\definecolor{hotpink}{rgb}{255, 0, 136} 
\newtheorem{theorem}{Theorem}[section]
\newtheorem{proposition}[theorem]{Proposition}
\newtheorem{lemma}[theorem]{Lemma}
\newtheorem{remark}[theorem]{Remark}
\newtheorem{question}[theorem]{Question}
\newtheorem{convention}[theorem]{Convention}
\newtheorem{corollary}[theorem]{Corollary}
\newtheorem{proofclaim}{Claim}
\newtheorem{proofclaim2}{Claim}
\newtheorem{proofclaim3}{Claim}
\newtheorem*{proposition*}{Proposition}
\newtheorem*{notation*}{Notation}
\newtheorem*{lemma*}{Lemma}
\newtheorem*{fact*}{Fact}
\newtheorem*{remark*}{Remark}
\newtheorem*{bigassumption*}{Big Assumption}
\newtheorem*{smallassumption*}{Small Assumption}
\newtheorem*{question*}{Question}
\newtheorem*{theorem*}{Theorem}
\newtheorem*{propprobtrue*}{A proposition that may or may not be true}
\newtheorem*{conjecture*}{Conjecture}
\newtheorem*{claim*}{Claim}
\newtheorem*{case1*}{Case 1}
\newtheorem*{case1a*}{Case 1a}
\newtheorem*{case1b*}{Case 1b}
\newtheorem*{case2*}{Case 2}
\newtheorem*{case3*}{Case 3}
\newtheorem*{claim1*}{Claim 1}
\newtheorem*{claim2*}{Claim 2}
\newtheorem*{claim3*}{Claim 3}
\newtheorem*{claim4*}{Claim 4}
\newtheorem*{claim5*}{Claim 5}
\newtheorem*{claim6*}{Claim 6}
\newtheorem*{claim7*}{Claim 7}
\newtheorem*{claim8*}{Claim 8}
\newtheorem*{corollary*}{Corollary}
\newtheorem*{armkon*}{A remark on the proofs of the two propositions above}
\theoremstyle{definition}
\newtheorem*{example*}{Example}
\newtheorem*{definition*}{Definition}
\newtheorem{example}[theorem]{Example}
\newtheorem{definition}[theorem]{Definition}
\newtheorem{notation}[theorem]{Notation}
\tikzstyle{black dot}=[fill=black, draw=black, shape=circle, scale=0.3]
\tikzstyle{dotted?}=[-, dotted, thick]
\tikzstyle{arrow}=[draw={rgb,255: red,255; green,0; blue,136}, {|->}, fill=none]
\tikzstyle{dashed?}=[-, dashed]
\tikzstyle{blueline}=[-, draw={rgb,255: red,28; green,116; blue,222}]
\tikzstyle{pinkline}=[-, draw={rgb,255: red,255; green,0; blue,136}]
\tikzstyle{blackarrow}=[->]
\tikzstyle{greenline}=[-, draw={rgb,255: red,55; green,255; blue,0}]
\def\Z{{\mathbb Z}}
\def\N{{\mathbb N}}
\def\R{{\mathbb R}}
\def\T{{\mathcal T}}
\def\Y{{\mathbb Y}}
\def\H{{\mathbb H}}
\def\T{{\mathcal T}}
\def\r{{\mathfrak r}}
\def\q{{\mathfrak q}}
\def\F{{\mathcal F}}
\def\C{{\mathcal{C}}}
\def\A{{\mathcal{A}}}
\def\B{{\mathcal{B}}}
\def\G{{\mathbb{G}}}
\def\scal{{\mathcal{S}}}
\def\scalf{{\scal_{\textrm{finite}}}}
\def\scall{{\scal_{\textrm{linear}}}}
\def\jf{{J_{\textrm{finite}}}}
\def\jl{{J_{\textrm{linear}}}}
\def\hyp{{\mathbb H}}
\def\last{\textrm{Last}}
\def\pcg{\mathcal{P}_K(\mathbb G)}
\def\qtomsg{\mathcal{C}_K(\mathbb G)}
\def\tomsg{\mathcal{C}_K^\T(\mathbb G)}
\def\diam{\textrm{diam}}
\def\mt{\textrm{MT}}
\def\stat{\textrm{stat}}
\def\bigstat{\textrm{STAT}}
\def\trunc{\textrm{trunc}}
\def\norm{\textrm{norm}}
\def\awl{\textrm{AWL}}
\def\ad{\textrm{AD}}
\def\ld{\textrm{LD}}
\def\oop{\textrm{oop}}
\def\asdim{\textrm{asdim}}
\def\id{\textrm{id}}
\def\lca{\textrm{LCA}}
\def\df{D_\textrm{finite}}
\def\dl{D_\textrm{linear}}
\def\omegaf{\Omega_\textrm{finite}}
\def\omegal{\Omega_\textrm{linear}}
\def\mf{M_\textrm{finite}}
\def\ml{M_\textrm{linear}}
\def\asdim{\textrm{asdim}}
\def\lambdacosetsp{\lambda_1}
\def\mucosetsp{\mu_1}
\def\lambdaqg{\lambda_2}
\def\muqg{\mu_2}
\def\lambdasp{\lambda_3}
\def\musp{\mu_3}
\def\lambdaqie{\lambda_4}
\def\muqie{\mu_4}
\def\lambdaPhi{\lambda_5}
\newcommand{\length}{\textrm{length}}
\newcommand{\tb}{T_{\{0,1\}}}
\newcommand{\ol}{\overline}
\newcommand{\ola}{\overleftarrow}
\newcommand{\myitem}[1]{%
\item[#1]\protected@edef\@currentlabel{#1}%
}
\DeclarePairedDelimiter\absval{\lvert}{\rvert}
\title{Embedding relatively hyperbolic groups into products of binary trees}
\date{}
\author{Patrick S. Nairne}
\begin{document}

\emergencystretch 3em

\raggedbottom

\setcounter{tocdepth}{1}

\begin{abstract}
We prove that if a group $G$ is relatively hyperbolic with respect to virtually abelian peripheral subgroups then $G$ quasiisometrically embeds into a product of binary trees. This extends the result of Buyalo, Dranishnikov and Schroeder in which they prove that a hyperbolic group quasiisometrically embeds into a product of binary trees. Inspired by Buyalo, Dranishnikov and Schroeder's \textit{Alice's Diary}, we develop a general theory of \textit{diaries} and \textit{linear statistics}. These notions provide a framework by which one can take a quasiisometric embedding of a metric space into a product of infinite-valence trees and upgrade it to a quasiisometric embedding into a product of \textit{binary} trees.
\end{abstract}

\maketitle

\section{Introduction}

\subsection{Results}

In this paper, we prove the following result.

\begin{theorem}[Main theorem] \label{thm.main}
Let $G$ be a finitely generated group that is relatively hyperbolic with respect to virtually abelian peripheral subgroups $H_1, H_2, \dots, H_T$. Let $R_t$ denote the rank of a finite index abelian subgroup of $H_t$, and define $R = \max(R_1, R_2, \dots, R_T)$. Then $G$ quasiisometrically embeds into a product of $R + \max(\asdim(G), R+1) + 1$ binary trees. Since $\asdim(G)$ is finite \cite{OSIN}, this is a finite product of binary trees.  
\end{theorem}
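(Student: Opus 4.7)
The plan is to produce the embedding in two major stages, matching the two-layer structure advertised by the abstract: first a quasiisometric embedding of $G$ into a product of (possibly infinite-valence) trees, and then an upgrade of that embedding to a product of binary trees via the diary and linear statistics framework.

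For the first stage, I would work with the Groves-Manning cusped space $X = X(G)$, which is Gromov hyperbolic because $G$ is relatively hyperbolic. Each combinatorial horoball over a virtually $\Z^{R_t}$ peripheral is quasiisometric to the standard horoball over $\R^{R_t}$ and has asymptotic dimension $R_t + 1$, so $\asdim(X) \le \max(\asdim(G), R+1)$; both quantities are finite by Osin's theorem. By a Buyalo-Lebedeva-type theorem (asdim $n$ hyperbolic spaces quasiisometrically embed into products of $n+1$ trees), $X$ embeds into a product of $\max(\asdim(G), R+1) + 1$ infinite-valence trees. The restriction to $G \hookrightarrow X$ is a coarse embedding, but the peripheral subgroups are logarithmically compressed inside $X$ (their metric is distorted to that of their horoballs), so I need to pay extra trees to recover the true peripheral metric. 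For each peripheral coset, after passing to a finite-index $\Z^{R}$-subgroup, I can map the $R$ abelian coordinates into $R$ separate trees (each coordinate of $\Z^{R}$ sits in a line, hence in a tree), and glue these maps together across cosets using the fact that distinct peripheral cosets are uniformly far apart in the coned-off geometry. Summing, this yields a quasiisometric embedding of $G$ into a product of $R + \max(\asdim(G), R+1) + 1$ infinite-valence trees.

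The second stage is where the diary machinery enters. Each tree produced above is generally infinite-valenced: the trees from the hyperbolic embedding branch along Cantor-like boundary structures, and the peripheral trees branch along horospheres. A diary is, roughly, a combinatorial log of which branches have been visited at each scale, while a linear statistic extracts a numerical summary (such as a coordinate in an abelian direction) of such a log. The framework should let me replace each infinite-valence tree $T_i$ by a binary tree $T_i'$ equipped with a diary that remembers the branch labels of $T_i$ by binary coding, provided any residual metric information can be stored inside an accompanying linear statistic. Crucially, this conversion happens tree-by-tree, so the final number of binary trees equals the number of input trees, matching the bound $R + \max(\asdim(G), R+1) + 1$.

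The main obstacle, which presumably motivates the long development of diaries and linear statistics in the rest of the paper, is the coherent interaction between the hyperbolic trees from $X$ and the linear trees from the peripherals during the binary conversion. In the purely hyperbolic case all linear statistics vanish and one recovers the Buyalo-Dranishnikov-Schroeder theorem. With virtually abelian peripherals, the horoballs carry genuine linear information that must be propagated through the diaries on \emph{multiple} trees without either inflating the number of trees or destroying the quasiisometric constants. Making each tree's diary self-contained (knowing precisely which linear statistics belong to it) while maintaining global consistency across all trees and all peripheral cosets is, I expect, the main technical content of the paper.
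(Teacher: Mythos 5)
Your overall architecture (peripheral ``coordinate'' trees plus trees coming from the cusped space, then a binary upgrade via diaries) points in the right direction, but there are two concrete gaps that the paper's proof has to work hard to fill and that your sketch elides or gets structurally wrong. First, the step ``glue these maps together across cosets using the fact that distinct peripheral cosets are uniformly far apart in the coned-off geometry'' is not a minor patch: a single tree must receive the $r$'th abelian coordinate of \emph{every} peripheral coset simultaneously, coherently with the transitions between cosets. In the paper this is exactly the role of the projection-complex/tree-of-metric-spaces machinery, the ordered standard paths (\Cref{def.sp}), the maps $F_r:G\to T_W$ into the sentence-tree, and the proof of \Cref{thm.fxphiqie}, which rests on Sisto's distance formula and the strong projection axioms. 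Without an argument of this kind your first stage is an assertion, not a proof; in particular nothing in your sketch controls how the ``lines'' from different cosets of the same $H_t$ interact inside one tree.

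Second, and more seriously, your binarization plan is not viable as stated. A diary cannot convert an arbitrary infinite-valence tree into a bounded-valence tree while coarsely preserving distances (growth obstructs this); it only preserves distances on pairs of sentences satisfying criteria such as $\leo$ or $\taurus$, and verifying those criteria for the pairs that actually arise is the bulk of \Cref{sec.proofpart2}, using the group geometry (conditions \ref{condition.natural1}--\ref{condition.natural2}, \Cref{lem.nomt}, the projection estimates). Moreover the paper does \emph{not} binarize the hyperbolic factors by diaries at all: it uses Dahmani--Yaman (virtually nilpotent peripherals $\Rightarrow$ doubling boundary of $X(G)$) together with Buyalo--Dranishnikov--Schroeder to get a regular map $\phi:G\to\prod_{q=1}^Q\tb$ into \emph{binary} trees directly (\Cref{cor.relhypreg}); your proposal never invokes the doubling property, which by \Cref{cor.bdsbs} is genuinely necessary, so diaries alone cannot recover it. Finally, your hope that ``each tree's diary is self-contained'' runs opposite to what the proof needs: the linear statistics used on the peripheral sentence-trees explicitly reference the binary coordinates $\Phi_q$ of the regular map (via \Cref{lem.reglem}), i.e.\ information from the hyperbolic factors is imported into the diaries on the peripheral factors; this cross-referencing is what makes the non-degenerate case of \Cref{sec.proofpart2} go through, and it is precisely the ingredient your tree-by-tree plan excludes.
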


In particular, the fundamental group of a finite volume real hyperbolic manifold quasiisometrically embeds into a finite product of binary trees. 

\Cref{thm.main} should be compared with the result of Mackay and Sisto below \cite{MS}. 

\begin{theorem}[Mackay--Sisto] \label{thm.MackaySisto}
Let $G$ be a finitely generated group. Suppose $G$ is relatively hyperbolic with respect to peripheral subgroups $H_1, H_2, \dots, H_T$. Suppose also that each $H_t$ quasiisometrically embeds into a product of $m_t$ trees, and define $m = \max(m_1, m_2, \dots, m_T)$. Then $G$ quasiisometrically embeds into a product of $m + \max(\asdim(G), m+1) + 1$ trees. Since $\asdim(G)$ is finite \cite{OSIN}, this is a finite product of trees. 
\end{theorem}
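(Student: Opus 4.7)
The plan is to combine a tree embedding of a Gromov-hyperbolic model space for $G$ with the given peripheral embeddings. First I would form the cusped space $X$ by gluing a combinatorial horoball onto each peripheral coset $gH_t$; it is standard that $X$ is Gromov hyperbolic. Since each $H_t$ quasiisometrically embeds into a product of $m$ trees we have $\asdim(H_t) \le m$, and a combinatorial horoball over $H_t$ has asdim at most $m+1$. Combining this with $\asdim(G) < \infty$, a finite-union estimate for asymptotic dimension bounds $\asdim(X) \le \max(\asdim(G), m+1)$.

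Next I would apply a Buyalo--Dranishnikov--Schroeder type embedding theorem to the hyperbolic space $X$: a hyperbolic geodesic space of asymptotic dimension $d$ quasiisometrically embeds into a product of $d+1$ trees. Restricting to $G \subseteq X$ gives a Lipschitz map $\phi \colon G \to T_0 \times \cdots \times T_{\max(\asdim(G), m+1)}$ that captures the hyperbolic geometry of $G$ but fails to detect distances inside peripheral cosets, since those distances are distorted inside the horoballs. To repair this I would assemble the given embeddings $H_t \hookrightarrow$ products of $m$ trees into a single Lipschitz map $\psi \colon G \to T_1' \times \cdots \times T_m'$, modelled on the Bestvina--Bromberg--Fujiwara quasi-tree-of-spaces construction: colour the peripheral cosets so that same-colour cosets are pairwise far apart, concatenate the peripheral tree factors within a colour class into single tree factors, and extend $\psi$ off the peripherals via a closest-point projection to peripheral cosets of the given colour.

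Finally I would show that the product map $(\phi, \psi) \colon G \to$ (product of $m + \max(\asdim(G), m+1) + 1$ trees) is a quasiisometric embedding. The Lipschitz upper bound is automatic; for the lower bound, any geodesic in $G$ can, up to bounded error, be decomposed into hyperbolic segments detected by $\phi$ and peripheral segments detected by $\psi$. The hard part is the construction of $\psi$: it must restrict to something uniformly equivalent to the given embedding on each peripheral coset, have no uncontrolled interference between distinct peripheral cosets sharing a tree factor, and be globally Lipschitz. Making this precise requires a projection calculus in relatively hyperbolic groups in the style of Drutu--Sapir and Sisto, together with a colouring argument for peripheral cosets whose chromatic number is controlled in terms of the asymptotic dimension of $G$.
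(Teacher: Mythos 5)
A preliminary remark: this paper does not prove \Cref{thm.MackaySisto} at all; it is quoted from \cite{MS}, and only its ingredients (the Bowditch space, Sisto's projections and distance formula) are reused later. Your outline does track the genuine Mackay--Sisto strategy --- embed the cusped space into trees, assemble the peripheral embeddings into $m$ further tree factors, and prove a lower bound via a distance decomposition --- but its pivotal step has a genuine gap. The claim that ``a hyperbolic geodesic space of asymptotic dimension $d$ quasiisometrically embeds into a product of $d+1$ trees'' is not an available theorem. The embedding theorems of Buyalo, Buyalo--Lebedeva and Buyalo--Dranishnikov--Schroeder (\cite{BL}, \cite{BDS}; compare \Cref{cor.bdsbs}) are governed by the \emph{linearly controlled} metric dimension of the boundary (equivalently, a linearly controlled version of asymptotic dimension of the space), not by plain $\asdim$; the identification of the two, $\asdim = \dim\partial + 1 = \ell\textrm{-}\dim\partial + 1$, is a theorem about hyperbolic \emph{groups}, whose proof uses coboundedness and self-similarity of the boundary, and the cusped space $X(G)$ enjoys neither. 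So your bound $\asdim(X) \leq \max(\asdim(G), m+1)$, even granting the horoball and union estimates, does not feed into any known tree-embedding theorem. This is exactly why the hypothesis of the theorem is a quasiisometric embedding of each $H_t$ into a product of $m_t$ trees rather than merely $\asdim(H_t) \leq m_t$: the tree embedding is what supplies the linear control, which is then propagated to the horoballs and to $\partial X(G)$. That propagation is the content of Mackay--Sisto's Propositions 3.4 and 3.6 (invoked in this paper in \Cref{cor.VAbinary}, with $R+1$ in place of $m+1$), after which Buyalo's theorem produces the $\max(\asdim(G), m+1)+1$ tree factors. By downgrading the hypothesis to an asymptotic-dimension bound, your argument discards precisely the information the proof runs on.

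On the peripheral map $\psi$: the colouring scheme you propose (finitely many colours, same-coloured cosets pairwise far apart, chromatic number controlled by $\asdim(G)$) is both unsubstantiated and unnecessary. Infinitely many pairs of distinct peripheral cosets lie at uniformly bounded distance from one another, and no bound on the chromatic number of the resulting ``conflict graph'' follows from $\asdim(G)$ in any straightforward way. The established route --- the one Mackay--Sisto use, and the one this paper follows for its main theorem --- is that the \emph{entire} family of peripheral cosets already satisfies the Bestvina--Bromberg--Fujiwara projection axioms with uniform constants (Sisto; see \Cref{thm.p3-p5}), so for each of the $m$ tree coordinates one builds a single (quasi-)tree of metric spaces over all cosets, with vertex spaces the images of the cosets under that coordinate of the given peripheral embeddings. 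The lower bound for the product map is then obtained from Sisto's distance formula (\Cref{thm.relhypdistanceformula}) together with \cite[Theorem 6.4]{BBFS}, in the spirit of \Cref{cor.embedding2}, rather than from an ad hoc decomposition of geodesics into hyperbolic and peripheral segments.
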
 

If a metric space $X$ quasiisometrically embeds into a finite product of trees then $X$ has finite asymptotic dimension. Hence the Mackay--Sisto result is itself a refinement of a result of Osin \cite{OSIN}: if a finitely generated group $G$ is relatively hyperbolic with respect to peripheral subgroups with finite asymptotic dimension, then $G$ has finite asymptotic dimension. Further, since peripheral subgroups are undistorted in a relatively hyperbolic group $G$ \cite{DS}, the reader should note that if $G$ quasiisometrically embeds into a finite product of trees (resp. binary trees) then the peripheral subgroups also quasiisometrically embed into a finite product of trees (resp. binary trees).

It is unclear to me whether the virtually abelian condition on the peripheral subgroups in \Cref{thm.main} can be weakened. The obvious conjecture is that $G$ quasiisometrically embeds into a product of binary trees if and only if the peripheral subgroups do as well. In this paper, the virtual nilpotence of the peripheral subgroups is used in order to quasiisometrically embed the Bowditch space $X(G)$ into a product of binary trees (see \Cref{cor.X(G)intobinarytrees}). 

\begin{question} \label{question.virtualnilpotence}
Can the virtually abelian condition be removed? If the peripherals quasiisometrically embed into products of binary trees, does $G$ do the same?
\end{question}

It follows from \cite{PAULS} that a virtually nilpotent but not virtually abelian finitely generated group cannot quasiisometrically embed into a product of trees. Hence, since peripheral subgroups are undistorted in a relatively hyperbolic group, if a single peripheral subgroup $H$ of a relatively hyperbolic group is virtually nilpotent but not virtually abelian, then $G$ cannot quasiisometrically embed into a product of trees. So, thanks to \cite{PAULS} and \Cref{thm.main}, the picture is fully understood when the peripheral subgroups are virtually nilpotent.

\begin{corollary}
Let $G$ be finitely generated and relatively hyperbolic with respect to virtually nilpotent peripheral subgroups. Then $G$ quasiisometrically embeds into a product of binary trees if and only if all the peripheral subgroups are virtually abelian. 
\end{corollary}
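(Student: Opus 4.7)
The plan is to deduce this corollary directly by combining \Cref{thm.main} with the facts about undistorted peripherals and the result of Pauls that have already been invoked in the preceding discussion; there is essentially no new content to prove.

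For the forward direction, I would assume that $G$ quasiisometrically embeds into a finite product of binary trees. Since a product of binary trees is in particular a product of trees, and since each peripheral subgroup $H_t$ is undistorted in $G$ by \cite{DS}, pre-composing the embedding of $G$ with the inclusion $H_t \hookrightarrow G$ gives a quasiisometric embedding of $H_t$ into a product of trees. By hypothesis each $H_t$ is virtually nilpotent, so \cite{PAULS} applies: a virtually nilpotent finitely generated group that quasiisometrically embeds into a product of trees must be virtually abelian. Hence every $H_t$ is virtually abelian.

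The reverse direction is exactly \Cref{thm.main}: if $G$ is finitely generated and relatively hyperbolic with respect to virtually abelian peripherals $H_1, \dots, H_T$, then $G$ quasiisometrically embeds into a product of $R + \max(\asdim(G), R+1) + 1$ binary trees, where $R$ is the maximal rank of a finite-index abelian subgroup of a peripheral. Since \cite{OSIN} ensures $\asdim(G) < \infty$, this is a finite product of binary trees, which is all the statement requires.

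There is no real obstacle here, as both implications are immediate given the results already established or cited in the paper. The only point worth stating carefully is that the hypothesis of virtual nilpotence on the peripherals is used only in the forward direction, to invoke Pauls' theorem; the reverse direction does not require it because \Cref{thm.main} is stated directly for virtually abelian peripherals.
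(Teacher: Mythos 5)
Your proposal is correct and follows exactly the argument the paper intends: the forward direction combines the undistortedness of peripherals \cite{DS} with Pauls' theorem \cite{PAULS}, and the reverse direction is a direct application of \Cref{thm.main} together with the finiteness of $\asdim(G)$ from \cite{OSIN}. This matches the paper's own (implicit) proof given in the discussion preceding the corollary, so there is nothing to add.
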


\Cref{thm.main} and \Cref{thm.MackaySisto} should themselves be compared with the two results below \cite{BL} \cite{BDS}.

\begin{theorem}[Buyalo--Lebedeva] \label{thm.BL}
A hyperbolic group $G$ admits a quasiisometric embedding into a product of $n+1$ trees where $n$ is the topological dimension of the boundary.
\end{theorem}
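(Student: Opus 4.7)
The plan is to reduce the problem of quasiisometrically embedding $G$ to one of quasisymmetrically embedding its Gromov boundary $\partial G$, equipped with any visual metric. As a $\delta$-hyperbolic group, $G$ has a boundary that is a compact metric space of topological dimension $n$. Three ingredients then combine to give the theorem: (i) a dimension-reduction result showing that any compact metric space of topological dimension $n$ quasisymmetrically embeds into a product of $n+1$ ultrametric spaces; (ii) the identification of every ultrametric space, up to quasisymmetry, with the visual boundary of a simplicial tree; (iii) a ``hyperbolic filling'' construction promoting a quasisymmetric embedding of boundaries to a quasiisometric embedding of the ambient visual hyperbolic spaces.

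For step (i) I would proceed as follows. Since $\partial G$ has topological dimension $n$, at every geometric scale $r>0$ there exist finite open covers of multiplicity at most $n+1$ with diameters comparable to $r$. I would build a nested sequence $\mc{U}_0, \mc{U}_1, \ldots$ of such covers with diameters decreasing geometrically in $k$. The multiplicity bound means the nerve of each cover is $(n+1)$-chromatic, so the sets of $\bigcup_k \mc{U}_k$ can be painted with $n+1$ colors so that at each scale $k$, sets of the same color are pairwise disjoint. For each color $c \in \{1,\ldots,n+1\}$, the monochromatic subcollection is a nested hierarchy of disjoint pieces across scales, which combinatorially is exactly the collection of metric balls in an ultrametric space $U_c$. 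The map $\partial G \to \prod_{c=1}^{n+1} U_c$ sending a point $\xi$ to the tuple of color-$c$ pieces containing $\xi$ at each scale is then verified to be quasisymmetric.

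For step (ii), an ultrametric $U_c$ is realised as the boundary of a simplicial tree $T_c$ whose vertices are the pieces of the hierarchy and whose edges record containment between consecutive scales; the visual metric on $\partial T_c$ is quasisymmetric to the ultrametric on $U_c$. Composition gives a quasisymmetric embedding $\partial G \hookrightarrow \prod_{c=1}^{n+1} \partial T_c$. For step (iii) I would recognise the latter as the boundary of the product $T_1 \times \cdots \times T_{n+1}$ taken with the $\ell^\infty$ metric, and invoke Bonk--Schramm's theory of hyperbolic cones: every bounded quasisymmetric embedding of visual boundaries extends functorially to a quasiisometric embedding of any hyperbolic fillings, and both $G$ and the tree product are such fillings of their respective boundaries.

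The hard part will be step (i). Arranging covers that simultaneously have the right multiplicity, the correct geometric diameter scaling, and adequate Lebesgue numbers, and are nested enough to define the combinatorial inclusion structure used for the ultrametric, while keeping quasisymmetry constants uniform across scales, is the core technical work. The greedy $(n+1)$-coloring of a bounded-degree nerve is straightforward once the covers are in place, and step (iii) is essentially functorial once the visual parameters of $\partial G$ and the $\partial T_c$ are arranged to match, so the whole argument rests on the delicate geometric combinatorics of the cover hierarchy.
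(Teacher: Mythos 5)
The paper does not prove this statement; it quotes it from Buyalo--Lebedeva, and their argument runs through the \emph{linearly controlled (capacity) metric dimension} of the boundary rather than the topological dimension directly. That is exactly where your step (i) has a genuine gap. Topological dimension $n$ of $\partial G$ only guarantees, for each scale, \emph{some} finite cover of small mesh and multiplicity at most $n+1$; it gives no lower control on Lebesgue numbers, and no separation of same-coloured pieces comparable to the scale, uniformly over all scales. That linear control is precisely what is needed for your coloured hierarchies to define ultrametric spaces into which $\partial G$ maps quasisymmetrically, and it is not a bookkeeping issue: the statement you want in (i) is false for general compacta. A compact space of topological dimension $0$ need not be uniformly disconnected, hence need not quasisymmetrically embed into any ultrametric space, i.e.\ into a product of $0+1$ ultrametric factors. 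What rescues the theorem for group boundaries is the actual content of Buyalo--Lebedeva: for a hyperbolic group the capacity dimension of $\partial G$ equals its topological dimension, proved via the quasi-self-similarity of the boundary under the group action. This is the missing idea, and the paper itself signals the distinction by phrasing the quantitative statements (e.g.\ \Cref{cor.bdsbs}, \Cref{cor.X(G)intobinarytrees}) in terms of linearly controlled metric dimension. (A smaller but real error in the same step: multiplicity at most $n+1$ does not make the nerve $(n+1)$-chromatic, since the chromatic number of a graph is not bounded by its clique number; in the standard treatment the covers are produced already coloured, which is built into the definition of capacity dimension.)

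Step (iii) also fails as stated. A product of two or more unbounded trees is not Gromov hyperbolic, so $\prod_{c}\partial T_c$ is not the Gromov boundary of $T_1\times\cdots\times T_{n+1}$ in any sense to which the Bonk--Schramm extension theory applies; and you cannot extend component-wise instead, because the individual coordinate maps $\partial G \to U_c$ are badly non-injective, so they are not quasisymmetric embeddings to which a cone construction could be applied. In the known proofs one builds, for each colour, a map from the group (or a hyperbolic cone over the boundary) to the tree of coloured cover elements directly, and verifies the lower quasiisometric bound for the product map by hand, using exactly the linear control of the covers that step (i) was missing. So both load-bearing steps of your outline coincide with the two places where the real content of the Buyalo--Lebedeva argument lies, and neither can be treated as routine.
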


\begin{theorem}[Buyalo--Dranishnikov--Schroeder] \label{thm.BDS}
A hyperbolic group admits a quasiisometric embedding into a product of $n+1$ binary trees where $n$ is the topological dimension of the boundary.
\end{theorem}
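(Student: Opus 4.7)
The plan is to combine the Buyalo--Lebedeva theorem (\Cref{thm.BL}) with a binarization procedure that replaces each tree factor by a binary tree without increasing the factor count. First I would apply \Cref{thm.BL} to produce a quasiisometric embedding $\Phi \colon G \hookrightarrow T_1 \times T_2 \times \cdots \times T_{n+1}$ into a product of $n+1$ (possibly infinite-valence) trees. The goal is then to construct, for each $i$, a binary tree $B_i$ together with a map $G \to B_i$ so that the product map $G \to B_1 \times \cdots \times B_{n+1}$ remains a quasiisometric embedding.

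The key observation is that one does not need to quasiisometrically embed $T_i$ itself into a binary tree --- such an embedding does not exist in general, because the valence at a single vertex can be unbounded. Rather, one only needs to encode the image $\Phi_i(G) \subset T_i$ using a binary tree. Because $G$ is hyperbolic, the image $\Phi_i(G)$ exhibits controlled local branching: the number of distinct ``directions'' in $T_i$ witnessed by $G$ within a ball of radius $r$ grows at most polynomially in $r$. I would exploit this by choosing, at each branching vertex $v$ of $T_i$, a local ordering of the incident branches that is coarsely compatible with the geometry of $G$, and then encoding this ordering in the binary tree $B_i$ by recording one bit at each level. The resulting map $G \to B_i$ should retain the distance information of $G \to T_i$ up to bounded multiplicative and additive error.

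The main obstacle is carrying out this binarization factor-by-factor, so that the total count stays at $n+1$ rather than ballooning to $(n+1)\log_2(\text{something})$. This requires the local orderings at different vertices of $T_i$ to be chosen coherently across scales, a property hyperbolic groups support because the visual metric on $\partial G$ is uniformly doubling and the Gromov product controls how branches split. In the framework of the present paper, this coherence is precisely what the diary and linear statistics machinery is designed to make precise; without that framework, one would have to build an ad hoc combinatorial encoding based on a covering of $\partial G$ by tubes at different scales. Once the factor-by-factor binarization is established, the product map assembles into the desired quasiisometric embedding $G \hookrightarrow B_1 \times \cdots \times B_{n+1}$.
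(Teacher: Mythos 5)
First, a point of reference: the paper does not prove \Cref{thm.BDS} at all --- it is quoted from \cite{BDS}, and the diary machinery of \Cref{chap.diaries} is an abstraction of the technique used there. Measured against that argument, your outline has the right overall shape (start from an embedding into $n+1$ unbounded-valence trees built from coverings of the boundary, as in \Cref{thm.BL}, then upgrade each factor to a binary tree without changing the number of factors), but the step that actually carries the theorem is missing, and the substitute you propose would fail. The branching of the image in a factor tree is not polynomial in $r$: in the covering construction the children of a vertex correspond to covering elements of the boundary at all deeper scales lying in a region of fixed size, so a single vertex has unboundedly many children, and the number of directions witnessed within radius $r$ grows exponentially in $r$ (doubling gives a bound of the form $C\lambda^{r}$, not a polynomial one). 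Consequently ``one bit at each level, via a coherently chosen local ordering of branches'' cannot even separate the children of a single vertex, no matter how coherently the orderings are chosen: with a bounded alphabet per level you cannot record, at the level where two group elements diverge, which of unboundedly many branches was taken.

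What makes the upgrade work in \cite{BDS} (and in the abstract form developed here) is exactly the content you defer to ``the diary machinery'': one records only boundedly much per level but greedily, spreading the unbounded per-level data over later levels (Alice's Diary), and one must then prove (i) that the distinguishing letter is actually recorded before the sentences end, which is the average-word-length analysis of \Cref{prop.AWL} and \Cref{cor.AWL} together with a verification that the pairs of sentences arising from the group satisfy a criterion of the type $(\virgo)$, and (ii) that two branches carrying the same recorded letters but words of different lengths are still separated, which is what the Morse--Thue sequence achieves in \cite{BDS} and what \Cref{lem.nomt} achieves in this paper. None of these steps appears in your proposal, and your identification of the main obstacle as ``keeping the factor count at $n+1$'' misses the real difficulty: the binarization never multiplies factors, the issue is the lower quasiisometric bound after re-encoding. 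Finally, a bookkeeping point: the binary-tree statement (\Cref{cor.bdsbs}) is naturally phrased with the linearly controlled metric dimension of the boundary, and to land on the topological dimension $n$ as in the statement one needs the Buyalo--Lebedeva identification of the two for boundaries of hyperbolic groups; your sketch should acknowledge where that enters.
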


Buyalo and Lebedeva \cite{BL} also prove that for a hyperbolic group $G$ we have $\asdim(G) = n+1$ where $n$ is the topological dimension of the boundary. As far as I'm aware, \Cref{thm.main}, \Cref{thm.BDS}, and the thesis of Alina Rull \cite{RULL}, in which Rull proves that an $n$-coloured right-angled Artin group or $n$-coloured right-angled Coxeter group admits a quasiisometric embedding into a product of $n$ binary trees, are the only results on the topic of quasiisometric embeddings into products of binary trees. 

\begin{remark}
The papers \cite{BDS} and \cite{RULL} both make essential use of the \textit{Morse--Thue sequence}. In my proof of \Cref{thm.main}, the use of this sequence is avoided (it is effectively replaced by \Cref{lem.nomt}).
\end{remark}

\subsection{Ideas}

There are two notable ideas that are developed in order to complete the proof of \Cref{thm.main}.

\subsubsection{The tree of metric spaces}

We use the theory of projection complexes developed by Bestvina, Bromberg, Fujiwara and Sisto, and in particular the notion of a \textit{tree of metric spaces} that is outlined in \cite[Section 4]{PART1}. I refer the reader to \cite{PART1} for the necessary background on projection complexes and the tree of metric spaces. 

\subsubsection{Diaries and statistics}

The second new idea is to use the theory of diaries and statistics described in \Cref{chap.diaries}. This work is inspired by \textit{Alice's Diary}, a function defined by Buyalo, Dranishnikov and Schroeder \cite{BDS}. In this paper, a diary is simply a height-preserving graph homomorphism from an infinite-valence rooted tree to a uniformly bounded rooted valence tree. It is outlined in \Cref{ex.diariesms} how diaries can be used to upgrade quasiisometric embeddings into products of infinite-valence trees so that they become quasiisometric embeddings into products of \textit{binary} trees. The techniques and theorems described in \Cref{chap.diaries} provide a framework for constructing quasiisometric embeddings into products of binary trees in many other contexts.   

\subsection{Questions}

It could be the case that a group quasiisometrically embeds into a finite products of trees if and only if it does so into a finite product of binary trees.

\begin{question} \label{question.embedding2}
Does there exist a finitely generated group which quasiisometrically embeds into a finite product of trees but which does not quasiisometrically embed into a finite product of binary trees?
\end{question}

One might hope to use the methods of \Cref{chap.diaries}, the proof structure of \Cref{thm.main}, and the quasiisometric embedding of the mapping class group into a product of quasi-trees of pseudo-Anosov axes \cite{BBF2} in order to prove that a mapping class group quasiisometrically embeds into a product of \textit{binary} trees, as opposed to the infinite-valence trees described in \cite{HUME} and \cite{BBF2}. One might even hope to use this to imitate \cite{PETYT} and prove that the mapping class group is quasiisometric to a uniformly locally finite CAT(0) cube complex. However, we have no equivalent of \Cref{cor.relhypreg} for the mapping class group. This regular map of a relatively hyperbolic group $G$ into a product of binary trees is used in an essential way in the proof of \Cref{thm.main}. So we ask the following question.
\begin{question}
Does there exist a regular map of the mapping class group (of, say, some compact hyperbolic surface) into a finite product of binary trees?
\end{question}
Thank you to Harry Petyt for engaging discussions on the possibility of quasiisometrically embedding mapping class groups into products of binary trees.

\subsection{Outline of the proof of the main theorem}

The reader might want to refer back to this proof outline as they read the paper if they want to understand how a section or result fits into the overall scheme of the proof of \Cref{thm.main}. 

Let $G$ be relatively hyperbolic with respect to virtually abelian peripheral subgroups $H_1, \dots H_T$. Let $R$ be the maximal rank of finite index abelian subgroups of the peripheral subgroups. Let $S$ be a finite generating set of $G$. Let $A$ be the finite alphabet $S \cup S^{-1}$, let $W$ be the set of finite words on $A$ and let $T_W$ be the sentence-tree (see \Cref{def.sentencetree}) whose vertices are in bijection with the free monoid $W^*$. Using work of Dahmani and Yaman \cite{DY} and work of Buyalo, Dranishnikov and Schroeder \cite{BDS}, we can prove there exists a regular map $\phi: G \rightarrow \prod_{q=1}^Q \tb$ where $Q \leq \max(\asdim(G),R+1) + 1$ (see \Cref{def.regular} and \Cref{cor.relhypreg}) and where $\tb$ denotes the rooted binary tree. 

\begin{enumerate}
    \item[(Step 1)] For each $1 \leq r \leq R$ and $x,z \in G$, we define a canonical standard path (which is a quasigeodesic) from $x$ to $z$ called the \textit{$r$'th ordered standard path}. This is \Cref{def.sp}.
    \item[(Step 2)] By partitioning the $r$'th ordered standard path from $e \in G$ to $x \in G$ (which corresponds to an element of $W$) into several words, we may define a map $F_r: G \rightarrow T_W$. 
    \item[(Step 3)] We prove that $F_1 \times \dots \times F_R \times \phi: G \rightarrow \prod_{r = 1}^R T_W \times \prod_{q=1}^Q \tb$ is a quasiisometric embedding.
    \item[(Step 4)] We choose several finite statistics and linear statistics, and constants $\jf$, $\jl$, $N$ and $\epsilon$ so that \Cref{cor.leotaurus} induces a diary $D: T_W \rightarrow T_\Omega$ which coarsely preserves the metric on a pair of sentences in $T_W$ whenever they satisfy $\leo(\scalf, \jf)$ or $\taurus(\scall, \jl, N, \epsilon)$. 
    \item[(Step 5)] We prove that 
    \[G \xrightarrow{F_1 \times \dots \times F_R \times \phi} \prod_{r = 1}^R T_W \times \prod_{q=1}^Q \tb \xrightarrow{D \times \dots D \times \id} \prod_{r = 1}^R T_\Omega \times \prod_{q=1}^Q \tb\]
    is a quasiisometric embedding. We follow the proof structure described in \Cref{ex.diariesms}; in other words, we reduce the problem to proving that a pair of sentences $\alpha$, $\beta \in T_W$ satisfy either the property $\leo(\scalf, \jf)$ or the property $\taurus(\scall, \jl, N, \epsilon)$.  
\end{enumerate}

Since the uniformly bounded valence word-tree $T_\Omega$ is quasiisometric to $\tb$, we are done. 

\subsection{Acknowledgements}

Thank you to Alessandro Sisto for alerting me to the result of \cite{PAULS} and for help with structuring this work. Thank you to David Hume for a useful and enlightening discussion. Finally, thank you to my supervisor Cornelia Dru\textcommabelow{t}u for suggesting that I consider relatively hyperbolic groups in this work.

\tableofcontents

\section{Definitions and conventions} \label{chap.defs}

\subsection{Numbers}

\begin{notation}
We use $\N_0$ to denote the non-negative integers and $\N$ to denote the positive integers. Notation such as $\R_{> 0}$ means the set $\{x \in \R : x > 0\}$.
\end{notation}

\subsection{Coarse geometry}

Throughout this subsection, let $X$ and $Y$ be a pair of metric spaces and let $\lambda \geq 1$, $\mu \geq 0$ be a pair of constants. 

\begin{definition}
We say that $f: X \rightarrow Y$ is \textit{$(\lambda,\mu)$-coarsely Lipschitz} if
\[d(f(x),f(x')) \leq \lambda d(x,x') + \mu\]
A $(\lambda,0)$-coarsely Lipschitz function $f:X \rightarrow Y$ is \textit{$\lambda$-Lipschitz}. 
\end{definition}

\begin{definition} \label{def.embeddings}
A \textit{$(\lambda,\mu)$-quasiisometric embedding} $f: X \rightarrow Y$ is a function which satisfies
\[\frac{1}{\lambda} d(x,x') - \mu \leq d(f(x),f(x')) \leq \lambda d(x,x') + \mu\]
for all $x,x' \in X$. It is a \textit{$(\lambda,\mu)$-quasiisometry} if in addition, for all $y \in Y$ there exists $x \in X$ such that $d(y,f(x)) \leq \mu$. A $(\lambda,0)$-quasiisometric embedding $f: X \rightarrow Y$ is a \textit{$\lambda$-bilipschitz embedding}. A $(\lambda,0)$-quasiisometry $f: X \rightarrow Y$ is a \textit{$\lambda$-bilipschitz homeomorphism}.
\end{definition}

\begin{definition}
A metric space $X$ has \textit{bounded growth at some scale} if there are constants $0 < r < R$ and $N \in \N$ such that all open balls of radius $R$ can be covered by $N$ open balls of radius $r$. 
\end{definition}

The following proposition will be used, briefly, in \Cref{sec.relhyp}.

\begin{proposition} \label{prop.bgoss}
If a metric space $X$ admits a quasiisometric embedding into a uniformly bounded degree connected graph $\Gamma$ then $X$ has bounded growth on some scale.
\end{proposition}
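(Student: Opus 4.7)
The plan is to exploit the (very obvious) fact that a uniformly bounded degree connected graph $\Gamma$ has bounded growth on every scale and then pull this back through the quasiisometric embedding. Specifically, if $d$ bounds the vertex degrees of $\Gamma$, then for any $s > 0$ the ball $B_\Gamma(p, s)$ contains at most $K(d, s)$ vertices, for some finite $K(d,s)$ depending only on $d$ and $s$ and not on $p$ (one can write $K(d,s) = 1 + d + d(d-1) + \dots + d(d-1)^{\lceil s \rceil - 1}$, but the exact formula is not needed). I would use this uniform count to produce, for any fixed scale, a cover of $B_\Gamma(p, s)$ by a uniformly bounded number of unit balls centred at nearby vertices.

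Concretely, let $f : X \to \Gamma$ be a $(\lambda, \mu)$-quasiisometric embedding, and fix a scale $R > 0$ (to be constrained later). For any $x \in X$, the image $f(B_X(x, R))$ is contained in $B_\Gamma(f(x), \lambda R + \mu)$. I would list the vertices $v_1, \dots, v_M$ of $\Gamma$ lying in $B_\Gamma(f(x), \lambda R + \mu + 1)$; by the previous paragraph $M \leq N := K(d, \lambda R + \mu + 1)$, and this $N$ depends on $d, \lambda, \mu, R$ but crucially not on $x$. For each $i$ let $A_i$ be the set of $y \in B_X(x, R)$ with $d_\Gamma(f(y), v_i) \leq 1$. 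Every point of the geometric realisation of $\Gamma$ lies within distance $1$ of some vertex, so the $A_i$ cover $B_X(x, R)$. The lower quasiisometric bound then forces each $A_i$ to have $X$-diameter at most $\lambda(2 + \mu)$.

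Finally I would set $r := \lambda(2 + \mu) + 1$ and pick any $R > r$. Each nonempty $A_i$ is contained in an open $X$-ball of radius $r$ about one of its points, and so every $B_X(x, R)$ is covered by at most $N$ such balls. This is exactly the bounded-growth-at-scale $R$ condition. There is no substantive obstacle here; the only bookkeeping to be careful about is the order of quantifiers, namely that $R$ must be fixed first so that the constant $N$, which in general grows (exponentially) with $R$, can then be absorbed into the statement.
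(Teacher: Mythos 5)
Your proposal is correct and follows essentially the same route as the paper's proof: push an $X$-ball forward into a ball of radius comparable to $\lambda R+\mu$ in $\Gamma$, bound the number of vertices there using the degree bound, and use the lower quasiisometric inequality to see that the preimage pieces have uniformly bounded diameter, hence are covered by uniformly many small balls. If anything you are slightly more careful than the paper (you avoid assuming the image lies in the vertex set and you track the diameter constant $\lambda(2+\mu)$ and the quantifier order for $r<R$ explicitly), but the underlying argument is the same.
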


\begin{proof}
Suppose we have some $(\lambda,\mu)$-quasiisometric embedding $\phi: X \rightarrow \Gamma$. Suppose that $D$ is a uniform bound on the degree of vertices of $\Gamma$. We may assume that $\phi$ takes elements of $X$ to vertices of $\Gamma$. Consider an open ball $B$ of radius $R$ in $X$. Then $\phi(B)$ is contained with an open ball $B'$ of radius $\lambda R + \mu$ in $\Gamma$. $B'$ contains at most $D^{\lambda R + \mu}$ vertices. For each vertex $v \in B'$ with $\phi^{-1}(v) \cap B$ non-empty, choose an element $x_v \in B$ with $\phi(x_v) = v$. If we consider balls of radius $\mu$ around all of the $x_v$, we can see that we have covered $B$ by $D^{\lambda R + \mu}$ balls of radius $\mu$. 
\end{proof}

\begin{convention}
Let $X$ be a metric space. Sometimes, but not always, we will denote the metric on $X$ by $d_X$, with a subscript to indicate the metric space. Other times, when the metric space is clear, we will just write $d$.
\end{convention}

\subsection{Groups}

Let $G$ be a group and let $S$ be a finite generating set. We use the notation $\Gamma(G,S)$ to indicate the \textit{Cayley graph} of $G$ with respect to $S$, which is the graph with vertex set $G$ and an edge $\{g,gs\}$ for all $g \in G$ and $s \in S$. 

\begin{notation} \label{notation.cayleysubgraph}
Let $\Gamma(G,S)$ be a Cayley graph and suppose a subgroup $H \leq G$ is generated by some finite set $S_H \subset S$. We may think of the Cayley graph $\Gamma(H,S_H)$ as being a connected subgraph of $\Gamma(G,S)$. Given $g \in G$, we use the notation 
\[g\Gamma(H,S_H)\]
to refer to the image of the subgraph $\Gamma(H,S_H)$ under the isometric action of $g$. So $g\Gamma(H,S_H)$ has vertex set $gH$ and the edges correspond to the generators $S_H$. 
\end{notation}

\subsection{Trees}

\begin{definition} \label{def.hpop}
Let $T$ be a simplicial rooted tree with root vertex $v_0$.
\begin{itemize}
    \item Say that two vertices of $T$ are \textit{adjacent} if they are distinct and connected by an edge.
    \item The \textit{children} of a vertex $v \in T$ are precisely the vertices $v' \in T$ which are adjacent to $v$ and for which $d_T(v_0,v') > d_T(v_0,v)$.
    \item We say that a vertex $v' \in T$ \textit{descends} from a vertex $v \in T$ if the unique geodesic from $v_0$ to $v'$ passes through $v$. If $v'$ descends from $v$ then $v$ is an \textit{ancestor} of $v'$. 
    \item Given two vertices $u, v \in T$, the \textit{lowest common ancestor} of $u$ and $v$, denoted $\lca(u,v)$, is the common ancestor of $u$ and $v$ with the greatest distance from the basepoint $v_0$. 
    \item We say that $T$ has \textit{uniformly bounded valence} or \textit{uniformly bounded degree} or that it is \textit{uniformly locally finite} if there is a constant $M \in \N$ such that every vertex of $T$ has degree at most $M$. 
\end{itemize}
Now suppose $T$ and $T'$ are simplicial rooted trees with root vertices $v_0$ and $v_0'$ respectively.
\begin{itemize}
    \item We say that a map $f: T \rightarrow T'$ is \textit{height-preserving} if for all vertices $v \in T$ we have $d(v_0,v) = d(v_0', f(v))$.
    \item We say that $f: T \rightarrow T'$ is \textit{order-preserving} if the following implication holds: if $v'$ descends from $v$ in $T$ then $f(v')$ descends from $f(v)$ in $T'$. 
\end{itemize}
\end{definition}

\subsection{Words and sentences}

\subsubsection{Words}

\begin{definition} \label{def.alphabet}
An alphabet is just a set of \textit{letters}. We will often denote alphabets by $A$, $B$ or $C$ and we will often denote letters by $a$ and $b$. We say that the alphabet is finite if there are finitely many letters.
\end{definition}

\begin{definition} \label{def.word}
A \textit{word} is an empty, finite, or infinite string of letters from an alphabet $A$. The \textit{length} of a word $u$ is the number of letters in it. We denote this by $\length(u)$.
\end{definition}

For example, if $A = \{a,b\}$ then $u = abab$ is a word of length $4$ on the alphabet $A$. We will often denote by $W$ the set of all finite words on an alphabet $A$. We will often denote words themselves by the letters $u$, $v$ or $w$. To indicate the concatenation of a pair of words into a longer word, we simply write the two words next to each other. For example, if $u = abab$ and $v = aaa$ then
\[uv = ababaaa\] 

\begin{notation} \label{not.ola}
We write $\ola{w}$ to denote the word $w$ written backwards. So $\ola{abcd} = dcba$ and so on.
\end{notation}

Further, we sometimes need to refer to the position of a letter $a$ within a word $u$. To do this, we might refer to the \textit{distance} of $a$ from the start or end of $u$. If we say $a$ has distance $d$ from the start of $u$ that means $a$ is the $d$'th letter in the word $u$. If we say that $a$ has distance $d$ from the end of $u$ that means $a$ is the $(\length(u) - d + 1)$'th letter of $u$, or, equivalently, the $d$'th letter of $\ola{u}$. For example, in the word $u = abaca$, the letter $c$ has distance $2$ from the end of $u$ and distance $4$ from the start of $u$. 

\subsubsection{Sentences}

\begin{definition} \label{def.sentence}
A \textit{sentence} is a string of words. We will often denote sentences by the Greek letters $\alpha$ and $\beta$. In order to describe the words which make up a sentence, we will use \textit{overlines} to indicate where one word ends and another begins. For example, if $A = \{a,b\}$ then 
\[\alpha = \ol{abab} \ol{aaa} \ol{ba}\]
is a sentence that consists of the three words $abab$, $aaa$ and $ba$. If $u = abab$, $v = aaa$ and $w = ba$ we can also write
\[\alpha = \ol{u} \ \ol{v} \ \ol{w}\]
The purpose of the overlines is so that we can distinguish between the single word $uvw$ and the three word sentence $\ol{u} \ \ol{v} \ \ol{w}$. The \textit{length} of a sentence is the number of words in it. We denote this by $\length(\alpha)$.
\end{definition}

Sentences are, of course, just words on an alphabet of words. To indicate the concatenation of a pair of sentences into a longer sentence, we simply write the two sentences next to each other. For example, if $\alpha = \ol{abab} \ol{aaa} \ol{ba}$ and $\beta = \ol{ab} \ol{a}$ then
\[\alpha \beta  = \ol{abab} \ \ol{aaa} \ \ol{ba} \ \ol{ab} \ \ol{a}\]

\begin{notation}
Given a sentence $\alpha$ and $m \in \N$, we can write $\alpha^m$ to indicate the sentence $\alpha$ written $m$ times. For example, $(\ol{a} \ol{b})^3 = \ol{a} \ol{b} \ol{a} \ol{b} \ol{a} \ol{b}$.
\end{notation}

The following notation will be convenient in \Cref{sec.proofpart1}.

\begin{notation} \label{not.sentencenegative}
Suppose we have a group $G$ generated by a finite set $S$ and suppose $A = S \cup S^{-1}$ is our alphabet. If $q \in \Z$ and $q < 0$, and if $s \in S$, then $(\ol{s})^q$ indicates the sentence $(\ol{s^{-1}})^{-q}$. For example, $(\ol{s})^{-3} = \ol{s^{-1}} \ \ol{s^{-1}} \ \ol{s^{-1}}$.
\end{notation}

\subsubsection{Word-trees and sentence-trees}

\begin{definition} \label{def.wordtree}
Given an alphabet $A$, we will use the notation $T_A$ to indicate \textit{the word-tree} on $A$. The tree $T_A$ is defined precisely so that its vertices are in bijection with finite words on the alphabet $A$. Every edge of $T_A$ will be labelled by an element of $A$. The word-tree $T_A$ is defined as follows. As a set, $T_A$ is precisely the rooted tree where every vertex has $\absval{A}$ children. Further, given a vertex $v \in T_A$, the $\absval{A}$ edges which go between $v$ and its children are labelled by the elements of $A$, with every such edge having a label in $A$, and no two such edges having the same label. We will now describe the natural bijection between the vertices of $T_A$ and finite words on the alphabet $A$. First, we associate the empty word $\emptyset$ to the root vertex of $T_A$. Now let $v$ be a vertex in $T_A$. The geodesic from $\emptyset$ to $v$ in $T_A$ travels along a series of labelled edges. These labels, read in order, make up the word that we associate to $v$. Thus every vertex in $T_A$ has a corresponding word on $A$. \textit{Indeed, we will often identify them.} The vertices at distance $n$ from the root of $T_A$ correspond to words of length $n$ on the alphabet $A$. 
\end{definition}

The following notation for the rooted binary tree will be used frequently in this paper. 

\begin{example} \label{ex.binarytree}
If $A = \{0,1\}$ then $T_{A} = \tb$ is a rooted binary tree whose vertices correspond to finite strings of zeros and ones.
\end{example}

\begin{definition} \label{def.sentencetree}
Let $A$ be a finite alphabet and let $W$ be the set of non-empty finite words on $A$. Analogous to the word-tree on $A$, we have a \textit{sentence-tree} on $A$ which is exactly the tree $T_W$, i.e. the word-tree on $W$. So the vertices of $T_W$ are in bijection with sentences on $A$, and the edges of $T_W$ are labelled by finite words on $A$. The vertices at distance $n$ from the root vertex of $T_W$ correspond to sentences with $n$ words.
\end{definition}

Note that if $A$ is a finite alphabet then $T_A$ is a uniformly bounded valence tree, whereas $T_W$ is a tree with countable valence at every vertex.

The following proposition will be used in the proof of \Cref{thm.main}. 

\begin{lemma} \label{lem.nomt}
Let $A$ be a finite alphabet and let $T_A$ be the word-tree on $A$ (whose vertices are in bijection with the free monoid $A^*$). Suppose $w,w' \in A^*$ are distinct words on $A$ that satisfy $d_{T_A}(w,w') \leq k$. Then one of the two following conditions hold:
\begin{enumerate}
    \item the final $k$ letters of $w$ and $w'$ are distinct;
    \item the final $k$ letters of the base $10$ expansion of $\length(w)$ and the final $k'$ letters of the base $10$ expansion of $\length(w')$ are distinct. 
\end{enumerate}
\end{lemma}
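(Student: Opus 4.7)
The plan is to split on whether $\length(w) = \length(w')$. Write $n_1 = \length(w)$, $n_2 = \length(w')$, and let $m = \length(\lca(w, w'))$; the tree distance then decomposes as
\[d_{T_A}(w, w') = (n_1 - m) + (n_2 - m) \leq k,\]
so in particular $\absval{n_1 - n_2} \leq k$. I read condition~(2) as asserting that $n_1 \not\equiv n_2 \pmod{10^k}$, and condition~(1) as asserting that the final $k$ letters of $w$ and $w'$ disagree as strings (aligned from the end).

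Suppose first that $n_1 \neq n_2$. I would argue that condition~(2) automatically holds: if the final $k$ digits of the base $10$ expansions of $n_1$ and $n_2$ agreed, then $10^k \mid n_2 - n_1$, so $\absval{n_2 - n_1} \geq 10^k$. But $\absval{n_2 - n_1} \leq k < 10^k$ for $k \geq 1$ (the case $k = 0$ forces $w = w'$ and is vacuous), contradicting the earlier bound.

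Suppose instead that $n_1 = n_2 = n$. Since $w \neq w'$ we have $m < n$, so $2(n - m) \leq k$ and in particular $n - m \leq k$. By the defining property of the lowest common ancestor, the letters of $w$ and $w'$ at position $m+1$ are distinct; this position sits exactly $n - m \leq k$ places from the end of each word. Since both words have the same length, their final $k$ letters align position-by-position from the right, and they disagree at that position, so condition~(1) holds.

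I do not foresee a substantial obstacle: the proof is essentially bookkeeping at the lowest common ancestor together with the crude arithmetic inequality $k < 10^k$. The only subtlety worth flagging is that the potentially awkward case --- when $w$ and $w'$ have different lengths, where the meaning of ``the final $k$ letters align'' could be ambiguous --- is eliminated outright by condition~(2), so condition~(1) only needs to be verified in the clean equal-length regime.
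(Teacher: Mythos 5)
Your proof is correct and follows essentially the same route as the paper: split on whether $\length(w)=\length(w')$, rule out agreement of the final $k$ digits in the unequal-length case via $\absval{\length(w)-\length(w')} \leq d_{T_A}(w,w') \leq k < 10^k$, and in the equal-length case locate the first disagreement within distance $k$ of the ends of the words. The extra bookkeeping with the lowest common ancestor and the $k=0$ remark are fine refinements of the same argument.
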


\begin{proof}
Suppose first that $\length(w) \neq \length(w')$. If the final $k$ letters of the base $10$ expansion of $\length(w)$ and the final $k'$ letters of the base $10$ expansion of $\length(w')$ are the same, then one can easily show that $\absval{\length(w) - \length(w')} \geq 10^k$. But this would imply that $d_{T_A}(w,w') \geq 10^k$ which is a contradiction. So we may assume that $\length(w) = \length(w')$. Since $d_{T_A}(w,w') \leq k$, it follows that there exists letters $a \in w$ and $a' \in w'$ which are distinct and at the same distance $d \leq k/2$ from the end of their words. It follows that the final $k$ letters of $w$ and $w'$ are distinct.
\end{proof}

Buyalo, Dranishnikov and Schroeder \cite{BDS} use the 
\textit{Morse--Thue sequence} in their proof for similar purposes as the proposition above.

\subsubsection{Notation for pairs of sentences}

Let $T_W$ be a sentence-tree and suppose we have a pair of sentences $\alpha$, $\beta \in T_W$. In this paper, this situation will occur so frequently that it is useful to have some standard notation for the words that make up these sentences. 

\begin{notation} \label{notation.sentence}
Given sentences $\alpha, \beta \in T_W$, we can always write
\begin{equation} \label{eq.alpha}
\alpha = \ol{u_1} \ \ol{u_2} \dots \ol{u_p} \ \ol{u_{p+1}} \dots \ol{u_{p+m}}
\end{equation}
and 
\begin{equation} \label{eq.beta}
\beta = \ol{u_1} \ \ol{u_2} \dots \ol{u_p} \ \ol{u_{p+1}'} \dots \ol{u_{p+n}'}
\end{equation}
where $u_{p+1} \neq u_{p+1}'$ and $m,n \geq 0$. In \eqref{eq.alpha}, $u_1$, $u_2$, \dots, $u_p$, $u_{p+1}$, \dots, $u_{p+m-1}$, $u_{p+m}$ are the words that make up the sentence $\alpha$. In \eqref{eq.beta}, $u_1$, $u_2$, \dots, $u_p$, $u_{p+1}'$, \dots, $u_{p+n-1}'$, $u_{p+n}'$ are the words that make up the sentence $\beta$. Therefore, the sentence $\ol{u_1} \ \ol{u_2} \dots \ol{u_p}$ is the lowest common ancestor of the sentences $\alpha$ and $\beta$ in the tree $T_W$ and $d_{T_W}(\alpha,\beta) = m+n$. 
\end{notation}

\subsection{Regular maps} \label{subsec.reg}

The notion of a \textit{regular map} notably appears in the work of Benjamini, Schramm and Tim\'{a}r \cite{BST} on separation profiles since separation profiles are monotone under regular maps. 

\begin{definition} \label{def.regular}
Let $\phi: V(\Gamma) \rightarrow V(\Gamma')$ be a map between the vertex sets of connected bounded degree graphs $\Gamma$, $\Gamma'$. We say that $\phi$ is \textit{regular} if $\phi$ is Lipschitz and if there is a uniform bound on the cardinality of $\phi^{-1}(v)$.
\end{definition}

Two important examples of regular maps are coarse embeddings and subgroup inclusion. It is possible to define a regular map between arbitrary metric spaces, but we will not do so here. In the lemma below, the maps $\phi$ and $\Phi$ should be interpreted as having vertex sets as domain and codomain, although this is dropped from the notation. 

\begin{lemma} \label{lem.reglem}
Suppose $\phi: \Gamma \rightarrow \prod_{q=1}^Q \tb$ is a regular map of a bounded degree graph (with base vertex $v_0 \in \Gamma$) into the product of $Q$ binary trees. Then there exists a finite alphabet $C$, with associated word-tree $T_C$, and a Lipschitz embedding $\Phi: \Gamma \rightarrow \prod_{q = 1}^{Q} T_C$ such that 
\begin{itemize}
    \item $\Phi(v)$ is a vertex for all vertices $v \in \Gamma$;
    \item $\Phi$ is injective on the vertices of $\Gamma$;
    \item the basepoint $v_0 \in \Gamma$ is mapped by $\Phi$ to the product of the roots $\mathbf{\emptyset} = (\emptyset,\emptyset, \dots, \emptyset)$.
\end{itemize}
\end{lemma}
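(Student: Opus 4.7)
The plan is to compose each factor of $\phi$ with an injective, $1$-Lipschitz ``oriented geodesic'' encoding of $\tb$ into a word-tree on an enlarged alphabet, so that $\phi_q(v_0)$ is sent to the root; and then to append a fibre-distinguishing tag letter in the first coordinate to kill the non-injectivity of $\phi$. This resolves in one stroke the two obstructions to reading $\Phi$ off directly from $\phi$: (i) $\phi(v_0)$ need not be the tuple of roots, and (ii) $\phi$ need not be injective.

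For the encoding, set $D := \{0, 1, \bar{0}, \bar{1}\}$ and $x_q := \phi_q(v_0) \in \tb$. Given $y \in \tb$, let $z := \lca(y, x_q)$ and write $y = z \cdot a$, $x_q = z \cdot b$ in $\{0,1\}^*$. Define
\[\psi_q(y) := \ola{\bar{b}} \cdot a \ \in T_D,\]
where $\bar b$ is $b$ with each letter barred; informally, $\psi_q(y)$ spells the geodesic from $x_q$ to $y$ as $|b|$ ``up'' steps (the letters of $x_q$ read from the end, barred to mark them as upward) followed by $|a|$ ``down'' steps. One checks that $\psi_q$ sends $x_q$ to $\emptyset$ and is injective: the barred prefix of $\psi_q(y)$ determines $b$ and hence $z$ as a prefix of $x_q$, and then the plain suffix determines $a$. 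The key technical claim is that $\psi_q$ is $1$-Lipschitz on edges of $\tb$; I would check this via a short case analysis on the position of $y$ relative to $x_q$ when $y'$ is the parent of $y$ (both ancestors of $x_q$; both descendants; $y$ a non-$x_q$-facing child of an ancestor; or neither an ancestor nor a descendant of $x_q$ so that $\lca(y,x_q) = \lca(y',x_q)$), showing in every case that the two encodings differ by a single letter appended or deleted at the right end.

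To handle the non-injectivity, let $K$ be the uniform bound on $|\phi^{-1}(\mathbf{w})|$ coming from regularity of $\phi$. Choose $\ell: \Gamma \to \{0, \ldots, K-1\}$ that is injective on each fibre of $\phi$ with $\ell(v_0) = 0$, and enlarge the alphabet to $C := D \sqcup \{c_1, \ldots, c_{K-1}\}$. Set $\Phi_q(v) := \psi_q(\phi_q(v))$ for $q \geq 2$ and $\Phi_1(v) := \psi_1(\phi_1(v)) \cdot c_{\ell(v)}$, with no letter appended when $\ell(v) = 0$. Then $\Phi(v_0) = (\emptyset, \ldots, \emptyset)$ by construction, and disjointness of the tag letters from $D$ forces $\Phi(v) = \Phi(v')$ to imply both $\psi_q(\phi_q(v)) = \psi_q(\phi_q(v'))$ for every $q$ and $\ell(v) = \ell(v')$, hence $\phi(v) = \phi(v')$ by injectivity of the $\psi_q$, and then $v = v'$ by injectivity of $\ell$ on fibres. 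Finally $\Phi$ is $(\lambda+2)$-Lipschitz in each coordinate, where $\lambda$ is the Lipschitz constant of $\phi$ and the $+2$ accounts for removing and re-appending the tag letter.

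The genuinely delicate step is the $1$-Lipschitz edge check for $\psi_q$; the critical design choice that makes it work is reversing the $b$-part in the encoding, which places the varying end of the geodesic at the right end of the word, so that passing from $y$ to its parent only ever changes $\psi_q(y)$ by one letter at the end rather than rewriting the whole string.
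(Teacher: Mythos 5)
Your proposal is correct, and it is in fact more detailed than the paper's own argument, which essentially asserts the result: the paper takes an $n$-ary tree with $n$ much larger than the fibre bound $m$ and states that "by adjusting $\phi$ by a uniformly bounded amount" one obtains an injective, basepoint-preserving, coarsely Lipschitz map, then invokes the observation that a coarsely Lipschitz map defined on the vertex set of a graph is Lipschitz. Your construction supplies the missing mechanism in two separable steps: the re-rooting encoding $\psi_q: \tb \rightarrow T_D$ (barred letters for the ascent from $\phi_q(v_0)$ to $\lca(y,\phi_q(v_0))$, plain letters for the descent to $y$) handles the basepoint condition by changing the encoding of the target rather than perturbing $\phi$, and your case analysis correctly shows each edge of $\tb$ is sent to an edge of $T_D$, so $\psi_q$ is $1$-Lipschitz and injective; the tag letters $c_{\ell(v)}$ appended in the first coordinate then use exactly the uniform fibre bound from regularity to restore injectivity, which is the concrete form of the paper's "bounded adjustment". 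The only point worth phrasing carefully is the final Lipschitz claim: appending or deleting the tag gives an additive error of $2$, so you should note (as the paper does) that for adjacent vertices of $\Gamma$ this yields $d(\Phi_1(v),\Phi_1(v')) \leq \lambda + 2$, whence $\Phi$ is $(\lambda+2)$-Lipschitz on the vertex set with its path metric; your statement already amounts to this. Both approaches buy the same thing for the rest of the paper (an injective, Lipschitz, vertex-to-vertex, root-preserving $\Phi$ into a product of word-trees on a finite alphabet), but yours has the merit of being checkable line by line, with a fixed explicit alphabet $C = \{0,1,\bar 0,\bar 1\} \sqcup \{c_1,\dots,c_{K-1}\}$ rather than an unspecified large $n$.
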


\begin{proof}
Let $m \in \N$ be a uniform upper bound on the amount of vertices in $\phi^{-1}(v)$. Let $n \in \N$ and let $T$ denote the $n$-ary rooted tree. That is, $T$ is the rooted tree such that every vertex has $n$ children. I think it is clear that if $n \in \N$ is much larger than $m$ then by adjusting $\phi$ by a uniformly bounded amount, we can define an \textit{injective} and coarsely Lipschitz map $\Phi: \Gamma \rightarrow \prod_{q=1}^Q T$ which takes $v_0 \in \Gamma$ to the basepoint of $\prod_{q=1}^Q T$. Further, a coarsely Lipschitz map whose domain is the vertex set of a graph is in fact Lipschitz. So $\Phi$ is Lipschitz on the vertices of $\Gamma$.

Finally, by setting $C = \{1,2,3, \dots, n\}$, we can identify $T$ with the word-tree $T_C$. This completes the proof of the lemma. 
\end{proof}

\section{Background on relatively hyperbolic groups} \label{sec.relhyp}

\subsection{Relatively hyperbolic groups}

Let $G$ be a group with finite generating set $S$ and let $\Gamma(G,S)$ denote the Cayley graph of $G$ with respect to $S$. When we speak of the metric on $G$, we are actually referring to the metric induced by $\Gamma(G,S)$, but we will generally denote it by $d_G$ nonetheless. Suppose that $\H$ is some collection of subgroups of $G$. Elements of $\H$ are called \textit{peripheral subgroups}. Let $\G$ be the collection of all cosets of peripheral subgroups. In other words, $\G = \{gH : H \in \H\}$. Elements of $\G$ are called \textit{peripheral cosets}. Let us suppose as well that for each $H \in \H$, $S \cap H$ generates the subgroup $H$. 

\begin{definition} \label{def.conedoff}
The \textit{coned-off graph} of $G$ with respect to $\H$ is the graph formed by taking the Cayley graph $\Gamma(G,S)$ and then adding an edge between $g$ and $g'$ (if it's not already there) whenever $g$ and $g'$ are contained in the same peripheral coset. Although the coned-off graph depends on $\H$, this is omitted from the notation and the coned-off graph is simply denoted by $\hat{G}$.
\end{definition}

The coned-off graph is (coarsely) what you get if you crush every peripheral coset to a point. We will now define the \textit{Bowditch space} $X(G)$, a metric space closely related to $\hat{G}$.

\begin{definition} \label{def.horoball}
Suppose that $\Gamma$ is a connected graph with vertex set $V_\Gamma$ and edge set $E_\Gamma$. Suppose also that every edge has length $1$. The \textit{horoball} $\mathcal{H}(\Gamma)$ is the graph with vertex set 
\[V_{\mathcal{H}(\Gamma)} = V_{\Gamma} \times \N_0\]
and edge set
\[E_{\mathcal{H}(\Gamma)} = \{\{(v,n), (v',n)\} : \{v,v'\} \in E_{\Gamma}, n \in \N_0\} \sqcup \{\{(v,n),(v,n+1)\} : v \in V_{\Gamma}, n \in \N_0\} \]
We insist that edges of the form $\{(v,n), (v',n)\}$ have length $e^{-n}$ and edges of the form $\{(v,n),(v,n+1)\}$ have length $1$. 
\end{definition}

So the horoball $\mathcal{H}(\Gamma)$ consists of countably many copies of $\Gamma$ stacked on top of each other, getting smaller and smaller as one travels higher. 

\begin{definition} \label{def.bowditchspace}
The \textit{Bowditch space} of $G$ with respect to $\H$ is formed by taking $\Gamma(G,S)$ and then, for every $gH \in \G$, attaching a copy of $\mathcal{H}(\Gamma(H,S \cap H))$ to the subgraph $g\Gamma(H,S \cap H)$. The Bowditch space is denoted by $X(G)$. 
\end{definition}

The definition we use below is equivalent to all the other definitions of relative hyperbolicity. It is due to Groves and Manning \cite{GM}.

\begin{definition} \label{def.relhyp}
We say that $G$ is \textit{relatively hyperbolic} with respect to $\H$ if $X(G)$ is hyperbolic.
\end{definition}

\begin{remark}
Due to \cite[Theorem 1.1]{OSIN2}, if $G$ is relatively hyperbolic we may always assume that $\H$ is finite and that every $H \in \H$ is finitely generated.
\end{remark}

Strangely, I cannot find it recorded in the literature that \cite[Theorem 1.2]{BDS} is really an \textit{if and only if}. I'll record it here, since we have it almost for free.

\begin{corollary}[Buyalo--Dranishnikov--Schroeder, Bonk--Schramm] \label{cor.bdsbs}
If $X$ is a visual hyperbolic metric space then $X$ admits a quasiisometric embedding into a finite product of binary trees if and only if the boundary of $X$ is a doubling metric space. Further, if the boundary of $X$ is a doubling metric space, then $X$ admits a quasiisometric embedding into a finite product of $n+1$ binary trees, where $n$ is the linearly controlled metric dimension of the boundary. 
\end{corollary}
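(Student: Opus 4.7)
The plan is to observe that this is a straightforward combination of \cite{BDS}'s Theorem 1.2 (the source theorem, which is stated for general visual hyperbolic spaces, not just hyperbolic groups as in \Cref{thm.BDS}) with the Bonk--Schramm characterization of visual hyperbolic spaces with doubling boundary. One direction is already known; the content is to record the reverse implication.

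For the ``if'' direction, I would simply quote \cite[Theorem 1.2]{BDS} in its full generality: if $X$ is visual hyperbolic and its boundary is doubling, then $X$ quasiisometrically embeds into a product of $n+1$ binary trees where $n$ is the linearly controlled metric dimension of the boundary. This also delivers the quantitative statement in the second sentence of the corollary.

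For the ``only if'' direction, suppose $X$ quasiisometrically embeds into $\prod_{q=1}^{Q} \tb$. First I would note that the vertex set of $\prod_{q=1}^{Q} \tb$, with the induced $\ell^1$ product metric, is quasiisometric to a connected graph of uniformly bounded degree (every vertex has at most $3Q$ neighbours, by moving in one coordinate at a time). Hence $X$ quasiisometrically embeds into a uniformly bounded degree connected graph, so by \Cref{prop.bgoss} the space $X$ has bounded growth at some scale. Then I would invoke the Bonk--Schramm theorem \cite[Theorem 9.2]{BS} (or whichever numbering the paper adopts), which states that a visual Gromov hyperbolic space has bounded growth at some scale if and only if its boundary is a doubling metric space. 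This gives that $\partial X$ is doubling, completing the equivalence.

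There is no substantive obstacle here; the entire point of writing this as a corollary is that both ingredients are off-the-shelf, and the only thing one needs to verify is the mild observation that a finite product of binary trees, viewed coarsely, really is a bounded-degree graph so that \Cref{prop.bgoss} applies. If one wanted to be more careful, the proof could be phrased as a short chain of implications: QI embeds into a product of binary trees $\Rightarrow$ QI embeds into a bounded-degree graph $\Rightarrow$ has bounded growth at some scale $\Rightarrow$ (by Bonk--Schramm, using visual hyperbolicity) boundary is doubling $\Rightarrow$ (by \cite[Theorem 1.2]{BDS}) QI embeds into a product of $n+1$ binary trees.
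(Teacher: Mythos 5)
Your proposal is correct and follows essentially the same route as the paper: the backwards implication and the quantitative part are quoted from \cite[Theorem 1.2]{BDS}, while the forwards implication observes that a product of binary trees is (coarsely) a bounded-degree graph, applies \Cref{prop.bgoss} to get bounded growth at some scale, and then invokes \cite[Theorem 9.2]{BS} to conclude the boundary is doubling. Nothing to change.
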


\begin{proof}
The backwards implication of the if and only if, and the \textit{Further} part of the corollary, are exactly \cite[Theorem 1.2]{BDS}. 

So suppose $X$ is a visual hyperbolic metric space that admits a quasiisometric embedding into a finite product of binary trees. Then obviously $X$ admits a quasiisometric embedding into a uniformly bounded degree graph. So \Cref{prop.bgoss} tells us that $X$ has bounded growth on some scale. Hence, \cite[Theorem 9.2]{BS} tells us that the boundary of $X$ is doubling.
\end{proof}

\begin{corollary}[Buyalo--Dranishnikov--Schroeder, Dahmani--Yaman, Bonk--Schramm] \label{cor.X(G)intobinarytrees}
Suppose $H$ is relatively hyperbolic with respect to $\H$. Then $X(G)$ quasiisometrically embeds into a finite product of binary trees if and only if all the subgroups in $\H$ are virtually nilpotent. Further, if all the subgroups in $\H$ are virtually nilpotent then $X(G)$ quasiisometrically embeds into a product of $n+1$ binary trees where $n$ is the linearly controlled metric dimension of the boundary of $X(G)$. 
\end{corollary}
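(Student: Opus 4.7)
The plan is to apply \Cref{cor.bdsbs} to the Bowditch space $X(G)$ and then reinterpret the doubling condition on its boundary in terms of the peripheral subgroups using a result of Dahmani and Yaman.

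First I would verify that $X(G)$ is a visual hyperbolic metric space. Hyperbolicity is exactly the content of \Cref{def.relhyp}. Visuality is standard: $X(G)$ is a proper geodesic space with a natural basepoint (the identity $e \in G \subset X(G)$), and every point of $X(G)$ lies on an infinite geodesic ray from $e$ (inside $G$ one uses infinite geodesics in the Cayley graph, and inside a horoball one uses the vertical rays $\{(v,n) : n \in \N_0\}$ from \Cref{def.horoball}). This places $X(G)$ inside the hypothesis of \Cref{cor.bdsbs}, which immediately yields that $X(G)$ quasiisometrically embeds into a finite product of binary trees if and only if $\partial X(G)$ is doubling, and that when this happens $n+1$ binary trees suffice, where $n$ is the linearly controlled metric dimension of $\partial X(G)$. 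This already gives the quantitative bound in the \textit{Further} part of the statement.

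It then remains to translate the doubling condition on $\partial X(G)$ into a condition on $\H$. The boundary $\partial X(G)$ is just the Gromov boundary of the hyperbolic space $X(G)$, which coincides with the Bowditch boundary of $(G, \H)$. The result of Dahmani and Yaman asserts that, for $G$ relatively hyperbolic with respect to $\H$, this boundary is a doubling metric space if and only if every peripheral subgroup $H \in \H$ has polynomial growth. By Gromov's polynomial growth theorem, a finitely generated group has polynomial growth if and only if it is virtually nilpotent. Combining these two equivalences with \Cref{cor.bdsbs} gives both directions of the corollary with the stated bound.

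The main obstacle is really just bookkeeping: pinning down the precise form of the Dahmani--Yaman criterion (their statement is phrased for the Bowditch boundary, but one needs to know this is bilipschitz-equivalent to the visual boundary of $X(G)$ so that \Cref{cor.bdsbs} applies with the same $n$) and confirming the visuality of $X(G)$. Both are well-known in the relatively hyperbolic literature, so no new argument is required; the corollary truly follows by combining the cited results in sequence.
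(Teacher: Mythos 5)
Your proposal is correct and is essentially the paper's argument: apply \Cref{cor.bdsbs} to the visual hyperbolic space $X(G)$ and convert "boundary doubling" into a condition on the peripherals. The only difference is in packaging — the paper cites Mackay--Sisto (Proposition 4.5 of their paper), who use Dahmani--Yaman to get "peripherals virtually nilpotent $\iff$ $\partial X(G)$ doubling" directly, whereas you route the same equivalence through polynomial growth plus Gromov's theorem; this is an attribution detail, not a mathematical difference.
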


\begin{proof}
Mackay and Sisto \cite[Proposition 4.5]{MS2} use the main result of Dahmani and Yaman's paper \cite{DY} to prove that all the peripheral subgroups are virtually nilpotent if and only if the boundary of $X(G)$ is doubling. It then follows from \Cref{cor.bdsbs} that all the peripheral subgroups are virtually nilpotent if and only if $X(G)$ quasiisometrically embeds into a finite product of binary trees. 

The \textit{Further} part of the corollary follows from \Cref{cor.bdsbs}. 
\end{proof}

Recall from \Cref{ex.binarytree} that $\tb$ denotes the rooted binary tree. 

\begin{corollary} \label{cor.VAbinary}
Suppose $\H = \{H_1, H_2, \dots, H_T\}$, suppose each peripheral subgroup $H_t$ is virtually abelian, and define $R = \max_t R_t$ where $R_t$ is the rank of a finite index abelian subgroup of $H_t$. Then there exists a quasiisometric embedding $\phi': X(G) \rightarrow \prod_{q=1}^Q \tb$ where $Q$ is at most $\max(\asdim(G), R+1) + 1$. We may assume that the image of $\phi'$ is contained in the vertex set of $\prod_{q=1}^Q \tb$. 
\end{corollary}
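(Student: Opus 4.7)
The plan is to reduce to Corollary \ref{cor.X(G)intobinarytrees} and then bound the linearly controlled metric dimension of $\partial X(G)$ in terms of $\asdim(G)$ and $R$. Since each $H_t$ is virtually abelian it is in particular virtually nilpotent, so Corollary \ref{cor.X(G)intobinarytrees} already provides a quasiisometric embedding of $X(G)$ into a product of $n+1$ binary trees, where $n$ is the linearly controlled metric dimension of $\partial X(G)$. Once this embedding is in hand, one may post-compose each coordinate with the nearest-vertex retraction on $\tb$ (which moves each coordinate by at most $\tfrac{1}{2}$) to land the image in the vertex set without spoiling the quasiisometric embedding property. It therefore suffices to verify the inequality
\[
n \leq \max(\asdim(G), R+1).
\]

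To prove this, I would use the canonical decomposition of $\partial X(G)$ into the countable set of parabolic fixed points (one per peripheral coset) together with the conical limit set. A neighbourhood of a parabolic fixed point corresponding to a coset $gH_t$ is quasi-symmetric to a punctured neighbourhood of the boundary of the horoball $\mathcal{H}(g\Gamma(H_t, S \cap H_t))$. Since $H_t$ is virtually $\mathbb{Z}^{R_t}$, this horoball is quasi-isometric to $\mathbb{H}^{R_t+1}$, whose punctured visual boundary is bi-Lipschitz equivalent to $\mathbb{R}^{R_t}$. Thus each parabolic point contributes a local linearly controlled metric dimension bounded by $R_t \leq R$. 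Away from parabolic points, the conical part of $\partial X(G)$ is controlled by the asymptotic geometry of $G$ itself.

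The expected main obstacle is combining these local estimates into the single global bound $n \leq \max(\asdim(G), R+1)$. My plan here is to first bound $\asdim(X(G)) \leq \max(\asdim(G), R+1)$ using a Bell--Dranishnikov style union theorem applied to the decomposition of $X(G)$ into $\Gamma(G,S)$ and the horoballs $\mathcal{H}(g\Gamma(H_t,S\cap H_t))$ (each of asymptotic dimension at most $R_t+1 \leq R+1$, since they are quasi-isometric to $\mathbb{H}^{R_t+1}$), and then to invoke a Buyalo--Lebedeva type inequality to conclude that the linearly controlled metric dimension of $\partial X(G)$ is bounded above by $\asdim(X(G))$, which is permitted because $X(G)$ is a visual hyperbolic space with doubling boundary. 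A cleaner alternative is to extract the estimate directly from Mackay--Sisto's proof of Theorem \ref{thm.MackaySisto}: their tree-embedding machinery produces exactly such a bound on the number of trees needed to embed $X(G)$, and the binary-tree version follows upon replacing their tree-embedding step with Corollary \ref{cor.X(G)intobinarytrees}.
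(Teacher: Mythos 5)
Your reduction is set up correctly and its first half coincides with the paper: virtually abelian implies virtually nilpotent, so \Cref{cor.X(G)intobinarytrees} gives a quasiisometric embedding of $X(G)$ into $n+1$ binary trees with $n$ the linearly controlled metric dimension of $\partial X(G)$, the vertex adjustment is harmless, and everything hinges on $n \leq \max(\asdim(G), R+1)$. The paper gets this inequality by citing Propositions 3.4 and 3.6 of \cite{MS}; your closing ``cleaner alternative'' (extract the bound from Mackay--Sisto's machinery) is in effect that same proof, just stated without identifying the specific results being quoted.

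The self-contained argument you propose in its place has a genuine gap at the step ``invoke a Buyalo--Lebedeva type inequality to conclude that the linearly controlled metric dimension of $\partial X(G)$ is bounded above by $\asdim(X(G))$.'' The inequality that holds for general visual hyperbolic spaces goes the other way (Buyalo: $\asdim X \leq \ell\textrm{-dim}(\partial X) + 1$); the reverse bound of Buyalo--Lebedeva \cite{BL} is proved only for \emph{cobounded} hyperbolic spaces, where self-similarity of the boundary under the group action is essential. The cusped space $X(G)$ is not cobounded and its boundary is not self-similar -- parabolic fixed points and conical limit points have genuinely different local geometry -- so no such inequality is available here, and being visual with doubling boundary does not rescue it. This is precisely the difficulty that Mackay--Sisto's direct covering arguments (their Propositions 3.4 and 3.6) are designed to circumvent: they bound the capacity dimension of $\partial X(G)$ without passing through $\asdim(X(G))$. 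Your preliminary union-theorem step is also shakier than it looks: the ``shallow part'' of $X(G)$ carries the metric induced from $X(G)$, which is heavily distorted relative to the word metric on $G$ (two points of a peripheral coset at word distance $k$ lie at distance roughly $2\log k$ in $X(G)$), and asymptotic dimension is not monotone under shrinking the metric, so $\asdim(G)$ does not bound the asymptotic dimension of that piece for free. To make your route airtight you would either have to prove a capacity-dimension analogue of the Buyalo--Lebedeva bound for cusped spaces (which is essentially redoing Mackay--Sisto's Section 3) or simply cite their propositions as the paper does.
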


\begin{proof}
\Cref{cor.X(G)intobinarytrees} implies that $X(G)$ quasiisometrically embeds into a product of $n+1$ binary trees where $n$ is the linearly controlled metric dimension of the boundary of $X(G)$. Proposition 3.6 and Proposition 3.4 of \cite{MS} combine to prove that $n \leq \max(\asdim(G),R+1)$ as desired.
\end{proof}

\begin{corollary} \label{cor.relhypreg}
Let $G$ be as in \Cref{cor.VAbinary}. Then there exists a regular map $\phi: G \rightarrow \prod_{q=1}^Q \tb$ where $Q \leq \max(\asdim(G),R+1) + 1$. 
\end{corollary}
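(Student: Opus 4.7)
The plan is to obtain $\phi$ simply by restricting the quasiisometric embedding $\phi': X(G) \rightarrow \prod_{q=1}^Q \tb$ provided by \Cref{cor.VAbinary} to the copy of $G$ sitting inside $X(G)$ at the ground floor of the construction. Let $(\lambda,\mu)$ be the quasiisometry constants of $\phi'$, and recall that $\phi'$ may be taken to land in the vertex set of $\prod_{q=1}^Q \tb$. Set $\phi = \phi'|_G$.

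First, I would check that $\phi$ is Lipschitz. The inclusion $G \hookrightarrow X(G)$ is $1$-Lipschitz, since $\Gamma(G,S)$ sits as a subgraph of $X(G)$ and so $d_{X(G)}(g,g') \leq d_G(g,g')$. Therefore $\phi$ is $(\lambda,\mu)$-coarsely Lipschitz. The codomain values are vertices and the domain is the vertex set of $\Gamma(G,S)$, so for adjacent $g, g' \in G$ we have $d(\phi(g),\phi(g')) \leq \lambda + \mu$; concatenating along a geodesic in $\Gamma(G,S)$ gives that $\phi$ is $(\lambda+\mu)$-Lipschitz (exactly as in the proof of \Cref{lem.reglem}).

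Next, I would verify the uniform bound on fibers. Suppose $\phi(g) = \phi(g')$. The lower bound of the quasiisometric embedding $\phi'$ gives
\[d_{X(G)}(g,g') \leq \lambda \cdot 0 + \lambda\mu = \lambda\mu,\]
so $\phi^{-1}(\phi(g)) \subseteq B_{X(G)}(g,\lambda\mu) \cap G$. The key observation is that $G$ acts on $X(G)$ by isometries: left multiplication preserves $\Gamma(G,S)$, permutes the peripheral cosets $gH$ (within each $H$-orbit), and hence permutes the attached horoballs isometrically. Therefore $|B_{X(G)}(g,\lambda\mu) \cap G|$ is independent of $g$. Moreover $X(G)$ is locally finite (at each level of a horoball only finitely many vertices are reachable in bounded metric distance, because horoball edges shrink exponentially with height), so this cardinality is some finite number $N$. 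This $N$ is then a uniform bound on all fibers of $\phi$.

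Together, $\phi: G \rightarrow \prod_{q=1}^{Q} \tb$ is Lipschitz with uniformly bounded fibers, hence regular, with $Q \leq \max(\asdim(G), R+1) + 1$ inherited directly from \Cref{cor.VAbinary}. The only delicate point is the fiber bound, and this reduces entirely to the $G$-equivariant isometric structure of $X(G)$ together with its local finiteness, both of which are immediate from the definitions.
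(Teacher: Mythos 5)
Your proposal is correct and takes essentially the same route as the paper: define $\phi$ as the composition $G \hookrightarrow X(G) \xrightarrow{\phi'} \prod_{q=1}^Q \tb$, get Lipschitzness from the composition, and bound the fibres using the lower quasiisometric inequality for $\phi'$ together with the fact that a ball of bounded radius in $X(G)$ contains a uniformly bounded number of elements of $G$ (which the paper merely asserts and you justify via the isometric $G$-action and finiteness of balls). The one small quibble is your parenthetical reason for finiteness: the exponentially shrinking horoball edges are what threaten it, and the correct point is that a path of length $r$ can only reach depth at most $r$, where it crosses boundedly many edges of the locally finite graphs $\Gamma(H_t, S\cap H_t)$ — but this does not affect the validity of the argument.
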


\begin{proof}
Let $\phi$ be the composition $G \hookrightarrow X(G) \rightarrow \prod_{q=1}^Q \tb$ where the second map is $\phi'$ from \Cref{cor.VAbinary}. By the assumption on $\phi'$, the image of $\phi$ is in the vertex set of $\prod_{q=1}^Q \tb$. 

Since $G \rightarrow X(G)$ is Lipschitz and $\phi': X(G) \rightarrow \prod_{q=1}^Q \tb$ is coarsely Lipschitz we conclude that $\phi$ is Lipschitz. 

Let $v$ be a vertex of $\prod_{q=1}^Q \tb$. Since $\phi': X(G) \rightarrow \prod_{q=1}^Q \tb$ is coarsely Lipschitz, the preimage of $v$ in $X(G)$ is contained in a ball of uniformly bounded radius in $X(G)$. But a ball of uniformly bounded radius in $X(G)$ can contain only a uniformly bounded amount of elements of $G \subset X(G)$. So $\phi^{-1}(v) \subset G$ has a uniformly bounded cardinality. 
\end{proof}

We now state some results on the metric structure of relatively hyperbolic groups. These are \cite[Lemma 4.15]{DS} and \cite[Lemma 1.13, Proposition 1.14 and Lemma 1.15]{SISTO}. 

\begin{proposition}[Dru\textcommabelow{t}u--Sapir] \label{prop.ds}
Peripheral cosets are undistorted in $G$. That is, the natural map $gH \subseteq G \rightarrow g\Gamma(H,S \cap H)$ is a quasiisometry. 
\end{proposition}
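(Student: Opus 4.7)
The plan is to establish the two inequalities of the quasiisometry separately. The upper bound
\[d_G(gh_1,gh_2) \le d_{\Gamma(H, S\cap H)}(h_1,h_2)\]
is immediate, since $S\cap H\subseteq S$ means any edge-path in $\Gamma(H,S\cap H)$ from $h_1$ to $h_2$ is an edge-path in $\Gamma(G,S)$ from $gh_1$ to $gh_2$ of the same length. The substantive content is the reverse inequality, and I would prove it by combining two ingredients.

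First, a \emph{staying-close lemma}: any geodesic $\gamma$ in $\Gamma(G,S)$ whose endpoints lie in a peripheral coset $gH$ stays in a $\nu$-neighbourhood of $gH$, for a uniform constant $\nu = \nu(G,S,\H)$. This is the crux of the argument and is what genuinely uses relative hyperbolicity. I would deduce it from the tree-graded structure of asymptotic cones of $G$: if the staying-close lemma failed, one could pick sequences of endpoints and bad midpoints, rescale, and pass to an asymptotic cone to obtain a geodesic whose endpoints lie in a piece but whose interior leaves that piece, contradicting the geodesic convexity of pieces in a tree-graded space. Equivalently, one can derive it from slim-triangle analysis in the Bowditch space $X(G)$ combined with the logarithmic geometry of horoballs.

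Second, a \emph{local finiteness lemma}: since $\Gamma(G,S)$ is locally finite, the closed ball $B_G(e,R)$ is a finite subset of $G$, and so its intersection with $H$ is a finite subset of $H$ of some finite $\Gamma(H,S\cap H)$-diameter $K = K(R)$. By left-translation invariance of both $d_G$ and $d_{\Gamma(H,S\cap H)}$ under the action of $H$, this gives $d_{\Gamma(H,S\cap H)}(h,h') \le K$ whenever $h,h' \in H$ satisfy $d_G(h,h') \le R$.

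Combining these: let $\gamma = (v_0,v_1,\dots,v_n)$ be a $G$-geodesic from $gh_1$ to $gh_2$, so $n = d_G(gh_1,gh_2)$. By the staying-close lemma each $v_i$ lies within $G$-distance $\nu$ of some $p_i \in gH$, with $p_0 = gh_1$ and $p_n = gh_2$, and hence $d_G(p_i,p_{i+1}) \le 2\nu + 1$ by the triangle inequality. Applying the local finiteness lemma with $R = 2\nu + 1$ gives $d_{\Gamma(H,S\cap H)}(p_i,p_{i+1}) \le K$, and concatenating over $i$ yields
\[d_{\Gamma(H,S\cap H)}(h_1,h_2) \le Kn = K\, d_G(gh_1,gh_2),\]
which is the desired inequality with $\lambda = K$ and $\mu = 0$. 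The main obstacle is clearly the staying-close lemma: the rest is elementary book-keeping, but verifying that geodesics stay near a peripheral coset is essentially the full content of Dru\textcommabelow{t}u--Sapir's argument and requires either the asymptotic cone / tree-graded machinery or an equivalent careful analysis in the hyperbolic Bowditch space.
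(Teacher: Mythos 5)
The paper does not actually prove this proposition: it is quoted from Dru\textcommabelow{t}u--Sapir, cited as \cite[Lemma 4.15]{DS} (together with the Sisto statements that follow it), so there is no in-paper argument to compare yours against; what you have written is an attempted reproof of background. Your reduction itself is the standard one and is sound: the Lipschitz direction is immediate from $S\cap H\subseteq S$; the local finiteness lemma is fine (it uses that $S$ is finite and that $S\cap H$ generates $H$, which is part of the paper's standing assumptions); and, granted a uniform staying-close constant $\nu$, the chaining argument correctly gives $d_{\Gamma(H,S\cap H)}(h_1,h_2)\le K\, d_G(gh_1,gh_2)$.

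The genuine gap is in your justification of the staying-close lemma, which you rightly identify as the crux. As sketched, ``pick bad midpoints, rescale, and obtain in the cone a geodesic whose endpoints lie in a piece but whose interior leaves it, contradicting convexity of pieces'' only handles deviation that is \emph{linear} in the separation of the endpoints. If geodesics $\gamma_n$ with endpoints $x_n,y_n$ in cosets $C_n$ had maximal deviation $D_n\to\infty$ with $D_n/d(x_n,y_n)\to 0$, then rescaling by $d(x_n,y_n)$ makes the deviation vanish in the limit, so the limit geodesic sits inside the (closed) piece and there is no contradiction; while rescaling by $D_n$ pushes the endpoints out of the cone, so the limit is an entire geodesic lying in the $1$-neighbourhood of the piece rather than a segment with endpoints in the piece, and convexity of pieces alone says nothing about that configuration. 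Closing this requires more of the tree-graded machinery of \cite{DS} (for instance, the fact that a connected set disjoint from a piece projects to a single point of that piece, applied to a maximal subsegment of the limit geodesic around the point of maximal deviation), or alternatively one can extract uniform quasiconvexity of peripheral cosets from the quoted Sisto results (\Cref{lem.sisto1}, \Cref{prop.sisto}, \Cref{lem.sisto2}) on geodesics in the coned-off graph and their lifts; your appeal to slim triangles in $X(G)$ plus horoball geometry is likewise only gestured at. Since you explicitly defer this step to Dru\textcommabelow{t}u--Sapir, your proposal should be read as a correct reduction of the proposition to uniform quasiconvexity of peripheral cosets, not as a self-contained proof of the statement.
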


Given $gH \in \G$ and $x \in G$, we define $\pi_{gH}(x) \subseteq gH$ to be the nearest point projection of $x$ onto $gH \subseteq \Gamma(G,S)$, i.e.
\[\pi_{gH}(x) = \{y \in gH : d_G(x,y) = \min_{z \in gH} d_G(x,z)\}\]

\begin{lemma}[Sisto] \label{lem.sisto1}
There exists $M$ such that if $\hat{\gamma}$ is a geodesic in the coned-off graph $\hat{G}$ connecting $x \in G$ to $y \in gH$ then the first point in $\hat{\gamma} \cap gH$ is at distance (taken with respect to the metric on $\Gamma(G,S)$) at most $M$ from $\pi_{gH}(x)$. 
\end{lemma}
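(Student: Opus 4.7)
The plan is to compare the $\hat G$-geodesic $\hat\gamma$ to a natural competitor path in $\hat G$ and then lift everything to the Bowditch space $X(G)$, using its hyperbolicity together with the geometry of horoballs to convert $\hat G$-closeness into $G$-closeness within the coset $gH$. Let $z \in gH$ denote the first point of $\hat\gamma \cap gH$, let $z' := \pi_{gH}(x)$, and set $D := d_G(x, z')$. Concatenating a $G$-geodesic from $x$ to $z'$ with the single cone-edge between $z'$ and $y$ yields a path in $\hat G$ of length $D+1$, so $d_{\hat G}(x,y) \leq D+1$. Since subpaths of $\hat\gamma$ are geodesics, and the portion from $z$ to $y$ has length at most $1$ (a single cone-edge suffices between two points of $gH$), the subpath of $\hat\gamma$ from $x$ to $z$ has $\hat G$-length at most $D$.

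The main step is to work in $X(G)$, which is $\delta$-hyperbolic by relative hyperbolicity. Let $\gamma_0$ be an $X(G)$-geodesic from $x$ to $y$ and let $w \in gH$ be its first ground-level entry into the horoball $\mathcal{H}(g\Gamma(H, S\cap H))$. Because the $X(G)$-nearest-point projection of $x$ onto the (quasi-convex) horoball coarsely equals $z'$, standard hyperbolic-geometry arguments give $d_{X(G)}(w, z') \leq K_0 = K_0(\delta)$. The natural lift $\tilde\gamma$ of $\hat\gamma$ into $X(G)$, obtained by replacing each cone-edge with an up-across-down route through the relevant horoball, is a uniform $X(G)$-quasigeodesic with the same endpoints as $\gamma_0$, so the Morse lemma gives that $\tilde\gamma$ and $\gamma_0$ are within $X(G)$-Hausdorff distance $C(\delta)$; comparing first ground-level entries into $\mathcal{H}(g\Gamma(H, S\cap H))$ yields $d_{X(G)}(z, w) \leq C'(\delta)$.

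Finally, to convert $X(G)$-bounded distance at ground level into $G$-bounded distance, I invoke the horoball metric: two ground-level points $v_1, v_2 \in gH$ satisfy $d_{X(G)}(v_1, v_2) \asymp \log(1 + d_G(v_1, v_2))$, because the cheapest $X(G)$-path between them climbs to height $\asymp \log d_G(v_1, v_2)$, crosses one unit of rescaled horizontal distance, and descends. Hence $d_{X(G)}(z, w) \leq C'(\delta)$ yields $d_G(z, w) \leq e^{C'(\delta) + O(1)}$, and the triangle inequality gives $d_G(z, z') \leq d_G(z, w) + d_G(w, z') \leq M$ for a uniform constant $M$ depending only on $\delta$ and the relative-hyperbolicity constants.

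The main obstacle will be matching the "first entries" between $\hat\gamma$ and its $X(G)$-lift $\tilde\gamma$: in principle $\tilde\gamma$ may traverse cone-apex points (high up in various other horoballs) before first descending to ground level in $\mathcal{H}(g\Gamma(H, S\cap H))$, and one must check that, up to uniform $X(G)$-error, its first such ground-level entry coincides with the vertex $z \in gH$ identified by $\hat\gamma$. Equivalently, one must rule out high-altitude shortcuts in $X(G)$ which could relocate the ground-level entry point by more than bounded $G$-distance; this is essentially the bounded-coset-penetration property of relatively hyperbolic groups and is the technical heart of the argument.
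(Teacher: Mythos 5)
The paper itself gives no proof of this lemma: it is quoted directly from Sisto's work (cited as \cite[Lemma 1.13]{SISTO}), so your proposal has to stand on its own, and as written it has a genuine gap concentrated in the two steps you assert rather than prove. The first is the claim that ``the $X(G)$-nearest-point projection of $x$ onto the horoball coarsely equals $z'=\pi_{gH}(x)$.'' Hyperbolicity of $X(G)$ plus quasiconvexity of horoballs does give the standard fact that the entry point $w$ of an $X(G)$-geodesic is coarsely the $X(G)$-nearest-point projection onto the horoball; it says nothing about $\pi_{gH}(x)$, which is defined via the word metric $d_G$, a metric that is exponentially distorted against $d_{X(G)}$ in peripheral directions. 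Identifying the cusped-space projection with the Cayley-graph nearest-point projection is exactly the kind of two-metric comparison that Sisto's lemmas exist to supply, and it is of essentially the same depth as the statement being proved, so invoking it as ``standard hyperbolic-geometry arguments'' begs the question. The second gap you flag yourself: that the horoball lift of $\hat{\gamma}$ is a uniform quasigeodesic of $X(G)$ and that its first ground-level entry into the horoball over $gH$ agrees with $z$ up to uniform error. You defer this to bounded coset penetration and call it ``the technical heart,'' but you neither state the form of BCP you need nor derive the entry-point stability from it; without these two links the chain $z\approx w\approx z'$ is not established, and they are where all the real content of the lemma lives.

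Two smaller points. The conversion of bounded $d_{X(G)}$-distance into bounded $d_G$-distance between ground-level points is correct, but not for the reason given: the comparison $d_{X(G)}\asymp\log(1+d_G)$ as you justify it only describes paths staying inside one horoball, and the lower bound needs an argument that shortcuts elsewhere cannot help. What you actually need is easier and cleaner: a path in $X(G)$ of length at most $C$ between ground-level points reaches depth at most $C$, hence uses at most $Ce^{C}$ horizontal edges, each moving the ground-level shadow by at most one generator, so $d_G\le Ce^{C}$. Also, the opening computation bounding the $\hat{G}$-length of $\hat{\gamma}$ from $x$ to $z$ by $D$ is never used later. The overall architecture (pass to the hyperbolic space $X(G)$, use quasiconvexity of horoballs and the Morse lemma, convert back to $d_G$) is a reasonable route that can be made to work, but as submitted the proof reduces the lemma to unproven assertions of comparable difficulty.
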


The \textit{lift} of a geodesic $\hat{\gamma}$ in $\hat{G}$ is a path $\gamma$ in $G$ obtained by substituting edges of $\hat{\gamma}$ that go between $g, g' \in kH$ with a geodesic in the subgraph $k\Gamma(H, S \cap H)$ from $g$ to $g'$. 

\begin{proposition}[Sisto] \label{prop.sisto}
There exist $\lambda \geq 1$, $\mu \geq 0$ so that if $\hat{\gamma}$ is a geodesic in $\hat{G}$ then its lifts are $(\lambda,\mu)$-quasigeodesics. 
\end{proposition}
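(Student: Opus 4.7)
The plan is to reduce to the endpoint version of the quasigeodesic condition, i.e.\ to show $\ell_G(\gamma) \leq \lambda \cdot d_G(x,y) + \mu$ for the lift $\gamma$ of an $\hat G$-geodesic $\hat\gamma$ from $x$ to $y$. The two-point version for arbitrary pairs of vertices on $\gamma$ then follows from the endpoint bound, since a subsegment of an $\hat G$-geodesic is itself an $\hat G$-geodesic whose lift is the corresponding subpath of $\gamma$, except in the harmless case when both points lie inside a single peripheral piece $\gamma_i$ of $\gamma$, which is already a $G$-quasigeodesic by \Cref{prop.ds}.

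Decompose $\hat\gamma$ as an alternating sequence of ``Cayley edges'' in $\Gamma(G,S)$ and ``coset edges'' $f_i$, where each $f_i$ joins points $a_i, b_i$ in a common peripheral coset $k_iH_i$, and the lift replaces each $f_i$ with a geodesic $\gamma_i$ in $k_i\Gamma(H_i, S\cap H_i)$ of length comparable to $d_G(a_i,b_i)$ (by \Cref{prop.ds}). The number of Cayley edges is at most $\ell_{\hat G}(\hat\gamma) \leq d_G(x,y)$, so the endpoint bound reduces to showing $\sum_i d_G(a_i,b_i) \leq C \cdot d_G(x,y) + C'$. For each $i$, I would apply \Cref{lem.sisto1} forward from $x$ and backward from $y$ (using that prefixes and suffixes of $\hat G$-geodesics are $\hat G$-geodesics): this places $a_i$ within $M$ of $\pi_{k_iH_i}(x)$ and $b_i$ within $M$ of $\pi_{k_iH_i}(y)$, so that
\[ d_G(a_i,b_i) \leq d_G(\pi_{k_iH_i}(x), \pi_{k_iH_i}(y)) + 2M. \]

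The main obstacle is then to bound $\sum_i d_G(\pi_{k_iH_i}(x), \pi_{k_iH_i}(y))$ linearly in $d_G(x,y)$ without double-counting. My plan is to invoke the standard \emph{bounded projection property} of relatively hyperbolic groups: for distinct peripheral cosets $gH \neq g'H'$, the diameter of $\pi_{gH}(g'H')$ is uniformly bounded, a consequence of hyperbolicity of the Bowditch space $X(G)$ (equivalent to Farb's Bounded Coset Penetration). Picking any $G$-geodesic $\sigma$ from $x$ to $y$, bounded projection together with the fact that $\hat\gamma$ is an $\hat G$-geodesic and hence visits the cosets $k_iH_i$ in a non-backtracking order should imply that the nearest-point projections of $\sigma$ onto the successive $k_iH_i$ occupy essentially disjoint sub-intervals of $\sigma$, each of length at least $d_G(\pi_{k_iH_i}(x), \pi_{k_iH_i}(y))$ minus a bounded error. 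Summing over $i$ then gives the desired linear bound, with the additive error absorbed into the number of cosets, which is itself at most $\ell_{\hat G}(\hat\gamma) \leq d_G(x,y)$. The quantitative tracking of the constants $\lambda,\mu$ through these estimates is the most delicate part, but ultimately depends only on the hyperbolicity constant of $X(G)$, the projection bound, and the constants from \Cref{prop.ds} and \Cref{lem.sisto1}.
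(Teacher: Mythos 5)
The paper itself contains no proof of this proposition: it is imported directly from Sisto's work (cited together with \Cref{lem.sisto1} and \Cref{lem.sisto2}), so your proposal can only be judged on its own merits. Its architecture is the standard and correct one: decompose $\hat{\gamma}$ into Cayley edges and coset edges, count the Cayley edges by $\ell_{\hat{G}}(\hat{\gamma}) = d_{\hat{G}}(x,y) \le d_G(x,y)$, control each peripheral excursion via \Cref{prop.ds} and via \Cref{lem.sisto1} applied to the prefix from $x$ and the reversed suffix from $y$ (this correctly places $a_i$ and $b_i$ within $M$ of $\pi_{k_iH_i}(x)$ and $\pi_{k_iH_i}(y)$, since an $\hat{G}$-geodesic meets a given peripheral coset only in at most two consecutive vertices), and then bound $\sum_i d_G(\pi_{k_iH_i}(x),\pi_{k_iH_i}(y))$ linearly in $d_G(x,y)$.

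Two steps, however, have genuine gaps. First, the reduction to the endpoint bound omits the mixed case in which one point, say $p$, is an interior vertex of a peripheral piece $\gamma_i$ and the other point lies outside that piece: the subpath of $\gamma$ between them is then \emph{not} the lift of a subsegment of $\hat{\gamma}$, and this case is not a formal consequence of what you establish. One can concatenate geodesic pieces in a tree (out along a branch, one further edge, then back through the entire branch and out along a second branch) so that every sub-concatenation between piece endpoints satisfies the endpoint length bound with uniform constants and every piece is geodesic, yet a midpoint of the first piece coincides with a point of the last piece, so the two-point quasigeodesic condition fails; hence extra geometric input is needed. Either observe that the subpath from $p$ to $q$ is the lift of a $(1,4)$-quasigeodesic of $\hat{G}$ and prove the endpoint estimate for lifts of such near-geodesics (\Cref{lem.sisto1} as quoted in this paper is stated only for geodesics), or exclude the backtracking configuration using the uniformly bounded coarse intersection of distinct peripheral cosets. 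Second, the heart of the endpoint estimate --- that the subintervals of a $G$-geodesic $\sigma$ attached to the distinct cosets crossed by $\hat{\gamma}$ are essentially disjoint and each has length at least $d_G(\pi_{k_iH_i}(x),\pi_{k_iH_i}(y))$ minus a constant --- is asserted (``should imply'') rather than argued. This is precisely the upper-bound half of \Cref{thm.relhypdistanceformula}, equivalently a Behrstock/BCP-type ordering argument matching the order in which $\sigma$ penetrates the cosets with the order of the coset edges along $\hat{\gamma}$. Within this paper you could simply cite \Cref{thm.relhypdistanceformula}, which is imported from Sisto independently of the present proposition; if you intend a self-contained proof, that ordering argument is the part that actually has to be written out, and at present it is missing.
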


Let $\bar{B}_r(x)$ denote the closed $r$-ball around some point $x \in G$.

\begin{lemma}[Sisto] \label{lem.sisto2}
There exists $L$ so that if $d_G(\pi_{gH}(x),\pi_{gH}(z)) \geq L$ for some ${gH} \in \G$ and $x,z \in G$ then 
\begin{enumerate}
    \item all $(\lambda,\mu)$-quasigeodesics in $\Gamma(G,S)$ connecting $x$ to $z$ intersect $\bar{B}_R(\pi_{gH}(x))$ and $\bar{B}_R(\pi_{gH}(z))$ where $R = R(\lambda,\mu)$;
    \item all geodesics in $\hat{G}$ connecting $x$ to $z$ contain an edge between two elements of $gH$.
\end{enumerate}
\end{lemma}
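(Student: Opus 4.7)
The plan is to prove (2) first, exploiting the fact that the coned-off graph $\hat{G}$ is hyperbolic (a classical equivalent of relative hyperbolicity), and then to deduce (1) by lifting geodesics from $\hat{G}$ via \Cref{prop.sisto} and applying a stability argument for quasigeodesics in relatively hyperbolic groups.

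For (2), I would argue by contradiction. Since any two vertices of $gH$ are joined by an edge in $\hat{G}$, the hypothesis that the geodesic $\hat{\gamma}$ contains no edge between two elements of $gH$ is equivalent to $\hat{\gamma}$ meeting $gH$ in at most one vertex. Construct a comparison path $\hat{\sigma}$ from $x$ to $z$ in $\hat{G}$ by concatenating a $\hat{G}$-geodesic from $x$ to $\pi_{gH}(x)$, the single edge from $\pi_{gH}(x)$ to $\pi_{gH}(z)$, and a $\hat{G}$-geodesic from $\pi_{gH}(z)$ to $z$. By $\delta$-hyperbolicity of $\hat{G}$, the geodesic $\hat{\gamma}$ lies within $\hat{G}$-Hausdorff distance at most $2\delta$ of $\hat{\sigma}$. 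Now apply \Cref{lem.sisto1} to a $\hat{G}$-geodesic from $x$ to $\pi_{gH}(z)$: its first vertex in $gH$ is $G$-close to $\pi_{gH}(x)$, and symmetrically from the $z$ side. Tracking these entry points along $\hat{\gamma}$ via the $\delta$-fellow-travelling, $\hat{\gamma}$ is forced to traverse an edge between two vertices of $gH$ that are $G$-close to $\pi_{gH}(x)$ and $\pi_{gH}(z)$ respectively, contradicting the one-vertex hypothesis once $d_G(\pi_{gH}(x), \pi_{gH}(z))$ exceeds a constant $L$ depending only on $\delta$ and the constant $M$ of \Cref{lem.sisto1}.

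For (1), take the $L$ produced by (2). Fix a $(\lambda,\mu)$-quasigeodesic $\gamma$ from $x$ to $z$ in $\Gamma(G,S)$ and any geodesic $\hat{\gamma}$ in $\hat{G}$ from $x$ to $z$. By (2), $\hat{\gamma}$ contains an edge $\{a,b\}$ with $a,b \in gH$, and \Cref{lem.sisto1} gives $d_G(a,\pi_{gH}(x)) \leq M$ and $d_G(b,\pi_{gH}(z)) \leq M$. Lift $\hat{\gamma}$ via \Cref{prop.sisto} to a $(\lambda_0,\mu_0)$-quasigeodesic $\gamma_0$ in $G$ passing through $a$ and $b$; the segment of $\gamma_0$ between $a$ and $b$ is a geodesic in $g\Gamma(H,S\cap H)$, which is $G$-quasi-isometrically embedded by \Cref{prop.ds}. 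I would then invoke the relative Morse property: two $(\lambda,\mu)$-quasigeodesics in $G$ with common endpoints synchronously fellow-travel at uniformly bounded distance except when one of them enters a peripheral coset deeply, in which case the other must come within $R(\lambda,\mu)$ of the corresponding entry and exit points. Applied to $\gamma$ and $\gamma_0$, after possibly enlarging $L$ to guarantee the excursion of $\gamma_0$ through $gH$ is sufficiently deep, this forces $\gamma$ to enter the $R$-neighborhoods of both $a$ and $b$, hence the $(R+M)$-neighborhoods of both $\pi_{gH}(x)$ and $\pi_{gH}(z)$.

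The main obstacle is the stability step in (1): in a relatively hyperbolic group, quasigeodesics with matching endpoints need not globally fellow-travel since they can diverge into peripheral cosets, so the statement must be formulated as synchronous fellow-travelling away from peripherals together with control on entry/exit points of deep peripheral excursions. Verifying the precise dependence of the constant $R$ on $(\lambda,\mu)$ is the technical heart of the argument; in practice one would appeal to the relative Morse lemma developed in Sisto's cited paper rather than redo the argument from scratch.
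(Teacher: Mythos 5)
You should first note that the paper does not prove this lemma at all: it is stated as a quotation of \cite[Lemma 1.13, Proposition 1.14, Lemma 1.15]{SISTO}, so there is no in-paper argument to compare against, and your proposal has to stand on its own.

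As it stands, part (2) has a genuine gap at its decisive step. Thin-triangle fellow-travelling only gives that $\hat{\gamma}$ is $2\delta$-close to $\hat{\sigma}$ \emph{in the metric of} $\hat{G}$, and the coset $gH$ has $\hat{G}$-diameter $1$; so being $\hat{G}$-close to $\pi_{gH}(x)$, to $\pi_{gH}(z)$, or to the edge between them carries no information about $G$-distances and, more importantly, does not force $\hat{\gamma}$ to contain even a single vertex of $gH$ — a priori it could skirt the coset through vertices at $\hat{G}$-distance $1$ or $2$ from it. You cannot repair this with \Cref{lem.sisto1}, since that lemma concerns the first point of $\hat{\gamma}\cap gH$ for a geodesic \emph{terminating} in $gH$, i.e.\ it presupposes exactly the nonempty intersection you are trying to establish; applying it to auxiliary geodesics from $x$ to $\pi_{gH}(z)$ and then "tracking entry points along $\hat{\gamma}$" is precisely the missing argument, not a proof of it. The standard route to (2) uses the projection machinery (e.g.\ that any edge of $\hat{G}$ which is not an edge between two elements of $gH$ — whether a Cayley edge or an edge inside another peripheral coset — has $\pi_{gH}$-image of uniformly bounded diameter, i.e.\ bounded coset penetration), combined with hyperbolicity to control the portion of $\hat{\gamma}$ relevant to the crossing; none of this appears in the proposal, so the contradiction is never actually reached.

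For part (1), the structure (lift $\hat{\gamma}$ via \Cref{prop.sisto}, locate $a,b\in gH$ near the projections, then compare $\gamma$ with the lift) is reasonable, but the load-bearing step is the "relative Morse property" you invoke, and you explicitly propose to take it from Sisto's paper. Since that property (quasigeodesics with common endpoints pass uniformly close to the entry and exit points of deep peripheral excursions, with constants depending on $(\lambda,\mu)$) is essentially the cluster of results the lemma is quoted from, this is a citation of the statement rather than an independent proof of it; the dependence of $R$ on $(\lambda,\mu)$, which you flag as the technical heart, is exactly the part left unproved.
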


\subsection{Relatively hyperbolic groups as projection complexes} \label{subsec.relhyppc}

For the necessary background on projection complexes, consult \cite[Section 2]{PART1} or \cite{BBFS}. 

As before, let $G$ be finitely generated with respect to a finite generating set $S$ and relatively hyperbolic with respect to a set of peripheral subgroups $\H$ (where $S \cap H$ generates each $H \in \H$) and let $\G$ be the set of peripheral cosets. 

The set of peripheral cosets $\G$ will be our indexing set, taking the place of $\Y$ in the general projection complex theory. I have chosen to use the notation $\G$ in this context so that there is a clear demarcation between the general theory of projection complexes given in \cite{PART1} and the specific case of relatively hyperbolic groups given here. However, for ease of notation, we will frequently refer to cosets by the notation $X,Y,Z,W$ as in \cite{PART1} or \cite{BBFS}. 

We associate to each $gH \in \G$ the metric space $\C(gH) = g\Gamma(H,S \cap H)$. Given $Y = gH \in \G$ and $x \in \C(X)$, we define, as in the previous subsection, $\pi_Y(x) \subseteq \C(Y)$ to be the nearest point projection of $x$ onto $gH \subseteq \Gamma(G,S)$. 
Then we have the projection data given by
\[\pi_Y(X) = \{\pi_Y(x) : x \in \C(X)\} \]
There is a slight subtlety in the above definitions; the nearest-point projections $\pi_Y$ are determined by the metric in $\Gamma(G,S)$, but we think of them as being subsets of $\C(Y) = g\Gamma(H,S \cap H)$.

Mackay and Sisto \cite{MS} notice that, by applying the work of Sisto \cite{SISTO}, the following result holds.

\begin{theorem}[Mackay--Sisto] \label{thm.p3-p5}
The metric spaces $\C(Y) = g\Gamma(H,S \cap H)$ together with the projection data
\[ \pi_Y: \G \setminus Y \rightarrow \textrm{non-empty subsets of } \C(Y)\]
satisfy axioms (P3) - (P5) for some projection constant $\xi$. 
\end{theorem}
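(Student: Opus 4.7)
The plan is to verify each of the three axioms in turn, leveraging the Sisto-type metric results recorded above as \Cref{lem.sisto1}, \Cref{prop.sisto} and \Cref{lem.sisto2}, in the manner carried out by Mackay and Sisto in \cite{MS}. Choose the projection constant $\xi$ large enough to exceed the constants $M$ and $L$ and the radius $R = R(\lambda, \mu)$ appearing in those lemmas, as well as any quasiconvexity constants implicit in \Cref{prop.sisto}.

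For axiom (P4), which asserts that for any $X, Z \in \G$ only finitely many $Y \in \G$ satisfy $d_{\C(Y)}(\pi_Y(X), \pi_Y(Z)) > \xi$, fix representatives $x \in \C(X)$ and $z \in \C(Z)$. Any offending $Y$ forces, by \Cref{lem.sisto2}(2), every coned-off geodesic from $x$ to $z$ to contain an edge with both endpoints in $Y$. Since such a geodesic in $\hat G$ has finite length and distinct cosets contribute distinct edges of this form, finiteness follows immediately.

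For axioms (P3) and (P5), which demand that large projection distance onto $Y$ controls the projections onto any other coset $W$, the strategy is to lift a $\hat G$-geodesic between $x$ and $z$ via \Cref{prop.sisto} to a $(\lambda, \mu)$-quasigeodesic in $\Gamma(G, S)$; by \Cref{lem.sisto2}(1) this lift enters bounded neighbourhoods of both $\pi_Y(X)$ and $\pi_Y(Z)$ inside $\C(Y)$, and \Cref{lem.sisto1} identifies these first and last points with the nearest-point projections up to error $M$. It follows that the lift fellow-travels $\C(Y)$ for a long segment, and we may replace the portion of the lift before entering $\C(Y)$ and after leaving it with pieces whose endpoints lie in $\C(Y)$. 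Since nearest-point projection onto each $\C(W)$ is coarsely Lipschitz — a standard consequence of \Cref{prop.ds} together with hyperbolicity of $X(G)$ — the coincidence of these pieces in $\C(Y)$ forces $\pi_W(X)$ and $\pi_W(Z)$ to lie within uniform distance in $\C(W)$, and the quantitative version of this bound gives (P5).

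The main obstacle, I expect, is extracting the constant in (P3) uniformly over the auxiliary coset $W$: while \Cref{lem.sisto1} and \Cref{lem.sisto2} handle the two-coset interaction between $Y$ and the geodesic from $x$ to $z$ cleanly, one needs the coarse Lipschitz behaviour of $\pi_W$ with constants independent of $W$, which in turn reduces to the uniform quasigeodesicity of lifts granted by \Cref{prop.sisto}. Once these ingredients are assembled, the verification is routine and follows the template laid down by Mackay--Sisto.
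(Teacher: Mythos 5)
First, a point of comparison: the paper offers no proof of \Cref{thm.p3-p5} at all — it is quoted from Mackay--Sisto, who in turn deduce it from Sisto's projection lemmas — so your attempt stands or falls on its own, and as written it has two genuine gaps. The axioms in question amount to three statements: a uniform bound $\diam_{\C(Y)}\pi_Y(X)\leq\xi$ for $X\neq Y$ (bounded projections), a Behrstock-type inequality, and the finiteness property. You never verify the bounded-projection axiom, and your own finiteness argument silently relies on it: \Cref{lem.sisto2} has the hypothesis $d_G(\pi_{gH}(x),\pi_{gH}(z))\geq L$ for the two chosen \emph{points}, whereas an ``offending'' $Y$ only gives $\diam(\pi_Y(X)\cup\pi_Y(Z))>\xi$ for the \emph{cosets}; without a uniform bound on $\diam\pi_Y(X)$ and $\diam\pi_Y(Z)$ this does not force the two point-projections to be far apart, so \Cref{lem.sisto2}(2) cannot be invoked. (The bounded-projection axiom can in fact be extracted from the recorded lemmas — for instance, for $x,x'\in X$ not both lying in $Y$, the single edge $\{x,x'\}$ of $\hat{G}$ is a coned-off geodesic containing no edge between two elements of $Y$, so the contrapositive of \Cref{lem.sisto2}(2) bounds $d_G(\pi_Y(x),\pi_Y(x'))$ — but this needs to be stated and proved, not omitted.)

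The second gap is that the statement you set out to prove for the remaining axioms is not the Behrstock inequality and is actually false. You claim that when $d_Y(X,Z)$ is large, $\pi_W(X)$ and $\pi_W(Z)$ are uniformly close in $\C(W)$ for \emph{any} other coset $W$. But for $X$ and $Z$ far apart there are typically several cosets $Y_1,Y_2,\dots$ with $d_{Y_i}(X,Z)>\xi$ simultaneously — that is precisely the situation the finiteness axiom quantifies — so largeness of $d_{Y_1}(X,Z)$ cannot force $\pi_{Y_2}(X)$ and $\pi_{Y_2}(Z)$ to coincide coarsely. The genuine axiom compares $d_Y(X,Z)$ with $d_X(Y,Z)$ and $d_Z(X,Y)$: it controls the projections of $Y$ and $Z$ onto $X$ (equivalently, at most one of the three quantities exceeds $\xi$), not the projections of $X$ and $Z$ onto an arbitrary fourth coset. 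Your lifting argument never touches the projection of the coset $Y$ itself onto another coset, so even read charitably it does not prove the required inequality; a correct verification would, for example, use \Cref{lem.sisto1} and \Cref{lem.sisto2} to show that when $d_Y(X,Z)>\xi$ every coned-off geodesic from $Z$ to a point of $X$ enters $Y$ near $\pi_Y(Z)$, and deduce from this that $\pi_X(Z)$ lies uniformly close to $\pi_X(Y)$. Finally, the assertion that nearest-point projection onto $\C(W)$ is coarsely Lipschitz ``as a consequence of \Cref{prop.ds} and hyperbolicity of $X(G)$'' also needs the contracting behaviour of peripheral cosets rather than those two facts alone, but this is secondary to the fact that the target statement in that paragraph is the wrong one.
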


We can now use the general theory of projection complexes to upgrade our projections $\{\pi_Y\}$ so that they now satisfy the \textit{strong} projection axioms with respect to a constant $\theta$, and let us define $\pcg$, $\qtomsg$ and $\tomsg$ for some $K \geq 4\theta$ accordingly.

Suppose $\H = \{H_1, H_2, \dots, H_T\}$. A group element $g \in G$ has $T$ possible images in the quasi-tree of metric spaces $\qtomsg$. Therefore, to properly specify a vertex of $\qtomsg$ we need specify the group element \textit{and} the coset. We write $g \in gH_t$ to mean the copy of $g$ that sits within the coset $gH_t \subset \qtomsg$. Let's define $\iota_t: G \rightarrow \qtomsg$ by $\iota_t(g) = g \in gH_t$. It is not difficult to prove that the embeddings $\iota_t: G \rightarrow \qtomsg$ are the same up to bounded distance; for lack of a better choice, we will always consider the embedding $\iota_1: G \rightarrow \qtomsg$.

\begin{notation} \label{notation.images}
A group element $g \in G$ has a natural image in $\hat{G}$ and $X(G)$. We will denote both of these images in $\hat{G}$ and $X(G)$ by $g$. 
\end{notation}

Sisto \cite{SISTO} proves that the distance in a relatively hyperbolic group can be understood in terms of projections onto peripheral cosets and the distance in the coned-off graph. In order to state Sisto's theorem, we need some notation. 

\begin{notation} \label{not.approx}
Let $A,B,L \in \R$, let $\lambda \geq 1$ and let $\mu \geq 0$. 
\begin{itemize}
    \item We write $A \approx_{\lambda,\mu} B$ if $A/\lambda - \mu \leq B \leq \lambda A + \mu$. If $A$ and $B$ are quantities which depend on a pair of group elements $g$ and $h$ (for example if $A = d_G(g,h)$ and $B = d_X(f(x),g(x))$ for some $f: G \rightarrow X$ and metric space $X$) then we might write $A \approx B$ if $A \approx_{\lambda,\mu} B$ for some $\lambda \geq 1$ and $\mu \geq 0$ which don't depend on $g$ and $h$ (in other words, $\lambda$, $\mu$ are \textit{uniform} constants). 
    \item We define $\{\{A\}\}_L$ to be $0$ if $A \leq L$ and to be $A$ if $A > L$.
\end{itemize}
\end{notation}

Let $Y = gH$ and let $\rho_Y: G \rightarrow Y$ be \textit{some} choice of nearest-point projection onto $Y$.

\begin{theorem}[Sisto] \label{thm.relhypdistanceformula}
There exists $L_0 \in \R$ such that for each $L \geq L_0$ there exist $\lambda, \mu$ such that the following holds. If $g,h \in G$ then 
\[d_G(g,h) \approx_{\lambda,\mu} \sum_{Y \in \G} \{\{ d_G(\rho_Y(g),\rho_Y(h)) \}\}_L + d_{\hat{G}}(g,h) \]
\end{theorem}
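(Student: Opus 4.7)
This is the relatively-hyperbolic analogue of the Masur--Minsky distance formula, and I would prove it by establishing the two inequalities separately, using the results of Sisto already collected in \Cref{prop.sisto}, \Cref{lem.sisto1} and \Cref{lem.sisto2} as the main tools.

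\textbf{The $d_G \gtrsim \text{RHS}$ direction.} The coned-off metric is coarsely dominated by $d_G$ since $\Gamma(G,S)$ is a subgraph of $\hat G$, so $d_{\hat G}(g,h) \leq d_G(g,h)$. For the projection sum, the nearest-point projection $\rho_Y$ is coarsely $1$-Lipschitz, hence each individual term $\{\{d_G(\rho_Y(g),\rho_Y(h))\}\}_L$ is at most $d_G(g,h) + O(1)$, but this is far too weak. The non-trivial input is \Cref{lem.sisto2}(1): whenever $d_G(\rho_Y(g),\rho_Y(h)) \geq L \geq L_0$, any $(\lambda,\mu)$-quasigeodesic from $g$ to $h$ must pass through $\bar B_R(\rho_Y(g))$ and $\bar B_R(\rho_Y(h))$, so it spends at least $d_G(\rho_Y(g),\rho_Y(h)) - 2R$ of its length in a neighbourhood of $Y$. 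Fix a geodesic $\gamma$ from $g$ to $h$ and let $\mathcal R$ denote the set of $Y$ with $d_G(\rho_Y(g),\rho_Y(h)) \geq L$. For two distinct $Y, Z \in \mathcal R$, the Behrstock-type consequence of the strong projection axioms guarantees $d_G(\rho_Y(Z),\rho_Y(g))$ or $d_G(\rho_Y(Z),\rho_Y(h))$ is bounded, which forces the subsegments of $\gamma$ entering $Y$ and entering $Z$ to be essentially disjoint. Ordering the cosets by the time $\gamma$ visits them and telescoping gives $\sum_{Y \in \mathcal R} d_G(\rho_Y(g),\rho_Y(h)) \leq d_G(g,h) + O(1)$, as required.

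\textbf{The $d_G \lesssim \text{RHS}$ direction.} Let $\hat\gamma$ be a geodesic in $\hat G$ from $g$ to $h$ of length $N = d_{\hat G}(g,h)$. By \Cref{prop.sisto}, any lift $\gamma$ of $\hat\gamma$ is a $(\lambda,\mu)$-quasigeodesic in $G$. I would decompose $\gamma$ into its coned edges (each replaced by a geodesic inside a peripheral coset) and its ordinary edges. The ordinary edges contribute at most $N$ to the length. Each coned edge of $\hat\gamma$ corresponds to consecutive vertices of $\gamma$ lying in some coset $Y = gH$, and by \Cref{lem.sisto1} the entry and exit points are within distance $M$ of $\rho_Y(g)$ and $\rho_Y(h)$ respectively, so the length spent inside $Y$ is at most $d_G(\rho_Y(g),\rho_Y(h)) + 2M$. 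Summing gives $\length(\gamma) \leq N + \sum_{Y} d_G(\rho_Y(g),\rho_Y(h)) + 2MN$, where the sum ranges over the at most $N$ cosets visited. Truncating at the threshold $L$ loses at most $LN$, which is absorbed into the $d_{\hat G}(g,h)$ term. Since $\gamma$ is a quasigeodesic, $d_G(g,h) \leq \lambda \cdot \length(\gamma) + \mu$, which yields the desired inequality.

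\textbf{The main obstacle.} The delicate step is the disjointness argument in the $\gtrsim$ direction: one needs to know that distinct "relevant" cosets account for essentially non-overlapping portions of a single geodesic. This is where the strong projection axioms are used in an essential way (they replace the Behrstock inequality of the mapping class group setting), and one must also verify that the constant $L_0$ can be chosen so that the ordering of the cosets along $\gamma$ is well-defined and compatible with the ordering of their projections. Once this bookkeeping is in place, both inequalities follow from the Sisto lemmas above without any further hyperbolicity input.
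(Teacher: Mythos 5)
This statement is imported by the paper from Sisto's work (\cite{SISTO}) and is not proved there, so there is no internal argument to compare yours against; I can only assess the sketch on its own terms. Your overall strategy of proving the two inequalities separately is the standard one and is sound: for the upper bound on $d_G$ you lift a coned-off geodesic using \Cref{prop.sisto} and locate its entry and exit points in each peripheral coset using \Cref{lem.sisto1}, and for the lower bound you force a geodesic through the balls $\bar B_R(\rho_Y(g))$, $\bar B_R(\rho_Y(h))$ using \Cref{lem.sisto2}. Two small inaccuracies in the upper bound: the lift replaces a coned edge by a geodesic in the \emph{intrinsic} metric of the coset subgraph, so you need the undistortion statement \Cref{prop.ds} to convert its length into $d_G(\rho_Y(g),\rho_Y(h)) + O(1)$; and $d_G(g,h)\le\length(\gamma)$ holds for any path from $g$ to $h$, so the quasigeodesic constants are not actually needed at that point. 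In the lower bound, note that \Cref{lem.sisto2} does not say the geodesic stays in a neighbourhood of $Y$, only that it meets the two $R$-balls; this suffices for your length count, but the phrasing should be corrected.

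The genuine gap is the step you yourself flag: the assertion that the subsegments of a fixed geodesic associated to distinct relevant cosets are essentially disjoint and are traversed in a coherent order. As written this is only asserted ("the Behrstock-type consequence \dots forces the subsegments \dots to be essentially disjoint"), yet it is exactly the content of the lower bound: without it the telescoping inequality $\sum_{Y\in\mathcal R} d_G(\rho_Y(g),\rho_Y(h)) \lesssim d_G(g,h)$ does not follow, since a priori many cosets could charge the same portion of the geodesic. To close it you would need to run an ordering argument of the kind the paper carries out (in the coned-off setting) in the Claim inside the proof of \Cref{prop.sp}: put the relevant cosets in the order determined by the projection axioms of \Cref{thm.p3-p5}, use the Behrstock-type inequality together with \Cref{lem.sisto1} and \Cref{lem.sisto2} to show that the geodesic meets the balls $\bar B_R(\rho_Y(g))$, $\bar B_R(\rho_Y(h))$ in that order with boundedly overlapping intervals, and only then telescope; one must also check that $L_0$ can be taken large relative to $R$, $M$ and the projection constant so that the per-coset error $2R$ plus the overlap loss is at most, say, half of $d_G(\rho_Y(g),\rho_Y(h))$. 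So the proposal is a correct plan with the right ingredients, but its decisive step is still open as written.
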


It follows from \Cref{thm.relhypdistanceformula} and \cite[Theorem 6.4]{BBFS} that you may approximate distances in $G$ via the images of group elements in $\qtomsg$, $\hat{G}$ and $X(G)$. See \cite[Theorem 4.1]{MS} for the proofs of these corollaries.   

\begin{corollary} \label{cor.embedding1}
For sufficiently large $K$, the map $G \rightarrow \qtomsg \times \hat{G}$ given by
\[g \mapsto (\iota_1(g), g)\]
is a quasiisometric embedding. 
\end{corollary}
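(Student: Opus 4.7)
The plan is to establish both an upper and a lower bound on distances under the map $g \mapsto (\iota_1(g), g)$, which together give the claimed quasiisometric embedding (using the $\ell^1$ or product metric on $\qtomsg \times \hat{G}$; these are bilipschitz equivalent).

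For the upper bound, the inclusion $G \to \hat{G}$ is $1$-Lipschitz since $\hat{G}$ is built from $\Gamma(G,S)$ by only adding edges. The map $\iota_1 : G \to \qtomsg$ is coarsely Lipschitz because for adjacent $g, h = gs$ the projections $\pi_Y(g)$ and $\pi_Y(h)$ differ by a uniformly bounded amount in each $\C(Y)$, and so the distance formula for the quasi-tree of metric spaces \cite[Theorem 6.4]{BBFS} supplies a uniform bound on $d_{\qtomsg}(\iota_1(g), \iota_1(h))$.

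The substantive step is the lower bound. I would choose $L \geq L_0$ and apply Sisto's distance formula (\Cref{thm.relhypdistanceformula}):
\[
d_G(g,h) \approx \sum_{Y \in \G} \{\{d_G(\rho_Y(g), \rho_Y(h))\}\}_L + d_{\hat{G}}(g,h).
\]
By \Cref{prop.ds}, each peripheral coset $Y$ is undistorted, so the ambient distance $d_G$ and the intrinsic distance $d_{\C(Y)}$ on $Y$ agree up to uniform multiplicative and additive constants. In particular, the projections $\rho_Y$ (used in Sisto's formula) and $\pi_Y$ (used in the projection complex) are both defined by nearest-point projection in $\Gamma(G,S)$ and agree as subsets of $Y$; only the ambient metric differs, and by the previous sentence only by uniform constants. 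For $K$ chosen sufficiently large -- in particular $K \geq 4\theta$ so that the strong projection axioms hold and the BBFS distance formula applies -- one has
\[
d_{\qtomsg}(\iota_1(g), \iota_1(h)) \approx \sum_{Y \in \G} \{\{d_{\C(Y)}(\pi_Y(g), \pi_Y(h))\}\}_K.
\]
Combining the two approximations yields $d_G(g,h) \approx d_{\qtomsg}(\iota_1(g), \iota_1(h)) + d_{\hat{G}}(g,h)$, which is the $\ell^1$ metric on the product $\qtomsg \times \hat{G}$, giving the desired quasiisometric embedding.

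The main obstacle is harmonising the two thresholds $L$ and $K$: one needs the sums $\sum \{\{\cdot\}\}_L$ and $\sum \{\{\cdot\}\}_K$ to be comparable up to uniform constants. This is a standard cutoff-swapping observation, using that any pair of group elements projects with large distance onto only boundedly many peripheral cosets (a consequence of the projection axioms (P3)--(P5) established in \Cref{thm.p3-p5}), so changing the threshold contributes only a uniform additive error per intermediate term and the total error is controlled by $d_{\hat{G}}(g,h)$. Once this calibration is arranged, chaining the two approximations gives the corollary.
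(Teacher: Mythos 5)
Your argument is correct and follows essentially the same route the paper intends: the paper simply cites Sisto's distance formula (\Cref{thm.relhypdistanceformula}) together with the distance formula for the quasi-tree of metric spaces \cite[Theorem 6.4]{BBFS} (referring to \cite[Theorem 4.1]{MS} for details), and your write-up just makes the threshold calibration explicit. One small correction: the control on the number of cosets whose projection exceeds the threshold does not come from (P3)--(P5) alone (those give only finiteness for a fixed pair), but from the fact that each such coset forces a coned-off geodesic to cross it (\Cref{lem.sisto2}), which is precisely why the threshold-swapping error is absorbed by the $d_{\hat{G}}(g,h)$ term as you assert.
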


\begin{corollary} \label{cor.embedding2}
For sufficiently large $K$, the map $G \rightarrow \qtomsg \times X(G)$ given by
\[g \mapsto (\iota_1(g), g)\]
is a quasiisometric embedding.
\end{corollary}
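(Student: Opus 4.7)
The plan is to deduce \Cref{cor.embedding2} directly from \Cref{cor.embedding1} by comparing the coarse geometry of $\hat{G}$ and $X(G)$ as seen from $G$.

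First I would handle the upper (Lipschitz) bound. The map $\iota_1: G \to \qtomsg$ is Lipschitz by the general theory of projection complexes (see \cite{BBFS, PART1}), and the natural inclusion $G \hookrightarrow X(G)$ is $1$-Lipschitz because the level-$0$ copy of each horoball in $X(G)$ is, by construction (\Cref{def.bowditchspace}), identified with the corresponding peripheral coset in $\Gamma(G,S)$, so every edge-path in $\Gamma(G,S)$ is also a path in $X(G)$. Hence the product map $g \mapsto (\iota_1(g), g)$ is coarsely Lipschitz.

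For the lower bound, the key intermediate claim is that $d_{\hat{G}}(g,h) \leq d_{X(G)}(g,h)$ for all $g,h \in G$. Given a geodesic in $X(G)$ from $g$ to $h$, I would decompose it into maximal subsegments that either lie in $\Gamma(G,S)$ or form an \emph{excursion} into a horoball above level~$0$. Each such excursion has length at least $2$ in $X(G)$, since it must include at least one vertical up-edge and one vertical down-edge (each of length $1$ by \Cref{def.horoball}). However, the two endpoints of an excursion lie in a common peripheral coset, so in $\hat{G}$ they are joined by a single coned-off edge of length $1$ (\Cref{def.conedoff}). Replacing each excursion with this coned edge, and keeping the $\Gamma(G,S)$-segments unchanged, produces a path in $\hat{G}$ whose length is at most that of the original geodesic in $X(G)$. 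Combining this with \Cref{cor.embedding1} yields
\[d_G(g,h) \preceq d_{\qtomsg}(\iota_1(g), \iota_1(h)) + d_{\hat{G}}(g,h) \leq d_{\qtomsg}(\iota_1(g), \iota_1(h)) + d_{X(G)}(g,h),\]
which is precisely the desired quasiisometric lower bound.

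I do not anticipate a serious obstacle: the heavy analytic input (Sisto's distance formula \Cref{thm.relhypdistanceformula} and \cite[Theorem 6.4]{BBFS}) is already packaged inside \Cref{cor.embedding1}, and the only new ingredient is the elementary path-replacement argument above. The one point to be careful about is to verify that the decomposition of an $X(G)$-geodesic into $\Gamma(G,S)$-segments and horoball-excursions is well-defined and that each excursion connects two points of the \emph{same} peripheral coset, which follows immediately because a horoball in $X(G)$ is attached only to its own peripheral coset.
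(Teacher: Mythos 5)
Your argument is correct. The paper itself does not spell out a proof — it defers both \Cref{cor.embedding1} and \Cref{cor.embedding2} to \cite[Theorem 4.1]{MS}, where they are obtained from Sisto's distance formula (\Cref{thm.relhypdistanceformula}) together with \cite[Theorem 6.4]{BBFS} — and your route, deducing \Cref{cor.embedding2} from \Cref{cor.embedding1} via the elementary comparison $d_{\hat{G}}(g,h) \leq d_{X(G)}(g,h)$ for $g,h \in G$ (collapse each horoball excursion, which has length at least $2$, to a single coned-off edge of length $1$, keeping the level-$0$ segments), is exactly the comparison between the coned-off graph and the Bowditch space that underlies the cited proof, so it is essentially the intended argument with the details filled in. The only points worth a sentence in a full write-up are that an excursion's two endpoints may coincide (in which case you simply delete it rather than replace it by an edge), and that, since the horizontal edge lengths in the horoballs tend to $0$, it is cleaner to run the path-replacement on a path of length at most $d_{X(G)}(g,h)+\varepsilon$ rather than insisting on an exact geodesic; neither affects the inequality or the conclusion.
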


\section{Diaries} \label{chap.diaries}

In \textit{100 Years of Solitude} a plague of insomnia afflicts the town of Macondo. The most troubling effect of the insomnia plague is that it ultimately leads to memory loss. 

\begin{displayquote}
One day he was looking for the small anvil that he used for laminating metals and he could not remember its name. His father told him: “Stake.” Aureliano wrote the name on a piece of paper that he pasted to the base of the small anvil: \textit{stake}. In that way he was sure of not forgetting it in the future. It did not occur to him that this was the first manifestation of a loss of memory, because the object had a difficult name to remember. But a few days later he discovered that he had trouble remembering almost every object in the laboratory. Then he marked them with their respective names so that all he had to do was read the inscription in order to identify them. When his father told him about his alarm at having forgotten even the most impressive happenings of his childhood, Aureliano explained his method to him, and José Arcadio Buendía put it into practice all through the house and later on imposed it on the whole village. With an inked brush he marked everything with its name: \textit{table, chair, clock, door, wall, bed, pan.} He went to the corral and marked the animals and plants: \textit{cow, goat, pig, hen, cassava, caladium, banana.} Little by little, studying the infinite possibilities of a loss of memory, he realized that the day might come when things would be recognized by their inscriptions but that no one would remember their use. Then he was more explicit. The sign that he hung on the neck of the cow was an exemplary proof of the way in which the inhabitants of Macondo were prepared to fight against loss of memory: \textit{This is the cow. She must be milked every morning so that she will produce milk, and the milk must be boiled in order to be mixed with coffee to make coffee and milk.} Thus they went on living in a reality that was slipping away, momentarily captured by words, but which would escape irremediably when they forgot the values of the written letters. \\
\textit{100 Years of Solitude, Gabriel Garc\'{i}a M\'{a}rquez}
\end{displayquote}

The human effort to record daily events in journals, databases and diaries is an attempt to record as effectively as possible the infinite detail of day-to-day life in finite packets of information. To record everything is a lost cause: you can only hope to record the key features of an event. For example, it is a waste of time to write in your diary \textit{The sun rose today.} because this always happens. Far better to take note of the events that distinguish the day from every other.

\subsection{Diaries}

A \textit{diary}, as defined in this paper, and perhaps in its normal sense as well, is some system of converting infinite quantities of data that occur on days $1,2,3, \dots, i$ into finite diary entries at the end of each day. 

\begin{definition}
A \textit{diary} is a height-preserving and order-preserving map (with respect to some chosen basepoints) from an infinite-valence tree to a uniformly locally finite tree. 
\end{definition}

\begin{remark}
Since a diary is a graph homomorphism, it is always $1$-Lipschitz.
\end{remark}

Throughout this section, we will assume that the infinite-valence trees have some extra structure; we will assume that they are \textit{sentence-trees} as defined in \Cref{def.sentencetree}. So, for the remainder of this section, we suppose that $A$ is a finite alphabet of letters, we suppose that $W$ is the set of finite but non-empty words on $A$ and we suppose that $T_W$ is the associated sentence-tree. We will also assume that the codomain of a diary is a word-tree $T_\Omega$ for some finite set $\Omega$. 

\begin{example}
Let $\last: W \rightarrow A$ be the function which outputs the last letter of a word $w \in W$. Then there exists a diary $\ld: T_W \rightarrow T_{A}$ which just outputs the last letter of every word in $\alpha \in T_W$. More precisely, 
\[\ld(\ol{w_1} \dots \ol{w_i}) = \last(w_1) \dots \last(w_i)\]
For example, $\ld(\ol{abc} \ \ol{bc} \ \ol{aa}) = c c a$.
\end{example}

The term \textit{diary} comes from the following metaphor. Every day, our protagonist Alice receives some sort of input data (it could be an image, or a voice note, or a text message) which may be very long, in the form of a finite word in $W$. Every day, she has only a finite amount of time to record the data in her diary, and there are only finite many letters in the English alphabet, and she can only write at a certain pace, and therefore the daily entry in her diary may be summed up as an element of some finite set $\Omega$. Suppose that on days $1,2, \dots, i$ Alice receives the input data $w_1$, $w_2$, \dots, $w_i \in W$ respectively and in response to this data Alice records $v_1$, $v_2$, \dots, $v_i \in \Omega$ in her diary. We have thus defined a map $D: T_W \rightarrow T_\Omega$ by $D(\ol{w_1} \dots \ol{w_i}) = \ol{v_1} \dots \ol{v_i}$ and it is a diary: it is height-preserving because Alice writes exactly one diary entry per day and it is order-preserving because Alice does not "re-write" diary entries.

We will be interested in understanding whether a given diary $D: T_W \rightarrow T_\Omega$ can successfully distinguish between two sentences $\alpha, \beta \in T_W$ (meaning that $D(\alpha) \neq D(\beta)$). As the following example demonstrates, this theory can be applied to understanding when we can upgrade quasiisometric embeddings into products of infinite-valence trees to quasiisometric embeddings into products of binary trees. The following example explains how the theorems in this section are used to prove \Cref{thm.main}.

\begin{example} \label{ex.diariesms}
Suppose $X$ is a metric space and suppose we have a $(\lambda,\mu)$-quasiisometric embedding $F: X \rightarrow \prod_{q=1}^Q T_W$. Let $P$ be a property of \textit{pairs} of sentences in $T_W$ (or, what is the same thing, $P$ is a subset of the product $T_W \times T_W$) and suppose we have a diary $D: T_W \rightarrow T_\Omega$ for which there exists a constant $M \geq 1$ such that 
\begin{equation} \label{eq.ablb}
d_{T_\Omega}(D(\alpha), D(\beta)) \geq \frac{1}{M} d_{T_W}(\alpha,\beta)
\end{equation}
for all pairs $\alpha$ and $\beta$ satisfying $P$. We are interested in whether the composition
\begin{equation} \label{eq.composition}
X \xrightarrow{F} \prod_{q=1}^Q T_W \xrightarrow{D \times D \times \dots \times D} \prod_{q=1}^Q T_\Omega
\end{equation}
is a quasiisometric embedding. Suppose products are given the $L^1$ metric.

Let $x,z \in X$ and write $F(x) = (\alpha_1, \alpha_2, \dots, \alpha_Q)$ and $F(z) = (\beta_1, \beta_2, \dots, \beta_Q)$. Then $d(F(x),F(z)) = \sum_{q=1}^Q d_{T_W}(\alpha_q,\beta_q)$. Suppose $\mathfrak{q} \in \{1,2,\dots, Q\}$ is such that $d_{T_W}(\alpha_\q,\beta_\q) = \max_q d_{T_W}(\alpha_q,\beta_q)$. Then $d_{T_W}(\alpha_\q,\beta_\q) \geq \frac{1}{Q} d(F(x),F(z))$. 

Suppose we manage to prove that the pair $\alpha_\q, \beta_\q \in T_W$ arising in this manner satisfy the property $P$. Then I claim that \eqref{eq.composition} would be a quasiisometric embedding. The diary $D$ is $1$-Lipschitz and the map $F$ is a quasiisometric embedding so the composition \eqref{eq.composition} is coarsely Lipschitz. For the lower bound of the quasiisometric inequality we have
\[d_{T_\Omega}(D(\alpha_\q), D(\beta_\q)) \geq \frac{1}{M} d_{T_W}(\alpha_\q,\beta_\q) \geq \frac{1}{QM} d(F(x),F(z)) \geq \frac{1}{\lambda QM} d_X(x,z) - \mu \]
where the first lower bound follows from the fact that $\alpha_\q$ and $\beta_\q$ satisfy $P$. As $\Omega$ is finite, $T_\Omega$ is quasiisometric to a binary tree (assuming $\absval{\Omega} > 1$). 
\end{example}

In the remainder of this section, we will provide four criteria $(\aries)$, $(\leo)$, $(\virgo)$ and $(\taurus)$ on pairs of sentences, analogous to the property $P$ in \Cref{ex.diariesms}, which imply that a certain diary $D$ coarsely preserves the metric on those sentences as in \eqref{eq.ablb}.

The ideas behind these criteria stem from the work of Buyalo, Dranishnikov and Schroeder \cite{BDS} on \textit{Alice's Diary}. Alice's Diary allows one to quasiisometrically embed subsets of products of infinite-valence trees into products of uniformly bounded valence trees in certain circumstances. However, the results that Buyalo, Dranishikov and Schroeder prove on Alice's Diary (in particular \cite[Proposition 5.5]{BDS}) are phrased in an \textit{ad hoc} manner. It was my aim to unravel the ideas and theorems which are latent in their work, and to provide easy to use and flexible criteria so that one may find quasiisometric embeddings into products of binary trees in many contexts.

I would also like to express my appreciation of Buyalo, Dranishnikov and Schroeder's paper. They chose to use metaphor to help the reader understand the Alice's Diary function, and it was this metaphor and the questions it raised that sparked my curiosity on the topic. 

\subsection{Finite statistics}

\begin{definition}
A \textit{finite statistic} is a function $\stat: T_W \rightarrow \Omega$ where $\Omega$ is some finite set. If we wish to specify the domain of $\stat$ we say that it is a finite statistic \textit{on $T_W$}. 
\end{definition}

You might think of $\stat(\alpha)$ as being a fact about $\alpha$ that contains a uniformly bounded amount of information. It is a \textit{statistic} in the sense that it is a value which can be calculated from some input data $\alpha$. We can use finite statistics to create diaries as follows. 

\begin{definition}
Suppose we have a finite statistic $\stat: T_W \rightarrow \Omega$ and suppose $T_\Omega$ is the word-tree on $\Omega$. Then the diary $D: T_W \rightarrow T_\Omega$ defined by
\[D(\ol{w_1} \ \ol{w_2} \dots \ol{w_i}) = \stat(\ol{w_1}) \stat(\ol{w_1} \ \ol{w_2}) \dots \stat(\ol{w_1} \ \ol{w_2} \dots \ol{w_i}) \]
is the \textit{diary associated to $\stat: T_W \rightarrow \Omega$}. 
\end{definition}

\begin{example} \label{example.truncdiary}
Let $\kappa \in \N$ be a constant and let $\Omega_\kappa$ denote the set of all words in $W$ of length at most $\kappa$. Consider the map $\trunc_\kappa: T_W \rightarrow \Omega_\kappa$ which is such that $\trunc_\kappa(\ol{w_1} \ \ol{w_2} \dots \ol{w_i})$ is equal to the final $\kappa$ letters of the word $w_i$. Then $\trunc_\kappa$ is a finite statistic on $T_W$. The associated diary simply returns the final $\kappa$ letters of each word in the sentence. If the letters in $A$ are thought of as \textit{events}, and the words in $W$ as \textit{sequences of events that occur on a given day}, then this diary is the one in which Alice records at the end of the day the $\kappa$ most recent events. 
\end{example}

We may now provide our first criterion for when there exists a diary that coarsely preserves a pair of sentences. 

Let $\scalf$ be a finite collection of finite statistics on the same sentence-tree $T_W$ (but the finite statistics are allowed to have different codomains). Let $0 \leq \delta < 1$, $J \in \N$ be constants. Consider the following property, pronounced \textit{Aries}, that a pair of sentences $\alpha$ and $\beta$ in the forms described by \Cref{notation.sentence} might possess.

\begin{definition} \label{def.aries}
We say that $\alpha$, $\beta$ satisfy $\aries(\scalf,\delta,J)$ if there exists some $1 \leq j \leq \delta \min(m,n) + J$ and $\stat \in \scalf$ such that
\[\stat(\ol{u_1} \dots \ol{u_p} \ \ol{u_{p+1}} \dots \ol{u_{p+j}}) \neq \stat(\ol{u_1} \dots \ol{u_p} \ \ol{u_{p+1}'} \dots \ol{u_{p+j}'})\]
\end{definition}

Property ($\aries$) is saying that there exists a \textit{fact} about $\alpha$ and $\beta$ which distinguishes them and which only requires a finite amount of information to state. The purpose of the constants $\delta$ and $J$ is to ensure that this distinguishing fact separates $\alpha$ and $\beta$ sufficiently close to their lowest common ancestor $\ol{u_1} \dots \ol{u_p}$. 

\begin{theorem} \label{thm.aries}
There exists a finite set $\Omega = \Omega(\scalf)$, a diary $D = D(\scalf)$ which is a function $D: T_W \rightarrow T_\Omega$, and a constant $M = M(\delta,J) \geq 1$, such that   
\[\frac{1}{M} d_{T_W}(\alpha,\beta) \leq d_{T_{\Omega}}(D\alpha,D\beta) \leq d_{T_W}(\alpha,\beta)\]
for all $\alpha,\beta$ satisfying $\aries(\scalf,\delta,J)$.
\end{theorem}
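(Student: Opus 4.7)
The plan is to collapse every statistic in $\scalf$ into a single product statistic and to use the diary it induces. Concretely, set $\Omega := \prod_{s \in \scalf} \Omega_s$, which is finite because $\scalf$ is a finite collection of finite statistics, and define $\sigma : T_W \to \Omega$ by $\sigma(\gamma) := (s(\gamma))_{s \in \scalf}$. The diary $D : T_W \to T_\Omega$ associated to $\sigma$ would be the map claimed in the theorem. The upper bound $d_{T_\Omega}(D\alpha, D\beta) \leq d_{T_W}(\alpha, \beta)$ is then immediate, as every diary is $1$-Lipschitz by the remark earlier in the section.

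For the lower bound, suppose $\alpha, \beta$ satisfy $\aries(\scalf, \delta, J)$, written as in \Cref{notation.sentence}, so that $d_{T_W}(\alpha, \beta) = m + n$ and the common prefix has length $p$. Let $j \leq \delta \min(m,n) + J$ and $s_0 \in \scalf$ be witnesses to the $\aries$ property. Then, by construction of $D$, the $(p+j)$-th letters of $D\alpha$ and $D\beta$ are $\sigma(\ol{u_1}\dots\ol{u_{p+j}})$ and $\sigma(\ol{u_1}\dots\ol{u_{p+j}'})$, which disagree in the $s_0$-coordinate by hypothesis; so the lowest common ancestor of $D\alpha$ and $D\beta$ in $T_\Omega$ sits at height at most $p + j - 1$. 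Because $D$ preserves heights, this would give
\[
d_{T_\Omega}(D\alpha, D\beta) \;\geq\; m + n - 2j + 2 \;\geq\; (1-\delta)(m+n) + 2 - 2J,
\]
where the second step uses $\min(m,n) \leq (m+n)/2$.

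The one piece I expect to need care is extracting a single $M = M(\delta, J)$ from the display above. The bound is comfortably linear once $m+n$ is large relative to $J/(1-\delta)$, yielding $M = 2/(1-\delta)$ in that regime. For smaller $m+n$ the right-hand side can go non-positive, and I would handle this case separately by observing that $j \leq \min(m,n)$ is implicit in the $\aries$ property (otherwise $\ol{u_{p+1}}\dots\ol{u_{p+j}}$ is not a subword of $\alpha$), which forces $2j \leq m+n$ and hence $d_{T_\Omega}(D\alpha, D\beta) \geq 2$; since $d_{T_W}(\alpha,\beta) = m+n$ is itself bounded in this regime, the ratio $d_{T_\Omega}/d_{T_W}$ is bounded below by a positive constant depending only on $\delta$ and $J$. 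Taking $M$ to be the worse of the two reciprocals arising from these two regimes produces the uniform constant required by the theorem.
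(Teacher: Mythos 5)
Your proposal is correct and follows essentially the same route as the paper: combine the finitely many statistics into a single product-valued statistic, take its associated diary, use the height of the lowest common ancestor to get $d_{T_\Omega}(D\alpha,D\beta)\geq m+n-2j+2$, and split into the regimes $m+n$ large or small relative to $J/(1-\delta)$ (the paper's threshold is $4J/(1-\delta)$, giving $M=2J/(1-\delta)$). The small-regime observation you flag, namely that $j\leq\min(m,n)$ is implicit in $\aries$ so the distance is at least $2$, is exactly what the paper uses there as well.
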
 

\begin{proof}
Write $\scalf = \{\stat^k: T_W \rightarrow \Omega^k : 1 \leq k \leq K \}$. We begin by combining all the finite statistics in $\scalf$ into a single finite statistic $\bigstat: T_W \rightarrow \Omega$. Set $\Omega = \prod_{k = 1}^K \Omega^k$. We define $\bigstat: T_W \rightarrow \Omega$ by 
\[
\bigstat(\ol{w_1} \dots \ol{w_i}) = 
\begin{pmatrix}
\stat^1(\ol{w_1} \dots \ol{w_i}) \\
\stat^2(\ol{w_1} \dots \ol{w_i}) \\
\vdots \\
\stat^K(\ol{w_1} \dots \ol{w_i})
\end{pmatrix}
\]
Let $D: T_W \rightarrow T_\Omega$ be the diary associated to $\bigstat$. More explicitly 
\[D(\ol{w_1} \dots \ol{w_i}) = \bigstat(\ol{w_1}) \bigstat(\ol{w_1} \  \ol{w_2}) \dots \bigstat(\ol{w_1} \ \ol{w_2} \dots \ol{w_i})\]
Suppose $\alpha,\beta$ satisfy $\aries(\scalf, \delta,J)$. Then there exists $1 \leq j \leq \delta \min(m,n) + J$ and $1 \leq k \leq K$ such that $\stat^k(\ol{u_1} \dots \ol{u_p} \dots \ol{u_{p+j}}) \neq \stat^k(\ol{u_1} \dots \ol{u_p} \dots \ol{u_{p+j}'})$. It follows that $\bigstat(\ol{u_1} \dots \ol{u_p} \dots \ol{u_{p+j}}) \neq \bigstat(\ol{u_1} \dots \ol{u_p} \dots \ol{u_{p+j}'})$ and so 
\[d_{T_\Omega}(D\alpha,D\beta) \geq (p+m) - (p+j-1) + (p+n) - (p+j-1) = m+n - 2j + 2\] 
We have two cases: either $m+n < 4J / (1 - \delta)$ or $m+n \geq 4J / (1 - \delta)$. In the first case, we have
\[d_{T_\Omega}(D\alpha,D\beta) \geq m+n - 2j + 2 \geq 2 = \frac{1 - \delta}{2J} \frac{4J}{1 - \delta} \geq \frac{1 - \delta}{2J} (m+n)\]
In the second case, we have
\[d_{T_\Omega}(D\alpha,D\beta) \geq m+n - 2j + 2 \geq m+n - \delta (m+n) - 2J \geq \frac{1-\delta}{2} (m+n)\]
So if we set $M = 2J / (1 - \delta)$ then we are done. 
\end{proof}

So we may now address the situation described in \Cref{ex.diariesms}: if the sentences $\alpha_\q, \beta_\q \in T_W$ satisfy $\aries(\scalf,\delta,J)$ for some uniform choices of $\scalf$, $\delta$ and $J$, then the diary $D = D(\scalf)$ given by \Cref{thm.aries} will be such that \eqref{eq.composition} is a quasiisometric embedding.  

\begin{definition} \label{def.leo}
We say that $\alpha,\beta \in T_W$ satisfy $\leo(\scalf,J)$ if they satisfy $\aries(\scalf,0,J)$. This property is pronounced \textit{Leo}.
\end{definition}

In the next section, we are going to introduce Buyalo, Dranishnikov and Schroeder's \textit{Alice's Diary}. Afterwards, in \Cref{subsec.ls}, Alice's Diary will be used to create diaries that are more powerful than the ones which are made via finite statistics. 

\subsection{Alice's Diary}

Let us add some more texture to our diary metaphor. We refer to a letter chosen from the finite alphabet $A$ as an \textit{event}. On each day, several events occur in some order (thereby forming a word in $W$), and at the end of each day Alice dedicates $\kappa \in \N$ minutes to diary writing. Alice can write down one letter per minute and so records at most $\kappa$ letters in her diary each day. On each page she writes down only one letter, and so on each day she fills in at most $\kappa$ pages of her diary. The pages of her diary corresponding to a single day form a \textit{chapter}. Alice wants to record the events on days $1,2,3,4,5,6,7, \dots$ to the best of her ability. She has perfect memory of past events but seems to always be paranoid that one day she might forget everything and only her diary will be left behind to remind her of what occurred. 

If $\Omega_\kappa$ is the set of all words of length at most $\kappa$ on the alphabet $A$, then Alice's diary writing process defines a diary $D: T_W \rightarrow T_{\Omega_\kappa}$ (assuming she responds deterministically to possible sequences of events). A vertex in $T_W$ corresponds to some possible sequence of events over the course of several days, and a vertex in $T_{\Omega_\kappa}$ corresponds to a possible sequence of chapters in the diary.

The diary described in \Cref{example.truncdiary} in which Alice simply records the final $\kappa$ events of the day provides an example of this sort of diary. However this truncation diary might be very inefficient if fewer than $\kappa$ events take place on some days. The inefficiency of the truncation diary motivates the definition of \textit{Alice's Diary}. Alice's Diary was invented by Buyalo, Dranishnikov and Schroeder \cite{BDS}. 

\begin{definition}
Alice's Diary $\ad_\kappa: T_W \rightarrow T_{\Omega_\kappa}$ is characterised by the following rule: \textit{Alice always records the most recent unrecorded event first}. 
\end{definition}

So Alice's Diary is a greedy algorithm for creating diaries. An example will help.

\begin{example} \label{diaryexample}
Let $A = \{a,b,c\}$ and $\kappa = 3$. Suppose that on day one, at first event $a$ occurs, then event $b$, then event $a$ again and then event $c$. So the associated word is $abac$. Since $\kappa = 3$, Alice only has time to write down $3$ letters at the end of the day, and so she records $cab$. That's because event $c$ occured most recently, then event $a$, then event $b$. Suppose that on day two, the associated word is $cb$. Then Alice writes down in her diary $bca$. The final $a$ she writes down was the first event of day one. Now suppose that on days three, four and five, the associated events are $accc$, $bcbc$ and $a$ respectively. Then the chapters that Alice records in her diary at the end of days three, four and five are $ccc$, $cbc$, $aba$. In the chapter corresponding to day five, the first page records an event from day five, the second page records an event from day four, and the third page records an event from day three. So we have shown that $\ad_3: T_W \rightarrow T_\Omega$ satisfies $\ad_3(\ol{abac} \ \ol{cb} \ \ol{accc} \ \ol{bcbc} \ \ol{a}) = \ol{cab} \ \ol{bca} \ \ol{ccc} \ \ol{cbc} \ \ol{aba}$.
\end{example}

We would now like to understand how much information $\ad_\kappa$ captures about the original sentence. But first, we want to assume that our sentences $\alpha$ have some further structure. 

\begin{definition} \label{def.starred}
Suppose that our alphabet $A$ contains a special character $\star$. We say that a word $u \in W$ is \textit{starred} if $u = \star w$ for some word $w \in W$ that does not contain the letter $\star$. We say that a sentence $\alpha \in T_W$ is \textit{starred} if all the constituent words of $\alpha$ are starred. Maybe $\star$ corresponds to the event of sunrise. 
\end{definition}

The primary reason to work with starred words is so that you can say the following: if $u$ and $u'$ are distinct starred words then there exist distinct letters $a \in u$ and $a' \in u'$ with the same distance to the end of their words.

In \Cref{lem.diary1}, \Cref{lem.diary2}, \Cref{lem.diary3} and \Cref{thm.diary} that follow, we will always assume that we have two starred sentences $\alpha = \ol{\star u_1} \ \ol{\star u_2} \dots \ol{\star u_m}$ and $\beta = \ol{\star u_1'} \ \ol{\star u_2'} \dots \ol{\star u_n'}$ in $T_W$ and that $\ad_\kappa(\alpha) = \ol{v_1} \ \ol{v_2} \dots \ol{v_m}$ and $\ad_\kappa(\beta) = \ol{v_1'} \ \ol{v_2'} \dots \ol{v_n'}$.

\begin{lemma} \label{lem.diary1}
Suppose the diary chapter $v_i$ has the form $v_i = u \star v$ where $u$ does not contain the letter $\star$. Then $u_i = \ola{u}$. 
\end{lemma}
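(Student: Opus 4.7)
The plan is to unwind the greedy rule defining $\ad_\kappa$ and read off the claim directly. The key observation is that at the moment Alice sits down to write chapter $v_i$, every event occurring on day $i$ (namely the letters of $\star u_i$) is unrecorded, since earlier chapters were written before day $i$ began and therefore can only refer to events in $\star u_1 \ldots \star u_{i-1}$. Hence, by the ``most recent unrecorded event first'' rule, the sequence of letters Alice writes while producing $v_i$ begins with the reversal of $\star u_i$, followed (time permitting) by letters drawn from unrecorded events of days $i-1, i-2, \dots$ in the analogous backward order.

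First I would use this to identify the prefix $u$. Since $u$ contains no $\star$, none of its letters can be the $\star$ that heads $\star u_i$, nor the $\star$ that heads any earlier day. Therefore all $|u|$ letters of $u$ must come from $u_i$ itself, written in reverse starting from the last letter; in particular $|u| \le |u_i|$ and $u$ equals the reversal of the length-$|u|$ suffix of $u_i$.

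Next I would pin down the $\star$ immediately following $u$ in $v_i$. By the greedy rule, this is the $(|u|+1)$-th letter Alice writes, i.e.\ the next unrecorded event going backward from where she stopped. If $|u| < |u_i|$, the next such event would be the $(|u|+1)$-th letter from the end of $u_i$, which lies in $u_i$ and is therefore not $\star$; this contradicts $v_i = u \star v$. Consequently $|u| = |u_i|$, and combining with the previous paragraph gives $u_i = \ola{u}$.

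There is no real obstacle here; the only thing to be careful about is the bookkeeping that no $\star$ from any earlier day can appear in $u$, which follows because Alice must exhaust all of $\star u_i$ before she is allowed to write a letter belonging to day $i-1$, and the only $\star$ encountered before that point is the one heading $\star u_i$ itself.
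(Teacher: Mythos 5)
Your proof is correct and follows essentially the same route as the paper: both unwind the greedy rule to see that chapter $v_i$ must begin with the reversal of $\star u_i$ (or a $\star$-free prefix of it if time runs out), so the presence of a $\star$ after the $\star$-free block $u$ forces day $i$ to be fully recorded and identifies that $\star$ as the one heading $\star u_i$, giving $u_i = \ola{u}$. Your write-up just makes the bookkeeping (why no earlier day's $\star$ can intrude before day $i$ is exhausted) slightly more explicit than the paper does.
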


\begin{proof}
At the end of day $i$, Alice sets out writing her diary. She starts by recording the events of day $i$. If she does not fully record the events of day $i$, then there can be no $\star$ in that day's chapter. Since $v_i = u \star v$, it follows that Alice records day $i$ in its entirety. Since Alice always records the most recent events first, her recording of day $i$ must come at the start of the word $v_i$ and the $\star$ in $\star u_i$ must be the first $\star$ of chapter $i$. It follows that $u_i = \ola{u}$. 
\end{proof}

\begin{lemma} \label{lem.diary2}
Suppose that $\ad_\kappa(\alpha) = \ad_\kappa(\beta)$. If a word $\star u_i$ has length at most $\kappa$ then $u_i' = u_i$. Similarly, if a word $\star u_j'$ has length at most $\kappa$ then $u_j = u_j'$.
\end{lemma}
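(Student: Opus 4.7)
The plan is to reduce the claim directly to \Cref{lem.diary1}. Fix an index $i$ such that $\star u_i$ has length at most $\kappa$. I first observe that on day $i$ Alice has enough time to record every event of day $i$ in the single chapter $v_i$: since she writes the most recent events first, she writes the letters $u_i[|u_i|], u_i[|u_i|-1], \ldots, u_i[1]$ followed by the opening $\star$ of day $i$, using a total of $|u_i|+1 \leq \kappa$ letters, and then (if any time remains) she continues with unrecorded events from earlier days. Consequently $v_i$ has the form
\[v_i = \ola{u_i}\,\star\, v\]
for some (possibly empty) word $v$. Crucially, $\ola{u_i}$ contains no $\star$, because $\alpha$ is starred, so the only $\star$ occurring in $\star u_i$ is the leading one; hence the $\star$ displayed above is genuinely the first $\star$ of $v_i$.

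Now I use the hypothesis $\ad_\kappa(\alpha) = \ad_\kappa(\beta)$, which forces $v_i' = v_i$. In particular, writing $v_i' = u' \star v'$ with $u'$ the (unique) prefix of $v_i'$ containing no $\star$, we must have $u' = \ola{u_i}$. Applying \Cref{lem.diary1} to the sentence $\beta$ and its $i$'th chapter $v_i'$, I conclude
\[u_i' = \ola{u'} = \ola{\ola{u_i}} = u_i,\]
which is exactly the claim. The symmetric statement for $u_j'$ is obtained by exchanging the roles of $\alpha$ and $\beta$ in the argument above.

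I do not expect any genuine obstacle here: the proof is essentially a bookkeeping argument that repackages \Cref{lem.diary1}. The only point that requires a moment of care is verifying that the $\star$ appearing at position $|u_i|+1$ is indeed the \emph{first} $\star$ of $v_i$, so that the decomposition $v_i = u \star v$ required by \Cref{lem.diary1} has $u = \ola{u_i}$; this is handled by the observation that $\alpha$ is starred and therefore $u_i$ itself contains no $\star$.
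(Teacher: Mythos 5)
Your proof is correct and takes essentially the same route as the paper: since $\length(\star u_i) \leq \kappa$ the chapter $v_i$ has the form $\ola{u_i}\star v$, and equality of diaries plus \Cref{lem.diary1} applied to $\beta$ gives $u_i' = u_i$. The only difference is that you spell out explicitly why the displayed $\star$ is the first one in $v_i$, a point the paper leaves implicit.
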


\begin{proof}
Suppose $\star u_i$ has length at most $\kappa$. It follows that $v_i$ is of the form $\ola{u_i} \star v$ for some word $v$. Since $v_i' = v_i$, by \Cref{lem.diary1} we deduce that $u_i' = u_i$. The case when $\star u_j'$ has length at most $\kappa$ follows similarly. 
\end{proof}

\begin{lemma} \label{lem.diary3}
Let $i \leq \min(m,n)$ and suppose $a \in \star u_i$ and $a' \in \star u_i'$ are letters such that 
\begin{enumerate}
    \item $a,a'$ have the same distance from the end of their words;
    \item there exists $i \leq j \leq \min(m,n)$ and $1 \leq k \leq \kappa$ such that event $a$ is recorded on page $k$ of diary chapter $v_j$;
    \item $u_l$ has the same length as $u_l'$ for $i+1 \leq l \leq j$.
\end{enumerate} 
Then $a'$ is recorded on page $k$ of diary chapter $v_j'$ 
\end{lemma}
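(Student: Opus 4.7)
My plan is to reduce the lemma to a Lindley-style recursion controlling the portion of Alice's queue coming from days $\geq i$. The case $j = i$ is immediate: Alice records the initial $\min(\kappa, L_i)$ letters of $\star u_i$ in reverse order at the start of chapter $i$, so the hypotheses determine the page of $a$ from the common distance from the end, and the same formula applies to $a'$. So assume $j > i$, and write $L_l = \length(\star u_l)$, $L_l' = \length(\star u_l')$, and let $r + 1$ be the common distance from the end of $a$ and $a'$. Since Alice writes $\min(\kappa, L_i)$ day-$i$ letters on day $i$ itself, the fact that $a$ survives past day $i$ forces $L_i, L_i' > r \geq \kappa$.

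Let $M_l$ denote the number of letters coming from days $i, i+1, \dots, l$ remaining in the queue at the end of day $l$'s chapter in $\alpha$, with $M_{i-1} := 0$. A case split on whether Alice eats into the part of the backlog coming from days $<i$ on day $l+1$ shows that $M_l$ satisfies the Lindley recursion
\[ M_l = \max(0, M_{l-1} + L_l - \kappa), \]
and the analogous $M_l'$ for $\beta$ obeys the same recursion with $L_l'$ in place of $L_l$. Because day-$i$ letters are recorded in reverse order, the fact that $a$ is unwritten at the end of each day $l \in [i, j-1]$ forces $M_l \geq L_i - r \geq 1$ throughout this range, so the $\max$ is inactive. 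Starting from $M_i = L_i - \kappa$ and $M_i' = L_i' - \kappa$, a short induction on $l$ yields $M_l' - M_l = L_i' - L_i$ for every $l \in [i, j-1]$; the inductive step uses the estimate $M_l' \geq (L_i - r) + (L_i' - L_i) = L_i' - r \geq 1$ to confirm that the $\max$ is inactive in $\beta$ too.

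Ordering the queue at the start of day $j$'s chapter by priority (later day first, then within each day so that the most recent letter appears first) places $a$ at page
\[ k = L_j + M_{j-1} - L_i + r + 1; \]
indeed, the unwritten day-$i$ letters contribute $L_i - N$ to the offset of the day-$i$ block, where $N$ is the number of day-$i$ letters recorded by the end of day $j-1$, while $a$ itself sits at position $r - N + 1$ within that block, so the $N$ terms cancel. The analogous computation in $\beta$ gives $k' = L_j + M_{j-1}' - L_i' + r + 1$, and condition 3 together with the invariant $M_{j-1}' - M_{j-1} = L_i' - L_i$ yields $k' = k$. Two verifications remain: Alice has capacity to reach page $k$ on day $j$ in $\beta$, which reduces to $r + 1 \leq L_i'$ and is automatic from the existence of $a'$; and $a'$ is still unwritten at the end of day $j-1$ in $\beta$, which follows from the easy identity that the number of day-$i$ letters remaining in the queue at the end of day $l$ equals $\min_{l' \in [i, l]} M_{l'}$, combined with the invariant. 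The main subtlety is that Alice's writing is chunked day-by-day rather than executed in a single globally priority-ordered pass, which is why we track the aggregate $M_l$ rather than attempting to count individual letters by priority.
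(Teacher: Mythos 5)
Your proof is correct and rests on the same key observation as the paper's own (very terse) proof: the chapter and page at which Alice records $a$ are determined solely by $r$ (the number of letters after $a$ on day $i$) and the lengths of the words on days $i+1,\dots,j$, because lower-priority letters can never be written before $a$. The paper simply asserts this determinism, whereas you make it explicit via the recursion $M_l=\max(0,M_{l-1}+L_l-\kappa)$, the page formula $k=L_j+M_{j-1}-L_i+r+1$, and the invariant $M_l'-M_l=L_i'-L_i$, which in particular supplies the verification (glossed over in the paper) that $a'$ is still unrecorded at the end of day $j-1$; all of these steps check out.
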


\begin{proof}
Write $\star u_i = vaw$ and $\star u_i' = v'a'w'$. The content of the first condition is that $w$ has the same length as $w'$. The position of $a$ within the diary words $\ol{v_i} \ \ol{v_{i+1}} \dots \ol{v_{j}}$ depends only on the lengths of the words $w, u_{i+1}, \dots, u_j$. Similarly, the position of $a'$ within the diary words $\ol{v_i'} \ \ol{v_{i+1}'} \dots \ol{v_{j}'}$ depends only on the lengths of the words $w', u_{i+1}', \dots, u_j'$. It follows that $a$ and $a'$ appear on the same page of $v_j$ and $v_j'$ respectively. 
\end{proof}

\begin{theorem} \label{thm.diary}
Suppose that $\ad_\kappa(\alpha) = \ad_\kappa(\beta)$. Let $a$ be a letter in the word $\star u_i$ and let $a'$ be a letter in the word $\star u_i'$ and suppose 
\begin{enumerate}
    \item $a,a'$ have the same distance from the end of their words;
    \item The events $a$ and $a'$ are recorded in the diaries of $\alpha$ and $\beta$.
\end{enumerate} 
Then there exists $i \leq j \leq m$ and $1 \leq k \leq \kappa$ such that $a,a'$ appear on page $k$ of chapters $v_j$ and $v_j'$ respectively. Consequently, $a,a'$ correspond to the same element of $A$.
\end{theorem}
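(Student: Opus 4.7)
The plan is to prove a strengthening of the theorem: under the hypothesis $\ad_\kappa(\alpha)=\ad_\kappa(\beta)$, at every position $(l,p)$ in the common diary, the event popped to form the $p$-th letter of chapter $v_l$ in $\ad_\kappa(\alpha)$ and the event popped to form the $p$-th letter of chapter $v_l'$ in $\ad_\kappa(\beta)$ come from the same index $i_0$ and have the same distance from the end of their respective words $\star u_{i_0}$ and $\star u_{i_0}'$. The theorem then follows immediately: if $a$, at distance $d$ from the end of $\star u_i$, is recorded at $(j,k)$ in $\alpha$, the strengthened statement forces the $(j,k)$-th event of $\beta$ also to come from index $i$ at distance $d$ from the end of $\star u_i'$, which is by definition $a'$; and $v_j = v_j'$ then yields $a = a'$.

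Write $L_l := 1 + \length(u_l)$, the length of $\star u_l$. I would prove the strengthened statement by lexicographic induction on $(l,p)$. The base case $(1,1)$ holds because the first pop on either side is the letter at distance $1$ from the end of the first word. For the inductive step at $(l,p)$, first observe that $v_l = v_l'$ together with \Cref{lem.diary1} yields $\min(L_l,\kappa)=\min(L_l',\kappa)$: either both $L_l,L_l'\le\kappa$ and are equal (by matching the position of the first $\star$ of $v_l$, which by \Cref{lem.diary1} is the $\star$ of $\star u_l$), or both strictly exceed $\kappa$ (since $v_l$ contains no $\star$ if and only if $L_l>\kappa$). Because Alice's greedy rule pops the events of $\star u_l$ first, in order of increasing distance from the end, positions $1,2,\dots,\min(L_l,\kappa)$ of $v_l$ record events of index $l$ at distances $1,2,\dots,\min(L_l,\kappa)$ identically on both sides, handling the subcase $p\le\min(L_l,\kappa)$.

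The remaining subcase $p>\min(L_l,\kappa)$, in which the event at $(l,p)$ is popped from the older queue, is where the main obstacle lies. Let $\tau_{l'}$ denote the number of events of index $l'$ popped strictly before position $(l,p)$; by the inductive hypothesis $\tau_{l'}$ agrees on both sides for each $l'<l$. The crux is to verify that the property ``index $l'$ still has leftover events''---i.e., $\tau_{l'}<L_{l'}$ on the $\alpha$ side versus $\tau_{l'}<L_{l'}'$ on the $\beta$ side---is the same on both sides. If instead $\tau_{l'}<L_{l'}$ while $\tau_{l'}=L_{l'}'$, then $\beta$ has already popped the initial $\star$ of $\star u_{l'}'$ at some earlier position, whereas at that same earlier position $\alpha$ has popped the event at distance $L_{l'}'$ from the end of $\star u_{l'}$; because $L_{l'}'<L_{l'}$ and $u_{l'}$ contains no $\star$, this latter event is a non-$\star$ letter, contradicting $\ad_\kappa(\alpha)=\ad_\kappa(\beta)$. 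Hence the largest index $l^*<l$ whose word has leftover events coincides on both sides, so the event at $(l,p)$ is of index $l^*$ at distance $\tau_{l^*}+1$ from the end of $\star u_{l^*}$ (resp.\ $\star u_{l^*}'$), closing the induction.
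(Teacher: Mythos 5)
Your proof is correct, but it takes a genuinely different route from the paper's. You prove a stronger invariant: the hypothesis $\ad_\kappa(\alpha)=\ad_\kappa(\beta)$ forces the entire ``pop schedule'' to coincide, i.e.\ for every page of every chapter, the day-index and the distance-from-the-end of the event recorded there agree for $\alpha$ and $\beta$. Your forward lexicographic induction is sound: the equality $\min(\length(\star u_l),\kappa)=\min(\length(\star u_l'),\kappa)$ does follow from $v_l=v_l'$ via the position of the first $\star$ (essentially \Cref{lem.diary1}), the first $\min$ pages of a chapter are always that day's events in reverse order, and your crux argument correctly rules out one side having exhausted a day that the other has not, since the exhausting side would have recorded a $\star$ at a position where, by the inductive hypothesis, the other side records a non-$\star$ letter of $u_{l'}$ — contradicting equality of the diaries. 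The deduction of the theorem from the invariant is immediate (and in fact you only need hypothesis (2) for $a$; that $a'$ is recorded comes out of the argument). By contrast, the paper works locally: it reduces to the case where $a$ is recorded in a later chapter $v_j$, uses \Cref{lem.diary2} to get $u_j=u_j'$, then runs a downward induction from $j$ to $i+1$ showing $u_l=u_l'$ for the intermediate days, invoking \Cref{lem.diary3} both to derive the contradiction in that induction and to conclude. Your approach bypasses \Cref{lem.diary2} and \Cref{lem.diary3} entirely and isolates a clean determinism statement about Alice's algorithm that could be reused elsewhere; the paper's approach is shorter given its prepared lemmas and only analyses the days between $i$ and $j$.
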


\begin{proof}
Write $\star u_i = vaw$ and $\star u_i' = v'a'w'$. The content of the first condition on $a,a'$ is that $w$ has the same length as $w'$. If $a$ is recorded within $v_i$ then it follows that $aw$ and $a'w'$ both have length at most $\kappa$ and the conclusion clearly holds. So assume that $a$ is recorded within $v_j$ for some $i+1 \leq j \leq m$. Then clearly $\star u_j$ has length at most $\kappa$ and so \Cref{lem.diary2} implies that $u_j = u_j'$. We will prove by induction that $u_l = u_l'$ for $i+1 \leq l \leq j$. So fix $i+1 \leq l \leq j$ and suppose $u_{l+1} = u_{l+1}', u_{l+2} = u_{l+2}', \dots, u_j = u_j'$. Looking for a contradiction, suppose that $u_l \neq u_l'$. Then we can find distinct letters $b \in \star u_l$ and $b' \in \star u_l'$ with the same distance to the end of their words. We also know that $b$ must appear in one of the diary words $v_l$, $v_{l+1}, \dots, v_{j}$ because for Alice to record event $a$ in chapter $v_j$, she must already have recorded event $b$ on some earlier page of the diary. It then follows from \Cref{lem.diary3}, and the fact that the diaries are equal, that $b$ and $b'$ correspond to the same element of $A$, which is a contradiction. Thus $u_l = u_l'$.

So we have shown that $u_l = u_l'$ for all $i+1 \leq l \leq j$. We can now apply \Cref{lem.diary3} again to conclude that $a$ and $a'$ appear on the same page of $v_j = v_j'$ and thus are equal. 
\end{proof}

In my view, the above theorem is the fundamental observation on Alice's Diary. To paraphrase it: if you know two diaries are equal, and if you have two events that are 
\begin{itemize}
    \item recorded in the diary;
    \item occur on the same day;
    \item are in the same position (with respect to the ends of their words);
\end{itemize}
then those two events correspond to the same element of $A$. Taking the contrapositive of \Cref{thm.diary}, we see that distinct, recorded events $a \in u_i$, $a' \in u_i'$ in the same position imply distinct diaries. 

It leads us naturally to trying to understand when we can guarantee that an event is recorded in the diary. We need some further definitions. 

\begin{definition} \label{def.tailsentence}
Suppose we have a sentence $\alpha = \ol{u_1} \ \ol{u_2} \dots \ol{u_m} \in T_W$ and suppose $a$ is some letter in $u_i$. We can write $u_i = vaw$ where $v$ and $w$ are some words on the alphabet $A$. Then the sentence 
\[ \ol{aw} \ \ol{u_{i+1}} \dots \ol{u_m} \]
is the \textit{tail-sentence} of $a$. Similarly, the sentence
\[\ol{u_1} \dots \ol{u_{i-1}} \ \ol{va}\]
is the \textit{head-sentence} of $a$. 
\end{definition}

In terms of our analogy, the tail-sentence of $a$ consists of $a$ and every event that happens after $a$. The head-sentence of $a$ consists of $a$ and every event that happens before $a$. 

\begin{definition} \label{def.AWL}
The \textit{average word length (AWL)} of a sentence $\alpha = \ol{u_1} \ \ol{u_2} \dots \ol{u_m} \in T_W$ is the amount of letters in $u_1 u_2 \dots u_m$ divided by $m$. We will often write $\awl(\alpha)$ to denote the average word length of $\alpha$. 
\end{definition}

\begin{proposition} \label{prop.AWL}
Suppose $\alpha = \ol{u_1} \ \ol{u_2} \dots \ol{u_m} \in T_W$ and let $a$ be some letter in $u_i$. Suppose that the tail-sentence of $a$ has average word length $N \geq 1$. If $\kappa \geq N$ then $a$ is recorded in $\ad_\kappa(\alpha)$.
\end{proposition}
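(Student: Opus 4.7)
The plan is to argue the contrapositive by a greedy counting argument. Suppose $a$ is not recorded in $\ad_\kappa(\alpha)$; I will derive a contradiction from Alice's reverse-chronological recording rule combined with the hypothesis on the average word length.

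First, I would set up the count. Writing $u_i = vaw$ as in \Cref{def.tailsentence}, the tail-sentence of $a$ has $m-i+1$ words, and by hypothesis its total letter count is exactly $N(m-i+1)$. Subtracting $a$ itself, the number of events strictly more recent than $a$ in the entire history equals
\[\length(w) + \sum_{j=i+1}^{m} \length(u_j) = N(m-i+1) - 1.\]
Since $\kappa \geq N$, Alice's total writing capacity on days $i$ through $m$ is $\kappa(m-i+1) \geq N(m-i+1) > N(m-i+1)-1$.

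The heart of the argument is then to show that every single one of Alice's $\kappa(m-i+1)$ writes on days $i, \dots, m$ must land on an event strictly more recent than $a$. On each day $j$ with $i \leq j \leq m$, Alice processes unrecorded events in reverse chronological order starting from day $j$'s most recent event. Since $a$ is unrecorded throughout days $i, \dots, m$ under our assumption, the first time her scan reached $a$ on any such day she would record it, which would be a contradiction; hence on no such day does her scan reach $a$, so every one of her $\kappa$ writes that day is on an event strictly more recent than $a$. Moreover she makes a full $\kappa$ writes on each such day, since if she had exhausted all unrecorded events on some day then $a$ itself would already be recorded.

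Combining, Alice makes $\kappa(m-i+1)$ writes, all distinct (each event is written at most once) and each on an event strictly more recent than $a$; this forces $\kappa(m-i+1) \leq N(m-i+1) - 1$, contradicting $\kappa \geq N$. The one point I would want to verify carefully, and the likely main obstacle, is the claim that in her reverse-chronological scan on day $j \geq i$ Alice must reach $a$ strictly before visiting any event chronologically older than $a$ (such as letters of $v$, or events in days $1, \dots, i-1$). This is immediate from the definition of $\ad_\kappa$ as a greedy reverse-chronological recorder, but it is the crux that makes the counting close up.
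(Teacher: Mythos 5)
Your proposal is correct and is essentially the paper's own argument: both prove the contrapositive by observing that if $a$ is unrecorded then chapters $i$ through $m$ are all full ($\kappa$ writes each) and every write lands on an event strictly more recent than $a$, which forces the tail-sentence to have more than $\kappa(m-i+1)$ letters and contradicts $\kappa \geq N$. The crux you flag (the greedy rule means Alice never records anything older than $a$ while $a$ is unrecorded) is exactly the step the paper uses, so there is no gap.
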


\begin{proof}
We will prove the contrapositive. Suppose that $a$ is not recorded in $\ad_\kappa(\alpha)$. Write $\ad_\kappa(\alpha) = \ol{v_1} \ \ol{v_2} \dots \ol{v_m}$. A chapter only has fewer than $\kappa$ pages when Alice has successfully recorded all past events already. It follows that $v_i$, $v_{i+1}$, \dots, $v_m$ all have $\kappa$ pages. Since Alice would only record an event that happened prior to $a$ if $a$ had already been recorded, we know that every page of the chapters $v_i$, $v_{i+1}$, \dots, $v_m$ corresponds to an event that happened after $a$. It follows that the tail-sentence of $a$ has at least $\kappa(m - i + 1) + 1$ letters. It consists of $m-i+1$ words and hence the average word length of the tail-sentence is greater than $\kappa$.
\end{proof}

\begin{corollary} \label{cor.AWL}
Suppose $\alpha$, $\beta$ are starred sentences in $T_W$. Write
\[\alpha = \ol{u_1} \ \ol{u_2} \dots \ol{u_p} \ \ol{u_{p+1}} \dots \ol{u_{p+m}} \] 
and 
\[\beta = \ol{u_1} \ \ol{u_2} \dots \ol{u_p} \ \ol{u_{p+1}'} \dots \ol{u_{p+n}'}\]
where $u_{p+1} \neq u_{p+1}'$. Write $\ad_\kappa(\alpha) = \ol{v_1} \ \ol{v_2} \ \dots \ \ol{v_{p+m}}$ and $\ad_\kappa(\beta) = \ol{v_1'} \ \ol{v_2'} \ \dots \ \ol{v_{p+n}'}$. Suppose we have letters $a \in u_{p+j}$ and $a' \in u_{p+j}'$ which satisfy the following
\begin{itemize}
    \item $a$ and $a'$ correspond to different elements of the alphabet $A$;
    \item $a$ and $a'$ are at equal distance from the ends of their words;
    \item the AWL of the tail-sentence of $a$ is $N$ and the AWL of the tail-sentence of $a'$ is $N'$;
\end{itemize}
Let $i \in \N_0$ be such that $p+j+i \leq p+\min(m,n)$. If $\kappa \geq N \frac{m-j+1}{i+1}$ and $\kappa \geq N' \frac{n-j+1}{i+1}$ then 
\[d(\ad_\kappa(\alpha),\ad_\kappa(\beta)) \geq d(\alpha,\beta) -2j - 2i \]
\end{corollary}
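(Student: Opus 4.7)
The plan is to compare the depth of the lowest common ancestor of the two diaries in $T_\Omega$ with the depth of $\ol{u_1}\cdots\ol{u_p}$, the LCA of $\alpha$ and $\beta$ in $T_W$. Since each chapter $v_k$ of Alice's diary depends only on the first $k$ input words, the first $p$ chapters of $\ad_\kappa(\alpha)$ and $\ad_\kappa(\beta)$ automatically agree; letting $\ell$ denote the number of \emph{further} chapters that coincide (so that $0 \leq \ell \leq \min(m,n)$), one has $d(\ad_\kappa\alpha,\ad_\kappa\beta) = (m+n) - 2\ell$, whereas $d(\alpha,\beta) = m+n$. The corollary is therefore equivalent to the assertion $\ell \leq j+i$.

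I would prove this by contradiction. Suppose $\ell \geq j+i$ and set $s := p+j+i$, which is at most $p + \min(m,n)$ by the hypothesis on $i$. The truncations $\alpha^{(s)} := \ol{u_1}\cdots\ol{u_s}$ and $\beta^{(s)} := \ol{u_1}\cdots\ol{u_p}\,\ol{u_{p+1}'}\cdots\ol{u_s'}$ are still starred, and because Alice's diary on day $k$ depends only on days $1,\ldots,k$, the first $s$ chapters of $\ad_\kappa(\alpha)$ coincide with $\ad_\kappa(\alpha^{(s)})$, and likewise for $\beta$. The standing assumption $\ell \geq j+i$ therefore gives $\ad_\kappa(\alpha^{(s)}) = \ad_\kappa(\beta^{(s)})$.

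The key calculation is that the tail-sentence of $a$ inside $\alpha^{(s)}$ consists of $i+1$ words whose total letter count is bounded above by that of the full tail-sentence of $a$ in $\alpha$, namely $N(m-j+1)$; hence its average word length is at most $N(m-j+1)/(i+1) \leq \kappa$, and Proposition \ref{prop.AWL} implies that $a$ is recorded in $\ad_\kappa(\alpha^{(s)})$. The symmetric computation, using $\kappa \geq N'(n-j+1)/(i+1)$, shows that $a'$ is recorded in $\ad_\kappa(\beta^{(s)})$. Theorem \ref{thm.diary}, applied to the equal diaries $\ad_\kappa(\alpha^{(s)}) = \ad_\kappa(\beta^{(s)})$ together with the recorded letters $a \in u_{p+j}$ and $a' \in u'_{p+j}$ (which are at equal distance from the ends of their words), then forces $a$ and $a'$ to represent the same element of $A$ --- contradicting the first hypothesis on $a,a'$. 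Hence $\ell \leq j+i-1$ and the desired inequality follows. The main obstacle, and really the only nontrivial step, is the bookkeeping that relates the AWL of the truncated tail to the AWL $N$ of the full tail, which is exactly what the $i+1$ denominator in the hypothesis on $\kappa$ is designed to absorb; everything else is a direct combination of Proposition \ref{prop.AWL} with Theorem \ref{thm.diary}.
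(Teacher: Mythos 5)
Your proposal is correct and follows essentially the same route as the paper: truncate both sentences at $p+j+i$, observe that the truncated tail-sentence of $a$ (resp.\ $a'$) has $i+1$ words and hence AWL at most $N\frac{m-j+1}{i+1} \leq \kappa$ (resp.\ $N'\frac{n-j+1}{i+1} \leq \kappa$), apply \Cref{prop.AWL} to see both letters are recorded, and then invoke \Cref{thm.diary} to conclude the truncated diaries differ, which forces the lowest common ancestor of the full diaries to lie below depth $p+j+i$ and yields the stated distance bound. Your explicit bookkeeping with $\ell$ and the contradiction framing is just a more verbose phrasing of the paper's final sentence, so there is nothing to flag.
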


\begin{proof}
Suppose the amount of letters in the tail-sentence of $a$ is $l$ and the amount of letters in the tail-sentence of $a'$ is $l'$. Then $N = \frac{l}{m-j+1}$ and $N' = \frac{l'}{n-j+1}$. Consider the truncated sentences $\alpha_{p+j+i} = \ol{u_1} \dots \ol{u_{p+j+i}}$ and $\beta_{p+j+i} = \ol{u_1} \dots \ol{u_{p+j+i}'}$. The AWL of the tail-sentence of $a \in \alpha_{p+j+i}$ is at most $\frac{l}{i+1} = N \frac{m-j+1}{i+1}$. So, by \Cref{prop.AWL}, if $\kappa \geq N \frac{m-j+1}{i+1}$ then $a$ is recorded in $\ad_\kappa(\alpha_{p+j+i})$. Similarly, if $\kappa \geq N' \frac{n-j+1}{i+1}$ then $a'$ is recorded in $\ad_\kappa(\beta_{p+j+i})$. It then follows from \Cref{thm.diary} that $\ad_\kappa(\alpha_{p+j+i}) \neq \ad_\kappa(\beta_{p+j+i})$ and the corollary follows.
\end{proof}

\subsection{Linear statistics} \label{subsec.ls}

Intuitively, where a finite statistic is some uniformly bounded quantity that can be calculated from a sentence $\ol{w_1} \ \ol{w_2} \dots \ol{w_i}$, a linear statistic is a quantity that can be calculated from $\ol{w_1} \ \ol{w_2} \dots \ol{w_i}$ that contains a linear amount of information with respect to some variable $c \in \N_0$. 

\begin{definition} \label{def.lstat}
Suppose, as ever, we have a finite alphabet $A$ and an associated sentence-tree $T_W$. Suppose we have another, potentially distinct, finite alphabet $B$ and let $T_B$ be the word-tree on $B$. Given a number $c \in [0,\infty]$ let $\Omega_c$ denote all words on the alphabet $B$ of length at most $c$. So we can identify the sets $\Omega_c$ with subsets of the vertices of $T_B$. A \textit{linear statistic} on $T_W$ consists of a constant $\tau \geq 1$ and a collection of finite statistics $\{\stat_c: T_W \rightarrow \Omega_{\tau c} : c \in \N_0\}$ such that if $c \leq c'$ then $\stat_{c'}(\ol{w_1} \dots \ol{w_i})$ descends from $\stat_{c}(\ol{w_1} \dots \ol{w_i})$ in $T_B$. We will typically denote a linear statistic just by $\stat_c: T_W \rightarrow \Omega_{\tau c}$ as opposed to $\{\stat_c : T_W \rightarrow \Omega_{\tau c} : c \in \N_0\}$. Associated to every linear statistic $\stat_c: T_W \rightarrow \Omega_{\tau c}$ is an \textit{infinite statistic} $\stat_{\infty}: T_W \rightarrow \Omega_\infty$ defined by $\stat_\infty(\ol{w_1} \dots \ol{w_i}) = \lim_{c \rightarrow \infty} \stat_c(\ol{w_1} \dots \ol{w_i})$. 
\end{definition}

So $\stat_{c}(\ol{w_1} \dots \ol{w_i})$ is a collection of facts about the sentence $\ol{w_1} \dots \ol{w_i}$ that contain steadily more information as $c$ increases. Since the image of $\stat_c$ is in $\Omega_{\tau c}$, the amount of information contained in the word $\stat_c(\ol{w_1} \dots \ol{w_i})$ is comparable in size to $c$. If $\stat_{c'}(\ol{w_1} \dots \ol{w_i})$ descends from $\stat_{c}(\ol{w_1} \dots \ol{w_i})$ that means that the former contains at least as much information as the latter. This happens when when $c \leq c'$. 

Let's now consider some examples of linear statistics. 

\begin{example} \label{ex.ltrunc}
Let $B = A$. If we define $\trunc_c(\ol{w_1} \dots \ol{w_i})$ to be the final $\tau c$ letters of the word $u_i$ written backwards (i.e. starting from the final letter) then $\trunc_c: T_W \rightarrow \Omega_{\tau c}$ is a linear statistic. 
\end{example}

\begin{example} \label{ex.oop}
Let $B = A$. Let $l = \length(w_1 w_2 \dots w_i)$. Let $a_1, a_2, \dots, a_l$ denote all the letters in the word $w_1 w_2 \dots w_i$. Let $\sigma \in S_l$ be a permutation which we call the \textit{order of priority}. We can define $\oop(\sigma)_c: T_W \rightarrow \Omega_{\tau c}$ by letting $\oop(\sigma)_c(\ol{w_1} \dots \ol{w_i})$ be the first $\tau c$ letters of the word $a_{\sigma(1)} a_{\sigma(2)} \dots a_{\sigma(l)}$. Then $\oop(\sigma)$ is a linear statistic.
\end{example}

We now come to our second criterion. Let $\scall$ be a finite collection of linear statistics on the same tree $T_W$ (but the linear statistics are allowed to have different codomains). Let $0 \leq \delta < 1$, $J \in \N$, $N > 0$, $\epsilon > 0$ be constants. Consider the property, pronounced \textit{Virgo}, that a pair of vertices $\alpha, \beta \in T_W$ of the form described in \Cref{notation.sentence} might possess. It is somewhat analogous to ($\aries$).

\begin{definition} \label{def.virgo}
We say that $\alpha$, $\beta$ satisfy $\virgo(\scall,\delta,J,N,\epsilon)$ if there exists some $1 \leq j \leq \delta \min(m,n) + J$ and $\stat_c \in \scall$ such that 
    \begin{enumerate}
        \myitem{($\virgo 1$)} \label{condition.virgo1} $\awl(\ol{u_{p+j+1}} \dots \ol{u_{p+m}})$ and $\awl(\ol{u_{p+j+1}'} \dots \ol{u_{p+n}'})$ are both at most $N$;
        \myitem{($\virgo 2$)} \label{condition.virgo2} $\length(u_{p+j}) \geq \epsilon (m+n)$ or $\length(u_{p+j}) \geq \epsilon (m+n)$ or $u_{p+j} \neq u_{p+j}'$;
        \myitem{($\virgo 3$)}\label{condition.virgo3} $\stat_{m+n}(\ol{u_1} \dots \ol{u_p} \ \ol{u_{p+1}} \dots \ol{u_{p+j}}) \neq \stat_{m+n}(\ol{u_1} \dots \ol{u_p} \ \ol{u_{p+1}'} \dots \ol{u_{p+j}'})$.
    \end{enumerate}
\end{definition}

($\virgo$) is saying that there exists some fact about $\alpha$ and $\beta$ which distinguishes them. However, unlike in the case of ($\aries$) and ($\leo$), this fact can now contain an unbounded amount of information, as long as the amount of information is at most linear with respect to $m+n = d_{T_W}(\alpha,\beta)$. The cost of this upgrade is that you very much need the $\awl$ of the tail-sentences to be uniformly bounded.

\begin{theorem} \label{thm.virgo}
There exists a finite set $\Omega = \Omega(\scall,\delta,J,N,\epsilon)$, a diary $D = D(\scall,\delta,J,N,\epsilon)$ which is a function $D: T_W \rightarrow T_\Omega$, and a constant $M = M(\delta,J) \geq 1$, such that 
\[\frac{1}{M} d_{T_W}(\alpha,\beta) \leq d_{T_{\Omega}}(D\alpha,D\beta) \leq d_{T_W}(\alpha,\beta)\]
for all $\alpha,\beta$ satisfying $\virgo(\scall,\delta,J,N,\epsilon)$.
\end{theorem}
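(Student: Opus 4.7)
The plan is to adapt the combine-and-record strategy of \Cref{thm.aries} to the linear setting, where a linear statistic $\stat_c$ can produce words of length up to $\tau c$ and thus cannot be recorded directly in a bounded-valence diary. Instead, I will encode a chunk of the statistic as additional letters appended to each word of the sentence, and then apply Alice's Diary $\ad_\kappa$ to this augmented sentence. The appeal to \Cref{cor.AWL} will convert the bounded-AWL hypothesis \ref{condition.virgo1} together with the distinguishing hypothesis \ref{condition.virgo3} into the required linear lower bound.

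First, combine the finitely many linear statistics in $\scall$ into a single linear statistic $\bigstat_c : T_W \to \Omega_{\tau c}$ on a combined alphabet $B$, using separator symbols and padding to preserve nesting in $c$ and to ensure that distinguishing via any member of $\scall$ forces distinguishing via $\bigstat$. Let $\bigstat_\infty(\ol{u_1} \dots \ol{u_l})$ denote the nested limit in $T_B$, and fix a starred alphabet $A^\sharp \supset A \cup B \cup \{\star, \#, \diamond\}$. Define a height- and order-preserving augmentation $\sharp : T_W \to T_{W^\sharp}$ by setting $\sharp(\alpha) = \ol{u_1^\sharp} \dots \ol{u_m^\sharp}$ with $u_l^\sharp = \star u_l \# b_l$, where $b_l$ is the length-$\lceil (\tau/\epsilon)\length(u_l) \rceil$ prefix of $\bigstat_\infty(\ol{u_1} \dots \ol{u_l})$, padded with the dummy symbol $\diamond$ if $\bigstat_\infty$ is shorter than needed. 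The diary is then $D = \ad_\kappa \circ \sharp$ for $\kappa = \kappa(\tau, \epsilon, N, \delta, J)$ chosen large enough for Alice's Diary to record the augmented tail, whose average word length is bounded by $(1 + \tau/\epsilon)N + O(1)$ under \ref{condition.virgo1}.

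To verify the lower bound for $\alpha, \beta$ satisfying $\virgo(\scall, \delta, J, N, \epsilon)$, split on condition \ref{condition.virgo2}. If $u_{p+j} \neq u_{p+j}'$, the starred words $u_{p+j}^\sharp$ and $u_{p+j}^{\sharp'}$ are distinct and the starred-word property supplies distinguishing letters at equal distance from the ends. If instead $u_{p+j} = u_{p+j}'$ but (without loss of generality) $\length(u_{p+j}) \geq \epsilon(m+n)$, then $\length(b_{p+j}) \geq \tau(m+n)$, so $b_{p+j}$ and $b_{p+j}'$ contain the position $t \leq \tau(m+n)$ at which $\bigstat_{m+n}$ distinguishes the two prefixes; the nesting of the linear statistic in $c$ shows that $\bigstat_\infty$ at the two prefixes also disagrees at position $t$, so $b_{p+j} \neq b_{p+j}'$. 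In both cases \Cref{cor.AWL} applies and gives $d(D\alpha, D\beta) \geq d_{T_W}(\alpha, \beta) - 2j - 2i$ for some $i$ of size $O(\max(m,n)/\kappa)$ coming from the AWL constraint on $\ad_\kappa$; combined with $j \leq \delta \min(m,n) + J$, $\delta < 1$, and a sufficiently large $\kappa$, this yields $\tfrac{1}{M} d_{T_W}(\alpha, \beta)$ for a constant $M = M(\delta, J, N, \tau, \epsilon)$, exactly as at the end of the proof of \Cref{thm.aries}. The upper bound $d(D\alpha, D\beta) \leq d_{T_W}(\alpha, \beta)$ is automatic since $D$ is a diary and hence $1$-Lipschitz.

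The main obstacle is coordinating the chunk sizes $\length(b_l)$ with $\kappa$: the chunks must be large enough for $b_{p+j}$ to contain the potentially far-out distinguishing position $t \leq \tau(m+n)$, yet controlled enough that the augmented tail retains bounded AWL so that $\ad_\kappa$ behaves well via \Cref{cor.AWL}. Condition \ref{condition.virgo2} is precisely what makes this balance possible: either the distinguishing is already present in the underlying words, or the diverging word $u_{p+j}$ is long enough that $b_{p+j}$ can absorb the required linear number of positions of $\bigstat_{m+n}$ locally without inflating the AWL of the surrounding tail.
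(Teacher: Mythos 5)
Your overall architecture is the same as the paper's: combine the linear statistics, append their values to the words of the sentence, compose with Alice's Diary $\ad_\kappa$, and conclude via \Cref{cor.AWL} using \ref{condition.virgo1}. But there is a genuine gap in how you control \emph{where} the distinguishing letters sit inside the augmented words, and this is precisely the point the hypothesis $\virgo$ does not hand you for free. Nothing in $\virgo(\scall,\delta,J,N,\epsilon)$ bounds $\length(u_{p+j})$ itself: \ref{condition.virgo1} only constrains the words strictly after index $p+j$, and \ref{condition.virgo2} is a disjunction, so you can perfectly well have $u_{p+j}\neq u_{p+j}'$ with $\length(u_{p+j})$ enormously larger than $m+n$ (this situation actually occurs in the application, where a word can be a huge power $a_{t,r}^q$). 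In your first branch you invoke the starred-word property to get distinguishing letters at equal distance from the ends, but that distance is uncontrolled: if the two words first differ far from their ends, the tail-sentence of the distinguishing letter contains up to $\length(u_{p+j}^\sharp)\approx(1+\tau/\epsilon)\length(u_{p+j})$ letters from day $p+j$ alone, its average word length is unbounded in terms of $N$, $m+n$, and \Cref{cor.AWL} gives nothing for any fixed $\kappa$. The same problem defeats your second branch: because you append the statistic as a \emph{forward} prefix $b_{p+j}$ of $\bigstat_\infty$ of length $\lceil(\tau/\epsilon)\length(u_{p+j})\rceil$, the disagreement at position $t\leq\tau(m+n)$ sits at distance roughly $(\tau/\epsilon)\length(u_{p+j})-t$ from the \emph{end} of $u_{p+j}^\sharp$, again unbounded in $m+n$ when $u_{p+j}$ is long; yet \Cref{thm.diary} and \Cref{cor.AWL} are exactly about letters measured from the ends of their words, since that is what Alice records first.

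The paper resolves both points by two devices you are missing. First, it attaches the statistic not as a forward prefix but as $\ola{\norm_{\omega\length(w_i)}(\stat_\infty(\cdots))}$, i.e.\ normalised to length exactly $\omega\length(w_i)$ and written \emph{backwards}, interleaved with $(w_i)^\omega$ on a product alphabet; then whenever $\length(u_{p+j})\geq\epsilon(m+n)$ the statistic disagreement guaranteed by \ref{condition.virgo3} lies within distance $\tau(m+n)$ of the end of the augmented word, no matter how long $u_{p+j}$ is (with the length-mismatch case handled by taking the initial $\bigstar$). Second, it only ever exploits the raw disagreement $u_{p+j}\neq u_{p+j}'$ in the case where \emph{both} $\length(u_{p+j})$ and $\length(u_{p+j}')$ are at most $\epsilon(m+n)$, so that the distinguishing letters are automatically within $(\tau+\epsilon)(m+n)$ of the ends; when either word is long it switches to the statistic instead. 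Without reorganising your case split along these lines and reversing (or otherwise end-anchoring) the appended statistic, the appeal to \Cref{cor.AWL} fails, and the subsequent arithmetic (including the small-$(m+n)$ regime, which the paper treats separately as Case 1b by noting the letter is within $\kappa$ of the end) cannot be carried out with a $\kappa$ and $M$ depending only on the stated data.
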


The theorem is powerful for this reason: when $m+n = d_{T_W}(\alpha,\beta)$ is very large, $\stat_{m+n}(\ol{u_1} \dots \ol{u_p} \ \ol{u_{p+1}} \dots \ol{u_{p+j}})$ and $\stat_{m+n}(\ol{u_1} \dots \ol{u_p} \ \ol{u_{p+1}'} \dots \ol{u_{p+j}'})$ contain vast amounts of information about our sentences and so \ref{condition.virgo3} is more likely to hold. 

\begin{proof}[Proof of \Cref{thm.virgo}]
Now, 
\begin{itemize}
\item write $\scall = \{\stat_c^k : 1 \leq k \leq K\}$;
    \item let $\tau^k \geq 1$ be the constant associated to the linear statistic $\stat_c^k$;
    \item let $B^k$ denote the finite alphabet associated to the linear statistic $\stat^k_c$;
    \item let $\stat^k_\infty$ denote the infinite statistic associated to $\stat^k_c$;
    \item let $\tau = \max_k \tau^k$;
    \item let $\omega$ be the smallest natural number which is at least $\tau / \epsilon$;
    \item let $\star$ and $\bigstar$ be two special letters.
\end{itemize} 
Let $u = b_1 b_2 b_3 \dots$ be a word on some alphabet, and let $l = \length(u)$. For $r \in \N$, we define
\[\norm_r(u) = 
\begin{cases}
b_1 b_2 \dots b_r & r \leq l \\ 
b_1 b_2 \dots b_l (\star)^{r-l} & r > l
\end{cases}
\]
So $\norm_r(u)$ is the same as $u$ except it has been either truncated or otherwise extended by $\star$'s so that it has exactly $r$ letters (the word has been "normalised"). Recall that $\ola{u}$ is the word $u$ written backwards.

If $\ol{w_1} \dots \ol{w_i}$ is an element of $T_W$ then we define a function $\widehat{\stat_\infty^k}$ with domain $T_W$ by
\[\widehat{\stat_\infty^k}(\ol{w_1} \dots \ol{w_i}) = \ola{\norm_{\omega \length(w_i)}(\stat_\infty^k(\ol{w_1} \dots \ol{w_i}))}\]
So $\widehat{\stat_\infty^k}(\ol{w_1} \dots \ol{w_i})$ is a word on the alphabet $B^k \cup \star$ of length exactly $\omega \length(w_i)$. Let $W'$ denote the set of finite words on the finite alphabet 
\[A' = \bigstar \cup \left(A \times \prod_{k = 1}^K (B^k \cup \star) \right)\]
Then we have a map $I: T_W \rightarrow T_{W'}$ given by 
\[I(\ol{w_1}\dots\ol{w_i}) = 
\ol{
\bigstar
\begin{pmatrix}
(w_1)^\omega \\ 
\widehat{\stat_\infty^1}(\ol{w_1}) \\
\widehat{\stat_\infty^2}(\ol{w_1}) \\
\vdots \\
\widehat{\stat_\infty^K}(\ol{w_1})
\end{pmatrix}
}
\
\ol{
\bigstar
\begin{pmatrix}
(w_2)^\omega \\ 
\widehat{\stat_\infty^1}(\ol{w_1} \ \ol{w_2}) \\
\widehat{\stat_\infty^2}(\ol{w_1} \ \ol{w_2}) \\
\vdots \\
\widehat{\stat_\infty^K}(\ol{w_1} \ \ol{w_2})
\end{pmatrix}
}
\dots
\ol{
\bigstar
\begin{pmatrix}
(w_i)^\omega \\ 
\widehat{\stat_\infty^1}(\ol{w_1} \ \ol{w_2} \dots \ol{w_i}) \\
\widehat{\stat_\infty^2}(\ol{w_1} \ \ol{w_2} \dots \ol{w_i}) \\
\vdots \\
\widehat{\stat_\infty^K}(\ol{w_1} \ \ol{w_2} \dots \ol{w_i})
\end{pmatrix}
}
\]
It can be checked that $I$ is an isometric embedding: evidently it is height-preserving and order-preserving, and it is injective since if $I(\ol{w_1} \dots \ol{w_i}) = I(\ol{w_1'} \dots \ol{w_i'})$ then $(w_l)^\omega = (w_l')^\omega$ for all $1 \leq l \leq i$ and so $w_l = w_l'$ for all $1 \leq l \leq i$. 

Let
\[U = \frac{12 \tau J}{1 - \delta} + \omega N + 1 \quad \textrm{and} \quad V = \frac{12(\tau + \epsilon)J}{1 - \delta} + \omega N + 1\]
and let $\kappa$ be the smallest natural number that is larger than $\frac{16U}{1 - \delta}$ and $\frac{64J\tau}{1-\delta}$ and $\frac{16V}{1 - \delta}$ and $\frac{64J(\tau+\epsilon)}{1-\delta}$. Let $\ad_\kappa: T_{W'} \rightarrow T_{\Omega_\kappa}$ be Alice's Diary for the value of $\kappa$ given above. Let $D = \ad_\kappa \circ I$. The map $D: T_W \rightarrow T_{\Omega_\kappa}$ is a diary since it is height-preserving and order-preserving. Suppose $\alpha, \beta \in T_W$ satisfy $\virgo(\scall,\delta,J,N,\epsilon)$, and suppose, as ever, that they have the form given by \Cref{notation.sentence}. We can assume that 
\begin{equation} \label{eq.mnub}
m \leq 2 \min(m,n) \quad \textrm{and} \quad n \leq 2 \min(m,n) 
\end{equation}
otherwise, we would have $\absval{m - n} \geq (m+n)/3$ and so $d(D\alpha, D\beta) \geq (m+n)/3$ and we would be done by choosing $M(\delta,J) \geq 3$. Let $1 \leq j \leq \delta \min(m,n) + J$ and $\stat^k_c \in \scall$ be those given by condition $\virgo(\scall,\delta,J,N,\epsilon)$. We have three cases: $\length(u_{p+j}) \geq \epsilon (m+n)$ or $\length(u_{p+j}') \geq \epsilon (m+n)$ or $u_{p+j} \neq u_{p+j}'$. \\

\noindent \textbf{Case 1:} We suppose first that $\length(u_{p+j}) \geq \epsilon (m+n)$. 

\begin{claim*}
Either (1) or (2) below occurs. 
\begin{enumerate}
    \item There exist letters 
    \[ b \in \widehat{\textup{stat}_\infty^k}(\ol{u_1} \dots \ol{u_p} \dots \ol{u_{p+j}}) \]
    and
    \[b' \in \widehat{\textup{stat}_\infty^k}(\ol{u_1} \dots \ol{u_p} \dots \ol{u_{p+j}'}) \]
    which are distinct and at the same distance $d \leq \tau(m+n)$ from the \textit{end} of their words; \\
    \item $\length(\widehat{\textup{stat}_\infty^k}(\ol{u_1} \dots \ol{u_p} \dots \ol{u_{p+j}})) \geq \tau(m+n)$ but $\length(\widehat{\textup{stat}_\infty^k}(\ol{u_1} \dots \ol{u_p} \dots \ol{u_{p+j}'})) < \tau(m+n)$.
\end{enumerate}
\end{claim*}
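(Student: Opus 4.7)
The plan is to extract the length bound forcing alternative~(2) from the hypothesis $\length(u_{p+j}) \geq \epsilon(m+n)$, and then use ($\virgo 3$) together with the descent axiom of a linear statistic to locate a distinguishing position within the first $\tau(m+n)$ letters of the two pre-reversal normalised words. Write $\alpha_j = \ol{u_1}\dots\ol{u_{p+j}}$, $\beta_j = \ol{u_1}\dots\ol{u_p}\ \ol{u_{p+1}'}\dots\ol{u_{p+j}'}$, and set $L_\alpha = \omega\length(u_{p+j})$ and $L_\beta = \omega\length(u_{p+j}')$, so that these are the lengths of $\widehat{\stat_\infty^k}(\alpha_j)$ and $\widehat{\stat_\infty^k}(\beta_j)$. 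Since $\omega \geq \tau/\epsilon$, we obtain $L_\alpha \geq \omega\epsilon(m+n) \geq \tau(m+n)$ for free, so the $\alpha$-half of alternative~(2) is automatic. If $L_\beta < \tau(m+n)$ then (2) holds outright; otherwise both lengths are at least $\tau(m+n)$ and the task becomes proving (1).

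Let $P_\alpha = \stat_{m+n}^k(\alpha_j)$ and $P_\beta = \stat_{m+n}^k(\beta_j)$, with lengths $l_\alpha, l_\beta \leq \tau(m+n)$. By ($\virgo 3$), $P_\alpha \neq P_\beta$, and by the descent axiom they are prefixes of $\stat_\infty^k(\alpha_j)$ and $\stat_\infty^k(\beta_j)$ respectively. Recalling that distance $d$ from the end of $\widehat{\stat_\infty^k}(\alpha_j)$ corresponds to position $d$ from the start of $\norm_{L_\alpha}(\stat_\infty^k(\alpha_j))$ (after the reversal $\ola{\,\cdot\,}$), the objective is to produce some $d \leq \tau(m+n)$ where the two normalised words carry different letters. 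If $P_\alpha$ and $P_\beta$ disagree at some $d \leq \min(l_\alpha,l_\beta)$, then the $d$-th letters of $\stat_\infty^k(\alpha_j)$ and $\stat_\infty^k(\beta_j)$ are distinct elements of $B^k$ and, since $d \leq \tau(m+n) \leq \min(L_\alpha,L_\beta)$, lie within the unpadded portion of both normalised words; (1) follows. Otherwise $P_\alpha$ and $P_\beta$ agree on their common prefix and, without loss of generality, $P_\alpha$ is a strict prefix of $P_\beta$.

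The strict-prefix case is what I expect to be the main obstacle. At position $d = l_\alpha + 1 \leq l_\beta \leq \tau(m+n)$ the $\beta$-side gives $P_\beta[d] \in B^k$; to finish I need the $\alpha$-side to give $\star$. I would argue this via the length-maximality property that the linear statistics of \Cref{ex.ltrunc} and \Cref{ex.oop} enjoy, namely that $\stat_c^k(\alpha_j)$ is the longest prefix of $\stat_\infty^k(\alpha_j)$ of length at most $\tau c$. Under this property, the inequality $l_\alpha < \tau(m+n)$ forces $\stat_\infty^k(\alpha_j)$ to have length exactly $l_\alpha$ (otherwise its $(l_\alpha+1)$-th letter would extend $P_\alpha$ within $\stat_{m+n}^k(\alpha_j)$, contradicting $|P_\alpha| = l_\alpha$), whence $\norm_{L_\alpha}(\stat_\infty^k(\alpha_j))[d] = \star \neq P_\beta[d]$, and (1) holds at distance $d = l_\alpha + 1$ from the end. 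Verifying this maximality property for the specific linear statistics employed in the main proof -- tacit in the definition but not strictly forced by the descent axiom alone -- is the step requiring the most attention.
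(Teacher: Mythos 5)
Your proposal is correct and follows the same basic route as the paper: the hypothesis $\length(u_{p+j}) \geq \epsilon(m+n)$ together with $\omega \geq \tau/\epsilon$ gives the length bound on the $\alpha$-side, alternative (2) absorbs the case where the $\beta$-side normalised word is shorter than $\tau(m+n)$, and otherwise \ref{condition.virgo3} plus the descent (prefix) property of a linear statistic is used to locate a disagreement within the first $\tau(m+n)$ letters of the two normalised words, which after the reversal $\ola{\,\cdot\,}$ is exactly alternative (1). Where you differ is in the final step: the paper's proof asserts a contradiction directly from ``$\stat^k_{m+n}$ of the two truncated sentences differ, and each is an initial subword of a normalised word, while the first $\tau(m+n)$ letters of the two normalised words agree'' --- which is precisely the step that does not follow when one of the two values of $\stat^k_{m+n}$ is a proper prefix of the other. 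So the strict-prefix case you isolate, and the maximality property you invoke to handle it (that $\stat^k_c$ is the longest admissible prefix of $\stat^k_\infty$, so a shorter value forces the padding $\star$ at position $l_\alpha+1$), is being used implicitly by the paper as well; the bare descent axiom of \Cref{def.lstat} does not rule out, say, two sentences with equal $\stat^k_\infty$ but nested, unequal $\stat^k_{m+n}$, for which neither (1) nor (2) would hold. Your concern is therefore well placed but harmless in practice: every linear statistic used in the paper (\Cref{ex.ltrunc}, \Cref{ex.oop}, and the ``final so-many letters'' statistics chosen in \Cref{sec.proofpart2}) is of truncation type and visibly has this maximality property, so both your argument and the paper's go through there (note only that the relevant threshold is $\tau^k(m+n)$ rather than $\tau(m+n)$, which your inequality $l_\alpha < l_\beta \leq \tau^k(m+n)$ already supplies); for \Cref{thm.virgo} to hold verbatim for arbitrary linear statistics, this property should really be added to the definition or hypotheses.
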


\begin{proof}[Proof of claim]
First, note that $\length(\widehat{\textup{stat}_\infty^k}(\ol{u_1} \dots \ol{u_p} \dots \ol{u_{p+j}})) \geq \tau (m+n)$ since $\length((u_{p+j})^\omega) = \omega \length(u_{p+j}) \geq \omega \epsilon (m+n) \geq \tau (m+n)$.

Now suppose that (2) doesen't occur, i.e. $\length(\widehat{\textup{stat}_\infty^k}(\ol{u_1} \dots \ol{u_p} \dots \ol{u_{p+j}'})) \geq \tau(m+n)$. So $\tau(m+n) \leq \omega \length(u_{p+j}')$. For a contradiction, suppose that (1) also doesen't occur. Then the final $\tau(m+n)$ letters of $\widehat{\textup{stat}_\infty^k}(\ol{u_1} \dots \ol{u_p} \dots \ol{u_{p+j}})$ and $\widehat{\textup{stat}_\infty^k}(\ol{u_1} \dots \ol{u_p} \dots \ol{u_{p+j}'})$ must be the same. It follows that the first $\tau(m+n)$ letters of $\norm_{\omega \length(u_{p+j})}(\stat_\infty^k(\ol{u_1} \dots \ol{u_{p+j}}))$ and $\norm_{\omega \length(u_{p+j}')}(\stat_\infty^k(\ol{u_1} \dots \ol{u_{p+j}'}))$ must be the same. But $\norm_{\omega \length(u_{p+j})}(\stat_\infty^k(\ol{u_1} \dots \ol{u_{p+j}}))$ contains $\stat_{m+n}^k(\ol{u_1} \dots \ol{u_{p+j}}))$ as an initial subword since $\tau(m+n) \leq \omega \length(u_{p+j})$. Similarly, $\norm_{\omega \length(u_{p+j}')}(\stat_\infty^k(\ol{u_1} \dots \ol{u_{p+j}'}))$ contains $\stat_{m+n}^k(\ol{u_1} \dots \ol{u_{p+j}'}))$ as an initial subword since $\tau(m+n) \leq \omega \length(u_{p+j}')$. We have a contradiction since $\stat_{m+n}^k(\ol{u_1} \dots \ol{u_{p+j}})) \neq \stat_{m+n}^k(\ol{u_1} \dots \ol{u_{p+j}'}))$. 
\end{proof}

It follows from the claim that there exist letters
\[
b
\in
\bigstar
\begin{pmatrix}
(u_{p+j})^\omega \\ 
\widehat{\stat_\infty^1}(\ol{u_1} \dots \ol{u_p} \dots \ol{u_{p+j}}) \\
\widehat{\stat_\infty^2}(\ol{u_1} \dots \ol{u_p} \dots \ol{u_{p+j}}) \\
\vdots \\
\widehat{\stat_\infty^K}(\ol{u_1} \dots \ol{u_p} \dots \ol{u_{p+j}})
\end{pmatrix}
\quad \textrm{and} \quad 
b'
\in
\bigstar
\begin{pmatrix}
(u_{p+j}')^\omega \\ 
\widehat{\stat_\infty^1}(\ol{u_1} \dots \ol{u_p} \dots \ol{u_{p+j}'}) \\
\widehat{\stat_\infty^2}(\ol{u_1} \dots \ol{u_p} \dots \ol{u_{p+j}'}) \\
\vdots \\
\widehat{\stat_\infty^K}(\ol{u_1} \dots \ol{u_p} \dots \ol{u_{p+j}'})
\end{pmatrix}
\]
which are distinct and both at distance $d \leq \tau (m+n)$ from the end of their words (in case (2) you choose $b'$ to be the $\bigstar$ at the beginning of the word). Now, suppose that the sentence $\ol{u_{p+j+1}}\dots\ol{u_{p+m}}$ has exactly $l$ letters. This implies that the tail-sentence of
\[
b
\in
I(\alpha)
\]
has at most $\tau(m+n) + \omega l + (m-j)$ letters. So it has average word length at most
\[\frac{\tau(m+n) + \omega l + (m-j)}{m-j+1} \leq \tau\frac{m+n}{m-j} + \omega N + 1 \leq \tau\frac{3m}{m-j} + \omega N + 1\]
where we have used that $\awl(\ol{u_{p+j+1}} \dots \ol{u_{p+m}}) = l / (m-j) \leq N$ together with \eqref{eq.mnub}. We now need to divide into two subcases: either $J \leq \frac{1 - \delta}{16} \min(m,n)$ or $J > \frac{1 - \delta}{16} \min(m,n)$. \\

\noindent \textbf{Case 1a:} We suppose $J \leq \frac{1 - \delta}{16} \min(m,n)$. In particular, we have $J \leq \frac{1 - \delta}{2} \min(m,n)$ and $2 \leq \frac{1 - \delta}{8}(m+n)$. Then the average word length of $b \in I(\alpha)$ is at most
\[\tau\frac{3m}{m-j} + \omega N + 1 \leq \tau \frac{3m}{m - \delta m - J} + \omega N + 1 \leq \tau \frac{6}{1 - \delta} + \omega N + 1 \leq U\]
By similar arguments, the average word length of the tail-sentence of $b' \in I(\beta)$ is also at most $U$. In order to apply \Cref{cor.AWL} to the starred sentences $I(\alpha)$ and $I(\beta)$, we choose 
\[i = \left\lceil \frac{\min(m,n) - j}{2} \right\rceil\]
from which it follows that
\[U \frac{m-j+1}{i+1} \leq 4U \frac{m - j}{\min(m,n) - j} \leq 4U \frac{2 \min(m,n)}{\min(m,n) - \delta \min(m,n) - J} \leq \frac{16U}{1 - \delta} \leq \kappa\]
and similarly $U \frac{n-j+1}{i+1} \leq \kappa$. Hence, \Cref{cor.AWL} tells us that
\begin{align*}
d(\ad_\kappa \circ I(\alpha), \ad_\kappa \circ I(\alpha)) &\geq d_{T_{W'}}(I(\alpha),I(\beta)) - 2j - 2i \\
&= (m - j - i) + (n - j - i) \\
&\geq ((m-j)/2 - 1) + ((n-j)/2 - 1) \\
&\geq ((m-\delta m - J)/2) + ((n-\delta n - J)/2) - 2 \\
&\geq \frac{(1 - \delta)m}{4} + \frac{(1 - \delta)n}{4} - 2 \\
&\geq \frac{(1 - \delta)(m+n)}{4} - \frac{(1 - \delta)(m+n)}{8} \\
&= \frac{1 - \delta}{8} d_{T_W}(\alpha,\beta)
\end{align*}
If we choose $M(\delta,J) \geq 8 / (1 - \delta)$ then we are done in this case.

\noindent \textbf{Case 1b:} We suppose $J > \frac{1 - \delta}{16} \min(m,n)$. Then, using \eqref{eq.mnub}, we have $m,n \leq \frac{32J}{1-\delta}$ and so $m+n \leq \frac{64J}{1-\delta}$. Recall that $b \in I(\alpha)$ and $b' \in I(\beta)$ are both at distance $d \leq \tau (m+n)$ from the ends of their words. So, since $\kappa \geq \frac{64J\tau}{1-\delta} \geq \tau(m+n) \geq d$, they are distance at most $\kappa$ from the ends of their words. It follows that Alice records the events $b$ and $b'$ on page $d$ of chapter $p+j$ of the diary. It follows that chapter $p+j$ of $\ad_\kappa \circ I(\alpha)$ is distinct from chapter $p+j$ of $\ad_\kappa \circ I(\beta)$. Hence, 
\begin{align*}
d(\ad_\kappa \circ I(\alpha), \ad_\kappa \circ I(\alpha)) &\geq (p+m) - (p+j-1) + (p+n) - (p+j-1) \\
&= m+n - 2j + 2 \\
&\geq 2 \\
&= \frac{1-\delta}{32J} \frac{64J}{1-\delta} \\
&\geq \frac{1-\delta}{32J} (m+n)
\end{align*}
So if we choose $M(\delta,J) \geq \frac{32J}{1 - \delta}$ then we are done in this case too. \\

\noindent \textbf{Case 2:} We are done for symmetrical reasons in the case when $\length(u_{p+j}') \geq \epsilon (m+n)$. \\

\noindent \textbf{Case 3:} Finally, we suppose that $\length(u_{p+j}) \leq \epsilon (m+n)$, $\length(u_{p+j}') \leq \epsilon (m+n)$ and $u_{p+j} \neq u_{p+j}'$. Two possibilities can occur: (1) there exist distinct letters $a \in (u_{p+j})^\omega$ and $a' \in (u_{p+j}')^\omega$ which are at the same distance from the end of their words and this distance is at most $\epsilon \omega (m+n) \leq (\tau + \epsilon)(m+n)$ or (2) $\length((u_{p+j})^\omega) \neq \length((u_{p+j}')^\omega)$. In either case, it follows that there exist distinct letters $b \in I(\alpha)$ and $b' \in I(\beta)$ which occur on the $(p+j)$'th day and which have equal distance $d \leq (\tau + \epsilon)(m+n)$ from the ends of their words. The proof now proceeds along the same lines as Case 1 and Case 2; a similar argument, splitting into cases depending on the magnitude of $J$, will give us that $d(\ad_\kappa \circ I(\alpha), \ad_\kappa \circ I(\alpha)) \geq \frac{1 - \delta}{32J} d_{T_W}(\alpha, \beta)$.
\end{proof}

We will now define another useful criterion, pronounced \textit{Taurus}, which makes use of the fact that we always know that $u_{p+1} \neq u_{p+1}'$ when $\alpha$ and $\beta$ have the form given by \Cref{notation.sentence}. It is somewhat analogous to condition $\leo(\scalf,J)$. As before, suppose that $\scall$ is some finite collection of linear statistics on $T_W$ (with possibly different codomains) and that $J \in \N$, $N > 0$ and $\epsilon > 0$ are constants. 

\begin{definition} \label{def.taurus}
We say that $\alpha$, $\beta$ satisfy $\taurus(\scall, J, N, \epsilon)$ if there exists some $1 \leq j \leq J$ such that
    \begin{enumerate}
        \myitem{($\taurus 1$)} \label{condition.taurus1} $\awl(\ol{u_{p+j+1}} \dots \ol{u_{p+m}})$ and $\awl(\ol{u_{p+j+1}'} \dots \ol{u_{p+n}'})$ are both at most $N$;
        \myitem{($\taurus 2$)} \label{condition.taurus2}  if $1 \leq j' \leq j$ is such that we have $\length(u_{p+j''}) \leq \epsilon (m+n)$ and $\length(u_{p+j''}') \leq \epsilon (m+n)$ for all $j' < j'' \leq j$  then there exists $\stat_c \in \scall$ such that $\stat_{m+n}(\ol{u_1} \dots \ol{u_p} \ \ol{u_{p+1}} \dots \ol{u_{p+j'}}) \neq \stat_{m+n}(\ol{u_1} \dots \ol{u_p} \ \ol{u_{p+1}'} \dots \ol{u_{p+j'}'})$.
    \end{enumerate}
\end{definition}

It is worth noting that \ref{condition.taurus2} implies that there exists a linear statistic $\stat_c \in \scall$ such that $\stat_{m+n}(\ol{u_1} \dots \ol{u_p} \ \ol{u_{p+1}} \dots \ol{u_{p+j}}) \neq \stat_{m+n}(\ol{u_1} \dots \ol{u_p} \ \ol{u_{p+1}'} \dots \ol{u_{p+j}'})$.

\begin{theorem} \label{thm.taurus}
There exists a finite set $\Omega = \Omega(\scall,J,N,\epsilon)$, a diary $D = D(\scall,J,N,\epsilon)$ which is a function $D: T_W \rightarrow T_\Omega$, and a constant $M = M(J) \geq 1$, such that 
\[\frac{1}{M} d_{T_W}(\alpha,\beta) \leq d_{T_{\Omega}}(D\alpha,D\beta) \leq d_{T_W}(\alpha,\beta)\]
for all $\alpha,\beta$ satisfying $\taurus(\scall,J,N,\epsilon)$.
\end{theorem}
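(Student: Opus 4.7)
The plan is to mimic the proof of \Cref{thm.virgo} closely, exploiting the fact that in $\taurus$ the distinguishing index $j$ is bounded by the constant $J$ rather than scaling with $\min(m,n)$. Concretely, I would reuse the encoding $I\colon T_W\to T_{W'}$ and Alice's Diary $\ad_\kappa$ constructed in the proof of \Cref{thm.virgo}, with the constants $\tau$, $\omega$ and $\kappa$ chosen in terms of $\scall$, $J$, $N$, $\epsilon$ (morally taking $\delta=0$ in the formulas for $U$, $V$ and $\kappa$). The desired diary is then $D=\ad_\kappa\circ I$.

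The crucial new step is picking the correct index at which to read off the separation. Let $j$ be the index supplied by $\taurus(\scall,J,N,\epsilon)$ and let $j'$ be the smallest element of $\{1,\dots,j\}$ for which $\length(u_{p+j''})$ and $\length(u_{p+j''}')$ are both $\leq\epsilon(m+n)$ for every $j'<j''\leq j$. (Taking $j'=j$ makes the condition vacuous, so $j'$ is well-defined.) Minimality then forces one of two alternatives: either $j'=1$, in which case $u_{p+j'}\neq u_{p+j'}'$ automatically by \Cref{notation.sentence}, or $j'>1$ and $\length(u_{p+j'})>\epsilon(m+n)$ or $\length(u_{p+j'}')>\epsilon(m+n)$. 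In either alternative, \ref{condition.taurus2} supplies a linear statistic $\stat^k_c\in\scall$ distinguishing $\alpha$ and $\beta$ at position $j'$. This is exactly the setup needed to re-run the internal claim and subsequent case analysis from the proof of \Cref{thm.virgo} with $j'$ in place of $j$.

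The third step is to verify the hypotheses of \Cref{cor.AWL} at position $p+j'$. The tail $\ol{u_{p+j'+1}}\dots\ol{u_{p+m}}$ splits into a ``bridge'' of at most $j-j'\leq J$ words, each of length $\leq\epsilon(m+n)$, followed by a ``far tail'' past $p+j$ whose $\awl$ is at most $N$ by \ref{condition.taurus1}. A direct count bounds the $\awl$ of the tail past $p+j'$ in $I(\alpha)$ by a constant $U'=U'(J,N,\epsilon,\tau,\omega)$ whenever $m-j'$ is comparable to $m+n$, and similarly for $I(\beta)$. With this $U'$ replacing $U$ in the Virgo estimates, the case split (Case~1a / 1b and Case~3 of \Cref{thm.virgo}) then transcribes essentially verbatim and yields $d_{T_\Omega}(D\alpha,D\beta)\geq\tfrac{1}{M}d_{T_W}(\alpha,\beta)$ for some constant $M=M(J)$.

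The main obstacle I anticipate is the degenerate regime corresponding to Case~1b of \Cref{thm.virgo}: when $\min(m,n)$ is small compared with $J$, so that $m+n$ is itself bounded in terms of $J$, the $\awl$ accounting above is useless. In that regime one must instead rely on the trivial lower bound $d_{T_\Omega}(D\alpha,D\beta)\geq 2$, which becomes linear in $d_{T_W}(\alpha,\beta)$ precisely because $m+n$ is bounded; this in turn forces $\kappa$ to be chosen large enough (in terms of $J$, $N$, $\epsilon$, $\tau$, $\omega$) that the witnessing letters $b, b'$ lie within $\kappa$ of the ends of their words. When $m$ and $n$ are very unequal one first reduces, as in the Virgo proof, to the balanced subcase $m\leq 2\min(m,n)$ and $n\leq 2\min(m,n)$, since otherwise $\absval{m-n}$ alone supplies a linear lower bound on the diary distance. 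Balancing the choice of the offset $i$ in \Cref{cor.AWL} across these regimes is the only delicate arithmetic; the remainder is a clean transcription of the Virgo argument.
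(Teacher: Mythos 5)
Your proposal is correct and takes essentially the same route as the paper: the paper's proof performs exactly the descent you describe to the index you call $j'$ (stepping down from the Taurus index $j$ until a word of length at least $\epsilon(m+n)$ appears or index $1$ is reached, absorbing the short intermediate words so that the tail's average word length is at most $N+6J^2\epsilon$), and then simply verifies that $\alpha,\beta$ satisfy $\virgo(\scall,0,J,N+6J^2\epsilon,\epsilon)$ and invokes \Cref{thm.virgo} as a black box rather than re-running its internals as you do. Your treatment of the unbalanced case and of the small-$(m+n)$ regime also matches the paper's, where the former is dispatched first via the height-preserving bound $d_{T_\Omega}(D\alpha,D\beta)\geq\absval{m-n}$ and the latter is already handled inside \Cref{thm.virgo}.
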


\begin{proof}
Suppose that $\alpha,\beta$ satisfy $\taurus(\scall, J, N, \epsilon)$. We let the map $D: T_W \rightarrow T_{\Omega}$ be the diary $D(\scall,0,J,N + 6J^2\epsilon,\epsilon)$ given by \Cref{thm.virgo}.

Note that if $m < (m+n) / 3$ then $n > 2 (m+n) / 3$ and so $\absval{m - n} > (m+n) / 3$, which implies $d_{T_\Omega}(D\alpha,D\beta) \geq (m+n) / 3$. So we are done when $m < (m+n) / 3$ by choosing $M = M(J) \geq 3$. Similarly we are done if $n < (m+n) /3$. So we can assume that $m,n \geq (m+n) / 3$. 

\begin{claim*}
$\alpha$ and $\beta$ satisfy $\virgo(\scall,0,J,N+6J^2\epsilon,\epsilon)$
\end{claim*}

\begin{proof}
Since $\alpha, \beta$ satisfy $\taurus(\scall, J, N, \epsilon)$, we know that there exists $1 \leq j \leq J$ such that \ref{condition.taurus1} and \ref{condition.taurus2} hold. In particular, $\awl(\ol{u_{p+j+1}}\dots\ol{u_{p+m}}) \leq N \leq N + 6J^2\epsilon$, $\awl(\ol{u_{p+j+1}'}\dots\ol{u_{p+n}'}) \leq N \leq N + 6J^2\epsilon$ and there exists $\stat_c \in \scal$ such that $\stat_{m+n}(\ol{u_1} \dots \ol{u_p} \dots \ol{u_{p+j}}) \neq \stat_{m+n}(\ol{u_1} \dots \ol{u_p} \dots \ol{u_{p+j}'})$. Hence, if $\length(u_{p+j}) \geq \epsilon (m+n)$ or $\length(u_{p+j}) \geq \epsilon (m+n)$ then $\alpha, \beta$ satisfy $\virgo(\scall, 0, J, N + 6J^2\epsilon,\epsilon)$ and the claim is proved. So we can assume both lengths are at most $\epsilon (m+n)$. But this implies that 
\begin{align*}
\awl(\ol{u_{p+j}} \dots \ol{u_{p+m}}) &= \frac{ \length(u_{p+j+1} \dots u_{p+m}) + \length(u_{p+j})}{m-j+1} \\
&\leq \awl(\ol{u_{p+j+1}}\dots\ol{u_{p+m}}) + \frac{\epsilon(m+n)}{m-j+1} \\
&\leq N + \frac{3\epsilon m}{m-J+1} \\
&\leq 
\begin{cases}
N + 6\epsilon & \textrm{when $m \geq 2J$} \\
N + 6J\epsilon & \textrm{when $m \leq 2J$}
\end{cases} \\
&\leq N + 6J^2 \epsilon
\end{align*}
and similarly $\awl(\ol{u_{p+j}'} \dots \ol{u_{p+n}'}) \leq N + 6J^2$. It also implies, since $\alpha$ and $\beta$ satisfy $\taurus(\scall, J, N, \epsilon)$, that there exists $\stat_c \in \scall$ such that $\stat_{m+n}(\ol{u_1} \dots \ol{u_p} \dots \ol{u_{p+j-1}}) \neq \stat_{m+n}(\ol{u_1} \dots \ol{u_p} \dots \ol{u_{p+j-1}'})$. So if $\length(u_{p+j-1}) \geq \epsilon (m+n)$ or $\length(u_{p+j-1}') \geq \epsilon (m+n)$ then the claim is proved. So we can assume that they are both at most $\epsilon (m+n)$. Continuing in this way, we can assume that $\length(u_{p+2})$, $\length(u_{p+3})$, \dots, $\length(u_{p+j})$ and $\length(u_{p+2}')$, $\length(u_{p+3}')$, \dots, $\length(u_{p+j}')$ are all at most $\epsilon (m+n)$. It follows that $\awl(\ol{u_{p+2}} \dots \ol{u_{p+m}})$ and $\awl(\ol{u_{p+2}'} \dots \ol{u_{p+n}'})$ are both at most $N + 6J^2\epsilon$. Since $u_{p+1} \neq u_{p+1}'$, $\alpha$ and $\beta$ satisfy $\virgo(\scall, 0, J, N + 6J^2\epsilon,\epsilon)$ and the claim is proved.
\end{proof}

It follows that if $M = M(0,J)$ is the constant given by \Cref{thm.virgo} then we have
\[\frac{1}{M} d_{T_W}(\alpha,\beta) \leq d_{T_{\Omega}}(D\alpha,D\beta) \leq d_{T_W}(\alpha,\beta)\]
and we are done.
\end{proof}

Recall the definition of $\leo(\scalf,J)$ from \Cref{def.leo}. The following corollary combines conditions $\leo(\scalf,J)$ and conditions $\taurus(\scall,J,N,\epsilon)$ so that we can use them simultaneously. 

\begin{corollary} \label{cor.leotaurus}
Let $\scalf$ be a finite collection of finite statistics, let $\scall$ be a finite collection of linear statistics, let $\jf \in \N$, let $\jl \in \N$, let $N \geq 1$, and finally let $\epsilon > 0$. 

Then there exists a finite set $\Omega = \Omega(\scalf,\scall, \jl, N,\epsilon)$, a diary $D = \\ D(\scalf,\scall,\jl, N,\epsilon)$ which is a function $D: T_W \rightarrow T_\Omega$, and a constant $M = M(\jf, \jl)$, such that 
\[\frac{1}{M} d_{T_W}(\alpha,\beta) \leq d_{T_{\Omega}}(D\alpha,D\beta) \leq d_{T_W}(\alpha,\beta)\]
for all $\alpha,\beta$ satisfying $\leo(\scalf, \jf)$ or $\taurus(\scall, \jl, N, \epsilon)$.
\end{corollary}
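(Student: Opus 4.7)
The plan is to form a product diary from the two diaries supplied by the previous two theorems. First, apply \Cref{thm.aries} to the property $\leo(\scalf, \jf) = \aries(\scalf, 0, \jf)$ to obtain a finite set $\Omega_1$, a diary $D_1 : T_W \to T_{\Omega_1}$, and a constant $M_1 = M_1(\jf)$ that coarsely preserves distance between all pairs satisfying $\leo(\scalf, \jf)$. Second, apply \Cref{thm.taurus} to obtain a finite set $\Omega_2$, a diary $D_2 : T_W \to T_{\Omega_2}$, and a constant $M_2 = M_2(\jl)$ that coarsely preserves distance between all pairs satisfying $\taurus(\scall, \jl, N, \epsilon)$.

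Next I set $\Omega = \Omega_1 \times \Omega_2$ and build the candidate diary $D : T_W \to T_\Omega$ coordinate-wise: for $\alpha \in T_W$ of height $i$, write $D_1(\alpha) = a_1 a_2 \dots a_i$ and $D_2(\alpha) = b_1 b_2 \dots b_i$, and put
\[ D(\alpha) = (a_1, b_1)(a_2, b_2) \dots (a_i, b_i) \in T_\Omega. \]
Since $D_1$ and $D_2$ are both height-preserving and order-preserving, the same is true of $D$, so $D$ is a diary. In particular $D$ is a graph homomorphism between rooted trees and hence $1$-Lipschitz, which gives the upper bound $d_{T_\Omega}(D\alpha, D\beta) \leq d_{T_W}(\alpha, \beta)$ for free.

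For the lower bound, the coordinate projection $\Omega \to \Omega_j$ induces a $1$-Lipschitz map $\pi_j : T_\Omega \to T_{\Omega_j}$ satisfying $\pi_j \circ D = D_j$, and therefore $d_{T_\Omega}(D\alpha, D\beta) \geq d_{T_{\Omega_j}}(D_j\alpha, D_j\beta)$ for $j = 1, 2$. Setting $M = \max(M_1, M_2)$, if $\alpha, \beta$ satisfy $\leo(\scalf, \jf)$ then the case $j = 1$ combined with \Cref{thm.aries} yields the claimed inequality, and if instead they satisfy $\taurus(\scall, \jl, N, \epsilon)$ then the case $j = 2$ combined with \Cref{thm.taurus} does. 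The only thing that needs verification is that the coordinate-wise pairing preserves the height-preserving, order-preserving structure, which is immediate; there is no real obstacle, and the corollary is essentially bookkeeping around the two preceding theorems.
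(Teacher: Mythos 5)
Your proposal is correct and is essentially the paper's own argument: both proofs pair the Leo-diary from \Cref{thm.aries} (with $\delta=0$) and the Taurus-diary from \Cref{thm.taurus} letter-by-letter into a diary valued in $T_{\Omega_1\times\Omega_2}$ and take $M=\max(M_1,M_2)$. Your projection argument just makes explicit the verification that the paper leaves to the reader.
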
 

\begin{proof}
Let $\df: T_W \rightarrow T_{\omegaf}$ be the diary $D(\scalf)$ given by \Cref{thm.aries}, and let $\mf = \mf(0,\jf)$ be the associated constant. Let $\dl: T_W \rightarrow T_{\omegal}$ be the diary $D(\scall,\jl,N,\epsilon)$ given by \Cref{thm.taurus}, and let $\ml = \ml(\jl)$ be the associated constant. Let $\Omega = \omegaf \times \omegal$; we think of $\Omega$ as being a finite alphabet. Define $M = \max(\mf,\ml)$. Consideration of the diary $D: T_W \rightarrow T_{\Omega}$ defined by
\[
D(\ol{w_1} \dots \ol{w_i}) = 
\begin{pmatrix}
\df(\ol{w_1}) \\ 
\dl(\ol{w_1}) \\ 
\end{pmatrix}
\begin{pmatrix}
\df(\ol{w_1} \ \ol{w_2}) \\ 
\dl(\ol{w_1} \ \ol{w_2}) \\ 
\end{pmatrix}
\dots
\begin{pmatrix}
\df(\ol{w_1} \ \ol{w_2} \dots \ol{w_i}) \\ 
\dl(\ol{w_1} \ \ol{w_2} \dots \ol{w_i}) \\ 
\end{pmatrix}
\]
will convince the reader of the corollary.
\end{proof}

\begin{remark} \label{remark.criteria}
It should hopefully be clear that the above argument can be repeated in order to combine any of the criteria $(\aries)$, $(\leo)$, $(\virgo)$, $(\taurus)$. 
\end{remark}

\Cref{cor.leotaurus} is used repeatedly in the proof of \Cref{thm.main}. \Cref{ex.diariesms} gives an idea as to how \Cref{cor.leotaurus} is used in the proof of \Cref{thm.main}. First, we need to thoughtfully choose the data $\scalf,\scall,\jl, N, \epsilon$ at the start of the proof in order to define the map $D(\scalf,\scall,\jl, N, \epsilon)$ given by \Cref{cor.leotaurus} which is a function $D: T_W \rightarrow T_\Omega$. We then analyse all the possible pairs $\alpha_\q$, $\beta_\q$ to ensure that they satisfy $\leo(\scalf, \jf)$ or $\taurus(\scall, \jl, N, \epsilon)$. You might think of this as \textit{finding a finite or linear (with respect to $m+n = d_{T_W}(\alpha_\q, \beta_\q)$) fact which distinguishes $\alpha_\q$ from $\beta_\q$}. Once this is done, we have proved that there exists a quasiisometric embedding $X \rightarrow \prod_{q=1}^Q T_\Omega$.

If a reader would like to the see the criteria $(\leo)$ and $(\virgo)$ in action in a relatively simple setting then they should read \Cref{appchap.3} in which these ideas are applied in order to quasiisometrically embed the hyperbolic plane into a product of binary trees. The theorems in this current section and also those in the next section (which is short) suffice for the understanding of this example. 

\section{Standard paths in relatively hyperbolic groups with virtually abelian peripherals} \label{sec.standardpaths}

In this section, we will use the description of relatively hyperbolic groups in terms of projection complexes, as described in \Cref{subsec.relhyppc}, to define a collection of canonical quasigeodesic paths between two group elements $x,z \in G$ when $G$ is relatively hyperbolic with respect to virtually abelian peripheral subgroups. The reader should note that this section, and indeed the rest of this paper, relies heavily on the theory of projection complexes and the tree of metric spaces outlined in \cite{PART1}.

So suppose $G$ is relatively hyperbolic with respect to virtually abelian peripheral subgroups $H_1, H_2, \dots, H_T$. Let $R_t$ denote the rank of a finite index abelian subgroup of $H_t$, and define $R = \max(R_1, R_2, \dots, R_T)$. Let $A_t \cong \Z^{R_t}$ be a finite index subgroup of $H_t$. A virtually abelian group always contains a normal, finite index abelian subgroup. Therefore without loss of generality we may assume that $A_t$ is normal in $H_t$. Suppose $G$ is generated by a finite set $S$. We may assume that $S_{H_t} := S \cap H_t$ generates $H_t$ and that $S_{A_t} := S_{H_t} \cap A_t$ generates $A_t$. We write $S_{H_t} = S_{A_t} \cup \{b_{t,1}, b_{t,2}, \dots, b_{t,R'_t}\}$ where $b_{t,r} \not \in A_t$ for $1 \leq r \leq R'_t$. Further, we may assume the following: that $S_{A_t} = \{a_{t,1}, a_{t,2}, \dots, a_{t,R_t}\}$ and that identifying 
\[a_{t,1}^{q_1} a_{t,2}^{q_2} \dots a_{t,R_t}^{q_{R_t}} \in \Gamma(A_t, S_{A_t})\]
with
\[(q_1, q_2, \dots, q_{R_t}) \in \Gamma(\Z^{R_t},\{(1,0,\dots,0), (0,1,\dots,0), \dots, (0,0,\dots,1)\})\]
gives a graph isomorphism. In other words, we assume that the subgraph $g\Gamma(A_t, S_{A_t})$ is a \textit{grid} for all $g \in G$. Let $I_t$ denote the cardinality of the quotient group $H_t / A_t$ and write $I = \max_t I_t$.

As in \Cref{subsec.relhyppc}, let $\G = \{gH_t : g \in G, 1 \leq t \leq T\}$ and define the projections $\{\pi_Y\}$ using the nearest point projections. We then modify $\{\pi_Y\}$ by a uniformly bounded amount using \cite[Theorem 2.2]{PART1} (which is the same as \cite[Theorem 4.1]{BBFS}) so that they now satisfy axioms (P3) - (P5) and (P4') with respect to the projection constant $\theta$. Since the original projections are collections of vertices, the modified projections will be too, as described in \cite[Theorem 2.2]{PART1}. The new projection distances will be denoted by $d_Y$. We choose $K$ to be at least $4 \theta + 2I$, and large enough that \Cref{cor.embedding2} holds. In particular, since $K \geq 4\theta$, all the results in \cite[Section 2]{PART1} hold and we can define $\pcg$ and $\qtomsg$. By setting $B = H_1$, we can also define $\T = \T_K(\G,B)$ as in \cite[Definition 3.1]{PART1}. In order to define $\tomsg$ as in \cite[Definition 4.1]{PART1}, we arbitrarily choose vertices $p(X,Y) \in \pi_X(Y)$ and $p(Y,X) \in \pi_Y(X)$ for every distinct pair $X,Y \in \G$ that are connected by an edge in $\pcg$. 

\subsection{Standard non-abelian paths between cosets of $A_t$ in $H_t$} 
Let us write $H_t / A_t = \{h_{t,1} A_t, h_{t,2} A_t, \dots, h_{t,I_t} A_t\}$. 

\begin{definition} \label{def.nonabelianpath}
For $1 \leq i \leq I_t$, let $\psi_{t,i}$ be some fixed path in $\Gamma(G,S)$ of length at most $I_t$, which is a word on the generators $\{ b_{t,1}, b_{t,2}, \dots, b_{t,R'_t}\} \subset H_t$ and their inverses, and that travels from $e$ to the coset $h_{t,i} A_t$. This is possible since $H_t / A_t$ is a group of cardinality $I_t$ that is generated by the set $\{b_{t,r}A_t : 1 \leq r \leq R_t' \}$.

Given two group elements $k, k'$ in the same coset of $H_t$, say $k,k' \in gH_t$, there exists a unique $1 \leq i \leq I_t$ such that for all group elements $l \in kA_t$ we have that $l\psi_{t,i} \in k'A_t$. We refer to $\psi_{t,i}$ as \textit{the standard non-abelian path} from the coset $kA_t$ to the coset $k'A_t$.
\end{definition} 

\subsection{Standard paths in $g\Gamma(H_t,S_{H_t})$} \label{subsec.cosetsp}

\begin{definition} \label{def.bp}
Given a coset $Y \in \G$, write
\[\G_K[H_1, Y] = \{H_1 = Y_1 < Y_2 < \dots < Y_{L-1} < Y_L = Y\}\]
The \textit{basepoint} of $Y$, which we denote by $p_0(Y)$, is defined to be $p(Y,Y_{L-1})$ if $Y \neq H_1$ and defined to be the identity $e$ if $Y = H_1$.
\end{definition}

\begin{definition} \label{def.spcoset}
Suppose $k$ and $k'$ are vertices of $\C(Y) = g\Gamma(H_t,S_{H_t})$. Let $\psi_{t,i}$ be the standard non-abelian path from the coset $kA_t$ to the coset $p_0(Y)A_t$. Let $\psi_{t,j}$ be the standard non-abelian path from the coset $p_0(Y)A_t$ to the coset $k'A_t$. A \textit{standard path} from $k$ to $k'$ in $\C(Y)$ is any path which is formed by concatenating the following three paths
\begin{itemize}
    \item The standard non-abelian path $\psi_{t,i}$;
    \item A geodesic in $p_0(Y)\Gamma(A_t,S_{A_t})$ formed of the abelian generators $\{a_{t,1}, \dots, a_{t,R_t}\}$ that takes us from $k\psi_{t,i}$ to $k'\psi_{t,j}^{-1}$;
    \item The standard non-abelian path $\psi_{t,j}$.
\end{itemize}
\end{definition}
So the standard path from $k$ to $k'$ in $\C(Y)$ has the label $\psi_{t,i} \nu \psi_{t,j}$ where $\nu$ is a geodesic in the generators $\{a_{t,1}, \dots, a_{t,R_t}\}$ from $k\psi_{t,i}$ to $k'\psi_{t,j}^{-1}$. So we have forced the standard path from $k$ to $k'$ to go via the coset of $A_t$ that contains $p_0(Y)$. 

Sometimes, we would like to make choices of standard path which reflect the ordering of the abelian generators $\{a_{t,r} : 1 \leq r \leq R_t\}$. We will refer to these special choices of standard path as \textit{ordered} standard paths. See \Cref{fig.cosetsp}.

\begin{figure}
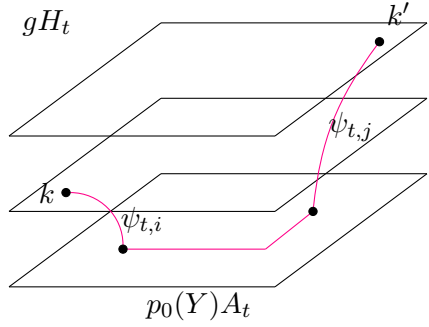

    \ctikzfig{cosetsp}
    \caption{An ordered standard path from $k$ to $k'$ in the coset $gH_t$. }
    \label{fig.cosetsp}
\end{figure}

\begin{definition} \label{def.spcosetordered}
Recall that $R = \max_t R_t$ and suppose $1 \leq r \leq R$. As before, suppose $k,k'$ are vertices of $\C(Y) = g\Gamma(H_t, S_{H_t})$. The \textit{$r$'th ordered standard path} from $k$ to $k'$ in $\C(Y)$ is the unique standard path from $k$ to $k'$ which travels the abelian generators of $A_t$ in the cyclic order 
\begin{align*}
&a_{t,r} < a_{t,r+1} < \dots < a_{t,R_t} < a_{t,1} < a_{t,2} < \dots < a_{t,r-1} & \textrm{if } r \leq R_t \\
&a_{t,1} < a_{t,2} < \dots < a_{t,R_t} & \textrm{if } r > R_t
\end{align*}
So the $r$'th ordered standard path from $k$ to $k'$ has a label of the form
\begin{align*}
&\psi_{t,i} a_{t,r}^{q_r}a_{t,r+1}^{q_{r+1}} \dots a_{t,R_t}^{q_{R_t}} a_{t,1}^{q_1} a_{t,2}^{q_2} \dots a_{t,r-1}^{q_{r-1}} \psi_{t,j} & \textrm{if } r \leq R_t \\
&\psi_{t,i} a_{t,1}^{q_1}a_{t,2}^{q_{2}} \dots a_{t,R_t}^{q_{R_t}} \psi_{t,j} & \textrm{if } r > R_t
\end{align*}
\end{definition}

\begin{proposition} \label{prop.cosetsp}
There exist $\lambdacosetsp \geq 1$ and $\mucosetsp \geq 0$ such that for all $1 \leq t \leq T$ and $g \in G$, every standard path in $g\Gamma(H_t, S_{H_t})$ is a $(\lambdacosetsp,\mucosetsp)$-quasigeodesic with respect to the metric on $g\Gamma(H_t, S_{H_t})$.  
\end{proposition}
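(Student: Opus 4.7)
The plan is to decompose a standard path $\gamma = \psi_{t,i} \nu \psi_{t,j}$ into its three constituent pieces and observe that the two non-abelian pieces $\psi_{t,i}$ and $\psi_{t,j}$ each have length at most $I_t \leq I$, a uniform constant. Consequently, proving that $\gamma$ is a uniform quasigeodesic will reduce, up to bounded additive errors, to proving the analogous statement for the middle abelian piece $\nu$.

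First I would analyse $\nu$. By the paper's standing assumption that every subgraph of the form $g'\Gamma(A_t, S_{A_t})$ is a grid, the induced metric on $p_0(Y)\Gamma(A_t, S_{A_t})$ is isometric to $(\Z^{R_t}, L^1)$. By \Cref{def.spcosetordered}, $\nu$ has the form $a_{t,\sigma(1)}^{q_{\sigma(1)}} a_{t,\sigma(2)}^{q_{\sigma(2)}} \cdots a_{t,\sigma(R_t)}^{q_{\sigma(R_t)}}$ for some cyclic permutation $\sigma$, so it is monotone in each coordinate direction. The monotonicity implies that for any two vertices $x, y$ lying on $\nu$, the arc length of the sub-arc of $\nu$ joining them equals the $L^1$ distance between them inside $p_0(Y)\Gamma(A_t, S_{A_t})$. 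In particular, $\nu$ itself is a geodesic of $p_0(Y)\Gamma(A_t, S_{A_t})$.

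Next I would invoke the standard fact that, since $A_t$ has index at most $I$ in $H_t$ and $S_{A_t} \subseteq S_{H_t}$, the inclusion $g'\Gamma(A_t, S_{A_t}) \hookrightarrow g'\Gamma(H_t, S_{H_t})$ is a $(1, 2I)$-quasiisometric embedding. (Any $S_{H_t}$-word between two elements of $A_t$ differs by at most $2I$ letters from an $S_{A_t}$-word between the same endpoints, using fixed coset representatives of $H_t/A_t$ of length at most $I$.) Combining this with the previous step, for any two vertices $x, y \in \nu$ the $\nu$-arc length between them is bounded above by a uniform affine function of $d_{g\Gamma(H_t, S_{H_t})}(x,y)$, giving the desired bound in this case.

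Finally I would do a short case analysis on which of $\psi_{t,i}$, $\nu$, $\psi_{t,j}$ contain the two given vertices $x, y$ on $\gamma$. When both lie on $\nu$, the previous paragraph applies directly. When one or both lie on $\psi_{t,i}$ or $\psi_{t,j}$, I would move them to the nearest endpoint of $\nu$; this changes both the arc length and the ambient distance by at most $I$ each, so the abelian bound applies with an extra additive constant of order $I$. Since all auxiliary constants are controlled by $I$ and $R$, they are uniform in $t$ and $g$, yielding $\lambdacosetsp$ and $\mucosetsp$. The main obstacle is not conceptual but rather bookkeeping: I need to check that the additive losses introduced by shuttling $x, y$ to the endpoints of $\nu$ combine cleanly with the quasiisometry constants from the finite-index inclusion so that the final bounds remain uniform over $t$ and $g$.
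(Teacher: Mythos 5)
Your argument is essentially the paper's own proof, which is a one-liner citing exactly the two facts you isolate: the abelian segment is controlled because $A_t \hookrightarrow H_t$ is a quasiisometry (finite index), and the non-abelian pieces $\psi_{t,i}$, $\psi_{t,j}$ have length at most $I$. One quantitative slip: the inclusion $g'\Gamma(A_t,S_{A_t}) \hookrightarrow g'\Gamma(H_t,S_{H_t})$ need not be a $(1,2I)$-quasiisometric embedding even though $S_{A_t}\subset S_{H_t}$, since a generator of $S_{H_t}$ lying outside $A_t$ can have a small power lying deep inside $A_t$ (e.g.\ $H=\Z\times\Z/3=\langle a\rangle\times\langle t\rangle$ with $S_H=\{a^{\pm1},(a^2t)^{\pm1}\}$ and $A=\langle a\rangle$, where $(a^2t)^3=a^6$ makes the discrepancy grow linearly), so the distortion is multiplicative rather than additive; but since the proposition only asks for \emph{some} uniform $(\lambdacosetsp,\mucosetsp)$, replacing your $(1,2I)$ claim by the standard fact that a finite-index inclusion is a quasiisometric embedding (constants depending only on the finitely many $H_t$, and independent of $g$ because left translation is an isometry) repairs this without changing anything else.
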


\begin{proof}
This follows from the fact that $A_t \hookrightarrow H_t$ is a quasiisometry (since $A_t$ has finite index in $H_t$) and the fact that $\length(\psi_{t,i}) \leq I$ for all $t$ and $i$. 
\end{proof}

\begin{lemma} \label{lem.existsatrgenerator}
Suppose $Y,Y',Y''$ are distinct cosets in $\G$ and that $\{Y,Y'\}$ and $\{Y',Y''\}$ are edges in $\pcg$. Suppose also that $d_{Y'}(Y,Y'') > K$. Then a standard path from $p(Y',Y)$ to $p(Y',Y'')$ contains at least one abelian generator $a_{t,r}^{\pm 1}$. 
\end{lemma}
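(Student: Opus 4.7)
The plan is to argue by contradiction. Suppose that $Y' = gH_t$ so that $\C(Y') = g\Gamma(H_t, S_{H_t})$, and suppose a standard path $\gamma$ from $p(Y',Y)$ to $p(Y',Y'')$ contains no abelian generator. By \Cref{def.spcoset}, $\gamma$ is the concatenation $\psi_{t,i} \cdot \nu \cdot \psi_{t,j}$, where $\psi_{t,i}$ and $\psi_{t,j}$ are standard non-abelian paths of length at most $I_t \leq I$ and $\nu$ is a geodesic spelled in the abelian generators $S_{A_t}$. The hypothesis forces $\length(\nu) = 0$, so the total length of $\gamma$ is at most $2I$. Since $\gamma$ is a path in $\C(Y')$, this immediately gives the bound
\[d_{\C(Y')}(p(Y',Y),\, p(Y',Y'')) \leq 2I.\]

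Next I would relate this distance to the projection distance $d_{Y'}(Y,Y'')$. The modified projections obtained from \cite[Theorem 2.2]{PART1} are still collections of vertices in $\C(Y')$ of uniformly bounded diameter (controlled by $\theta$), and by construction $p(Y',Y) \in \pi_{Y'}(Y)$ and $p(Y',Y'') \in \pi_{Y'}(Y'')$. Consequently
\[d_{Y'}(Y,Y'') \leq d_{\C(Y')}(p(Y',Y),\, p(Y',Y'')) + 4\theta,\]
since $d_{Y'}(Y,Y'') = \diam\bigl(\pi_{Y'}(Y) \cup \pi_{Y'}(Y'')\bigr)$ up to the bounded modification from the strong projection axioms. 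Combining this with the preceding bound gives $d_{Y'}(Y,Y'') \leq 2I + 4\theta \leq K$, contradicting the assumption $d_{Y'}(Y,Y'') > K$. The choice $K \geq 4\theta + 2I$ made at the start of the section is precisely what is needed to close the argument.

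The only step which requires care is the comparison between $d_{Y'}$ and $d_{\C(Y')}$ applied to the chosen basepoints $p(Y',Y)$ and $p(Y',Y'')$; this is bookkeeping about the modified projections from \cite[Theorem 2.2]{PART1}, but it is the one place where the precise choice of $K$ enters. Everything else is a direct length computation from the definition of a standard path, exploiting the fact that the only way to have ``no abelian generator'' is to have the middle abelian-geodesic segment $\nu$ be trivial, which caps the total length at $2I$.
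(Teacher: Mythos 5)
Your proposal is correct and is essentially the paper's argument run in contrapositive form: the paper shows directly that any standard path between $p(Y',Y)$ and $p(Y',Y'')$ has length at least $d_{Y'}(Y,Y'') - 2\theta > K - 2\theta > 2I$, forcing the abelian segment $\nu$ to be nonempty, while you assume $\nu$ is empty and derive $d_{Y'}(Y,Y'') \leq 2I + 4\theta \leq K$, a contradiction. The only cosmetic difference is your slightly looser $4\theta$ versus the paper's $2\theta$ for comparing the projection diameter with the distance between the chosen points, and both fit within the choice $K \geq 4\theta + 2I$.
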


\begin{proof}
Let $\zeta = \psi_{t,i} \nu \psi_{t,j}$ denote a standard path from $p(Y',Y)$ to $p(Y',Y'')$. We have that 
\begin{align*}
\length(\zeta) &\geq d(p(Y',Y), p(Y',Y'')) \\
&\geq \diam(\pi_{Y'}(Y) \cup \pi_{Y'}(Y'')) - 2\theta \\
&= d_{Y'}(Y,Y'') - 2\theta \\
&> K - 2\theta \\
&> 2I
\end{align*}
where we have used $K \geq 4\theta + 2I$ in the final inequality. Since $\length(\psi_{t,i}) \leq I$ and $\length(\psi_{t,j}) \leq I$ we deduce that $\length(\nu) \geq 1$. 
\end{proof}

\subsection{Quasigeodesics between cosets of $H_t$} \label{subsec.transversegeo}

Let $\lambdaqg \geq 1$ and $\muqg \geq 0$ be constants of our choosing.

\begin{definition} \label{def.transverseqgeos}
Given $X,Y \in \G$ that are connected by an edge in $\pcg$, let $\gamma(X,Y)$ be a fixed choice of $(\lambdaqg,\muqg)$-quasigeodesic in $\Gamma(G,S)$ from $p(X,Y) \in G$ to $p(Y,X) \in G$. We sometimes refer to $\gamma(X,Y)$ as a \textit{transverse quasigeodesic}. 
\end{definition}

It does not exactly matter what $\lambdaqg$ and $\muqg$ are as long as they are uniform over all the choices of quasigeodesic. For example, you could always choose the transverse quasigeodesics to simply be geodesics from $p(X,Y) \in G$ to $p(Y,X) \in G$. 

\subsection{Standard paths in $\Gamma(G,S)$}

Let $x,z \in G$ and write $X = xH_1$ and $Z = zH_1$. Let $L = L[X,Z]$ and suppose 
\[\G_K[X,Z] = \{X = Y_1 < Y_2 < \dots < Y_{L - 1} < Y_{L} = Z\}\]

\begin{definition} \label{def.sp}
A \textit{standard path} from $x$ to $z$ is formed by concatenating standard paths within cosets $Y_l$ (as defined in \Cref{def.spcoset}) together with the transverse quasigeodesics $\gamma(Y_l,Y_{l+1})$ between these cosets (as defined in \Cref{def.transverseqgeos}). More formally, a standard path $\zeta$ from $x$ to $z$ is defined to be any concatenation
\[ \zeta = \zeta_1 \ \gamma(Y_1, Y_2) \ \zeta_2 \ \gamma(Y_2, Y_3) \ \zeta_3 \ \dots \ \zeta_{L-1} \ \gamma(Y_{L-1}, Y_L) \ \zeta_L\]
where
\begin{itemize}
    \item $\zeta_1$ is a standard path from $x \in Y_1$ to $p(Y_1, Y_2) \in Y_1$;
    \item for $2 \leq l \leq L-1$, $\zeta_{l}$ is a standard path from $p(Y_l, Y_{l-1})$ to $p(Y_l,Y_{l+1})$;
    \item $\zeta_L$ is a standard path from $p(Y_L, Y_{L-1})$ to $z$.
\end{itemize}

Given $1 \leq r \leq R$, the \textit{$r$'th ordered standard path from $x$ to $z$} is the unique standard path such that for $1 \leq l \leq L$, we have that $\zeta_l$ is the $r$'th ordered standard path between its initial and terminal vertices. See \Cref{fig.sp}.
\end{definition}

\begin{figure}
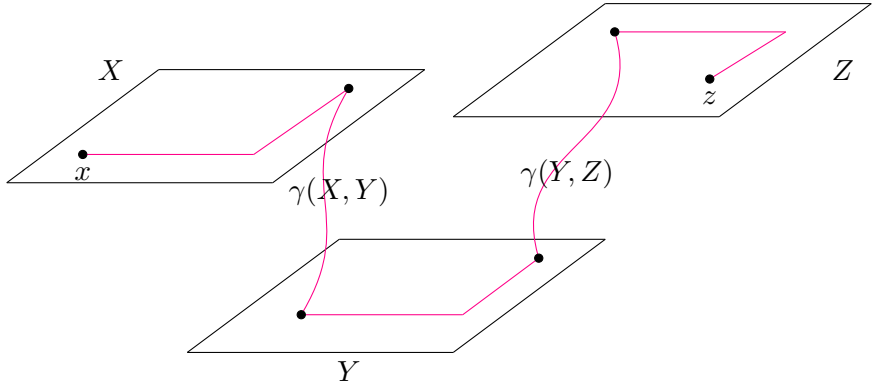

    \ctikzfig{groupsp}
    \caption{An ordered standard path from $x$ to $z$ in $\Gamma(G,S)$. It is drawn as if $\G_K[X,Z] = \{X < Y < Z\}$ and as if $A_t = H_t = \Z^2$ for all $1 \leq t \leq T$.}
    \label{fig.sp}
\end{figure}

\begin{proposition} \label{prop.sp}
For sufficiently large $K$, there exist constants $\lambdasp \geq 1$ and $\musp \geq 0$, neither of which depend on $x$ or $z$, such that a standard path from $x$ to $z$ is a $(\lambdasp, \musp)$-quasigeodesic in $G$.
\end{proposition}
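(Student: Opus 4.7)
The plan is to establish the uniform upper bound $\length(\zeta|_{[a,b]}) \leq \lambdasp\, d_G(a,b) + \musp$ for every pair $a, b$ of points on $\zeta$, since the reverse inequality $d_G(a,b) \leq \length(\zeta|_{[a,b]})$ is automatic from the fact that $\zeta$ is a path parametrised by arc length. First I would prove the bound in the case $a=x$, $b=z$, and then argue that any subpath $\zeta|_{[a,b]}$ is, up to bounded modification at its endpoints, again a standard path in $G$ between $a$ and $b$, so the same estimate applies uniformly.

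To handle the case $a=x$, $b=z$, I would decompose $\length(\zeta) = \sum_{l=1}^{L}\length(\zeta_l) + \sum_{l=1}^{L-1}\length(\gamma(Y_l,Y_{l+1}))$. By \Cref{prop.cosetsp} combined with \Cref{prop.ds}, each piece $\zeta_l$ is a uniform quasigeodesic in $\Gamma(G,S)$, and its length is bounded by $\lambdacosetsp$ times the distance in $\C(Y_l)$ between $p(Y_l,Y_{l-1})$ and $p(Y_l,Y_{l+1})$, plus $\mucosetsp$. The strong projection axioms (P3)--(P5) identify this distance, up to uniform additive error, with $d_{Y_l}(Y_{l-1},Y_{l+1})$, and then with $d_G(\rho_{Y_l}(x),\rho_{Y_l}(z))$ up to an error absorbed by taking $K$ sufficiently large (using \Cref{lem.sisto1} and \Cref{lem.sisto2}).

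For the transverse pieces $\gamma(Y_l,Y_{l+1})$, their total length is bounded above by $\sum_l d_G(p(Y_l,Y_{l+1}),p(Y_{l+1},Y_l))$, which I would in turn estimate by a uniform multiple of $d_{\hat{G}}(x,z)$ by exploiting the tree-of-metric-spaces structure from \cite[Section 4]{PART1} together with the fact that geodesics in $\hat G$ from $x$ to $z$ traverse the cosets $Y_l$ in order, with $p(Y_l,Y_{l\pm 1})$ close to the corresponding entry and exit points by \Cref{lem.sisto1}. Summing the two kinds of contributions then gives
\[\length(\zeta) \leq C_1 \sum_{Y \in \G_K[X,Z]} d_Y(\rho_Y(x),\rho_Y(z)) + C_2\, d_{\hat{G}}(x,z) + C_3,\]
and since $\G_K[X,Z]$ captures exactly those cosets $Y$ with $d_Y(X,Z) > K$, the right-hand side is bounded by a constant multiple of the right-hand side of Sisto's formula \Cref{thm.relhypdistanceformula} (with threshold $L = K$). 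Hence $\length(\zeta) \leq \lambdasp\, d_G(x,z) + \musp$ for suitable uniform constants.

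The hard part will be the transferral to an arbitrary subpath: I need to show that if $a \in \zeta_l$ and $b \in \zeta_{l'}$, then $\G_K[aH_1, bH_1]$ coincides, up to a bounded modification at each end, with the subsequence $Y_l, Y_{l+1}, \dots, Y_{l'}$, so that $\zeta|_{[a,b]}$ is itself essentially a standard path from $a$ to $b$. I would derive this from the stability of the projection complex sequence $\G_K[X,Z]$ under moving the endpoints within their cosets, which is a standard consequence of the bounded projection axiom (P4), following the style of argument in \cite{PART1, BBFS}. Once this identification is established, the length bound for $\zeta|_{[a,b]}$ reduces to the same calculation as above, completing the proof of the quasigeodesic estimate.
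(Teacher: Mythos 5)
Your route is genuinely different from the paper's, and most of it can be made to work, but the step you leave vaguest is exactly the crux. The paper proves only the endpoint estimate $\length(\zeta)\le\lambdasp d_G(x,z)+\musp$, and it handles the in-coset and the transverse contributions simultaneously by comparing $\zeta$ with a lift $\gamma$ of a coned-off geodesic $\hat{\gamma}$ from $x$ to $z$: after showing (this is the Claim in the paper's proof, using \Cref{lem.sisto1}, \Cref{lem.sisto2} and $K$ large) that $\hat{\gamma}$ meets the cosets $Y_1,\dots,Y_L$ in order, \Cref{lem.sisto1} places its entry and exit points $p_l,q_l$ within a uniform distance of the points $p(Y_l,Y_{l\pm1})$, so $\length(\gamma)$ dominates both $\sum_l d_G(p(Y_l,Y_{l-1}),p(Y_l,Y_{l+1}))$ and $\sum_l d_G(p(Y_l,Y_{l+1}),p(Y_{l+1},Y_l))$ up to an error linear in $L$ (absorbed via $\length(\gamma)\ge L-2$), and \Cref{prop.sisto} gives $\length(\gamma)\le\lambda d_G(x,z)+\mu$. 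In your scheme the in-coset terms are fine (though the comparison of $d_{\C(Y_l)}(p(Y_l,Y_{l-1}),p(Y_l,Y_{l+1}))$ with $d_{Y_l}(x,z)$ comes from the projection axioms and the BBFS interval lemmas rather than from \Cref{lem.sisto1} or \Cref{lem.sisto2}), but the assertion that $\sum_l d_G(p(Y_l,Y_{l+1}),p(Y_{l+1},Y_l))$ is bounded by a uniform multiple of $d_{\hat{G}}(x,z)$ does not follow from the ordering fact: it reduces to bounding $\sum_l d_G(q_l,p_{l+1})$, and the segment of $\hat{\gamma}$ from $q_l$ to $p_{l+1}$ may still pass through peripheral cosets not in $\G_K[X,Z]$, so its $G$-length is not controlled by its $\hat{G}$-length for free. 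To repair this you must either show that every such intermediate coset $W$ satisfies $d_W(q_l,p_{l+1})\lesssim K$ (using $d_W(X,Z)\le K$ and the coarse coincidence of $\pi_W(q_l)$ with $\pi_W(Y_l)$, etc.) and then apply \Cref{thm.relhypdistanceformula} to the pair $q_l,p_{l+1}$ with threshold above that bound, or, much more simply, bound the jumps by $\length(\gamma)$ and invoke \Cref{prop.sisto} --- which is the paper's shortcut and removes any need for the distance formula. Note also that your additive error is $O(L)$, not $O(1)$; it is absorbed because $d_{\hat{G}}(x,z)\ge L-2$.

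On the subsegment issue: the paper does not address it --- its proof establishes only the endpoint length bound, and that is the only consequence of \Cref{prop.sp} used later --- so your last paragraph is additional work rather than a required match. Your reduction is plausible, but beware that for a point $a$ interior to $\zeta_l$ with $Y_l=gH_t$, $t\ne 1$, the relevant interval is $\G_K[aH_1,bH_1]$, which need not contain $Y_l$ when the remaining travel of $\zeta$ inside $Y_l$ is short; the comparison with $\{Y_l,\dots,Y_{l'}\}$ holds only up to boundedly many cosets at each end (in the spirit of \cite[Lemma 3.6]{BBFS}), and you also need that a terminal segment of a standard path inside a coset is again, up to the bounded non-abelian prefix, a standard path. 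Alternatively, you can avoid this reduction by rerunning the endpoint argument directly for $a,b$, using that the intermediate cosets $Y_j$ satisfy $d_{Y_j}(a,b)>K-2\theta$.
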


\begin{proof}
We use the notation for a standard path from $x$ to $z$ that is given in \Cref{def.sp}.

Let $\hat{\gamma}$ denote a geodesic from $x$ to $z$ in the coned-off graph $\hat{G}$. By \Cref{lem.sisto2} (2), the geodesic $\hat{\gamma}$ must pass through the cosets $Y_1$, $Y_2$, \dots, $Y_{L}$ and the intersection of $\hat{\gamma}$ with one of these cosets is either an edge or possibly a single vertex (in the case of $Y_1$ and $Y_L$). For $1 \leq l \leq L$, let $p_{l}$ be the vertex where $\hat{\gamma}$ enters $Y_{l}$ (or let $p_l = x$ if $l=1$). For $1 \leq l \leq L$, let $q_{l}$ be the vertex where $\hat{\gamma}$ leaves $Y_{l}$ (or let $q_l = z$ if $l = L$). Now the vertices $V := \{p_l : 1 \leq l \leq L \} \cup \{q_l : 1 \leq l \leq L\}$ admit a natural ordering of the following form: $p_1 \leq q_1 \leq p_2 \leq q_2 \leq p_3 \leq \dots \leq p_{L} \leq z$ and given $v,v' \in V$ we say that $v < v'$ if $v \leq v'$ and $v \neq v'$. Another possible ordering on $V$, which we denote by $<<$, is to say that $v << v'$ if $\hat{\gamma}$ hits $v$ before $v'$ on its journey from $x$ to $z$. We write $v \leq \leq v'$ if $v << v'$ or $v = v'$.

\begin{claim*}
The total orders $<$ and $<<$ are the same. 
\end{claim*}

\begin{proof}
First, note that by definition $p_l \leq \leq q_l$. Second, recall that the intersection of $\hat{\gamma}$ with a coset $Y_l$ is either an edge or a vertex. It follows that there cannot exist a $v \in V$ satisfying $p_l << v << q_l$. Third, note that we clearly have $p_1 << p_l$ for all $2 \leq l \leq L$ and $q_l << q_L$ for all $1 \leq l \leq L-1$. Fourth, suppose for a contradiction that $2 \leq l < l' \leq L-1$ and yet $p_{l'} << p_l$. This implies that $\hat{\gamma}$ enters the coset $Y_{l'}$ before it enters the coset $Y_{l}$. \Cref{lem.sisto1} implies that $\hat{\gamma}$ first enters $Y_{l}$ close to $\pi_{Y_{l}}(X)$. But it also implies that $\hat{\gamma}$ enters $Y_{l}$ close to $\pi_{Y_{l}}(Y_{l'})$, since $\hat{\gamma}$ passes through $Y_{l'}$ beforehand. This implies that $\pi_{Y_l}(X)$ and $\pi_{Y_l}(Y_{l'})$ are close. But $d_{Y_{l}}(X, Y_{l'}) > K$ since $X < Y_{l} < Y_{l'}$ in $\G_K[X,Z]$, and so by choosing $K$ sufficiently large (i.e. much larger than the constant $M$ given by \Cref{lem.sisto1}) we have a contradiction. So if $l < l'$ then $p_{l} \leq \leq p_{l'}$. The claim follows from the four statements above.
\end{proof}

So $\hat{\gamma}$ moves through the cosets $Y_1, Y_2, \dots, Y_{L}$ in the natural order. \Cref{prop.sisto} tells us that a lift of $\hat{\gamma}$ in $G$ is a $(\lambda,\mu)$-quasigeodesic for some constants $\lambda \geq 1$ and $\mu \geq 0$. We denote some chosen lift by $\gamma$. The claim implies that $\length(\gamma) \geq \sum_{l = 1}^{L} d_G(p_l,q_l) + \sum_{l = 1}^{L-1} d_G(q_l,p_{l+1})$.

Further, \Cref{lem.sisto1} implies that there exists some uniform constant $M$ such that for $1 \leq l \leq L$ we have $d_G(p_{l},p(Y_{l},Y_{l - 1})) \leq M$ and $d_G(q_{l},p(Y_{l},Y_{l +1})) \leq M$. 

Since peripheral cosets are undistorted in $G$ (see \Cref{prop.ds}), and since standard paths \textit{within} cosets are quasigeodesics (see \Cref{prop.cosetsp}), there exist constants $\lambda' \geq 1$ and $\mu' \geq 0$ such that for $1 \leq l \leq L$ we have $\length(\zeta_l) \leq \lambda' d_G(p(Y_l,Y_{l-1}), p(Y_l,Y_{l+1})) + \mu'$. We may assume for simplicity that $\lambda' \geq \lambdaqg$ and $\mu' \geq \muqg$. It follows that 
\begin{align*}
&\length(\zeta) = \sum_{l = 1}^{L} \length(\zeta_l) + \sum_{l=1}^{L-1} \length(\gamma(Y_l,Y_{l+1})) \\
&\leq \lambda'\sum_{l=1}^{L} d_G(p(Y_l,Y_{l-1}), p(Y_l,Y_{l+1})) + \lambdaqg \sum_{l=1}^{L-1} d_G(p(Y_l,Y_{l+1}), p(Y_{l+1},Y_{l})) + L \mu' + (L-1)\muqg\\
&\leq \lambda' \sum_{l=1}^{L} (d_G(p_l,q_l) + 2M) + \lambdaqg \sum_{l=1}^{L-1} (d_G(q_l,p_{l+1}) + 2M) + L \mu' + (L-1)\muqg\\
&\leq \lambda'\length(\gamma) + 2M\lambda'L + 2M\lambda'(L - 1) + L \mu' + (L-1)\mu' \\
&= \lambda'\length(\gamma) + (2L - 1)(2M\lambda' + \mu')
\end{align*}
Since $\hat{\gamma}$ contains an edge in $Y_l$ for each $2 \leq l \leq L-1$, we have $\length(\hat{\gamma}) \geq L-2$. So certainly $\length(\gamma) \geq L-2$. Hence,
\[
\length(\zeta) \leq \lambda'\length(\gamma) + (2\length(\gamma) + 3)(2M\lambda' + \mu')
\]
and so we are done since $\length(\gamma) \leq \lambda d_G(x,z) + \mu$. 
\end{proof}

\section{Proof of the main theorem part one} \label{sec.proofpart1}

Let $G$ be relatively hyperbolic with respect to virtually abelian peripheral subgroups as in \Cref{thm.main}. Let us retain all the notation and assumptions described at the beginning of \Cref{sec.standardpaths}. 

However, in contrast to \Cref{sec.standardpaths} where the transverse quasigeodesics $\gamma(X,Y)$ were chosen arbitrarily, for this proof, we want to choose them so that the following two conditions hold. Let $X,Y \in \G$ and suppose $\{X,Y\}$ is an edge in $\pcg$. 
\begin{enumerate}
    \myitem{($\natural 1$)} \label{condition.natural1} 
    We choose $\gamma(X,Y)$ from $p(X,Y) \in G$ to $p(Y,X) \in G$ to be such that $\gamma(X,Y)$ is non-empty and not entirely contained in any coset subgraph $\C(W) \subset \Gamma(G,S)$.
    \myitem{($\natural 2$)} \label{condition.natural2} We choose $\gamma(X,Y)$ in such a way that we can deduce from the final $\chi$ letters of $\gamma(X,Y)$ which peripheral subgroup $H_1, H_2, \dots, H_T$ is such that $Y = gH_t$, where $\chi$ is a constant that only depends on $T$.
\end{enumerate}

It is always possible to choose $\gamma(X,Y)$ such that \ref{condition.natural1} holds (for uniform choices of $\lambdaqg \geq 1$, $\muqg \geq 0$) since we can force $\gamma(X,Y)$ to take a small diversion out of any coset subgraph $\C(W)$. It is possible to choose $\gamma(X,Y)$ such that \ref{condition.natural2} holds since there are only $T$ peripheral subgroups. One solution would be this: if $Y$ is a coset of $H_t$, then you could insist that $\gamma(X,Y)$ consists of a geodesic from $p(X,Y)$ to $p(Y,X)$ followed by the word $(a_{t,1} a_{t,1}^{-1})^t$ - in this case you could choose $\chi = 2T$. 

\begin{remark}
The necessity of \ref{condition.natural1} and \ref{condition.natural2} is an artifact of the fact that distinct peripheral cosets $X,Y$ can have non-empty intersection. If we have a single peripheral subgroup $H$ this cannot happen and we may choose the transverse quasigeodesics $\gamma(X,Y)$ to simply be geodesics. 

The purpose of \ref{condition.natural1} is so that we can recognise, just by looking at the letters of $\gamma(X,Y)$ whether it is a transverse quasigeodesic or contained in a peripheral coset. 

The purpose of \ref{condition.natural2} is so that we can recognise, just by looking at the final letters of $\gamma(X,Y)$, which peripheral subgroup $Y$ corresponds to. 
\end{remark}

\subsection{The map $F: G \rightarrow T_W \times T_W \times \dots \times T_W$}

Recall that $S$ is the finite generating set of $G$. Let $A = S \cup S^{-1}$ be our finite alphabet, let $W$ be the set of all finite (but non-empty) words on $A$ and let $T_W$ be the associated sentence-tree. Given $x \in G$, we will now define a sentence $F_r(x) \in T_W$ by partitioning the standard path from $e$ to $x$ into multiple words. 

\begin{definition} \label{def.fr}
We aim to define a function $F_r: G \rightarrow T_W$. Let $x \in G$. Write $X = xH_1$. Let $L = L[H_1, X]$ and suppose
\[\G_K[H_1,X] = \{H_1 = X_1 < X_2 < \dots < X_{L-1} < X_L = X\}\]
Let $1 \leq r \leq R$. Let $\zeta$ denote the $r$'th ordered standard path from $e$ to $x$ in $\Gamma(G,S)$. Recall from \Cref{def.sp} that we can write $\zeta$ as a concatenation of paths in the follow manner 
\[ \zeta = \zeta_1 \ \gamma(X_1, X_2) \ \zeta_2 \ \gamma(X_2, X_3) \ \zeta_3 \ \dots \ \zeta_{L-1} \ \gamma(X_{L-1}, X_L) \ \zeta_L\]

Let $1 \leq l \leq L$ and suppose that $X_l = gH_t$. We would like to define a sentence $\alpha_l \in T_W$. It has a different form depending on whether $r \leq R_t$ or $r > R_t$. Note that $\zeta_l$ has the form 
\[
\zeta_l = 
\begin{cases}
a_{t,r}^{q_r}a_{t,r+1}^{q_{r+1}} \dots a_{t,R_t}^{q_{R_t}} a_{t,1}^{q_1} a_{t,2}^{q_2} \dots a_{t,r-1}^{q_{r-1}} \psi_{t,j} & \textrm{if } r \leq R_t \\
a_{t,1}^{q_1}a_{t,2}^{q_{2}} \dots a_{t,R_t}^{q_{R_t}} \psi_{t,j} & \textrm{if } r > R_t
\end{cases}
\]
Then we define
\[
\alpha_l = 
\begin{cases}
(\ol{a_{t,r}})^{q_r} \ \ol{a_{t,r+1}^{q_{r+1}}} \dots \ol{a_{t,R_t}^{q_{R_t}}} \ \ol{a_{t,1}^{q_1}} \ \ol{a_{t,2}^{q_2}} \dots \ol{a_{t,r-1}^{q_{r-1}}} \  \ol{\psi_{t,j}} & \textrm{if } r \leq R_t \\
\ol{a_{t,1}^{q_1}} \ \ol{a_{t,2}^{q_{2}}} \dots \ol{a_{t,R_t}^{q_{R_t}}} \ \ol{\psi_{t,j}} & \textrm{if } r > R_t
\end{cases}
\]
where the meaning of $(\ol{a_{t,r}})^{q_r}$ when $q_r < 0$ is described in \Cref{not.sentencenegative}. Further, for $1 \leq l \leq L-1$, we define a sentence $\alpha_{l,l+1}$ by
\[\alpha_{l,l+1} = \ol{\gamma(X_l,X_{l+1})}\]
Finally, we define
\[F_r(x) = \alpha_{1} \alpha_{1,2} \alpha_{2} \alpha_{2,3} \dots \alpha_{L-1, L} \alpha_{L}\]
\end{definition}
Intuitively, $F_r(x)$ "crushes" everything in the $r$'th ordered standard path from $e$ to $x$ other than the directions corresponding to generators of the form $a_{t,r}$.

\begin{remark}
We don't allow empty words to appear in sentences. For example, if $\psi_{t,j}$ is empty, then it simply doesen't appear in the sentence $\alpha_l$ and we don't write something like $\ol{\emptyset}$. So the vertices of $T_W$ are in bijection with the free monoid over non-empty words on $S \cup S^{-1}$. 
\end{remark}

Recall that $R = \max_t R_t$.

\begin{definition} \label{def.f}
We define $F: G \rightarrow T_W \times T_W \times \dots \times T_W$ by $F(x) = (F_1(x),F_2(x), \dots, F_R(x))$. 
\end{definition}

\subsection{A quasiisometric embedding into a product of trees}

By \Cref{cor.relhypreg}, there exists a regular map $\phi: G \rightarrow \prod_{q=1}^Q \tb$ where $Q \leq \max(\asdim(G),R+1) + 1$. Recall as well that $\phi$ is the composition $G \hookrightarrow X(G) \rightarrow \prod_{q=1}^Q \tb$ where the second map comes from \Cref{cor.VAbinary}. 

\begin{theorem} \label{thm.fxphiqie}
There exist $\lambdaqie \geq 1$ and $\muqie \geq 0$ such that 
\[F \times \phi: G \rightarrow T_W \times T_W \times \dots \times T_W \times \prod_{q=1}^Q \tb\]
is a $(\lambdaqie,\muqie)$-quasiisometric embedding. 
\end{theorem}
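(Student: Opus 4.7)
My plan is to prove the two directions of the quasi-isometric inequality separately, using \Cref{thm.relhypdistanceformula} to split the lower bound between a coned-off part, captured by $\phi$, and a within-coset projection sum, captured by the $F_r$.

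For the upper bound, $\phi$ is already Lipschitz by \Cref{cor.relhypreg}, so it suffices to verify each $F_r : G \to T_W$ is coarsely Lipschitz. Restricting to adjacent $x, z \in G$, the projection complex axioms of \cite[Section 2]{PART1} together with \Cref{lem.sisto2} force the coset sequences $\G_K[H_1, xH_1]$ and $\G_K[H_1, zH_1]$ to coincide outside a bounded suffix, and in the shared cosets the basepoints $p_0$, projection points $p(X_l, X_{l \pm 1})$ and transverse quasigeodesics $\gamma(X_l, X_{l+1})$ all move by only a uniformly bounded amount. Since the within-coset standard paths are quasigeodesics (\Cref{prop.cosetsp}), only a bounded suffix of the sentences $F_r(x)$ and $F_r(z)$ can differ, giving $d_{T_W}(F_r(x), F_r(z)) \leq C$.

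For the lower bound, I would apply the relative distance formula
\[d_G(x,z) \approx \sum_{Y \in \G} \{\{d_G(\rho_Y(x), \rho_Y(z))\}\}_L + d_{\hat G}(x,z)\]
and control each summand. Any path in $X(G)$ projects to a path of no greater length in $\hat G$ by replacing each horoball traversal with the single cone edge joining its endpoints, so $d_{\hat G}(x,z) \leq d_{X(G)}(x,z)$; since $\phi$ factors as $G \hookrightarrow X(G) \xrightarrow{\phi'} \prod_{q=1}^Q \tb$ with $\phi'$ a quasi-isometric embedding by \Cref{cor.VAbinary}, this term is linearly bounded above by $d(\phi(x), \phi(z))$. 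For the projection sum, whenever $d_G(\rho_Y(x), \rho_Y(z)) > L$ is large the coset $Y = gH_t$ lies in both sequences $\G_K[H_1, xH_1]$ and $\G_K[H_1, zH_1]$ (with entry/exit projection points approximating $\rho_Y(x)$ and $\rho_Y(z)$ by \Cref{lem.sisto1}); the two within-coset standard paths through $Y$ then differ by an element of $H_t$ whose $\Z^{R_t}$-coordinates appear, via the construction of $F_r$, as the word-counts $|q_r|$ in the run $(\ol{a_{t,r}})^{q_r}$ opening the corresponding sub-sentence $\alpha_l$ (for $r \leq R_t$). Summing over $1 \leq r \leq R$ recovers the full displacement (using $R \geq R_t$ and undistortedness from \Cref{prop.ds}), and summing over $Y$ yields $\sum_Y \{\{d_G(\rho_Y(x), \rho_Y(z))\}\}_L \lesssim \sum_{r=1}^R d_{T_W}(F_r(x), F_r(z))$.

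The trickiest step will be matching the common initial sub-sentence of $F_r(x)$ and $F_r(z)$ in $T_W$ to the common prefix of the two coset sequences and the agreed-upon within-coset segments: cosets $Y$ lying in only one of the two sequences must be charged to $d_{\hat G}$ (and thence to $\phi$) rather than double-counted, and the single-word transverse contributions $\alpha_{l,l+1}$ together with the non-abelian correction words $\ol{\psi_{t,j}}$ must be tracked carefully so that they contribute to the sentence distance in the right direction without creating spurious additive or multiplicative losses.
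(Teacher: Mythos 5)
Your overall architecture (split $d_G$ into a projection sum plus a coned-off term, absorb the coned-off term into $\phi$ via $X(G)$, and charge the projections to the sentence distances) is parallel to the paper's, which runs the same split through $\tomsg$ using \Cref{cor.embedding2} and \cite[Theorem 4.3]{PART1} and then proves $d_{\tomsg}(\iota_1(x),\iota_1(z)) \approx d(F(x),F(z))$. But your lower-bound accounting rests on a claim that is false, and the proposed repair points the large terms at the wrong place. The assertion that every coset $Y$ with $d_G(\rho_Y(x),\rho_Y(z)) > L$ lies in \emph{both} $\G_K[H_1,xH_1]$ and $\G_K[H_1,zH_1]$ cannot hold: these are geodesics in the tree $\T$ issuing from the common root $H_1$, so a coset lying in both lies in their common prefix, and for any coset strictly before the divergence coset the next coset is shared, whence $\pi_Y(xH_1)$ and $\pi_Y(zH_1)$ are both uniformly close to $\pi_Y$ of that shared successor and $d_Y(x,z)$ is uniformly bounded. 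In fact the picture is the opposite of what you describe: up to a bounded number of cosets near the divergence coset (this is where \cite[Lemma 3.3]{PART1} and \cite[Lemma 3.6]{BBFS} enter), the cosets carrying terms of the truncated sum are exactly those lying in \emph{only one} of the two sequences, in the suffixes after the divergence point, together with the divergence coset itself. Also, at the divergence coset both entry points approximate $\pi_Y(H_1)$, not $\rho_Y(x)$ and $\rho_Y(z)$; it is the two \emph{exit} points that approximate $\rho_Y(x)$ and $\rho_Y(z)$, and the displacement between them is what must be read off from the runs $(\ol{a_{t,r}})^{q_r}$ versus $(\ol{a_{t,r}})^{q_r'}$.

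This matters because your stated fallback, ``cosets lying in only one of the two sequences must be charged to $d_{\hat G}$,'' cannot work: $d_{\hat G}(x,z)$ sees each peripheral coset crossing as a single edge, so a lone coset with $d_Y(x,z)$ enormous contributes $O(1)$ to $d_{\hat G}$ and, under your claim, nothing to the sentence distances either — the lower bound then fails. The correct charging is the reverse: single-sequence cosets after the divergence contribute their whole sub-sentences $\alpha_l$ (resp.\ $\beta_l$) to $d_{T_W}(F_r(x),F_r(z))$, and their lengths are comparable (summed over $r$) to the within-coset standard-path lengths, hence to the projection terms (this is the content of the paper's Claims 3 and 4 in the proof of \Cref{thm.fxphiqie}); the divergence coset is handled separately by comparing the two sub-sentences that share the same entry point, where the difference $\absval{q_r - q_r'}$ of the $r$-runs recovers the $r$'th coordinate of the displacement (the paper's Claim 5). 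Your upper bound is also thinner than it looks: for the differing suffixes one must show both that they contain boundedly many cosets and that the relevant projection distances there are bounded by roughly $K$ (the objects $p_0$, $p(X_l,X_{l\pm1})$, $\gamma(X_l,X_{l+1})$ do not ``move'' at all on the common prefix — they depend only on the cosets — so the issue is entirely in the suffixes and in the divergence coset); the paper gets this for free from the two-sided comparison with $\length(\zeta)$ in its Claims 1, 2 and 6. So the plan is salvageable and close in spirit to the paper's, but as written the key matching step — which you yourself flag as the crux — is based on an incorrect description of where the large projections live.
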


The intuition for \Cref{thm.fxphiqie} comes from \Cref{thm.relhypdistanceformula} which tells you that the metric on a relatively hyperbolic group can be split into its transverse and peripheral parts. If $x,z \in G$ then $d(\phi(x),\phi(z))$ captures the \textit{transverse} distance in $G$ between $x$ and $z$ while $d_{T_W}(F_r(x),F_r(z))$ captures the $r$'th factor of the \textit{peripheral} distance in $G$ between $x$ and $z$.

\begin{proof}[Proof of \Cref{thm.fxphiqie}]

Let $x, z \in G$. By \Cref{cor.embedding2}, \cite[Theorem 4.3]{PART1} and \Cref{cor.X(G)intobinarytrees}, we have
\[d_G(x,z) \approx d_{\tomsg}(\iota_1(x),\iota_1(z)) + d_{\prod_{q=1}^Q \tb}(\phi(x),\phi(z))\]
It follows that, in order to prove \Cref{thm.fxphiqie}, we simply need to show that 
\[d_{\tomsg}(\iota_1(x), \iota_1(z)) \approx d(F(x),F(z))\]

First, we develop some notation. Write $X := xH_1$, $Z := zH_1$, $L_X := L[H_1,X]$, $L_Z := L[H_1,Z]$, $\G_K[H_1, X] = \{H_1 = X_1 < X_2 < \dots < X_{L_X} = X \}$, $\G_K[H_1, Z] = \{H_1 = Z_1 < Z_2 < \dots < Z_{L_Z} = Z \}$ and $l_\dagger = \max \{l : X_l = Z_l\}$. So $l_\dagger$ is the index at which the paths $\G_K[H_1, X]$ and $\G_K[H_1, Z]$ diverge in $\pcg$. Further, let $X = Y_1 \rightarrow Y_2 \rightarrow \dots \rightarrow Y_{L-1} \rightarrow Y_L = Z$ denote the unique geodesic from $X$ to $Z$ in $\T = \T_K(\G, H_1)$. Suppose $1 \leq l_* \leq L$ is the index such that $Y_{l_*} = X_{l_\dagger} = Z_{l_\dagger}$. See \Cref{fig.xyz}.

\begin{figure}
    \centering
    \[\begin{tikzcd}[cramped]
	{X = X_{L_X} = Y_1} && {Z = Z_{L_Z} = Y_L} \\
	{X_{L_X-1} = Y_2} && {Z_{L_Z-1} = Y_{L-1}} \\
	\vdots && \vdots \\
	{X_{l_\dagger + 1} = Y_{l_* - 1}} && {Z_{l_\dagger + 1} = Y_{l_* + 1}} \\
	& {X_{l_\dagger} = Y_{l_*} = Z_{l_\dagger}} \\
	& {X_{l_\dagger - 1} = Z_{l_\dagger - 1}} \\
	& \vdots \\
	& {H_1 = X_1 = Z_1}
	\arrow[from=5-2, to=4-3]
	\arrow[from=4-3, to=3-3]
	\arrow[from=5-2, to=4-1]
	\arrow[from=4-1, to=3-1]
	\arrow[from=3-1, to=2-1]
	\arrow[from=2-1, to=1-1]
	\arrow[from=3-3, to=2-3]
	\arrow[from=2-3, to=1-3]
	\arrow[from=6-2, to=5-2]
	\arrow[from=7-2, to=6-2]
	\arrow[from=8-2, to=7-2]
    \end{tikzcd}\]
    \caption{The paths $\G_K[H_1,X]$ and $\G_K[H_1,Z]$.}
    \label{fig.xyz}
\end{figure}
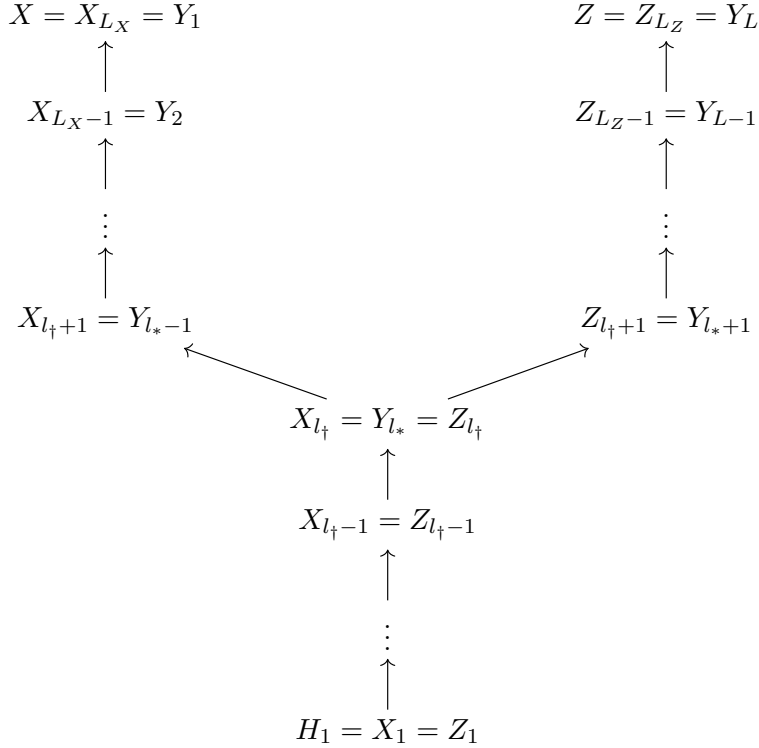

Let $\zeta$ be the composition of the following paths in $\tomsg$: 
\begin{align*}
&\textrm{a standard path from $x \in \C(Y_1)$ to $p(Y_1, Y_2) \in \C(Y_1)$ (in the sense of \Cref{def.spcoset})} \\ 
&\textrm{the transverse edge between $p(Y_1, Y_2)$ and $p(Y_2, Y_1)$} \\ 
&\textrm{a standard path from $p(Y_2, Y_1) \in \C(Y_2)$ to $p(Y_2, Y_3) \in \C(Y_2)$ (in the sense of \Cref{def.spcoset})} \\ 
&\textrm{the transverse edge between $p(Y_2, Y_3)$ and $p(Y_3, Y_2)$} \\ 
&\textrm{a standard path from $p(Y_3, Y_2) \in \C(Y_3)$ to $p(Y_3, Y_4) \in \C(Y_3)$ (in the sense of \Cref{def.spcoset})} \\ 
&\textrm{the transverse edge between $p(Y_3, Y_4)$ and $p(Y_4, Y_3)$} \\
&\vdots \\
&\textrm{the transverse edge between $p(Y_{L-1}, Y_L)$ and $p(Y_{L}, Y_{L-1})$} \\
&\textrm{a standard path from $p(Y_{L},Y_{L-1}) \in \C(Y_L)$ to $z \in \C(Y_L)$ (in the sense of \Cref{def.spcoset})}
\end{align*}
Let $\zeta_l$ indicate the intersection of $\zeta$ with the coset $\C(Y_l)$, in other words, the standard path from $p(Y_l,Y_{l-1}) \in \C(Y_l)$ to $p(Y_l,Y_{l+1}) \in \C(Y_l)$.

\begin{proofclaim} \label{claim1}
There exists $\lambda \geq 1$ and $\mu \geq 0$, not depending on $x$ or $z$, such that $\zeta$ is a $(\lambda,\mu)$-quasigeodesic in $\tomsg$. 
\end{proofclaim}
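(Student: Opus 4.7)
The plan is to reduce the claim to the distance formula for the tree of metric spaces $\tomsg$ established in \cite[Section 4]{PART1}. That formula tells us that for points $a\in\C(Y_1)$ and $b\in\C(Y_L)$ whose cosets are joined by the unique $\T$-geodesic $Y_1\to Y_2\to\dots\to Y_L$, the distance $d_{\tomsg}(a,b)$ is, up to uniform multiplicative and additive constants, equal to
\[d_{\C(Y_1)}(a,p(Y_1,Y_2))+\sum_{l=2}^{L-1}d_{\C(Y_l)}(p(Y_l,Y_{l-1}),p(Y_l,Y_{l+1}))+d_{\C(Y_L)}(p(Y_L,Y_{L-1}),b)+(L-1),\]
where the term $(L-1)$ accounts for the transverse edges that any path from $a$ to $b$ in $\tomsg$ must cross.

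For the upper bound on $\ell(\zeta)$, I would observe that each $\zeta_l$ is a standard path within the coset $\C(Y_l)$ and so, by \Cref{prop.cosetsp}, has length at most $\lambdacosetsp$ times the distance in $\C(Y_l)$ between its endpoints, plus $\mucosetsp$. Summing these $L$ contributions and adding the $L-1$ transverse edges, each of unit length in $\tomsg$, yields
\[\ell(\zeta)\leq \lambdacosetsp\cdot S + L\mucosetsp + (L-1),\]
where $S$ denotes the sum of within-coset distances appearing in the distance formula. The distance formula forces both $S$ and $L-1$ to be dominated, up to uniform constants, by $d_{\tomsg}(\iota_1(x),\iota_1(z))$, so $\ell(\zeta)\leq \lambda\, d_{\tomsg}(\iota_1(x),\iota_1(z))+\mu$ for suitable uniform $\lambda,\mu$.

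To upgrade this to the full quasigeodesic property, the same argument must be applied to every sub-path of $\zeta$. Given $\zeta(s),\zeta(t)$ lying in cosets $Y_l$ and $Y_{l'}$ respectively (with $l\leq l'$), the restriction $\zeta|_{[s,t]}$ decomposes exactly as above with $Y_l\to Y_{l+1}\to\dots\to Y_{l'}$ playing the role of the $\T$-geodesic between its endpoints, and the distance formula applies to this sub-geodesic as well. Consequently $|s-t|=\ell(\zeta|_{[s,t]})\leq \lambda\, d_{\tomsg}(\zeta(s),\zeta(t))+\mu$, while the companion inequality $d_{\tomsg}(\zeta(s),\zeta(t))\leq|s-t|$ is automatic from the triangle inequality. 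The main obstacle is verifying that the $\T$-geodesic between any two cosets visited by $\zeta$ is the obvious sub-segment of $Y_1\to\dots\to Y_L$, and that the projection points used by $\zeta$ to transition between consecutive cosets are precisely those which appear in the distance formula. Both facts ultimately follow from the strong projection axioms and the analysis of $\T$ carried out in \cite[Sections 2--3]{PART1}.
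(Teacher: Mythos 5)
Your proposal is correct and is essentially the paper's argument: both decompose $\zeta$ into the coset pieces $\zeta_l$ plus the transverse edges, bound each $\length(\zeta_l)$ against the within-coset distance between its endpoints via \Cref{prop.cosetsp}, and absorb the $L$-dependent error using the fact that $L-1$ is controlled by $d_{\tomsg}(x,z)$; the only cosmetic difference is that the paper obtains the needed distance decomposition by cutting an actual geodesic of $\tomsg$ into its coset pieces (with transverse edges of length $K$, not $1$) rather than quoting a distance formula from \cite[Section 4]{PART1}. Your explicit verification of the subpath condition is a harmless, slightly more careful addition than what the paper records.
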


\begin{proof}[Proof of claim 1]
Let $\gamma$ denote a geodesic from $x$ to $z$ in $\tomsg$. For $1 \leq l \leq L$, let $\gamma_l$ indicate the intersection of $\gamma$ with the coset $Y_l$. We have that 
\[d_{\tomsg}(x,z) = \length(\gamma) = \sum_{l=1}^L \length(\gamma_l) + K(L-1) \]
Now, by \Cref{prop.cosetsp}, since $\gamma_l$ and $\zeta_l$ have the same initial and terminal vertices, we have that $\length(\zeta_l) \leq \lambdacosetsp \length(\gamma_l) + \mucosetsp$. Hence,
\begin{align*}
\length(\zeta) &= \sum_{l=1}^L \length(\zeta_l) + K(L-1) \\
&\leq \lambdacosetsp \sum_{l=1}^L \length(\gamma_l) + L\mucosetsp + K(L-1) \\
&\leq \lambdacosetsp d_{\tomsg}(x,z) + L\mucosetsp
\end{align*}
If we note that $L \leq d_{\tomsg}(x,z) + 1$ then we are done. 
\end{proof}

Given a coset $Y_l$, where $1 \leq l \leq L$, let $s_r(Y_l)$ denote the amount of times that the path $\zeta$ crosses a generator of the form $a_{t,r}^{\pm 1}$ in the coset subgraph $\C(Y_l) \subset \tomsg$. Further, let us define $s_{r} = \sum_{l=1}^L s_r(Y_l)$ and $s = \sum_{r = 1}^R s_r$. 

\begin{proofclaim} \label{claim2}
There exist $\lambda' \geq 1$ and $\mu' \geq 0$, not depending on $x$ or $z$, such that 
\[ s \leq \length(\zeta) \leq \lambda' s + \mu'\]
\end{proofclaim}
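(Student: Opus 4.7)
The plan is to prove the two inequalities separately. The lower bound $s \leq \length(\zeta)$ is immediate, since each of the $s$ abelian-generator crossings contributes $1$ to the length of $\zeta$ and all other edges contribute non-negatively.

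For the upper bound, I would first decompose $\length(\zeta)$ according to the construction of $\zeta$. The $L-1$ transverse edges of $\zeta$ in $\tomsg$ each have length $K$, contributing a total of $K(L-1)$. Each standard path $\zeta_l$ within a coset $Y_l = gH_t$ has, by \Cref{def.spcoset}, the form $\psi_{t,i} \, \nu \, \psi_{t,j}$ where $\length(\psi_{t,i}), \length(\psi_{t,j}) \leq I$ and $\nu$ is the abelian portion. By construction $\length(\nu) = \sum_{r=1}^{R_t} |q_r| = \sum_{r=1}^R s_r(Y_l)$. Summing over $l$ yields
\[
\length(\zeta) = \sum_{l=1}^L \length(\zeta_l) + K(L-1) \leq s + 2IL + K(L-1).
\]

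The remaining task is to bound $L$ linearly in terms of $s$. For this I would use \Cref{lem.existsatrgenerator}: if $Y_{l-1}, Y_l, Y_{l+1}$ are consecutive cosets with $d_{Y_l}(Y_{l-1}, Y_{l+1}) > K$, then $\zeta_l$ contains at least one abelian generator $a_{t,r}^{\pm 1}$, i.e. $\sum_{r=1}^R s_r(Y_l) \geq 1$. Since $Y_1 \to Y_2 \to \dots \to Y_L$ is a geodesic in $\T = \T_K(\G,H_1)$, the defining property of such tree geodesics (see \cite[Section 3]{PART1}) guarantees exactly that $d_{Y_l}(Y_{l-1}, Y_{l+1}) > K$ for every $2 \leq l \leq L-1$. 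Consequently $s \geq L-2$, and substituting back gives
\[
\length(\zeta) \leq s + (2I+K)(L-1) + 2I \leq s + (2I+K)(s+1) + 2I = (2I+K+1)s + (4I+K),
\]
so we may take $\lambda' = 2I+K+1$ and $\mu' = 4I+K$, both of which depend only on $I$ and $K$ and not on $x$ or $z$.

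The main (and essentially only) obstacle is invoking the correct property of the tree $\T_K(\G,H_1)$ to secure the projection-distance bound $d_{Y_l}(Y_{l-1},Y_{l+1}) > K$ at each interior vertex of the geodesic; everything else is a direct accounting of the three ingredients (non-abelian $\psi_{t,i}$-segments, abelian segments, and transverse edges) that make up $\zeta$.
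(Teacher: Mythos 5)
Your decomposition and your lower bound coincide with the paper's argument: the same splitting of each $\zeta_l$ into $\psi_{t,i}\,\nu\,\psi_{t,j}$, the same estimate $\length(\zeta) \leq s + 2IL + K(L-1)$, and the same idea of bounding $L$ via \Cref{lem.existsatrgenerator}. The one step that does not go through as written is the assertion that, because $Y_1 \rightarrow \dots \rightarrow Y_L$ is a geodesic in $\T = \T_K(\G,H_1)$, one automatically has $d_{Y_l}(Y_{l-1},Y_{l+1}) > K$ for \emph{every} interior vertex $2 \leq l \leq L-1$. That is not a property of geodesics in this tree: $\T_K(\G,H_1)$ is assembled from the paths $\G_K[H_1,\cdot]$, and the geodesic from $X$ to $Z$ is the concatenation of (the reverse of) a terminal segment of $\G_K[H_1,X]$ down to the divergence vertex $Y_{l_*} = X_{l_\dagger} = Z_{l_\dagger}$ with a terminal segment of $\G_K[H_1,Z]$. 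At interior vertices lying strictly inside either monotone segment the projection bound does hold, and this is exactly what the paper invokes; but at the turning vertex $Y_{l_*}$ there is no control on $d_{Y_{l_*}}(Y_{l_*-1},Y_{l_*+1}) = d_{X_{l_\dagger}}(X_{l_\dagger+1},Z_{l_\dagger+1})$, since the two branch cosets $X_{l_\dagger+1}$ and $Z_{l_\dagger+1}$ may project to nearby subsets of $\C(X_{l_\dagger})$ (indeed, later in the proof the case $X_{l_\dagger} \notin \G_K[X,Z]$, i.e. $d_{X_{l_\dagger}}(X,Z) \leq K$, is treated separately, and in that case the relevant projection distances are small). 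So \Cref{lem.existsatrgenerator} cannot be applied at $l = l_*$, and the conclusion $s \geq L-2$ is not justified.

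The damage is minor: excluding the single vertex $l_*$ gives $s \geq L-3$, which is what the paper proves, and substituting $L \leq s+3$ into your estimate $\length(\zeta) \leq s + 2IL + K(L-1)$ still yields $\length(\zeta) \leq (1+2I+K)s + (6I+2K)$, so the claim holds with constants depending only on $I$ and $K$. The fix is a one-line adjustment, but the appeal to a ``defining property of tree geodesics'' giving the projection bound at every interior vertex is, as stated, the gap.
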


\begin{proof}[Proof of claim 2]
The lower bound of the inequality follows from the definition of $s$. 

Write $\zeta_l = \psi_l \nu_l \psi_l'$ where $\nu_l$ indicates the segment of $\zeta_l$ that consists entirely of the abelian generators $a_{t,r}$ and $\psi_l$, $\psi_l'$ indicate the standard non-abelian paths between cosets. For the upper bound of the inequality, note that we have
\begin{align*}
\length(\zeta) &= \sum_{l=1}^L \length(\zeta_l) + K(L-1) \\
&= s + \sum_{l=1}^L (\length(\psi_l) + \length(\psi_l')) + K(L-1) \\
&\leq s + 2LI + K(L-1)
\end{align*}

If $1 < l < l_*$, it follows from \Cref{lem.existsatrgenerator} that the intersection of $\zeta$ with the coset $Y_l$ contains at least one abelian generator $a_{t,r}^{\pm 1}$ since $d_{Y_l}(Y_{l-1}, Y_{l+1}) > K$. For the same reason, if $l_* < l < L$ then the intersection of $\zeta$ with the coset $Y_l$ also contains at least one abelian generator $a_{t,r}^{\pm 1}$. It follows that $s \geq L - 3$. Hence,
\[\length(\zeta) \leq s + 2(s+3)I + K(s+2) = (1 + 2I + K)s + 6I + 2K\]
and we are done. 
\end{proof}

As described in \Cref{def.fr}, we can write $F_r(x)$ and $F_r(z)$ as concatenations of sentences. Let us write
\[F_r(x) = \alpha_1 \alpha_{1,2} \alpha_2 \alpha_{2,3} \dots \alpha_{L_X}\]
and 
\[F_r(z) = \beta_1 \beta_{1,2} \beta_2 \beta_{2,3} \dots \beta_{L_Z}\]

\begin{proofclaim} \label{claim3}
We have 
\begin{align*}
d_{T_W}(F_r(x),F_r(z)) &= d_{T_W}(\alpha_1 \alpha_{1,2} \dots \alpha_{l_{\dagger}}, \beta_1 \beta_{1,2} \dots \beta_{l_{\dagger}}) \\
&+ \sum_{l = l_{\dagger}+1}^{L_X} \length(\alpha_{l}) + \sum_{l = l_\dagger+1}^{L_Z} \length(\beta_{l}) + L - 1
\end{align*}

\end{proofclaim}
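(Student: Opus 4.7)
The plan is to compute the lowest common ancestor $\lca(F_r(x), F_r(z))$ in the sentence-tree $T_W$ explicitly, and then derive the claim by substituting it into the standard formula $d_{T_W}(\alpha, \beta) = \length(\alpha) + \length(\beta) - 2 \length(\lca(\alpha, \beta))$.

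First I would show that $\alpha_l = \beta_l$ for $1 \leq l < l_\dagger$ and $\alpha_{l, l+1} = \beta_{l, l+1}$ for $1 \leq l \leq l_\dagger - 1$. Since $X_l = Z_l$ for $l \leq l_\dagger$, the common initial segment of $\G_K[H_1, X]$ and $\G_K[H_1, Z]$ forces the basepoints $p_0(X_l) = p_0(Z_l)$ to agree; for $l < l_\dagger$ the neighbouring cosets also agree, so the endpoints $p(X_l, X_{l \pm 1})$ of the $r$'th ordered standard path through $\C(X_l)$ coincide with those of $\C(Z_l)$, and the $r$'th ordered standard path is uniquely determined by these data. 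The transverse quasigeodesics $\gamma(X_l, X_{l+1})$ for $l < l_\dagger$ are fixed by their endpoints, which also coincide. Hence the sentence $\mu := \alpha_1 \alpha_{1,2} \ldots \alpha_{l_\dagger - 1} \alpha_{l_\dagger - 1, l_\dagger}$ is a common prefix of $F_r(x)$ and $F_r(z)$ in $T_W$.

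Next I would identify the LCA. Writing $X_{l_\dagger} = Z_{l_\dagger} = gH_t$, both $\alpha_{l_\dagger}$ and $\beta_{l_\dagger}$ are $r$'th ordered standard-path sentences in $\C(gH_t)$ from the common basepoint to distinct terminal vertices $p(X_{l_\dagger}, X_{l_\dagger + 1}) \neq p(Z_{l_\dagger}, Z_{l_\dagger + 1})$; let $\Lambda$ denote the LCA of $\alpha_{l_\dagger}$ and $\beta_{l_\dagger}$ inside $T_W$. I claim $\lca(F_r(x), F_r(z)) = \mu \Lambda$. When the divergence between $\alpha_{l_\dagger}$ and $\beta_{l_\dagger}$ is strictly internal, this is immediate. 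When $\alpha_{l_\dagger}$ is a proper prefix of $\beta_{l_\dagger}$ and $L_X > l_\dagger$, the next word of $F_r(x)$ past $\mu \alpha_{l_\dagger}$ is the transverse word $\ol{\gamma(X_{l_\dagger}, X_{l_\dagger + 1})}$, which by property $(\natural 1)$ contains a letter outside $S_{H_t} \cup S_{H_t}^{-1}$, whereas the next word of $F_r(z)$ is a further word of the standard path inside $Z_{l_\dagger} = X_{l_\dagger}$ and so uses only letters in $S_{H_t} \cup S_{H_t}^{-1}$; the two words therefore differ. When $\alpha_{l_\dagger} = \beta_{l_\dagger}$ and both $L_X, L_Z > l_\dagger$, the next words are $\ol{\gamma(X_{l_\dagger}, X_{l_\dagger+1})}$ and $\ol{\gamma(Z_{l_\dagger}, Z_{l_\dagger+1})}$, which differ because $X_{l_\dagger + 1} \neq Z_{l_\dagger + 1}$ forces distinct endpoints, with property $(\natural 2)$ giving a clean way to recognise this from the final letters. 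The symmetric and the degenerate cases (when $L_X = l_\dagger$ or $L_Z = l_\dagger$, so that one sentence is itself equal to $\mu \Lambda$) are dispatched immediately, since one sentence is then a prefix of the other.

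Finally the remainder is bookkeeping. Using $\length(F_r(x)) = \sum_{l=1}^{L_X} \length(\alpha_l) + (L_X - 1)$ (counting one word for each transverse block), and similarly for $\length(F_r(z))$ and for the shorter sentences $\alpha_1 \ldots \alpha_{l_\dagger}$, $\beta_1 \ldots \beta_{l_\dagger}$ (which share the same LCA $\mu \Lambda$), I would substitute into the formula $d_{T_W}(\alpha,\beta) = \length(\alpha) + \length(\beta) - 2\length(\lca(\alpha,\beta))$ for both pairs and take the difference, arriving at the target identity once one uses the tree identity $L - 1 = (L_X - l_\dagger) + (L_Z - l_\dagger)$. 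This last identity is read off the geodesic $Y_1, \ldots, Y_L$ in $\T$, which descends $L_X - l_\dagger$ steps from $Y_1 = X$ to $Y_{l_*} = X_{l_\dagger} = Z_{l_\dagger}$ and then ascends $L_Z - l_\dagger$ steps to $Y_L = Z$. The main obstacle is the LCA identification, and specifically the invocation of $(\natural 1)$ to ensure that a transverse word cannot be confused with a word appearing in a coset standard-path sentence when comparing the first sentences past $\mu \Lambda$.
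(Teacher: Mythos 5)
Your proposal is correct and is essentially the paper's own argument in different clothing: the paper proves the claim by exhibiting the concatenation through $\alpha_1\alpha_{1,2}\dots\alpha_{l_\dagger}$ and $\beta_1\beta_{1,2}\dots\beta_{l_\dagger}$ as an explicit geodesic in $T_W$, which amounts precisely to your identification of the lowest common ancestor of $F_r(x)$ and $F_r(z)$ as your $\mu\Lambda$, with the same key inputs (agreement of the sentences up to the divergence coset, property $(\natural 1)$ to prevent a transverse word being mistaken for an in-coset word, and the identity $L-1=(L_X-l_\dagger)+(L_Z-l_\dagger)$ read off the geodesic in $\T$). Two small remarks: your parenthetical that the terminal vertices $p(X_{l_\dagger},X_{l_\dagger+1})$ and $p(Z_{l_\dagger},Z_{l_\dagger+1})$ are distinct is not true in general (they may coincide, exactly the case $\alpha_{l_\dagger}=\beta_{l_\dagger}$, which your case analysis covers anyway), and your treatment of that equal case via $(\natural 2)$ is a point the paper's proof of this claim passes over silently (it surfaces only later, in Claim 2 of Section 6), so spelling it out is a slight strengthening rather than a departure.
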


\begin{proof}[Proof of claim 3]
Write
\begin{align*}
&u_1 = \alpha_1 \alpha_{1,2} \dots \alpha_{l_{\dagger}} \\
&u_2 = \alpha_1 \alpha_{1,2} \dots \alpha_{l_{\dagger}} \alpha_{l_\dagger,l_\dagger + 1} \\
&u_3 = \alpha_1 \alpha_{1,2} \dots \alpha_{l_{\dagger}} \alpha_{l_\dagger,l_\dagger + 1} \alpha_{l_{\dagger+1}} \\
&\vdots \\
&u_m = \alpha_1 \alpha_{1,2} \dots \alpha_{l_{\dagger}} \alpha_{l_\dagger,l_\dagger + 1} \alpha_{l_{\dagger+1}} \dots \alpha_{L_X} = F_r(x)
\end{align*}
Similarly, we choose
\begin{align*}
&v_1 = \beta_1 \beta_{1,2} \dots \beta_{l_{\dagger}} \\
&v_2 = \beta_1 \beta_{1,2} \dots \beta_{l_{\dagger}} \beta_{l_\dagger,l_\dagger + 1} \\
&v_3 = \beta_1 \beta_{1,2} \dots \beta_{l_{\dagger}} \beta_{l_\dagger,l_\dagger + 1} \beta_{l_{\dagger+1}} \\
&\vdots \\
&v_n = \beta_1 \beta_{1,2} \dots \beta_{l_{\dagger}} \beta_{l_\dagger,l_\dagger + 1} \beta_{l_{\dagger+1}} \dots \beta_{L_Z} = F_r(z)
\end{align*}

Suppose, for a contradiction, that $v_1$ descends from $u_2$. If $v_1$ were to descend from $u_2$, then $\alpha_1 \alpha_{1,2} \dots \alpha_{l_\dagger}\alpha_{l_\dagger, l_\dagger + 1}$ would be a subsentence at the start of $\beta_1 \beta_{1,2} \dots \beta_{l_\dagger}$. Noting that $\alpha_1 \alpha_{1,2} \dots \alpha_{l_\dagger -1} \alpha_{l_\dagger - 1, l_\dagger} = \beta_1 \beta_{1,2} \dots \beta_{l_\dagger -1} \beta_{l_\dagger - 1, l_\dagger}$, it follows that $\alpha_{l_\dagger} \alpha_{l_\dagger, l_\dagger + 1}$ is a subsentence of $\beta_{l_\dagger}$. However, due to our choice of transverse quasigeodesics described in \ref{condition.natural1}, the path corresponding to $\alpha_{l_\dagger} \alpha_{l_\dagger, l_\dagger + 1}$ leaves the coset $\C(X_{l_\dagger}) = \C(Z_{l_\dagger})$. In contrast, the path corresponding to $\beta_{l_\dagger}$ remains entirely within $\C(X_{l_\dagger}) = \C(Z_{l_\dagger})$. So we have a contradiction. 

For similar reasons, $u_1$ cannot descend from $v_2$. 

We claim that the path
\[u_m \rightarrow u_{m-1} \rightarrow \dots \rightarrow u_2 \rightarrow u_1 \rightarrow v_1 \rightarrow v_2 \rightarrow \dots \rightarrow v_{n-1} \rightarrow v_n\]
is a geodesic in $T_W$. If $v_1$ does not descend from $u_1$ and $u_1$ does not descend from $v_1$ then it is clearly a geodesic. If $v_1$ descends from $u_1$ then, since $d_{T_W}(u_1,u_2) = 1$ and since $v_1$ does not descend from $u_2$, we can also deduce that it is a geodesic. Similarly, if $u_1$ descends from $v_1$ it is a geodesic. Hence,
\begin{align*}
&d_{T_W}(F_r(x),F_r(z)) = d_{T_W}(\alpha_1 \alpha_{1,2} \dots \alpha_{L_X}, \beta_1 \beta_{1,2} \dots \beta_{L_Z}) \\
&= d_{T_W}(\alpha_1 \alpha_{1,2} \dots \alpha_{l_{\dagger}}, \beta_1 \beta_{1,2} \dots \beta_{l_{\dagger}}) \\
&+ \sum_{l = l_{\dagger}+1}^{L_X} d_{T_W}(\alpha_1 \alpha_{1,2} \dots \alpha_{l-1}, \alpha_1 \alpha_{1,2} \dots \alpha_{l} \alpha_{l-1,l}) + d_{T_W}(\alpha_1 \alpha_{1,2} \dots \alpha_{l-1,l}, \alpha_1 \alpha_{1,2} \dots \alpha_{l-1,l} \alpha_{l}) \\
&+ \sum_{l = l_\dagger+1}^{L_Z} d_{T_W}(\beta_1 \beta_{1,2} \dots \beta_{l-1}, \beta_1 \beta_{1,2} \dots \beta_{l-1} \beta_{l-1,l}) + d_{T_W}(\beta_1 \beta_{1,2} \dots \beta_{l-1,l}, \beta_1 \beta_{1,2} \dots \beta_{l-1,l} \beta_{l})
\end{align*}
By noting that 
\[d_{T_W}(\alpha_1 \alpha_{1,2} \dots \alpha_{l-1}, \alpha_1 \alpha_{1,2} \dots \alpha_{l-1} \alpha_{l-1,l}) = d_{T_W}(\beta_1 \beta_{1,2} \dots \beta_{l-1}, \beta_1 \beta_{1,2} \dots \beta_{l-1} \beta_{l-1,l}) = 1\]
and
\[d_{T_W}(\alpha_1 \alpha_{1,2} \dots \alpha_{l-1,l}, \alpha_1 \alpha_{1,2} \dots \alpha_{l-1,l} \alpha_{l}) = \length(\alpha_l)\]
and
\[d_{T_W}(\beta_1 \beta_{1,2} \dots \beta_{l-1,l}, \beta_1 \beta_{1,2} \dots \beta_{l-1,l} \beta_{l}) = \length(\beta_l)\]
the claim follows. 
\end{proof}

\begin{proofclaim} \label{claim4}
If $l_\dagger < l \leq L_X$ then $s_r(X_l) \leq \length(\alpha_l) \leq s_r(X_l) + R$
and if $l_\dagger < l \leq L_Z$ then $s_r(Z_l) \leq \length(\beta_l) \leq s_r(Z_l) + R$.
\end{proofclaim}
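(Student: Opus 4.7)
The plan is to unpack \Cref{def.bp}, \Cref{def.spcoset} and \Cref{def.fr}, then perform an elementary word count. The claim is purely a syntactic statement about $\length(\alpha_l)$.

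First I would justify the explicit form of $\alpha_l$ recorded in \Cref{def.fr}. The segment $\zeta_l$ of the $r$'th ordered standard path from $e$ to $x$ inside $\C(X_l)$ begins at $p(X_l, X_{l-1})$ (by \Cref{def.sp}), which by \Cref{def.bp} equals the basepoint $p_0(X_l)$. Hence in the decomposition $\zeta_l = \psi_{t,i}\, \nu\, \psi_{t,j}$ of \Cref{def.spcoset}, the leading non-abelian piece $\psi_{t,i}$ is empty, which is precisely why no leading $\psi_{t,i}$ block appears in the formula for $\alpha_l$ in \Cref{def.fr}.

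Next I would split on whether $r \leq R_t$ or $r > R_t$, where $X_l = gH_t$. In the case $r \leq R_t$, any standard path in $\C(X_l)$ between the two endpoints of $\zeta_l$ uses each generator $a_{t,r'}^{\pm 1}$ the same number of times, since the abelian factor is a geodesic in $\Z^{R_t}$ between fixed vertices. Hence $s_r(X_l) = \absval{q_r}$, where the exponents $q_{r'}$ are those appearing in
\[\alpha_l = (\ol{a_{t,r}})^{q_r} \ \ol{a_{t,r+1}^{q_{r+1}}} \dots \ol{a_{t,R_t}^{q_{R_t}}} \ \ol{a_{t,1}^{q_1}} \dots \ol{a_{t,r-1}^{q_{r-1}}} \ \ol{\psi_{t,j}}.\]
Counting the words in $\alpha_l$ yields $\absval{q_r}$ single-letter words from the leading block $(\ol{a_{t,r}})^{q_r}$, at most one word from each of the $R_t - 1$ remaining abelian blocks (blocks with vanishing exponent contribute nothing), and at most one word from the trailing $\ol{\psi_{t,j}}$. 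Combining these bounds gives
\[s_r(X_l) = \absval{q_r} \leq \length(\alpha_l) \leq \absval{q_r} + (R_t - 1) + 1 \leq s_r(X_l) + R.\]
In the case $r > R_t$ the generator $a_{t,r}$ does not exist in $H_t$, so $s_r(X_l) = 0 \leq \length(\alpha_l)$ trivially, while $\length(\alpha_l) \leq R_t + 1 \leq R$ because $r \leq R$ and $r > R_t$ together force $R_t \leq R - 1$. The argument for $\beta_l$ is identical by symmetry.

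I do not anticipate any real obstacle here; the only point requiring attention is the basepoint identification that validates the explicit form of $\alpha_l$, after which the claim is immediate from counting words in that formula.
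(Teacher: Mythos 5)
Your proof is correct and follows essentially the same route as the paper: split on $r \leq R_t$ versus $r > R_t$, use the identification $s_r(X_l) = \absval{q_r}$ coming from the fact that the abelian part of any standard path between the two fixed endpoints in $\C(X_l)$ is a grid geodesic, and then count the words in $\alpha_l$ to get $\length(\alpha_l) \leq \absval{q_r} + R_t \leq s_r(X_l) + R$. Your extra remark justifying the absence of the leading $\psi_{t,i}$ via the basepoint $p_0(X_l) = p(X_l,X_{l-1})$ is a detail the paper absorbs into \Cref{def.fr}, not a genuinely different argument.
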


\begin{proof}[Proof of claim 4]
Let $l_\dagger < l \leq L_X$ and suppose $X_l$ is a coset of $H_t$. We have two cases: either $r \leq R_t$ or $r > R_t$. If $r \leq R_t$, then $\alpha_{l}$ has the form 
\[(\ol{a_{t,r}})^{q_r} \ol{a_{t,r+1}^{q_{r+1}}} \dots \ol{a_{t,r-1}^{q_{r-1}}} \ol{\psi_{t,j}}\]
where 
\[(a_{t,r})^{q_r} a_{t,r+1}^{q_{r+1}} \dots a_{t,r-1}^{q_{r-1}} \psi_{t,j}\]
is the $r$'th ordered standard path from $p(X_l, X_{l-1})$ to $p(X_l, X_{l+1})$. Hence, 
\[s_r(X_l) \leq \length(\alpha_l) \leq s_r(X_l) + R_t \leq s_r(X_l) + R\]
where we have used the fact that $s_r(X_l) = \absval{q_r}$. If, $r > R_t$, then $s_r(X_l) = 0$ and $\length(\alpha_l) \leq R_t + 1 \leq R$. So, in either case, we have $s_r(X_l) \leq \length(\alpha_l) \leq s_r(X_l) + R$.

For similar reasons, for $l_\dagger < l \leq L_Z$, we have $s_r(Z_l) \leq \length(\beta_l) \leq s_r(Z_l) + R$
and we are done.
\end{proof}

\begin{proofclaim} \label{claim5}
We have $s_r(Y_{l_*}) \leq d_{T_W}(\alpha_1 \alpha_{1,2} \dots \alpha_{l_{\dagger}}, \beta_1 \beta_{1,2} \dots \beta_{l_{\dagger}}) \leq s_r(Y_{l_*}) + 2(R+1)$.
\end{proofclaim}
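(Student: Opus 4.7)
The strategy is to reduce to a word-by-word comparison of $\alpha_{l_\dagger}$ with $\beta_{l_\dagger}$ inside the common coset $\C(Y_{l_*})$, and then to match this against the $a_{t,r}$-count along $\zeta \cap \C(Y_{l_*})$.

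First, because $X_l = Z_l$ for all $l \leq l_\dagger$, the constituent paths of the $r$-th ordered standard paths from $e$ to $x$ and from $e$ to $z$ share endpoints up to the entry into $\C(X_{l_\dagger})$, and so $\alpha_l = \beta_l$ for $l < l_\dagger$ and $\alpha_{l-1,l} = \beta_{l-1,l}$ for $l \leq l_\dagger$. Writing $P$ for this common prefix, the descending-subsentence argument of Claim~3 yields
\[
d_{T_W}(P\alpha_{l_\dagger}, P\beta_{l_\dagger}) = \length(\alpha_{l_\dagger}) + \length(\beta_{l_\dagger}) - 2k,
\]
where $k$ is the length of the longest common initial subsentence of $\alpha_{l_\dagger}$ and $\beta_{l_\dagger}$.

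The central observation is that $p_0(X_{l_\dagger}) = p_0(Z_{l_\dagger}) = p_0(Y_{l_*})$, so both $\alpha_{l_\dagger}$ and $\beta_{l_\dagger}$ encode the $r$-th ordered standard path within $\C(Y_{l_*})$ starting at the common basepoint $p_0(Y_{l_*})$, with endpoints $p(Y_{l_*}, X_{l_\dagger+1})$ and $p(Y_{l_*}, Z_{l_\dagger+1})$ respectively. Writing the $\Z^{R_t}$-coordinates of the corresponding abelian segments (in the grid through $p_0(Y_{l_*})$) as $\vec{q}^\alpha$ and $\vec{q}^\beta$, each sentence begins with $|q_r^\alpha|$ respectively $|q_r^\beta|$ single-letter $\ol{a_{t,r}}$-words and has at most $R$ further words (the remaining abelian coordinates together with the trailing $\ol{\psi}$-word). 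At the same time, $\zeta \cap \C(Y_{l_*})$ is itself a standard path in $\C(Y_{l_*})$ whose abelian segment realises the difference $\vec{q}^\beta - \vec{q}^\alpha$, so $s_r(Y_{l_*}) = |q_r^\beta - q_r^\alpha|$ up to a universally bounded correction coming from the concatenation of the non-abelian $\psi$'s (which lies in $A_t$).

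A sign-based case split now finishes the argument. If $q_r^\alpha$ and $q_r^\beta$ share a sign (or one vanishes), then the first $\min(|q_r^\alpha|, |q_r^\beta|)$ single-letter $\ol{a_{t,r}}$-words match, so $\min(|q_r^\alpha|, |q_r^\beta|) \leq k \leq \min(|q_r^\alpha|, |q_r^\beta|) + R$ and $|q_r^\beta - q_r^\alpha| = \bigl| |q_r^\alpha| - |q_r^\beta| \bigr|$. If the signs differ, then the first single-letter words $\ol{a_{t,r}}$ and $\ol{a_{t,r}^{-1}}$ already disagree, forcing $k = 0$ and $|q_r^\beta - q_r^\alpha| = |q_r^\alpha| + |q_r^\beta|$. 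In both cases, combining with the bounds $|q_r^\alpha| \leq \length(\alpha_{l_\dagger}) \leq |q_r^\alpha| + R$ and $|q_r^\beta| \leq \length(\beta_{l_\dagger}) \leq |q_r^\beta| + R$ from Claim~4 yields the two-sided inequality. The main (minor) subtlety is bookkeeping the $\leq R$ non-$a_{t,r}$ words that may or may not match between $\alpha_{l_\dagger}$ and $\beta_{l_\dagger}$, together with the bounded correction from the $\psi$-transitions in $\zeta$; altogether these fit inside the $+2(R+1)$ slack. The edge case $r > R_t$ (where neither sentence contains an $\ol{a_{t,r}}$-word and $s_r(Y_{l_*}) = 0$) follows immediately from the same length bounds.
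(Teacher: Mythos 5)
Your argument follows the same route as the paper's proof of this claim: strip the common prefix so that the distance equals $d_{T_W}(\alpha_{l_\dagger},\beta_{l_\dagger})$; read off from the form of the $r$'th ordered standard path that $\alpha_{l_\dagger}$ is an initial block of $|q_r|$ single-letter $\ol{a_{t,r}^{\pm 1}}$-words followed by at most $R$ further words (likewise $\beta_{l_\dagger}$ with $q_r'$), which gives $|q_r-q_r'| \leq d_{T_W}(\alpha_{l_\dagger},\beta_{l_\dagger}) \leq |q_r-q_r'|+2R_t$ (your sign split is exactly the detail behind the paper's corresponding inequality); handle $r>R_t$ as a degenerate case; and finally compare with the $a_{t,r}$-count of $\zeta_{l_*}$, the standard path from $p=p(Y_{l_*},Y_{l_*-1})$ to $p'=p(Y_{l_*},Y_{l_*+1})$ through the common basepoint coset $p_0(Y_{l_*})A_t$.

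The gap is in that last comparison. The claim has exact constants: the lower bound $s_r(Y_{l_*}) \leq d_{T_W}(\cdots)$ carries no slack and the upper bound allows exactly $2(R+1)$. The paper gets these because it evaluates $s_r(Y_{l_*})$ exactly: the non-abelian legs of $\zeta_{l_*}$ contain no $a_{t,r}$-letters, and its abelian segment is a grid geodesic from $p\psi_{t,j}^{-1}$ to $p'(\psi_{t,j}')^{-1}$, i.e.\ between the points of $p_0(Y_{l_*})A_t$ whose $r$'th coordinates are $q_r$ and $q_r'$, whence $s_r(Y_{l_*}) = |q_r-q_r'|$. You instead assert this only ``up to a universally bounded correction coming from the concatenation of the non-abelian $\psi$'s'' and then claim the correction fits inside the $+2(R+1)$ slack. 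That does not follow: the correction is the $\Z^{R_t}$-coordinate vector of an element of $A_t$ formed from the fixed paths $\psi_{t,i}$, so it is bounded only in terms of those choices and the comparison of metrics on $A_t$ and $H_t$, not by $2$ nor by anything depending on $R$; and since the lower bound has no slack at all, even a correction of size one in the unfavourable direction would defeat $s_r(Y_{l_*}) \leq d_{T_W}(\cdots)$ as you derive it. To close the argument you must prove the exact evaluation (i.e.\ that the abelian segment of $\zeta_{l_*}$ runs between the points with coordinates $q_r$ and $q_r'$, so the correction vanishes in the $r$'th coordinate), which is precisely how the paper proceeds. A secondary, easily repaired point: in the same-sign case the window $\min(|q_r|,|q_r'|) \leq k \leq \min(|q_r|,|q_r'|)+R$ only yields $d_{T_W} \geq \bigl||q_r|-|q_r'|\bigr| - 2R$; for the slack-free lower bound note that $k=\min(|q_r|,|q_r'|)$ exactly whenever $|q_r|\neq|q_r'|$ (the word following the shorter $a_{t,r}$-block is a power of a different generator or a $\psi$-word, hence never the single letter $a_{t,r}^{\pm1}$), while if $|q_r|=|q_r'|$ then $q_r=q_r'$ and the lower bound is trivial.
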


\begin{proof}[Proof of claim 5]
Recall that $Y_{l_*} = X_{l_\dagger} = Z_{l_\dagger}$. Suppose that $Y_{l_*}$ is a coset of $H_t$. We have two cases: $r \leq R_t$ and $r > R_t$. 

Suppose first that $r > R_t$. Then $s_r(Y_{l_*}) = 0$. Further, $\length(\alpha_{l_\dagger}) \leq R_t + 1 \leq R + 1$ and $\length(\beta_{l_\dagger}) \leq R_t + 1 \leq R + 1$. Since $\alpha_1 \alpha_{1,2} \dots \alpha_{l_{\dagger} - 1,l_\dagger} = \beta_1 \beta_{1,2} \dots \beta_{l_{\dagger} - 1,l_\dagger}$, it follows that 
\[d_{T_W}(\alpha_1 \alpha_{1,2} \dots \alpha_{l_{\dagger}}, \beta_1 \beta_{1,2} \dots \beta_{l_{\dagger}}) = d_{T_W}(\alpha_{l_\dagger},\beta_{l_\dagger}) \leq 2(R+1)\] 
So $s_r(Y_{l_*}) \leq d_{T_W}(\alpha_1 \alpha_{1,2} \dots \alpha_{l_{\dagger}}, \beta_1 \beta_{1,2} \dots \beta_{l_{\dagger}}) \leq s_r(Y_{l_*}) + 2(R+1)$.

Now suppose that $r \leq R_t$. Then $\alpha_{l_\dagger}$ has the form 
\[\alpha_{l_\dagger} = (\ol{a_{t,r}})^{q_r} \ \ol{a_{t,r+1}^{q_{r+1}}} \dots \ol{a_{t,R_t}^{q_{R_t}}} \ \ol{a_{t,1}^{q_1}} \ \ol{a_{t,2}^{q_2}} \dots \ol{a_{t,r-1}^{q_{r-1}}} \  \ol{\psi_{t,j}}\]
and $\beta_{l_\dagger}$ has the form 
\[\beta_{l_\dagger} = (\ol{a_{t,r}})^{q_r'} \ \ol{a_{t,r+1}^{q_{r+1}'}} \dots \ol{a_{t,R_t}^{q_{R_t}'}} \ \ol{a_{t,1}^{q_1'}} \ \ol{a_{t,2}^{q_2'}} \dots \ol{a_{t,r-1}^{q_{r-1}'}} \  \ol{\psi_{t,j}'}\]
It follows that
\begin{equation} \label{eq.qrineq}
\absval{q_r - q_r'} \leq d_{T_W}(\alpha_{l_\dagger}, \beta_{l_\dagger}) \leq \absval{q_r - q_r'} + 2R_t
\end{equation}

Now consider the coset $p_0(Y_{l_*})A_t \subset Y_{l_*}$. We identify $p_0(Y_{l_*}) \in p_0(Y_{l_*})A_t$ with $(0,0,\dots,0) \in \Z^{R_t}$. More precisely, we identify $p_0(Y_{l_*}) a_{t,1}^{q_1} a_{t,2}^{q_2} \dots a_{t,R_t}^{q_{R_t}} \in p_0(Y_{l_*})A_t$ with $(q_1, q_2, \dots, q_{R_t}) \in \Z^{R_t}$. Under this identification, let $\pi_r: \Z^{R_t} \rightarrow \Z$ be the projection onto the $r$'th factor. Let us also write $p := p(Y_{l_*}, Y_{l_*-1})$ and $p' := p(Y_{l_*}, Y_{l_*+1})$. The intersection of $\zeta$ with $Y_{l_*}$, which we have denoted by $\zeta_{l_*}$, is precisely a standard path from $p$ to $p'$ in the coset $Y_{l_*}$. Now $\zeta_{l_*}$ has the form $\psi_{t,j}^{-1} \nu \psi_{t,j}'$ where $\nu$ corresponds to a geodesic in $\Z^{R_t}$ from $p\psi_{t,j}^{-1}$ to $p'(\psi_{t,j}')^{-1}$ and $\psi_{t,j}$ and $\psi_{t,j'}$ are the standard non-abelian paths arising from our description of $\alpha_{l_\dagger}$ and $\beta_{l_\dagger}$. It should be clear that 
\begin{equation} \label{eq.sr}
s_r(Y_{l_*}) = d_{\Z}(\pi_r(p\psi_{t,j}^{-1}), \pi_r(p'(\psi_{t,j}')^{-1})) = \length(\pi_r(\nu))
\end{equation}

Finally, observe that $\pi_r(p\psi_{t,j}^{-1}) = q_r$ and $\pi_r(p'(\psi_{t,j}')^{-1}) = q_r'$. See \Cref{fig.claim5}. 

\begin{figure}
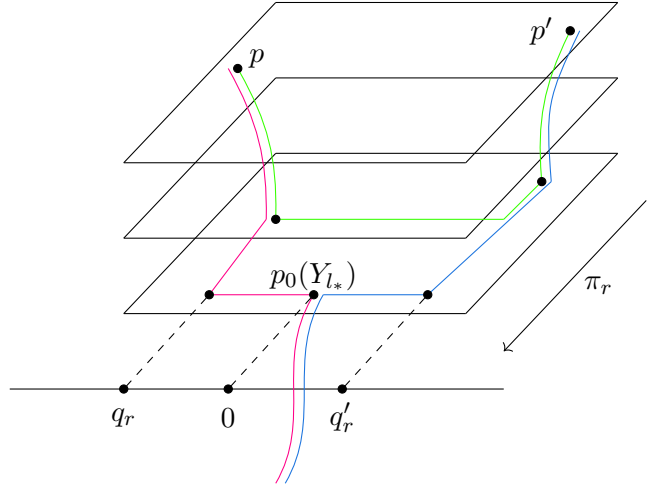

    \ctikzfig{claim5}
    \caption{The green line is the path $\zeta_{l_*}$. The red line is a section of the $r$'th ordered standard path from $e$ to $x$. Similarly, the blue line is a section of the $r$'th ordered standard path from $e$ to $z$.}
    \label{fig.claim5}
\end{figure}

So we conclude from \eqref{eq.qrineq} and \eqref{eq.sr} that
\[s_r(Y_{l_*}) \leq d_{T_W}(\alpha_{l_\dagger}, \beta_{l_\dagger}) \leq s_r(Y_{l_*}) + 2R_t \leq s_r(Y_{l_*}) + 2R\]
As before, since $\alpha_1 \alpha_{1,2} \dots \alpha_{l_{\dagger} - 1,l_\dagger} = \beta_1 \beta_{1,2} \dots \beta_{l_{\dagger} - 1,l_\dagger}$, we know that 
\[d_{T_W}(\alpha_1 \alpha_{1,2} \dots \alpha_{l_{\dagger}}, \beta_1 \beta_{1,2} \dots \beta_{l_{\dagger}}) = d_{T_W}(\alpha_{l_\dagger},\beta_{l_\dagger})\]
and so we are done. 
\end{proof}

\Cref{claim3}, \Cref{claim4} and \Cref{claim5} combine to give us the following claim. 

\begin{proofclaim} \label{claim6}
We have 
\[s_r + (L-1) \leq d_{T_W}(F_r(x),F_r(z)) \leq s_r + (R+1)(L+1)\]
\end{proofclaim}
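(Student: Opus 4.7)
The plan is to combine Claim \ref{claim3}, Claim \ref{claim4} and Claim \ref{claim5} after observing that the quantities $s_r(Y_l)$ partition cleanly according to the ``Y-shape'' depicted in \Cref{fig.xyz}.

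First, I would record the bookkeeping identification between the cosets $Y_l$ on the geodesic from $X$ to $Z$ in $\T$ and the cosets $X_l$, $Z_l$ on the paths $\G_K[H_1,X]$ and $\G_K[H_1,Z]$. Namely, $Y_1 = X_{L_X}$, $Y_2 = X_{L_X-1}$, \dots, $Y_{l_*-1} = X_{l_\dagger+1}$, $Y_{l_*} = X_{l_\dagger} = Z_{l_\dagger}$, $Y_{l_*+1} = Z_{l_\dagger+1}$, \dots, $Y_L = Z_{L_Z}$. From this I read off that
\[
s_r \;=\; \sum_{l=1}^L s_r(Y_l) \;=\; s_r(Y_{l_*}) \;+\; \sum_{l=l_\dagger+1}^{L_X} s_r(X_l) \;+\; \sum_{l=l_\dagger+1}^{L_Z} s_r(Z_l),
\]
and also that $(L_X - l_\dagger) + (L_Z - l_\dagger) = (l_*-1) + (L - l_*) = L-1$. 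The latter identity will be needed to convert the additive ``$+R$'' errors in Claim \ref{claim4} into the $(R+1)(L+1)$ term on the right of Claim \ref{claim6}.

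Next, I would substitute the lower bounds from Claim \ref{claim4} and Claim \ref{claim5} into the formula from Claim \ref{claim3}:
\[
d_{T_W}(F_r(x),F_r(z)) \;\geq\; s_r(Y_{l_*}) \;+\; \sum_{l=l_\dagger+1}^{L_X} s_r(X_l) \;+\; \sum_{l=l_\dagger+1}^{L_Z} s_r(Z_l) \;+\; (L-1) \;=\; s_r + (L-1),
\]
which gives the lower bound. For the upper bound, I would substitute the corresponding upper bounds and get
\[
d_{T_W}(F_r(x),F_r(z)) \;\leq\; s_r(Y_{l_*}) + 2(R+1) + \sum_{l=l_\dagger+1}^{L_X}\bigl(s_r(X_l)+R\bigr) + \sum_{l=l_\dagger+1}^{L_Z}\bigl(s_r(Z_l)+R\bigr) + (L-1),
\]
and then collapse the additive constants using the identity $(L_X-l_\dagger)+(L_Z-l_\dagger) = L-1$:
\[
d_{T_W}(F_r(x),F_r(z)) \;\leq\; s_r + 2(R+1) + R(L-1) + (L-1) \;=\; s_r + (R+1)(L+1).
\]

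I do not anticipate any real obstacle: the whole content of the claim is an accounting argument once Claims \ref{claim3}--\ref{claim5} are in hand. The only point that deserves a moment's care is checking that the indexing of $Y_l$ really does exhaust $\{X_l : l_\dagger < l \leq L_X\} \cup \{Y_{l_*}\} \cup \{Z_l : l_\dagger < l \leq L_Z\}$ with no repetitions, so that the sum $\sum s_r(Y_l)$ splits as claimed; this is immediate from the definition of $l_\dagger$ and the uniqueness of geodesics in the tree $\T$.
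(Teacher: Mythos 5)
Your proposal is correct and follows essentially the same route as the paper: substitute the bounds from Claims \ref{claim4} and \ref{claim5} into the exact formula of Claim \ref{claim3}, split $s_r$ as $s_r(Y_{l_*}) + \sum_{l>l_\dagger} s_r(X_l) + \sum_{l>l_\dagger} s_r(Z_l)$, and use $(L_X-l_\dagger)+(L_Z-l_\dagger)=L-1$ to collect the constants into $(R+1)(L+1)$. The only difference is that you make explicit the identification of the $Y_l$ with the $X_l$ and $Z_l$, which the paper leaves implicit.
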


\begin{proof}[Proof of claim 6]
We have the following chain of inequalities. 
\begin{align*}
d_{T_W}(F_r(x),F_r(z)) &= d_{T_W}(\alpha_1 \alpha_{1,2} \dots \alpha_{l_{\dagger}}, \beta_1 \beta_{1,2} \dots \beta_{l_{\dagger}}) \\
&+ \sum_{l = l_{\dagger}+1}^{L_X} \length(\alpha_{l}) + \sum_{l = l_\dagger+1}^{L_Z} \length(\beta_{l}) + L-1 \\
&\leq s_r(Y_{l_*}) + 2(R+1) + \sum_{l = l_{\dagger}+1}^{L_X} (s_r(X_l) + R) + \sum_{l = l_{\dagger}+1}^{L_Z} (s_r(Z_l) + R) + L-1  \\
&= s_r + 2(R+1) + (L-1)R + L-1 \\
&= s_r + (R+1)(L+1)
\end{align*}
Similarly,
\begin{align*}
d_{T_W}(F_r(x),F_r(z)) &= d_{T_W}(\alpha_1 \alpha_{1,2} \dots \alpha_{l_{\dagger}}, \beta_1 \beta_{1,2} \dots \beta_{l_{\dagger}}) \\
&+ \sum_{l = l_{\dagger}+1}^{L_X} \length(\alpha_{l}) + \sum_{l = l_\dagger+1}^{L_Z} \length(\beta_{l}) + L-1 \\
&\geq s_r(Y_{l_*}) + \sum_{l = l_{\dagger}+1}^{L_X} s_r(X_l) + \sum_{l = l_{\dagger}+1}^{L_Z} s_r(Z_l) + L-1  \\
&= s_r + L-1
\end{align*}
and so we have proved the claim.
\end{proof}

We will now combine \Cref{claim1}, \Cref{claim2} and \Cref{claim6} above in order to prove that 
\[d_{\tomsg}(\iota_1(x), \iota_1(z)) \approx d(F(x),F(z))\]

Recalling that $L-3 \leq s$ (by the argument given in the proof of \Cref{claim2}), we have
\begin{align*}
d(F(x),F(z)) &= \sum_{r=1}^R d_{T_W}(F_r(x),F_r(z)) \\ 
&\leq \sum_{r=1}^R (s_r + (R+1)(L+1)) &(\textrm{by \Cref{claim6}})\\ 
&= s + R(R+1)(L+1) \\
&\leq s + R(R+1)(s+4) \\
&= (R^2 + R + 1)s + 4R(R+1) \\
&\leq (R^2 + R + 1)\length(\zeta)+ 4R(R+1) &(\textrm{by \Cref{claim2}})\\
&\leq (R^2 + R + 1)(\lambda d_{\tomsg}(x,z) + \mu) + 4R(R+1) &(\textrm{by \Cref{claim1}})
\end{align*}
Similarly, 
\begin{align*}
d_{\tomsg}(x,z) &\leq \length(\zeta) &(\textrm{by \Cref{claim1}})\\
&\leq \lambda' s + \mu' &(\textrm{by \Cref{claim2}})\\
&= \lambda' \sum_{r = 1}^R s_r + \mu' \\
&\leq \lambda' \sum_{r=1}^R d_{T_W}(F_r(x),F_r(z)) + \mu' &(\textrm{by \Cref{claim6}})\\
&= \lambda' d(F(x),F(z)) + \mu'
\end{align*}
and so we have proved the theorem.
\end{proof}

\section{Proof of the main theorem part two} \label{sec.proofpart2}

A reader may find it helpful at this point to read the proof in \Cref{appchap.3} that the hyperbolic plane quasiisometrically embeds into a product of two binary trees. Several of the ideas in the proof of \Cref{thm.main} also appear in this simpler case, however it does nonetheless have its own quirks. 

We continue with our proof of \Cref{thm.main} and we use all the notation developed in \Cref{sec.standardpaths} and \Cref{sec.proofpart1}.

Let $\phi: G \rightarrow \prod_{q=1}^Q \tb$ be the regular map given by \Cref{cor.relhypreg}. Let $\Phi: G \rightarrow \prod_{q=1}^Q T_C$ be the injective and Lipschitz map given by \Cref{lem.reglem}. Let us say that $\Phi$ is $\lambdaPhi$-Lipschitz. 

\begin{notation} \label{not.wq}
A vertex of $T_C$ is naturally identified with a word on the alphabet $C$. Hence, every vertex of $\prod_{q = 1}^{Q} T_C$ naturally corresponds to a vector $(v_1, \dots, v_Q)$
where $v_q$ is a word on the alphabet $C$. Let $g \in G$. Then $\Phi(g) \in \prod_{q = 1}^{Q} T_C$ corresponds to a vector
\[
\begin{pmatrix}
v_1 \\ v_2 \\ \vdots \\ v_Q
\end{pmatrix}
\]
For $1 \leq q \leq Q$, we let $\Phi_q(g)$ denote the word $v_q \in T_C$.  
\end{notation}

\subsection{Choosing our statistics}

We would like to apply \Cref{cor.leotaurus} to the sentence-tree $T_W$ where $W$ is the set of finite words on $S \cup S^{-1}$. In order to do so, we need to make choices for $\scalf$, $\scall$, $\jf$, $\jl$, $N$ and $\epsilon$. We make the following choices.

Let $\jl = 4\lambdacosetsp K + 2\mucosetsp + 3$, $\jf = \max(\jl, R)$, $N = 12(R+1)\lambdaqie (\lambdasp^2 + \lambdasp \musp + \lambdasp K + \musp)$ and let $\epsilon = 1$. Here, $\lambdacosetsp$ and $\mucosetsp$ are constants arising from \Cref{prop.cosetsp}, $\lambdasp$ and $\musp$ are the constants arising from \Cref{prop.sp}, and $\lambdaqie$ is the constant arising from \Cref{thm.fxphiqie}. 

Let $\scalf$ be the following collection of finite statistics on $T_W$ (defined with respect to an arbitrary sentence $\ol{w_1} \ \ol{w_2} \dots \ol{w_i} \in T_W$). 
\begin{itemize}
    \item is the path $w_i$ entirely contained in one of the coset subgraphs $\C(H_1), \C(H_2), \dots, \C(H_T)$?
    \item the final $I+3K$ letters of $w_i$;
    \item $\length(w_i)$ modulo $(\lambdacosetsp K + \mucosetsp + 1)$;
    \item if $w_j$ is the last word of $\ol{w_1} \ \ol{w_2} \dots \ol{w_i}$ that is not entirely contained in one of the subgraphs $\C(H_1), \dots, \C(H_T)$, then what are the final $\chi$ letters of $w_i$?
\end{itemize}
So, in total, we have chosen four finite statistics. Note that a question like "is the path $w_i$ entirely contained in one of the coset subgraphs $\C(H_1), \C(H_2), \dots, \C(H_T)$?" is indeed a finite statistic as it has a yes or no answer. Recall that $\chi$ is the constant defined in \ref{condition.natural2}. 

Let $\omega$ be the smallest natural number that is greater than both 
\[\lambdaPhi(\jl (\lambdaqie + \muqie) + K + \lambdasp + \musp + \jl(\lambdaqie + \muqie) + K) \]
and
\[\lambdaPhi(\jl (\lambdaqie + \muqie) + K + \lambdasp + \musp + \lambdasp (\lambdasp + \musp + K) + \musp + K)\]
Let $\scall$ be the following collection of linear statistics on $T_W$, where we again let $\ol{w_1} \ \ol{w_2} \dots \ol{w_i}$ denote an arbitrary sentence in $T_W$. In the linear statistics below, note that $c$ is the \textit{variable} required in the definition of a linear statistic (see \Cref{def.lstat}) and not a constant like $R, \lambdaqie$ and $\omega$. 
\begin{itemize}
    \item the final $2(R+1) \lambdaqie \omega c$ letters of $w_{i-r+1}$ (for $1 \leq r \leq R$);
    \item the final $2(R+1) \lambdaqie \omega c$ letters of the base $10$ expansion of $\length(w_{i-r+1}) \in \N$ (for $1 \leq r \leq R$);
    \item the final $2(R+1) \lambdaqie \omega c$ letters of $\Phi_q(w_{i - j + 1} \dots w_i)$ (for $1 \leq j \leq \jl$ and $1 \leq q \leq Q$);
    \item the final $2(R+1)\lambdaqie \omega c$ letters of the base $10$ expansion of $\length(\Phi_q(w_{i - j + 1} \dots w_i)) \in \N$ (for $1 \leq j \leq \jl$ and $1 \leq q \leq Q$).
\end{itemize}
So, in total, we have chosen $2R + 2\jl Q$ linear statistics. To be clear: when we refer to $\Phi_q(w_{i - j + 1} \dots w_i)$, we mean the image of $w_{i - j + 1} w_{i-j+2} \dots w_i \in G$ under $\Phi_q: G \rightarrow T_C$.

\subsection{The associated diary}

Let $D = D(\scalf, \scall, \jl, N, \epsilon)$ be the diary given by \Cref{cor.leotaurus}. We claim that the composition
\[ G \xrightarrow{F \times \phi} T_W \times T_W \times \dots \times T_W \times \prod_{q=1}^Q \tb \xrightarrow{D \times D \times \dots \times D \times \textrm{id}} T_{\Omega} \times T_{\Omega} \times \dots \times T_\Omega \times \prod_{q=1}^Q \tb \]
is a quasiisometric embedding. Once this is shown, the proof of the main theorem (\Cref{thm.main}) is complete since $T_\Omega$ is quasiisometric to the rooted binary tree $\tb$. 

\begin{notation}
Let us refer to the composition $(D \times D \times \dots \times D \times \textrm{id}) \circ (F \times \phi)$ as $\mathcal{F}: G \rightarrow T_{\Omega} \times T_{\Omega} \times \dots \times T_\Omega \times \prod_{q=1}^Q \tb$. 
\end{notation}

Note that $\F$ is coarsely Lipschitz since $F \times \phi$ is a quasiisometric embedding and $D \times D \times \dots \times D \times \textrm{id}$ is $1$-Lipschitz. It follows that we only need to prove the lower bound of the quasiisometric inequality for $\F$. 

\subsection{Reductions}

Let $x,z \in G$. We may assume that 
\begin{equation} \label{eq.dlb}
d_G(x,z) \geq 2\lambdaqie\muqie + 12R(R+1)\lambdaqie \jl
\end{equation}
since we are only interested in the coarse geometry of $G$. Now,
\begin{itemize}
    \item define $d := d_G(x,z)$;
    \item define $d' := d((F \times \phi)(x), (F \times \phi)(z))$;
    \item define $d'' := d(\F(x),\F(z))$;
    \item let $d'_r$ be the distance between $(F \times \phi)(x)$ and $(F \times \phi)(z)$ in the $r$'th factor for $r = 1,2, \dots, R+1$;
    \item let $d''_r$ be the distance between $\F(x)$ and $\F(z)$ in the $r$'th factor for $r = 1,2, \dots, R+1$.
\end{itemize}
In the above, by the $(R+1)$'st factor, we mean the entirety of $\prod_{q=1}^Q \tb$. So $d' = d'_1 + d'_2 + \dots + d'_{R} + d'_{R+1}$ and $d'' = d''_1 + d''_2 + \dots + d''_{R} + d''_{R+1}$. Further, note that $d'_{R+1} = d''_{R+1}$. Since $d \geq 2 \lambdaqie \muqie$ by \eqref{eq.dlb}, and since $F \times \phi$ is a $(\lambdaqie,\muqie)$-quasiisometric embedding, we have $d' \geq d / 2\lambdaqie$.

\begin{claim*}
We are done if $d'_{R+1} \geq d'/(R+1)$.
\end{claim*}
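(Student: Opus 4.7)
The plan is to observe that this case is essentially immediate from the definitions and the estimates already collected. The key point is that the $(R+1)$'st factor of the codomain of $\F$ is literally $\prod_{q=1}^Q \tb$ and the map applied on that factor is the identity, so $d''_{R+1} = d'_{R+1}$ by construction.

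First I would write $d'' \geq d''_{R+1} = d'_{R+1}$. By the hypothesis of the claim, $d'_{R+1} \geq d'/(R+1)$. Next, since $F\times\phi$ is a $(\lambdaqie,\muqie)$-quasiisometric embedding, we have $d' \geq d/\lambdaqie - \muqie$, and using the lower bound \eqref{eq.dlb} on $d$ we get $d' \geq d/(2\lambdaqie)$. Chaining these gives
\[ d'' \;\geq\; d''_{R+1} \;=\; d'_{R+1} \;\geq\; \frac{d'}{R+1} \;\geq\; \frac{d_G(x,z)}{2\lambdaqie(R+1)}. \]

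Combined with the fact that $\F$ is coarsely Lipschitz (being the composition of a quasiisometric embedding with a $1$-Lipschitz map), this establishes a linear lower bound for $d(\F(x),\F(z))$ in terms of $d_G(x,z)$, which is precisely the lower half of the quasiisometric inequality for $\F$. Hence $\F$ is a quasiisometric embedding in this case, completing the proof of \Cref{thm.main} in the subcase where the product-of-binary-trees factor already absorbs at least a $1/(R+1)$ fraction of the total distance. I don't expect any obstacle here; the substantive work is deferred to the complementary case $d'_{R+1} < d'/(R+1)$, where some $d'_r$ with $r \leq R$ must be large and one then has to invoke \Cref{cor.leotaurus} together with the choices of $\scalf$, $\scall$, $\jf$, $\jl$, $N$, $\epsilon$ to extract the required lower bound on $d''_r$.
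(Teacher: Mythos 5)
Your argument is correct and follows essentially the same route as the paper: you chain $d'' \geq d''_{R+1} = d'_{R+1} \geq d'/(R+1) \geq d/(2\lambdaqie(R+1))$, using \eqref{eq.dlb} and the $(\lambdaqie,\muqie)$-quasiisometric embedding property exactly as the paper does, and then invoke the coarse Lipschitz upper bound for $\F$. Nothing further is needed.
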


\begin{proof}[Proof of claim]
If $d'_{R+1} \geq d'/(R+1)$ then 
\[d'' \geq d''_{R+1} = d'_{R+1} \geq d'/(R+1) \geq \frac{d}{2(R+1)\lambdaqie}\]
In other words, 
\[d(\F(x),\F(z)) \geq \frac{d}{2(R+1)\lambdaqie}\]
Since, as mentioned above, $\F$ is coarsely Lipschitz, we are done. 
\end{proof}

So we can assume that $d'_{R+1} \leq d'/(R+1)$. This implies that $d'_1 + d'_2 + \dots + d'_R \geq Rd'/(R+1)$. Suppose $1 \leq \r \leq R$ is such that $d'_{\r} = \max(d'_1, d'_2, \dots, d'_R)$. It follows that
\begin{equation} \label{eq.mplusnlb}
d'_{\r} \geq \frac{d'_1 + d'_2 + \dots + d'_R}{R} \geq d'/(R+1) \geq \frac{d}{2(R+1)\lambdaqie}
\end{equation}

Let $\alpha \in T_W$ be the $\r$'th factor of $(F \times \phi)(x)$ and let $\beta \in T_W$ be the $\r$'th factor of $(F \times \phi)(z)$. We can write
\begin{align*}
&\alpha = \ol{u_1} \dots \ol{u_p} \ \ol{u_{p+1}} \dots \ol{u_{p+m}} \\
&\beta = \ol{u_1} \dots \ol{u_p} \ \ol{u_{p+1}'} \dots \ol{u_{p+n}'} \\
&u_{p+1} \neq u_{p+1}'
\end{align*}
So $d_{T_W}(\alpha,\beta) = m+n = d'_{\r}$. 

\begin{claim*}
We are done if $m \leq (m+n)/3$ or $n \leq (m+n) / 3$.
\end{claim*}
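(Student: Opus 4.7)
The plan is to exploit the fact that the diary $D$ is height-preserving (indeed, it is a graph homomorphism from $T_W$ to $T_\Omega$ that preserves the distance to the root), so the depth of $D\alpha$ in $T_\Omega$ equals the depth of $\alpha$ in $T_W$, and similarly for $\beta$. Since $\alpha$ has depth $p+m$ and $\beta$ has depth $p+n$, the images $D\alpha$ and $D\beta$ lie at depths $p+m$ and $p+n$ respectively in $T_\Omega$. Any path between two vertices of a rooted tree at different depths must traverse at least the difference of the depths, so
\[d_{T_\Omega}(D\alpha, D\beta) \geq \absval{(p+m) - (p+n)} = \absval{m-n}.\]

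Now I would use the hypothesis. Suppose $m \leq (m+n)/3$. Then $n = (m+n) - m \geq 2(m+n)/3$, so $\absval{m-n} \geq (m+n)/3$. Applying the bound above, $d''_{\r} \geq d_{T_\Omega}(D\alpha, D\beta) \geq (m+n)/3 = d'_{\r}/3$. Combining with \eqref{eq.mplusnlb}, which says $d'_{\r} \geq d / (2(R+1)\lambdaqie)$, I conclude
\[d'' \geq d''_{\r} \geq \frac{d}{6(R+1)\lambdaqie}.\]
The symmetric argument handles the case $n \leq (m+n)/3$. Since $\mathcal{F}$ is already known to be coarsely Lipschitz, this lower bound on $d(\mathcal{F}(x), \mathcal{F}(z))$ in terms of $d_G(x,z)$ is enough to complete the quasiisometric embedding inequality in this case, so we are done.

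There is no real obstacle here; this is a short pigeonhole-style reduction whose only purpose is to let us assume in the sequel that $m$ and $n$ are comparable (both at least $(m+n)/3$). That comparability will presumably be needed later to apply the $(\leo)$ or $(\taurus)$ criteria via \Cref{cor.leotaurus}, since those criteria require us to locate an index $j$ bounded linearly in $\min(m,n)$ rather than in $m+n$.
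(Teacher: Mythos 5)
Your proof is correct and matches the paper's argument essentially verbatim: both use that $D$ is height-preserving to get $d''_{\r} \geq \absval{m-n}$, note that the hypothesis forces $\absval{m-n} \geq (m+n)/3 = d'_{\r}/3$, and then invoke \eqref{eq.mplusnlb} together with the coarse Lipschitz property of $\F$ to conclude. Nothing to add.
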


\begin{proof}[Proof of claim]
If either of these hold, we have $\absval{m - n} \geq d'_{\r} / 3$ and hence 
\[d'' \geq d''_{\r} \geq \absval{m - n} \geq d'_{\r} / 3 \geq \frac{d}{6(R+1)\lambdaqie}\]
where the second inequality follows from the fact that $D$ is height-preserving.
\end{proof}

So we can assume that
\begin{equation} \label{eq.mn}
m,n \geq (m+n)/3 = d'_{\r} / 3 \geq \frac{d}{6(R+1)\lambdaqie}
\end{equation}
In particular, it follows that 
\begin{equation} \label{eq.mnlb}
m \geq \max(R,\jl) \quad \textrm{and} \quad n \geq \max(R,\jl)
\end{equation}
since $d \geq 6R(R+1)\lambdaqie$ and $d \geq 6 (R+1) \lambdaqie \jl$ by \eqref{eq.dlb}. 

\begin{claim*}
We are done if we manage to prove that $\alpha, \beta \in T_W$ satisfy either $\leo(\scalf, \jf)$ or $\taurus(\scall,\jl,N,\epsilon)$.
\end{claim*}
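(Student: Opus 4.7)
The plan is to simply invoke \Cref{cor.leotaurus} with the choices of $\scalf$, $\scall$, $\jf$, $\jl$, $N$ and $\epsilon$ fixed above, and then chain the resulting coarse-preservation estimate through the factor $\r$ back to the lower bound on $d_G(x,z)$ provided by \eqref{eq.mplusnlb}. There is essentially no hidden step: the assumption that $\alpha,\beta$ satisfy $\leo(\scalf,\jf)$ or $\taurus(\scall,\jl,N,\epsilon)$ is precisely the hypothesis of that corollary.

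Concretely, I would proceed as follows. Let $M = M(\jf,\jl)$ be the constant produced by \Cref{cor.leotaurus}. Since $D : T_W \rightarrow T_\Omega$ is exactly the diary supplied by that corollary, the hypothesis on $\alpha$ and $\beta$ gives
\[
d_{T_\Omega}(D\alpha, D\beta) \;\geq\; \frac{1}{M}\, d_{T_W}(\alpha,\beta) \;=\; \frac{d'_\r}{M}.
\]
Now $D\alpha$ and $D\beta$ are precisely the $\r$'th factors of $\F(x)$ and $\F(z)$, so $d''_\r = d_{T_\Omega}(D\alpha, D\beta)$. Combining with \eqref{eq.mplusnlb}:
\[
d(\F(x),\F(z)) \;\geq\; d''_\r \;\geq\; \frac{d'_\r}{M} \;\geq\; \frac{d_G(x,z)}{2M(R+1)\lambdaqie}.
\]
Since $\F$ is already coarsely Lipschitz, this lower bound, uniform in $x,z$, upgrades $\F$ to a quasiisometric embedding, which completes the proof of \Cref{thm.main}.

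There is no real obstacle to the reduction step itself; the entire difficulty of the argument has been pushed into the genuine work of the next section, namely verifying that for the pair $(\alpha,\beta)$ arising from the worst factor $\r$ the property $\leo(\scalf,\jf)$ or $\taurus(\scall,\jl,N,\epsilon)$ actually holds. This verification is exactly where the careful choices of $\scalf$, $\scall$, $N$, $\epsilon$, $\jf$ and $\jl$ (in terms of $\lambdacosetsp$, $\mucosetsp$, $\lambdasp$, $\musp$, $\lambdaqie$, $\muqie$, $\lambdaPhi$, $R$ and $K$) become important, and where the standard path analysis of \Cref{sec.standardpaths} together with the coset/transverse decomposition encoded by $F_r$ will be used. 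For the present claim, however, the proof is a one-line application of \Cref{cor.leotaurus} followed by the two inequalities displayed above.
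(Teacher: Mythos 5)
Your proposal is correct and follows exactly the paper's own argument: apply \Cref{cor.leotaurus} to get $d''_{\r} = d_{T_\Omega}(D\alpha,D\beta) \geq d'_{\r}/M$, combine with \eqref{eq.mplusnlb} to obtain $d'' \geq \frac{d}{2(R+1)M\lambdaqie}$, and conclude via the coarse Lipschitz bound on $\F$. No difference in approach worth noting.
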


\begin{proof}
If $\alpha, \beta \in T_W$ satisfy either $\leo(\scalf, \jf)$ or $\taurus(\scall,\jl,N,\epsilon)$ then \Cref{cor.leotaurus} would imply that 
\[d''_{\r} =  d_{T_\Omega}(D\alpha, D\beta) \geq d_{T_W}(\alpha, \beta) / M = d'_{\r} / M\] 
where $M = M(\jf,\jl)$ is the constant given by \Cref{cor.leotaurus}. But then, using \eqref{eq.mplusnlb}, we would have 
\[d'' \geq \frac{d}{2(R+1)M\lambdaqie} \] 
and the proof of the theorem would be complete.
\end{proof}

So we have reduced the problem to proving that 
\begin{align*}
\alpha &= F_{\r}(x) = \ol{u_1} \dots \ol{u_p} \ \ol{u_{p+1}} \dots \ol{u_{p+m}} \in T_W \\ 
\beta &= F_{\r}(z) = \ol{u_1} \dots \ol{u_p} \ \ol{u_{p+1}'} \dots \ol{u_{p+n}'} \in T_W
\end{align*}
satisfy either $\leo(\scalf, \jf)$ or $\taurus(\scall,\jl,N,\epsilon)$. This will now be our only goal.

Recall the definition of $F_r: G \rightarrow T_W$ given in \Cref{def.fr}. We have several cases depending on the forms of the words $u_{p+1}$ and $u_{p+1}'$: they can be a single letter of the form $a_{t,\r}^{\pm 1}$, they can be a word of the form $a_{t,r}^q$ for some $r \neq \r$ and $q \neq 0$, they can have the form $\psi_{t,i}$ or they can correspond to a transverse quasigeodesic $\gamma(X,Y)$.

\subsection{Simple cases}

If $\length(u_{p+1}) \leq I + 3K$ or $\length(u_{p+1}') \leq I + 3K$ then since $u_{p+1} \neq u_{p+1}'$ we know that the finite statistic "the final $I + 3K$ letters of $w_i$" in $\scalf$ distinguishes $u_{p+1}$ from $u_{p+1}'$. So $\alpha$ and $\beta$ would satisfy $\leo(\scalf,\jf)$ (with a choice of $j = 1$). Thus $u_{p+1}$ and $u_{p+1}'$ can't have the forms $a_{t,\r}$ or $a_{t,\r}^{-1}$ or $\psi_{t,i}$. 

Recall \ref{condition.natural1} which implies that the transverse quasigeodesics $\gamma(X,Y)$ cannot be entirely contained in a single coset subgraph $\C(X)$. Thus if $u_{p+1}$ has the form $a_{t,r}^{q}$ (for $r \neq \r$) and $u_{p+1}'$ has the form $\gamma(X,Y)$ then the the finite statistic "is the path $w_i$ entirely contained in one of the coset subgraphs $\C(H_1), \C(H_2), \dots, \C(H_T)$?" distinguishes $u_{p+1}$ from $u_{p+1}'$. It follows that $\alpha$, $\beta$ satisfy $\leo(\scalf,\jf)$. Similarly, if $u_{p+1}'$ has the form $a_{t,r}^{q}$ ($r \neq \r$) and $u_{p+1}$ has the form $\gamma(X,Y)$ then we are done. 

Two cases remain:  
\begin{enumerate}
    \myitem{(C1)}\label{case.c1} $u_{p+1} = a_{t,r}^q$ and $u_{p+1}' = a_{t,r}^{q'}$ where $r \neq \r$;
    \myitem{(C2)}\label{case.c2} $u_{p+1} = \gamma(X,Y)$ and $u_{p+1}' = \gamma(X',Y')$ for some transverse quasigeodesics $\gamma(X,Y)$ and $\gamma(X',Y')$.
\end{enumerate}

\subsection{Some more notation}

We need some more notation. Write $X = xH_1 \in \G$ and $Z = zH_1 \in \G$. Set $L_X = L[H_1,X]$ and $L_Z = L[H_1,Z]$ and $L = L[X,Z]$. Write
\[\G_K[H_1,X] = \{H_1 = X_1 < X_2 < \dots < X_{L_X} = X\}\]
and 
\[\G_K[H_1,Z] = \{H_1 = Z_1 < Z_2 < \dots < Z_{L_Z} = Z\}\]
and
\[\G_K[X,Z] = \{X = Y_1 < Y_2 < \dots < Y_{L} = Z\}\]
Let $\zeta$ denote the $\r$'th ordered standard path from $e$ to $x$ and write $\zeta$ as the concatenation of paths 
\[\zeta = \zeta_1 \ \gamma(X_1,X_2) \ \zeta_2 \ \gamma(X_2, X_3) \ \dots \ \gamma(X_{L_X - 1}, X_{L_X}) \ \zeta_{L_X}\]
as described in \Cref{def.sp}. Similarly, Let $\zeta'$ denote the $\r$'th ordered standard path from $e$ to $z$ and write $\zeta'$ as the concatenation of paths 
\[\zeta' = \zeta_1' \ \gamma(Z_1,Z_2) \ \zeta_2' \ \gamma(Z_2, Z_3) \ \dots \ \gamma(Z_{L_Z - 1}, Z_{L_Z}) \ \zeta_{L_Z}'\]

\subsection{Degenerate cases}

\begin{proofclaim2} \label{2claim1}
If $\G_K[H_1,X]$ and $\G_K[H_1,Z]$ never diverge, equivalently, if one is contained in the other, then we are done.
\end{proofclaim2}

\begin{proof}[Proof of \Cref{2claim1}]
If, say, $\G_K[H_1,X]$ is contained in $\G_K[H_1,Z]$, then that would imply that the $\zeta$ and $\zeta'$ diverge in the coset subgraph $\C(X)$. This implies, since $x \in X$, that $u_{p+1}$ has the form $a_{t,\r}^{\pm 1}$ or  $\psi_{t,i}$ or $a_{t,r}^{q}$ (for some $r \neq \r$). As described above, we are done unless $u_{p+1}$ has the form $a_{t,r}^q$ ($r \neq \r$). If that occurs, by considering the structure of $F_{\r}(x)$, there must exist $2 \leq j \leq R$ such that $u_{p+j}$ has the form $\gamma(X,Y)$ for some $Y \in \G$ (note that $m \geq R$ by \eqref{eq.mnlb}). But this can't happen because $x \in X$.

We are done for similar reasons if $\G_K[H,Z]$ is contained in $\G_K[H,X]$.
\end{proof}

So we can assume there exists some $l_\dagger < \min(L_X,L_Z)$ for which $X_{l} = Z_{l}$ when $l \leq l_\dagger$ and $X_{l_\dagger+1} \neq Z_{l_\dagger+1}$. In other words $X_{l_\dagger} = Z_{l_\dagger}$ is the divergence point of the two paths $\G_K[H_1,X]$ and $\G_K[H_1,Z]$ in $\pcg$. 

\begin{proofclaim2} \label{2claim2}
We may assume that one of the two following situations occur:
\begin{enumerate}
    \item $\zeta_{l_\dagger} \neq \zeta_{l_\dagger}'$;
    \item $\zeta_{l_\dagger} = \zeta_{l_\dagger}'$  and $\gamma(X_{l_\dagger},X_{l_\dagger + 1}) \neq \gamma(Z_{l_\dagger},Z_{l_\dagger + 1})$.
\end{enumerate}
\end{proofclaim2}

\begin{proof}[Proof of \Cref{2claim2}]
If $\zeta_{l_\dagger} = \zeta_{l_\dagger}'$ and $\gamma(X_{l_\dagger},X_{l_\dagger + 1}) = \gamma(Z_{l_\dagger},Z_{l_\dagger + 1})$ then 
\[\zeta_1 \ \gamma(X_1,X_2) \ \zeta_2 \ \gamma(X_2, X_3) \ \dots \zeta_{l_\dagger} \gamma(X_{l_\dagger},X_{l_\dagger + 1}) = \zeta_1' \ \gamma(Z_1,Z_2) \ \zeta_2' \ \gamma(Z_2, Z_3) \ \dots \zeta_{l_\dagger}' \gamma(Z_{l_\dagger},Z_{l_\dagger + 1})\]
and so $p(X_{l_\dagger+1},X_{l_\dagger}) =_G p(Z_{l_\dagger + 1},Z_{l_\dagger})$. It follows that $X_{l_\dagger + 1} \cap Z_{l_\dagger + 1} \neq \emptyset$ and so they are cosets of different peripheral subgroups $H_1, H_2, \dots H_T$. But then, due to \ref{condition.natural2}, it is possible to distinguish $\gamma(X_{l_\dagger},X_{l_\dagger + 1})$ from $\gamma(Z_{l_\dagger},Z_{l_\dagger + 1})$ just by looking at the final $\chi$ letters. In particular, $\gamma(X_{l_\dagger},X_{l_\dagger + 1}) \neq \gamma(Z_{l_\dagger},Z_{l_\dagger + 1})$ which is a contradiction. 
\end{proof}

So, since either \ref{case.c1} or \ref{case.c2} occurs, and since one of the two cases from \Cref{2claim2} occurs, we have reduced the problem to the following two possibilities: 
\begin{enumerate}
    \myitem{(D1)}\label{case.d1} $\zeta_{l_\dagger} \neq \zeta_{l_\dagger}'$ and $u_{p+1} = a_{t,r}^q$ and $u_{p+1}' = a_{t,r}^{q'}$ where $r \neq \r$;
    \myitem{(D2)} \label{case.d2} $\zeta_{l_\dagger} = \zeta_{l_\dagger}'$ and $u_{p+1} = \gamma(X_{l_\dagger},X_{l_\dagger + 1})$ and $u_{p+1}' = \gamma(Z_{l_\dagger},Z_{l_\dagger + 1})$ and $p(X_{l_\dagger},X_{l_\dagger + 1}) = p(Z_{l_\dagger},Z_{l_\dagger + 1})$.
\end{enumerate}

\begin{proofclaim2} \label{2claim3}
When \ref{case.d1} holds, we have
\[\absval{q - q'} \leq \length(\eta) \leq \lambdacosetsp d_{\C(X_{l_\dagger})}(p(X_{l_\dagger}, X_{l_\dagger + 1}),p(Z_{l_\dagger}, Z_{l_\dagger + 1})) + \mucosetsp \]
where $\eta$ is a standard path from $p(X_{l_\dagger}, X_{l_\dagger + 1})$ to $p(Z_{l_\dagger}, Z_{l_\dagger + 1})$ in the coset $\C(X_{l_\dagger}) = \C(Z_{l_\dagger})$ in the sense of \Cref{def.spcoset}.
\end{proofclaim2}

\begin{proof}[Proof of \Cref{2claim3}]
If \ref{case.d1} holds then $u_{p+1} = a_{t,r}^q$ and $u_{p+1}' = a_{t,r}^{q'}$ correspond to subpaths of $\zeta_{l_\dagger}$ and $\zeta_{l_\dagger}'$ respectively. 

We know that $\zeta_{l_\dagger}$ travels from $p_0(X_{l_\dagger})$ to $p(X_{l_\dagger}, X_{l_\dagger + 1})$ and has the form $\nu \psi$ where $\nu$ is a geodesic in $p_0(X_{l_\dagger})A_t \cong \Z^{R_t}$ from $p_0(X_{l_\dagger})$ to $p(X_{l_\dagger}, X_{l_\dagger + 1})\psi^{-1}$ and $\psi$ is one of the non-abelian paths $\psi_{t,i}$. Analogously, we can write $\zeta_{l_\dagger}' = \nu' \psi'$. Now, identifying the abelian coset $p_0(X_{l_\dagger})A_t = p_0(Z_{l_\dagger})A_t$ with $\Z^{R_t}$ by identifying $p_0(X_{l_\dagger}) = p_0(Z_{l_\dagger})$ with $(0,0,\dots,0)$, we know that the $r$'th coordinate of $p(X_{l_\dagger}, X_{l_\dagger + 1}) \psi^{-1}$ is $q$ and the $r$'th coordinate of $p(Z_{l_\dagger}, Z_{l_\dagger + 1}) (\psi')^{-1}$ is $q'$. Hence,
\[\absval{q - q'} \leq d_{\Z^{R_t}}(p(X_{l_\dagger}, X_{l_\dagger + 1}) \psi^{-1}, p(Z_{l_\dagger}, Z_{l_\dagger + 1}) (\psi')^{-1})\]
But we also know that a standard path $\eta$ from $p(X_{l_\dagger}, X_{l_\dagger + 1})$ to $p(Z_{l_\dagger}, Z_{l_\dagger + 1})$ in the coset $X_{l_\dagger} = Z_{l_\dagger}$ contains a geodesic in $p_0(X_{l_\dagger})A_t \cong \Z^{R_t}$ from $p(X_{l_\dagger}, X_{l_\dagger + 1}) \psi^{-1}$ to $p(Z_{l_\dagger}, Z_{l_\dagger + 1}) (\psi')^{-1}$. It follows that
\[\absval{q-q'} \leq \length(\eta) \leq \lambdacosetsp d_{\C(X_{l_\dagger})}(p(X_{l_\dagger}, X_{l_\dagger + 1}),p(Z_{l_\dagger}, Z_{l_\dagger + 1})) + \mucosetsp\]
where the second inequality follows from \Cref{prop.cosetsp}.
\end{proof}

\begin{proofclaim2} \label{2claim4}
We are done if $X_{l_\dagger} = Z_{l_\dagger}$ is an element of $\G_K[X,Z]$.
\end{proofclaim2}

\begin{proof}[Proof of \Cref{2claim4}]
Suppose this is true. It follows from \cite[Lemma 3.3]{PART1} that 
\begin{equation} \label{eq.zxpath}
\G_K[Z,X] = \{Z_{L_Z} < \dots < Z_{l_\dagger + 1} < Z_{l_\dagger} = X_{l_\dagger} < X_{l_\dagger + 1} < \dots < X_{L_X}\}
\end{equation}

Now, if \ref{case.d1} occurs, then, by considering the structure of $F_{\r}(x)$ and $F_{\r}(z)$, there exists $2 \leq j \leq R$ such that $u_{p+j} = \gamma(X_{l_\dagger},X_{l_\dagger+1})$ and there exists $2 \leq j' \leq R$ be such that $u_{p+j'}' = \gamma(Z_{l_\dagger},Z_{l_{\dagger+1}})$. So if either \ref{case.d1} or \ref{case.d2} occurs, there exists $1 \leq j \leq R$ such that $u_{p+j} = \gamma(X_{l_\dagger},X_{l_\dagger+1})$ and there exists $1 \leq j' \leq R$ be such that $u_{p+j'}' = \gamma(Z_{l_\dagger},Z_{l_{\dagger+1}})$.

Suppose that $j < j'$. Then $u_{p+j} = \gamma(X_{l_\dagger},X_{l_\dagger+1})$ yet $u_{p+j}'$ has the form $a_{t,r}^q$ or $\psi_{t,i}$. Recalling that $\jf \geq R$, it follows that $\alpha$ and $\beta$ satisfy $\leo(\scalf,\jf)$. Similarly, we are done if $j' < j$. So we may assume that $j = j'$. 

Now, we can see from \eqref{eq.zxpath} that $X_{l_\dagger} < X_{l_\dagger+1} < \dots < X$ is a subpath of $\G_K[Z,X]$ and hence $u_{p+j+1} \dots u_{p+m}$ is a subpath of the $\r$'th ordered standard path from $z$ to $x$. It follows that $\length(u_{p+j+1} \dots u_{p+m}) \leq \lambdasp d + \musp$ by \Cref{prop.sp}. Hence $\ol{u_{p+j+1}} \dots \ol{u_{p+m}}$ has at most $(\lambdasp + \musp)d$ letters. Recall from \eqref{eq.mn} that $m,n \geq \frac{d}{6(R+1)\lambdaqie}$ and so $\ol{u_{p+j+1}} \dots \ol{u_{p+m}}$ has at least $\frac{d}{6(R+1)\lambdaqie} - R$ words. Now, thanks to \eqref{eq.dlb}, we have $d \geq 12 R(R+1)\lambdaqie$ and so $R \leq \frac{d}{12(R+1)\lambdaqie}$. Therefore $\frac{d}{6(R+1)\lambdaqie} - R \geq \frac{d}{12(R+1)\lambdaqie}$. It follows that
\[\awl(\ol{u_{p+j+1}}\dots\ol{u_{p+m}}) \leq \frac{(\lambdasp + \musp)d}{(d / 12(R+1)\lambdaqie)} \leq 12(R+1) \lambdaqie (\lambdasp + \musp) \leq N\]
For similar reasons,
\[\awl(\ol{u_{p+j+1}'}\dots\ol{u_{p+n}'}) \leq N\]
In other words, we have proved that \ref{condition.taurus1} holds for $\alpha$ and $\beta$. We now turn to proving that \ref{condition.taurus2} holds for $\alpha$ and $\beta$ (with respect to the choices for $\scall$, $\jl$, $N$, $\epsilon$ given above). 

Suppose first that \ref{case.d1} holds, i.e. $u_{p+1}$ has the form $a_{t,r}^q$ and $u_{p+1}'$ has the form $a_{t,r}^{q'}$ where $r \neq \r$. We want to bound $\absval{q - q'}$ in terms of $d = d_G(x,z)$. Let $\zeta$ denote a standard path from $x$ to $z$ in $G$. Then the intersection of $\zeta$ with $\C(X_{l_\dagger}) = \C(Z_{l_\dagger})$, which we denote by $\eta$, is a standard path from $p(X_{l_\dagger}, X_{l_\dagger + 1})$ to $p(Z_{l_\dagger}, Z_{l_\dagger + 1})$ in the coset $X_{l_\dagger} = Z_{l_\dagger}$. Hence, using \Cref{2claim3} and \Cref{prop.sp}, we have
\begin{equation} \label{eq.qshift}
\absval{q - q'} \leq \length(\eta) \leq \length(\zeta) \leq \lambdasp d + \musp \leq \omega d   
\end{equation}
So we can deduce, from \Cref{lem.nomt}, that either the final $\omega d$ letters of $a_{t,r}^q$ and $a_{t,r}^{q'}$ are distinct or the final $\omega d$ letters of the base $10$ expansions of $\absval{q} \in \N$ and $\absval{q'} \in \N$ are distinct. 

Now let $1 \leq j' \leq j$. Consider the linear statistic "the final $2(R+1) \lambdaqie \omega c$ letters of $w_{i-j'+1}$". When applied to the sentences $\ol{u_1} \dots \ol{u_{p+j'}}$ and $\ol{u_1} \dots \ol{u_{p+j'}'}$ with $c = m+n$ this becomes "the final $2(R+1) \lambdaqie \omega (m+n)$ letters of $a_{t,r}^q$" and "the final $2(R+1) \lambdaqie \omega (m+n)$ letters of $a_{t,r}^{q'}$" respectively. Note that $2(R+1) \lambdaqie \omega (m+n) \geq \omega d$ by \eqref{eq.mplusnlb}. Hence $\alpha$ and $\beta$ satisfy $\taurus(\scall,\jl,N,\epsilon)$ if the final $\omega d$ letters of $a_{t,r}^q$ and $a_{t,r}^{q'}$ are distinct. For similar reasons, by considering the linear statistic "the final $2(R+1) \lambdaqie \omega c$ letters of the base $10$ expansion of $\length(w_{i-j'+1}) \in \N$", we are done when the final $\omega d$ letters of the base $10$ expansions of $\absval{q}$ and $\absval{q'}$ are distinct. So we are done when \ref{case.d1} holds. 

Now suppose that \ref{case.d2} holds. In particular this implies that $p(X_{l_\dagger},X_{l_\dagger+1}) = p(Z_{l_\dagger},Z_{l_\dagger+1})$. By considering \eqref{eq.zxpath}, we see that $u_{p+1}$ is a subpath of a standard path from $z$ to $x$ and so $u_{p+1}$ has length at most $\lambdasp d + \musp \leq \omega d$ by \Cref{prop.sp}. Similarly $\length(u_{p+1}') \leq \omega d$. Since $u_{p+1} \neq u_{p+1}'$, it follows that they are distinguished by the linear statistic "the final $2(R+1)\lambdaqie \omega c$ letters of $w_{i}$" in $\scall$ and so $\alpha$ and $\beta$ satisfy $\taurus(\scall,\jl,N,\epsilon)$.
\end{proof}

So we can assume that $X_{l_\dagger} = Z_{l_\dagger} \not \in \G_K[X,Z]$.

\begin{proofclaim2} \label{2claim5}
We are done when \ref{case.d1} holds.
\end{proofclaim2}

\begin{proof}[Proof of \Cref{2claim5}]
Suppose \ref{case.d1} holds. If $q > 0$ and $q' < 0$ then of course $\alpha$, $\beta$ satisfy $\leo(\scalf,\jf)$ as $\ol{u_1} \dots \ol{u_p} \ \ol{u_{p+1}}$ and $\ol{u_1} \dots \ol{u_p} \ \ol{u_{p+1}'}$ are distinguished by the finite statistic "the final $I + 3K$ letters of $w_i$". Similarly we are done if $q < 0$ and $q' > 0$. So $q,q' > 0$ or $q,q' < 0$ and so $\length(u_{p+1}) \neq \length(u_{p+1}')$.

Since $X_{l_\dagger} = Z_{l_\dagger} \not \in \G_K[X,Z]$, we know that $d_{X_{l_\dagger}}(X,Z) \leq K$. Recall that $d_{X_{l_\dagger}}(X,Z)$ is by definition the diameter of $\pi_{X_{l_\dagger}}(X) \cup \pi_{X_{l_\dagger}}(Z)$ in $\C(X_{l_\dagger})$. Hence, using \Cref{2claim3} we have
\[\absval{q - q'} \leq \lambdacosetsp d_{\C(X_{l_\dagger})}(p(X_{l_\dagger}, X_{l_\dagger + 1}),p(Z_{l_\dagger}, Z_{l_\dagger + 1})) + \mucosetsp \leq \lambdacosetsp d_{X_{l_\dagger}}(X,Z) + \mucosetsp \leq \lambdacosetsp K + \mucosetsp\]
So the sentences $\ol{u_1} \dots \ol{u_p} \ \ol{u_{p+1}}$ and $\ol{u_1} \dots \ol{u_p} \ \ol{u_{p+1}'}$ are distinguished by the finite statistic "$\length(w_i)$ modulo $(\lambdacosetsp K + \mucosetsp + 1)$".
\end{proof}

By \Cref{2claim5} we can assume that \ref{case.d2} holds.

\begin{proofclaim2} \label{2claim6}
We may assume that for all $1 \leq k \leq \jf$ the group elements $u_1 \dots u_p u_{p+1} \dots u_{p+k}$ and $u_1 \dots u_p u_{p+1}' \dots u_{p+k}'$ are distinct (as elements of $G$).
\end{proofclaim2}

\begin{proof}[Proof of \Cref{2claim6}]
Suppose $1 \leq k \leq \jf$ is such that $u_1 \dots u_p u_{p+1} \dots u_{p+k} =_G u_1 \dots u_p u_{p+1}' \dots u_{p+k}'$.

Firstly, note that $u_1 \dots u_p u_{p+1} \dots u_{p+k}$ is an element of the coset $X_{l}$ for some $l \geq l_\dagger + 1$ and similarly $u_1 \dots u_p u_{p+1}' \dots u_{p+k}'$ is an element of the coset $Z_{l'}$ for some $l' \geq l_\dagger + 1$. We know that $X_l \neq Z_{l'}$ since $X_l \in \G_K[X_{l_\dagger + 1},X]$ and $Z_l \in \G_K[Z_{l_\dagger + 1},Z]$. 

Due to \ref{condition.natural1}, we know that the last word of $\ol{u_1} \dots \ol{u_p} \ \ol{u_{p+1}} \dots \ol{u_{p+k}}$ not contained in one of the cosets $\C(H_t)$ has the form $\gamma(X_{l-1},X_l)$. Similarly, the last word of $\ol{u_1} \dots \ol{u_p} \ \ol{u_{p+1}'} \dots \ol{u_{p+k}'}$ not contained in one of the cosets $\C(H_t)$ has the form $\gamma(Z_{l'-1},Z_l')$.

But since $u_1 \dots u_p u_{p+1} \dots u_{p+k} =_G u_1 \dots u_p u_{p+1}' \dots u_{p+k}'$ we know that $X_{l} \cap Z_{l'} \neq \emptyset$ and so, since $X_l \neq Z_{l'}$, we deduce that $X_l$ and $Z_{l'}$ are cosets of different peripheral subgroups $H_1, \dots, H_T$. It follows from \ref{condition.natural2} that the final $\chi$ letters of $\gamma(X_{l-1},X_l)$ are distinct from the final $\chi$ letters of $\gamma(Z_{l'-1},Z_l')$. By considering the fourth finite statistic in $\scalf$, we deduce that $\alpha$ and $\beta$ satisfy $\leo(\scalf,\jf)$.
\end{proof}

\subsection{The non-degenerate case}

We have now reached the heart of the proof.

Consider the path $\G_K[X,Z]$. By \cite[Lemma 3.6]{BBFS}, we know that $\G_K[X,Z]$ diverges from $\G_K[X,H_1]$ at some vertex $X_{l_\dagger + l_x}$ where $1 \leq l_x \leq 3$ and that it joins $\G_K[H_1,Z]$ at a vertex $Z_{l_\dagger + l_z}$ for some $1 \leq l_z \leq 3$. See \Cref{fig.bigdiagram} for a visual depiction of the situation at which we have arrived. 

\begin{figure}
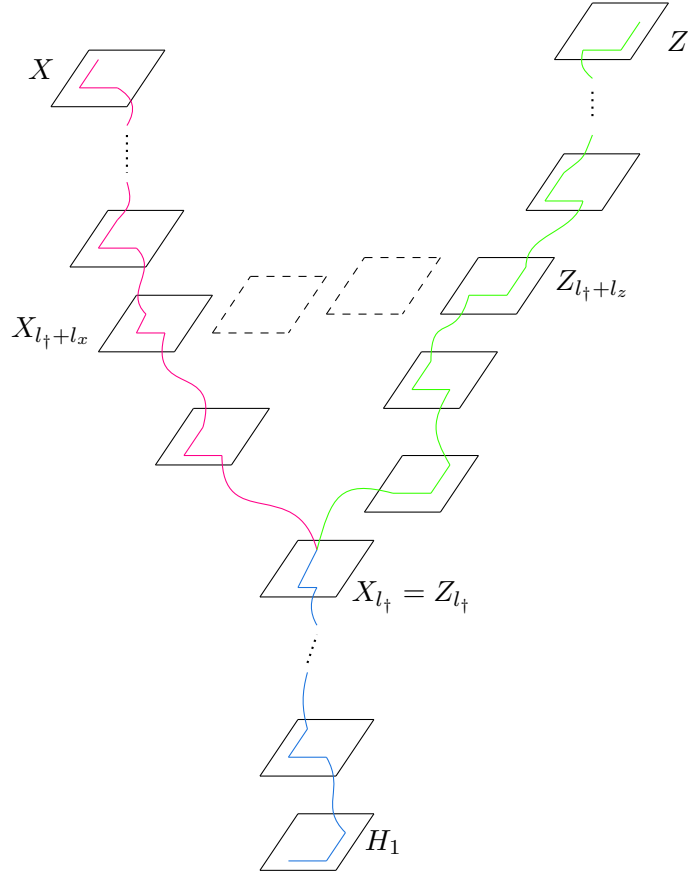

    \vspace{10mm}
    \centering
    \scalebox{1}{
    \tikzfig{bigmainproofdiagram}
    }
    \caption{This diagram is drawn as if $H_t = A_t = \Z^2$ for all $1 \leq t \leq T$. It is also drawn for $l_x = 2$ and $l_z = 3$. The blue line indicates where the $\r$'th ordered standard path from $e$ to $x$ (i.e. $\zeta$) and the $\r$'th ordered standard path from $e$ to $z$ (i.e. $\zeta'$) agree. The pink line is the rest of $\zeta$ from $p(X_{l_\dagger},X_{l_\dagger + 1}) = p(Z_{l_\dagger},Z_{l_\dagger + 1})$ to $x$. The green line is the rest of $\zeta'$ from $p(X_{l_\dagger},X_{l_\dagger + 1}) = p(Z_{l_\dagger},Z_{l_\dagger + 1})$ to $x$. The dashed squares are those which are in $\G_K[X,Z]$ but not in $\G_K[H_1,X]$ or $\G_H[H_1,Z]$.}
    \label{fig.bigdiagram}
\end{figure}

Suppose that $j_x \in \N$ is such that $u_{p+j_x} = \gamma(X_{l_\dagger+l_x-1},X_{l_\dagger+l_x})$. Suppose that $j_z \in \N$ is such that $u_{p+j_z}' = \gamma(Z_{l_\dagger+l_z-1},Z_{l_\dagger+l_z})$.

\begin{proofclaim2} \label{2claim7}
We have $j_x \leq \jl$ and $j_z \leq \jl$.
\end{proofclaim2}

\begin{proof}[Proof of \Cref{2claim7}]
We begin by proving that $j_x \leq \jl$. Now, $\ol{u_{p+1}} \ \ol{u_{p+2}} \dots \ol{u_{p+j_x}}$ has the form 
\[\ol{\gamma(X_{l_\dagger},X_{l_\dagger+1})}\]
or the form
\[\ol{\gamma(X_{l_\dagger},X_{l_\dagger+1})} \alpha_{l_\dagger+1} \ol{\gamma(X_{l_\dagger+1}, X_{l_\dagger+2})} \] 
or the form
\[\ol{\gamma(X_{l_\dagger},X_{l_\dagger+1})} \alpha_{l_\dagger+1} \ol{\gamma(X_{l_\dagger+1}, X_{l_\dagger+2})} \alpha_{l_\dagger+2}\ol{\gamma(X_{l_\dagger+2},X_{l_\dagger+3})}\] 
depending on whether $l_x = 1$, $2$ or $3$. In the above, we are using the notation from \Cref{def.fr}. 

We will prove that (in the second and third cases above) we have $\length(\alpha_{l_\dagger + 1}) \leq 2 \lambdacosetsp K + \mucosetsp$. First, note that $\length(\alpha_{l_\dagger + 1}) \leq \length(\zeta_{l_\dagger + 1})$. But we have
\[\length(\zeta_{l_\dagger + 1}) \leq \lambdacosetsp d_{\C(X_{l_\dagger + 1})}(p(X_{l_\dagger + 1},X_{l_\dagger}), p(X_{l_\dagger + 1},X_{l_\dagger + 2})) + \mucosetsp \leq \lambdacosetsp d_{X_{l_\dagger + 1}}(H_1, X) + \mucosetsp\]
where the first inequality follows from \Cref{prop.cosetsp}. However, and this is really the crucial observation, since $X_{l_\dagger + 1} \not \in \G_K[H_1, Z]$ and $X_{l_\dagger + 1} \not \in \G_K[X, Z]$ we have
\[d_{X_{l_\dagger + 1}}(H_1, X) \leq d_{X_{l_\dagger + 1}}(H_1, Z) + d_{X_{l_\dagger + 1}}(Z, X) \leq 2K\]
and so 
\[\length(\alpha_{l_\dagger + 1}) \leq \length(\zeta_{l_\dagger + 1}) \leq 2 \lambdacosetsp K + \mucosetsp\]
For identical reasons, in the third case above, we have $\length(\alpha_{l_\dagger + 2}) \leq 2\lambdacosetsp K + \mucosetsp$. It follows that, in all three cases, we have $j_x \leq 2(2\lambdacosetsp K + \mucosetsp) + 3 = \jl$.

We have $j_z \leq \jl$ for analogous reasons. 
\end{proof}

We define $j = \max(j_x, j_z)$. Without loss of generality we can assume that in fact $j = j_x$. Let $p_x$ denote the group element $u_1 \dots u_p u_{p+1} \dots u_{p+j_x} = p_0(X_{l_\dagger + l_x}) = p(X_{l_\dagger + l_x}, X_{l_\dagger + l_x - 1})$ and let $p_z$ denote the group element $u_1 \dots u_p u_{p+1}' \dots u_{p+j_z}' = p_0(Z_{l_\dagger + l_z}) = p(Z_{l_\dagger + l_z}, Z_{l_\dagger + l_z - 1})$. Then $u_{p+j_x+1} u_{p+j_x+2} \dots u_{p+m}$ is a path in $\Gamma(G,S)$ from $p_x$ to $x$ and $u_{p+j_z+1}' u_{p+j_z+2}' \dots u_{p+n}'$ is a path in $\Gamma(G,S)$ from $p_z$ to $z$. 

Recall that $\G_K[X,Z] = \{X = Y_1 < Y_2 < \dots < Y_{L} = Z\}$. Let $1 \leq l \leq L$ be such that $Y_l = X_{l_\dagger + l_x}$. Let $p = p(Y_{l}, Y_{l+1}) \in X_{l_\dagger + l_x}$. So $p \in \pi_{X_{l_\dagger + l_x}}(Z)$. Now, since $X_{l_\dagger + l_x} \not \in \G_K[H_1,Z]$, we have $d_{X_{l_\dagger + l_x}}(H_1, Z) \leq K$ and so 
\begin{equation} \label{eq.px}
d_G(p_x, p) \leq K
\end{equation}

Similarly, let $1 \leq l' \leq L$ be such that $Y_{l'} = Z_{l_\dagger + l_z}$ and let $p'$ be the vertex $p' = p(Y_{l'}, Y_{l'-1})$. So $p' \in \pi_{Z_{l_\dagger + l_z}}(X)$. Since $Z_{l_\dagger + l_z} \not \in \G_K[H_1,X]$, we have $d_{Z_{l_\dagger + l_z}}(H_1, X) \leq K$ and so \begin{equation} \label{eq.pz}
d_G(p_z, p') \leq K
\end{equation}

\begin{proofclaim2} \label{2claim8}
$\awl(\ol{u_{p+j+1}} \ \ol{u_{p+j+2}} \dots \ol{u_{p+m}})$ and $\awl(\ol{u_{p+j+1}'} \ \ol{u_{p+j+2}'} \dots \ol{u_{p+n}'})$ are both at most $N$.
\end{proofclaim2}

\begin{proof}[Proof of \Cref{2claim8}]
We begin by counting the number of letters in $u_{p+j_x+1} u_{p+j_x+2} \dots u_{p+m}$. Now, $u_{p+j_x+1} u_{p+j_x+2} \dots u_{p+m}$ is a subpath of the $\r$'th ordered standard path from $e$ to $x$. By \Cref{prop.sp}, this is $(\lambdasp,\musp)$-quasigeodesic and hence
\[\length(u_{p+j_x+1} u_{p+j_x+2} \dots u_{p+m}) \leq \lambdasp d_G(p_x,x) + \musp \leq \lambdasp (d_G(p,x) + K) + \musp\]
where we have used \eqref{eq.px}. Now, consider a standard path in $G$ from $z$ to $x$. By definition, this standard path must go through the coset $X_{l_\dagger + l_x}$ since $X_{l_\dagger + l_x} \in \G_K[Z,X]$ and also through the point $p \in X_{l_\dagger + l_x}$. By \Cref{prop.sp}, we know that this standard path has length at most $\lambdasp d + \musp$. Hence,
\[d_G(p,x) \leq \lambdasp d + \musp\]
So we have 
\[\length(u_{p+j_x+1} u_{p+j_x+2} \dots u_{p+m}) \leq \lambdasp^2 d + \lambdasp \musp + \lambdasp K + \musp\]
For similar reasons, we have $\length(u_{p+j_z+1}' u_{p+j_z+2}' \dots u_{p+n}') \leq \lambdasp^2 d + \lambdasp \musp + \lambdasp K + \musp$. 

Since $j = j_x$, we therefore have
\[\length(u_{p+j+1} u_{p+j+2} \dots u_{p+m}) \leq \lambdasp^2 d + \lambdasp \musp + \lambdasp K + \musp\]
Further, we know that $u_{p+j+1}' u_{p+j+2}' \dots u_{p+n}'$ is a subword of $u_{p+j_z+1}' u_{p+j_z+2}' \dots u_{p+n}'$ and so 
\[\length(u_{p+j+1}' u_{p+j+2}' \dots u_{p+n}') \leq \lambdasp^2 d + \lambdasp \musp + \lambdasp K + \musp\]

By \eqref{eq.dlb} we have $d \geq 12(R+1) \lambdaqie \jl$ and hence $\jl \leq \frac{d}{12(R+1) \lambdaqie}$. Therefore, using \Cref{2claim7} and \eqref{eq.mn}, we have
\[m - j \geq m - \jl \geq \frac{d}{6(R+1)\lambdaqie} - \jl \geq \frac{d}{12(R+1) \lambdaqie}\]
Therefore
\begin{align*}
\awl(\ol{u_{p+j+1}} \ \ol{u_{p+j+2}} \dots \ol{u_{p+m}})) &\leq \frac{\lambdasp^2 d + \lambdasp \musp + \lambdasp K + \musp}{d / 12(R+1)\lambdaqie} \\
&\leq 12(R+1)\lambdaqie (\lambdasp^2 + \lambdasp \musp + \lambdasp K + \musp) \\
&= N
\end{align*}
For similar reasons, we also have
\[\awl(\ol{u_{p+j+1}'} \ \ol{u_{p+j+2}'} \dots \ol{u_{p+n}'})) \leq N\]
and we have proved the claim. 
\end{proof}
So \ref{condition.taurus1} of $\taurus(\scall,J,N,\epsilon)$ holds. We now turn to proving \ref{condition.taurus2}. With this in mind, and recalling that $\epsilon = 1$, let us suppose that $1 \leq j' \leq j$ is such that $\length(u_{p+j''}) \leq m+n$ and $\length(u_{p+j''}') \leq m+n$ for all $j' < j'' \leq j$.

\begin{proofclaim2} \label{2claim9}
The distance between $\Phi(u_{p+1}\dots u_{p+j'})$ and $\Phi(u_{p+1}'\dots u_{p+j'}')$ in $\prod_{q=1}^Q T_C$ is at most $\omega d$.
\end{proofclaim2}

\begin{proof}[Proof of \Cref{2claim9}]
Since $\length(u_{p+j''}) \leq m+n$ for all $j' < j'' \leq j$, we have 
\begin{align*}
d_G(u_1 \dots u_p u_{p+1}\dots u_{p+j'}, p_x) &= d_G(u_1 \dots u_p u_{p+1}\dots u_{p+j'}, u_1 \dots u_p u_{p+1}\dots u_{p+j}) \\
&\leq \jl (m+n)
\end{align*}
So, using \eqref{eq.px}, we have
\[d_G(u_1 \dots u_p u_{p+1}\dots u_{p+j'}, p) \leq \jl (m+n) + K\]
Recall that we have $m + n = d'_{\r} \leq d' \leq \lambdaqie d + \muqie$. Hence, 
\begin{equation} \label{eq.dj'p}
d_G(u_1 \dots u_p u_{p+1}\dots u_{p+j'}, p) \leq \jl (\lambdaqie d + \muqie) + K
\end{equation}

Consider a standard path from $x$ to $z$ to $\Gamma(G,S)$. By \Cref{prop.sp}, the length of this path is at most $\lambdasp d + \musp$. Further, note that this standard path goes through the vertices $p$ and $p'$. Therefore,
\begin{equation} \label{eq.dpp'}
d_G(p,p') \leq \lambdasp d + \musp 
\end{equation}

We must now divide into two cases: we can have $j' \leq j_z$ or $j' > j_z$. 

In the first case, since $\length(u_{p+j''}') \leq m+n$ for all $j' < j'' \leq j$, and since $j_z \leq j$, we have 
\begin{align*}
d_G(u_1 \dots u_p u_{p+1}'\dots u_{p+j'}', p_z) &= d_G(u_1 \dots u_p u_{p+1}'\dots u_{p+j'}', u_1 \dots u_p u_{p+1}'\dots u_{p+j_z}') \\
&\leq \sum_{j' < j'' \leq j_z} \length(u_{p+j''}') \\
&\leq \jl (m+n)
\end{align*}
and so, using \eqref{eq.pz}, we have
\begin{equation} \label{eq.dj'p'}
d_G(u_1 \dots u_p u_{p+1}'\dots u_{p+j'}', p') \leq \jl (m+n) + K \leq \jl(\lambdaqie d + \muqie) + K
\end{equation}
Therefore, combining \eqref{eq.dj'p}, \eqref{eq.dpp'} and \eqref{eq.dj'p'}, and recalling that $\Phi: G \rightarrow \prod_{q=1}^Q T_C$ is $\lambdaPhi$-Lipschitz, we have
\begin{align*}
&d(\Phi(u_{p+1}\dots u_{p+j'}), \Phi(u_{p+1}'\dots u_{p+j'}')) \leq \lambdaPhi d_{G}(u_{p+1}\dots u_{p+j'}, u_{p+1}'\dots u_{p+j'}')\\
&= \lambdaPhi d_{G}(u_1 \dots u_p u_{p+1}\dots u_{p+j'}, u_1 \dots u_p u_{p+1}'\dots u_{p+j'}')) \\
&\leq \lambdaPhi(\jl (\lambdaqie d + \muqie) + K + \lambdasp d + \musp + \jl(\lambdaqie d + \muqie) + K) \\
&\leq \omega d
\end{align*}
as desired.

In the second case, when $j' > j_z$, we know that $u_{p+ j_z + 1}'u_{p+ j_z + 2}'\dots u_{p+n}'$ is a subpath of the $\r$'th ordered standard path from $e$ to $z$. By \Cref{prop.sp}, this is a $(\lambdasp,\musp)$-quasigeodesic, hence
\begin{align*}
&\length(u_{p+ j_z + 1}'u_{p+ j_z + 2}'\dots u_{p+j'}') \leq \length(u_{p+ j_z + 1}'u_{p+ j_z + 2}'\dots u_{p+n}') \leq \lambdasp d_G(p_z, z) + \musp \\
&\leq \lambdasp (d_G(p', z) + K) + \musp \leq \lambdasp (\lambdasp d + \musp + K) + \musp
\end{align*}
where we have used \eqref{eq.pz} and the fact that a standard path in $\Gamma(G,S)$ from $x$ to $z$ must pass through $p'$. Therefore
\begin{align}
d_G(u_1 \dots u_p u_{p+1}'\dots u_{p+j'}', p') &\leq d_G(u_1 \dots u_p u_{p+1}'\dots u_{p+j'}', p_z) + K \nonumber \\
&\leq \lambdasp (\lambdasp d + \musp + K) + \musp + K \label{eq.c2}
\end{align}
where we have again used \eqref{eq.pz}. Hence, combining \eqref{eq.dj'p}, \eqref{eq.dpp'} and \eqref{eq.c2}, and using the fact that $\Phi$ is $\lambdaPhi$-Lipschitz, we have
\begin{align*}
&d(\Phi(u_{p+1}\dots u_{p+j'}), \Phi(u_{p+1}'\dots u_{p+j'}')) \leq \lambdaPhi d_{G}(u_{p+1}\dots u_{p+j'}, u_{p+1}'\dots u_{p+j'}')\\
&= \lambdaPhi d_{G}(u_1 \dots u_p u_{p+1}\dots u_{p+j'}, u_1 \dots u_p u_{p+1}'\dots u_{p+j'}')) \\
&\leq \lambdaPhi(\jl (\lambdaqie + \muqie) + K + \lambdasp d + \musp + \lambdasp (\lambdasp d + \musp + K) + \musp + K) \leq \omega d
\end{align*}
and we are done.
\end{proof}

Since $j' \leq \jf$, it follows from \Cref{2claim6} that $u_{p+1}\dots u_{p+j'} \neq u_{p+1}'\dots u_{p+j'}'$. Then, since $\Phi$ is injective by \Cref{lem.reglem}, we have $\Phi(u_{p+1}\dots u_{p+j'}) \neq \Phi(u_{p+1}'\dots u_{p+j'}')$, and so there exists some $1 \leq q \leq Q$ such that $\Phi_q(u_{p+1}\dots u_{p+j'}) \neq \Phi_q(u_{p+1}'\dots u_{p+j'}')$. For simplicity of notation, let us write $v_q = \Phi_q(u_{p+1}\dots u_{p+j'})$ and $v_q' = \Phi_q(u_{p+1}'\dots u_{p+j'}')$. We have $d_{T_C}(v_q,v_q') \leq \omega d$ by \Cref{2claim9}. So we deduce from \Cref{lem.nomt} that either the final $\omega d$ letters of $v_q$ and $v_q'$ are distinct or the final $\omega d$ letters of the base $10$ expansions of $\length(v_q) \in \N$ and $\length(v_q') \in \N$ are distinct. 

Consider the linear statistic "the final $2(R+1) \lambdaqie \omega c$ letters of $\Phi_q(w_{i-j'+1} \dots w_i)$". When applied to the sentences $\ol{u_1} \dots \ol{u_{p+j'}}$ and $\ol{u_1} \dots \ol{u_{p+j'}'}$ with $c = m+n$ this becomes "the final $2(R+1) \lambdaqie \omega (m+n)$ letters of $v_q$" and "the final $2(R+1) \lambdaqie \omega (m+n)$ letters of $v_q'$" respectively. Note that $2(R+1) \lambdaqie \omega (m+n) \geq \omega d$ by \eqref{eq.mplusnlb}. Hence $\alpha$ and $\beta$ satisfy $\taurus(\scall,\jl,N,\epsilon)$ if the final $\omega d$ letters of $v_q$ and $v_q'$ are distinct. For similar reasons, by considering the linear statistic "the final $2(R+1) \lambdaqie \omega c$ letters of the base $10$ expansion of $\length(w_{i-j'+1}) \in \N$", we are done when the final $\omega d$ letters of the base $10$ expansions of $\length(v_q)$ and $\length(v_q')$ are distinct. So $\alpha$ and $\beta$ satisfy $\taurus(\scall,\jl,N,\epsilon)$. \qed

\appendix

\section{A quasiisometric embedding of $\H^2$ into a product of two binary trees} \label{appchap.3}

In this appendix we will prove that there is a quasiisometric embedding of the hyperbolic plane into a product of two binary trees. The proof roughly follows the outline of the proof of the same result given in \cite[Section 6]{BDS}, however I use the terminology of diaries and statistics in order to prove it. I hope this example helps to clarify the usage of diaries and statistics in a simple case. 

One can prove that the hyperbolic plane $\hyp^2$ is quasiisometric to the hexagonal hyperbolic Coxeter group 
\[G = \langle a_1, a_2, a_3, b_1, b_2, b_3 \ | \ a_i^2 = b_i ^2 = e \textrm{ for } i \in \{1,2,3\} \textrm{ and } [a_k, b_l] = e \textrm{ for } k \neq l \rangle\]
To see this, one considers the cocompact action of $G$ on $\hyp^2$, where the generators of $G$ correspond to reflections in the sides of a regular hexagon in $\hyp^2$. Write $S = \{a_1, a_2, a_3, b_1, b_2, b_3\}$. We consider the word metric on $G$ arising from this generating set. 

Let $A = S \cup S^{-1} = S$ and let $W$ be the set of finite words on $A$. We write $\A = \{a_1, a_2, a_3\}$ and $\B = \{b_1, b_2, b_3\}$. 

\begin{definition}
Let $g \in G$. The \textit{$a$-left representation} of $g$ is the unique geodesic word $w \in W$ which represents $g$ such that all the $\A$-letters in $w$ have been commuted as far to the left as possible. For example, if $g = b_1 a_2 a_3 b_2 a_1 b_1$ then the $a$-left representation of $g$ is $w = a_2 a_3 b_1 a_1 b_2 b_1$. The \textit{$b$-left representation} of $g$ is defined analogously. 
\end{definition}

We can map $G$ into $T_W$ via a map $F_\A: G \rightarrow T_W$ as follows. Let $g \in G$ and let $w \in W$ be the $a$-left representation of $g$. We may write 
\[w = u_1 a_1 u_2 a_2 u_3 a_3 \dots u_m a_m u_{m+1}\]
where $a_i \in \A$ and $u_i$ is a word on the alphabet $\B$. Then we define
\[F_\A(g) = \ol{u_1 a_1} \ \ol{u_2 a_2} \ \ol{u_3 a_3} \dots \ol{u_m a_m}\]
So, for example $F_\A(b_1 a_2 a_3 b_2 a_1 b_1) = \ol{a_2} \ \ol{a_3} \ \ol{b_1 a_1}$. 

We define $F_\B: G \rightarrow T_W$ analogously. Let $F: G \rightarrow T_W \times T_W$ be $F = F_1 \times F_2$. 

\begin{lemma} \label{lem.aleftrep}
Suppose $g,g' \in G$ and suppose they have $a$-left representations (respectively)
\[w = u_1 a_1 u_2 a_2 \dots u_p a_p u_{p+1} a_{p+1} \dots u_{p+m} a_{p+m} u_{p+m+1}\]
and
\[w' = u_1 a_1 u_2 a_2 \dots u_p a_p u_{p+1}' a_{p+1}' \dots u_{p+n}' a_{p+n}' u_{p+n+1}'\]
where $u_{p+1} a_{p+1} \neq u_{p+1}' a_{p+1}'$. Then one can prove that the letters $a_{p+1}, a_{p+2}, \dots, a_{p+m}$ and $a_{p+1}', a_{p+2}', \dots, a_{p+n}'$ are \textit{exactly} the letters in $\A$ which survive when you fully cancel the word $w^{-1}w'$. A similar statement can be said for the $b$-left representations.
\end{lemma}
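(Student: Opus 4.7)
The plan is to exploit the fact that $G$ is the right-angled Coxeter group on the hexagon, where the only commuting pairs of generators are $\{a_k,b_l\}$ for $k \neq l$: no two $\A$-letters commute, and no two $\B$-letters commute. Since any two geodesic expressions for a fixed element of a right-angled Coxeter group are related by a sequence of commutations of adjacent commuting generators, and every such commutation swaps an $\A$-letter past a $\B$-letter, the ordered sub-sequence $\alpha(g)$ of $\A$-letters appearing in any geodesic word for $g \in G$ is a well-defined invariant of $g$.

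Writing $w = \pi v$ and $w' = \pi v'$ for the common prefix $\pi = u_1 a_1 \cdots u_p a_p$, one has $w^{-1} w' = v^{-1}\pi^{-1}\pi v' = v^{-1} v'$ after freely cancelling $\pi^{-1}\pi$, which proceeds by a sequence of elementary $b_i b_i$ and $a_i a_i$ cancellations and requires no commutations. Denote the word $v^{-1}v'$ by $W$. The $\A$-sub-sequence of $W$, read left to right, is precisely $(a_{p+m},\dots,a_{p+1},a_{p+1}',\dots,a_{p+n}')$, which is the list claimed by the lemma. Consequently the lemma is reduced to showing that $\alpha(g^{-1}g') = \alpha(W)$, i.e., that no pair of $\A$-letters ever gets cancelled during any $G$-reduction of $W$ to a geodesic.

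The main step, and the hard part, will be establishing this no-$\A$-cancellation property. The plan is to argue by contradiction: suppose the first $\A$-cancellation in some reduction of $W$ brings together two copies of $a_r$. Since no $\A$-letter has been deleted before this step, every $\A$-letter of $\alpha(W)$ lying between the cancelling pair must itself commute with $a_r$, and hence equal $a_r$; so the cancelling pair is bracketed by a (possibly empty) run of $a_r$'s in $\alpha(W)$. The $a$-left property of $w$ and $w'$ then forces each intervening $\B$-block to terminate in the non-commuting partner $b_r$: a non-empty $u_i$ with $i \geq 2$ must end in $b_{r_i}$ where $a_i = a_{r_i}$, and an empty $u_i$ would force $a_{i-1} \neq a_i$, contradicting the fact that the trapped $\A$-letters form a run of $a_r$'s. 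Each such blocking $b_r$ sits adjacent to an $a_r$ on one side and, on the other, to either another $a_r$ or to a distinct $\B$-letter (coming from the reduced alternating structure of the block); in either case it cannot move under any commutation. The crux of the argument is that no cascade of $\B$-cancellations can free any of these blocking $b_r$'s: in the ``first-$\A$-cancellation'' scenario every $\A$-letter accessible to such a $b_r$ is an $a_r$, with which $b_r$ does not commute, so any $\B$-cancellation involving this $b_r$ would require a prior $\A$-cancellation, contradicting its status as first. The boundary pair $(a_{p+1},a_{p+1}')$ needs separate treatment, using the hypothesis $u_{p+1}a_{p+1} \neq u_{p+1}'a_{p+1}'$ together with the $a$-left constraints on $u_{p+1}$ and $u_{p+1}'$ to locate an explicit blocking $b_r$ within $u_{p+1}^{-1}u_{p+1}'$. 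The analogous statement for $b$-left representations then follows by the manifest $\A \leftrightarrow \B$ symmetry of $G$'s presentation.
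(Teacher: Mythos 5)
The paper itself gives no argument for this lemma (its ``proof'' is literally left to the reader), so there is nothing to compare against; judged on its own terms, your framework is the right one: in this right-angled Coxeter group the only braid moves are commutations of an $\A$-letter past a $\B$-letter, so the ordered $\A$-subsequence of a reduced word is an invariant of the group element, and the lemma does reduce to showing that no $\A$-pair is ever deleted in reducing $W=v^{-1}v'$. But the execution has problems. A small one first: an $\A$-letter lying between the first cancelling pair cannot be disposed of by saying it ``commutes with $a_r$, hence equals $a_r$'' --- a generator does not commute past a copy of itself, and it cannot be deleted without contradicting firstness, so in fact there are \emph{no} $\A$-letters between the cancelling pair: the pair consists of consecutive $\A$-letters of $W$, and the material between them is a single $\B$-block of $W$ (one of the reversed $u_i$'s, one of the $u_i'$'s, or the junction block $\overleftarrow{u_{p+1}}\,u_{p+1}'$), not a run of $a_r$'s separating several blocks.

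The genuine gap is at your crux. Your only justification that the blocking $b_r$ can never be cancelled is that ``any $\B$-cancellation involving this $b_r$ would require a prior $\A$-cancellation''. That rules out a partner $b_r$ separated from it by an $\A$-letter, but says nothing about a partner lying in the \emph{same} $\B$-block --- which, by the previous point, is the only case that actually arises. Such configurations occur: a block $b_rb_lb_r$ between two consecutive $a_r$'s is compatible with geodesity and $a$-leftness, and at the junction the block $\overleftarrow{u_{p+1}}\,u_{p+1}'$ typically contains several $b_r$'s, some of which really do cancel (those coming from the common prefix of $u_{p+1}$ and $u_{p+1}'$). So the statement you need is not ``no $b_r$ in the region is ever cancelled'' but ``not all of them can be'', and that needs an argument you have not supplied: for instance an innermost-pair induction (two $b_r$'s in the region can meet only after clearing the nonempty reduced word in $\{b_l:l\neq r\}$ between them; those letters cannot commute past a $b_r$, their potential partners cannot enter past one, and recursively they cannot cancel among themselves), combined with the check that the hypotheses force the reduced form of $u_{p+1}^{-1}u_{p+1}'$ in $\langle b_1,b_2,b_3\rangle$ to retain a $b_r$. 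Alternatively you can bypass the blocking analysis altogether: two letters can cancel in a reduction only if they determine the same reflection, so same-side $\A$-pairs are excluded because $v$ and $v'$ are geodesic, and the junction pair $a_{p+1}=a_{p+1}'=a_r$ is excluded because equality of reflections would force $\overline{u_{p+1}}^{-1}\overline{u_{p+1}'}$ into the centraliser $\langle a_r\rangle\times\langle b_l : l\neq r\rangle$ of $a_r$, impossible since its reduced $\B$-form contains $b_r$. As it stands, the heart of the lemma --- exactly the place where the hypothesis $u_{p+1}a_{p+1}\neq u_{p+1}'a_{p+1}'$ and $a$-leftness must be used --- is asserted rather than proved.
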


\begin{proof}
This is is left to the reader.
\end{proof}

\begin{corollary}
$F: G \rightarrow T_W \times T_W$ is an isometric embedding. 
\end{corollary}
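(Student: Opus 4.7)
The plan is to show directly that $d_G(g,g') = d_{T_W}(F_\A(g), F_\A(g')) + d_{T_W}(F_\B(g), F_\B(g'))$ for all $g, g' \in G$, taking the $\ell^1$ metric on $T_W \times T_W$.

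First I would unpack each tree distance using \Cref{lem.aleftrep}. Let $w, w'$ be the $a$-left representations of $g, g'$ in the form of the lemma, with common prefix $u_1 a_1 \dots u_p a_p$ and $u_{p+1} a_{p+1} \neq u_{p+1}' a_{p+1}'$. Directly from the definition of $F_\A$, we have $F_\A(g) = \ol{u_1 a_1} \dots \ol{u_{p+m} a_{p+m}}$ and $F_\A(g') = \ol{u_1 a_1} \dots \ol{u_p a_p}\ \ol{u_{p+1}' a_{p+1}'} \dots \ol{u_{p+n}' a_{p+n}'}$. The lowest common ancestor of these sentences in $T_W$ is $\ol{u_1 a_1} \dots \ol{u_p a_p}$, so $d_{T_W}(F_\A(g), F_\A(g')) = m+n$. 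Running the same argument with $b$-left representations $v, v'$ produces $d_{T_W}(F_\B(g), F_\B(g')) = m' + n'$ for the analogous integers $m', n'$.

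Next, \Cref{lem.aleftrep} identifies $m+n$ as the number of $\A$-letters surviving the full cancellation of $w^{-1}w'$, and likewise $m'+n'$ as the number of $\B$-letters surviving the full cancellation of $v^{-1}v'$. In the right-angled Coxeter group $G$, fully cancelling a word produces a geodesic (the deletion condition), and any two geodesic expressions for the same element differ only by commutations; hence they use the same multiset of generators, so the number of $\A$-letters and the number of $\B$-letters in any geodesic for $g^{-1}g'$ are well-defined invariants. It follows that $m+n$ is the $\A$-letter count and $m'+n'$ is the $\B$-letter count of any geodesic for $g^{-1}g'$, and so $(m+n)+(m'+n') = d_G(g,g')$, as required.

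The main work lies inside \Cref{lem.aleftrep}, which we take as given. The delicate point there is that no $\A$-letter in the fully cancelled form of $w^{-1}w'$ can be ``created'' by commuting past a $\B$-block that then cancels --- this is exactly why one first pushes all $\A$-letters to the left, so that the $\A$- and $\B$-alphabets cancel independently in $w^{-1}w'$. Granting the lemma, the corollary itself reduces to the bookkeeping argument above.
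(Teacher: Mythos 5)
Your proposal is correct and takes essentially the same route as the paper: both identify $d_{T_W}(F_\A(g),F_\A(g'))$ and $d_{T_W}(F_\B(g),F_\B(g'))$ via \Cref{lem.aleftrep} with the numbers of $\A$-letters and $\B$-letters in a reduced word for $g^{-1}g'$, whose sum is $d_G(g,g')$. You merely spell out the bookkeeping (lowest common ancestors in $T_W$, invariance of the letter counts under commutation moves) that the paper leaves implicit.
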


\begin{proof}
Let $g,g' \in G$. \Cref{lem.aleftrep} tells us that $F_\A$ counts the numbers of letters in $\A$ that are in a reduced word representing $g^{-1}g'$ and $F_\B$ counts the numbers of letters in $\B$ that are in a reduced word representing $g^{-1}g'$.
\end{proof}

Now, as discussed in \Cref{remark.criteria}, we can combine $(\leo)$ and $(\virgo)$, analogously to how $(\leo)$ and $(\taurus)$ are combined in \Cref{cor.leotaurus}, to create a $D: T_W \rightarrow T_\Omega$ associated to the data
\begin{itemize}
    \item $\scalf = \{\textrm{the final letter of } w_i\}$;
    \item $\jf = 2$;
    \item $\scall = 
    \begin{Bmatrix}
    \textrm{the final $12c$ letters of } w_i \\
    \textrm{the final $12c$ letters of } \mt(w_i) \\
    \end{Bmatrix}$;
    \item $\delta = 0$; 
    \item $\jl = 2$;
    \item $N = 18$;
    \item $\epsilon = 1$.
\end{itemize}
We imagine the single finite statistic and the two linear statistics as being applied to an arbitrary sentence $\ol{w_1} \ \ol{w_2} \dots \ol{w_i} \in T_W$. Further, in the linear statistics above note that $c$ is the \textit{variable} required in the definition of a linear statistic (see \Cref{def.lstat}) and not a constant. Let $M \geq 1$ be the associated constant so that $d_{T_\Omega}(D\alpha,D\beta)) \geq d_{T_W}(\alpha,\beta)/M$ for all $\alpha, \beta$ satisfying $\leo(\scalf,\jf)$ or $\virgo(\scall,\delta,\jl,N,\epsilon)$.  

\begin{theorem} \label{thm.h2example}
The composition
\[G \xrightarrow{F} T_W\times T_W \xrightarrow{D \times D} T_\Omega \times T_\Omega \]
is a quasiisometric embedding. 
\end{theorem}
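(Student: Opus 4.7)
The plan is to apply the template of Example~\ref{ex.diariesms} to the isometric embedding $F: G \to T_W \times T_W$. Coarse Lipschitzness of $(D \times D) \circ F$ is automatic since $F$ is isometric and each copy of $D$ is $1$-Lipschitz, so only the lower bound requires work. Given $g, g' \in G$, write $F(g) = (\alpha_\A, \alpha_\B)$ and $F(g') = (\beta_\A, \beta_\B)$. Because $F$ is isometric, $d_G(g,g') = d_{T_W}(\alpha_\A, \beta_\A) + d_{T_W}(\alpha_\B, \beta_\B)$, so one of the two factors $\q \in \{\A, \B\}$ satisfies $d_{T_W}(\alpha_\q, \beta_\q) \geq d_G(g,g')/2$. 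By the symmetric roles of $\A$ and $\B$ we may assume $\q = \A$; set $\alpha := \alpha_\A$ and $\beta := \beta_\A$, and write them in the standard form of Notation~\ref{notation.sentence} with common prefix $\ol{u_1a_1}\dots\ol{u_pa_p}$ and divergent tails of lengths $m$ and $n$, so $m+n \geq d_G(g,g')/2$. By the combined diary theorem obtained from Theorem~\ref{thm.aries} and Theorem~\ref{thm.virgo} exactly as Corollary~\ref{cor.leotaurus} is assembled (see also Remark~\ref{remark.criteria}), it then suffices to verify that this pair $(\alpha, \beta)$ satisfies $\leo(\scalf, 2)$ or $\virgo(\scall, 0, 2, 18, 1)$.

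I would then split into cases depending on the trailing $\A$-letters $a_{p+1}$ and $a_{p+1}'$ of the first divergent sentence-words. If $a_{p+1} \neq a_{p+1}'$, the finite statistic ``the final letter of $w_i$'' distinguishes at depth $p+1$ and $\leo(\scalf, 2)$ holds with $j = 1$. Otherwise $a_{p+1} = a_{p+1}'$, so the $\B$-words $u_{p+1}$ and $u_{p+1}'$ must be distinct; I would aim for $\virgo(\scall, 0, 2, 18, 1)$ with $j = 1$. Condition \ref{condition.virgo2} is immediate, since the sentence-words $u_{p+1}a_{p+1}$ and $u_{p+1}'a_{p+1}'$ themselves differ. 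For \ref{condition.virgo3}, Lemma~\ref{lem.nomt} applied to these two distinct vertices of $T_A$ implies that, provided their $T_A$-distance is at most $12(m+n)$, either the final $12(m+n)$ letters differ or the final $12(m+n)$ digits of the base-$10$ length expansions differ; either mismatch is detected by one of the two linear statistics with $c = m+n$. If instead the $T_A$-distance exceeds $12(m+n)$, one of the two words is much longer than the other, and the length-encoding linear statistic distinguishes them directly.

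The main obstacle is verifying condition \ref{condition.virgo1}: the divergent tails $\ol{u_{p+2}a_{p+2}}\dots\ol{u_{p+m}a_{p+m}}$ and $\ol{u_{p+2}'a_{p+2}'}\dots\ol{u_{p+n}'a_{p+n}'}$ must have average word length at most $N = 18$. This is the geometric heart of the argument and depends on the specific commutation pattern of the hexagonal Coxeter presentation, in which $a_k$ and $b_l$ commute if and only if $k \neq l$. The plan is to exploit maximality of $\q = \A$: by the $\B$-analogue of Lemma~\ref{lem.aleftrep}, the total number of $\B$-letters surviving under full cancellation of $g^{-1}g'$ equals the sentence distance in the $\B$-factor, and is therefore at most $m+n$. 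Each $\B$-letter in a tail block $u_{p+i}$ of the $a$-left representation of $g$ either survives in $g^{-1}g'$ or cancels against a $\B$-letter in the corresponding tail block of $g'$, and each such cancellation requires the two $\B$-letters to commute past all intervening $\A$-letters --- a rigid constraint from the commutation rules. Tracking these cancellations carefully should produce a linear bound on the total $\B$-content of the two tails in terms of $m+n$, and hence a uniform bound on $\awl$; calibrating the constants to the explicit value $N = 18$ is the technical heart of the argument. Once condition \ref{condition.virgo1} is in place the diary criterion yields $d_{T_\Omega}(D\alpha, D\beta) \geq d_{T_W}(\alpha, \beta)/M$, and the proof is complete since $T_\Omega$ is quasiisometric to the rooted binary tree.
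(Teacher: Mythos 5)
There is a genuine gap, and it sits exactly where you flagged it: condition \ref{condition.virgo1} for your pair $(\alpha,\beta)$ with $j=1$ is simply false in general, so the single case split ``$a_{p+1}\neq a_{p+1}'$, else $\virgo$ with $j=1$'' cannot be made to work for any value of $N$. The bookkeeping premise of your plan --- that a $\B$-letter of a tail block $u_{p+i}$ either survives in $g^{-1}g'$ or cancels against a $\B$-letter of the \emph{corresponding} block of $g'$ --- is not true: cancellation happens across blocks. Concretely, take $a_{p+1}=a_{p+1}'=a_1$, $u_{p+1}=b_1$, $u_{p+1}'=b_1(b_2b_3)^k b_1$, and $u_{p+2}=(b_2b_3)^k b_1$ with $a_{p+2}=a_1$ (one can pad both elements with short further tails so that $d_G(g,g')$ is as large as required and the $\A$-factor is the maximal one). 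In the reduction of \eqref{eq.reducer} the segment $(b_2b_3)^k\subset u_{p+2}$ commutes past $a_1$ and cancels entirely against $u_{p+1}'$, so $d_G(g,g')$ and $m+n$ stay bounded while $\length(u_{p+2})=2k+1$ is arbitrary; hence $\awl(\ol{u_{p+2}a_{p+2}}\dots\ol{u_{p+m}a_{p+m}})$ is unbounded and \ref{condition.virgo1} fails at $j=1$. This is precisely why the paper's proof runs a finer case analysis than yours: it first compares $a_{p+2}$ with $a_{p+2}'$ as well (using $\leo(\scalf,\jf)$ with $\jf=2$ and $j=2$), then splits on whether $u_{p+2}a_{p+2}=u_{p+2}'a_{p+2}'$ (\Cref{3claim5} versus \Cref{3claim6}), taking $j=2$ in the unequal case so that the possibly huge block at position $p+2$ is excluded from the average-word-length condition (the blocks beyond $p+2$ being controlled by \Cref{3claim3}), and only using $j=1$ in the equal case, where \Cref{3claim4} forces $\length(u_{p+2}a_{p+2})\leq d_G(g,g')$.

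A second, smaller gap is in your verification of \ref{condition.virgo3}: the fallback ``if the $T_A$-distance exceeds $12(m+n)$ then one word is much longer and the length-encoding statistic distinguishes them'' does not hold. Two words of equal length that differ far from their ends have large $T_A$-distance, and even a large length difference does not force the last $12(m+n)$ digits of the base-$10$ expansions of the lengths to differ (the difference could be a multiple of $10^{12(m+n)}$). The bound $d_{T_A}(u_{p+1}a_{p+1},u_{p+1}'a_{p+1}')\leq 6\,d_G(g,g')\leq 12(m+n)$ that must be in place before invoking \Cref{lem.nomt} is a genuine geometric statement; the paper proves it by the three-case cancellation analysis inside \Cref{3claim6}, and notably only in the case $u_{p+2}a_{p+2}=u_{p+2}'a_{p+2}'$, distinguishing at depth $p+2$ instead in the other case. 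So while your overall strategy (maximize a factor, reduce to $\leo$ or $\virgo$, feed a combined diary) is the same as the paper's, the actual content of the proof is the pair of cancellation claims \Cref{3claim3} and \Cref{3claim4} and the resulting choice between $j=1$ and $j=2$; without them, \ref{condition.virgo3} is unjustified and \ref{condition.virgo1} is false in the single case you propose.
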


\begin{proof}
Since $D$ is $1$-Lipschitz, the composition is coarsely Lipschitz. So we only need to worry about the lower bound of the quasiisometric inequality. 

Let $g, g' \in G$. Since we only care about the coarse geometry of $G$, we may assume that $d_G(g,g') \geq 12$. We may write $F(g) = (\alpha_1, \alpha_2)$ and $F(g') = (\beta_1, \beta_2)$ so that $d(F(g),F(g')) = d_{T_W}(\alpha_1,\beta_1) + d_{T_W}(\alpha_2,\beta_2)$. Without loss of generality, we may assume that $d_{T_W}(\alpha_1,\beta_1) \geq d_{T_W}(\alpha_2,\beta_2)$. So $d_{T_W}(\alpha_1,\beta_1) \geq \frac{1}{2} d(F(g),F(g'))$. We may write 
\[\alpha_1 = \ol{u_1 a_1} \ \ol{u_2 a_2} \dots \ol{u_p a_p} \ \ol{u_{p+1} a_{p+1}} \dots \ol{u_{p+m} a_{p+m}} \]
and
\[\beta_1 = \ol{u_1 a_1} \ \ol{u_2 a_2} \dots \ol{u_p a_p} \ \ol{u_{p+1}' a_{p+1}'} \dots \ol{u_{p+n}' a_{p+n}'} \]
where $u_{p+1} a_{p+1} \neq u_{p+1}' a_{p+1}'$ and where the $a_i$ and $a_i'$ are letters in $\A$ and the $u_i$ and $u_i'$ are words on $\B$. Note that $d_{T_W}(\alpha_1,\beta_1) = m+n$. 

\begin{proofclaim3} \label{3claim1}
We may assume that $m,n \geq (m+n) / 3$.
\end{proofclaim3}

\begin{proof}[Proof of \Cref{3claim1}]
Otherwise, we have $\absval{m - n} \geq (m+n)/3$ and so 
\[d_{T_\Omega}(D\alpha_1,D\beta_1) \geq (m+n)/3 = d_{T_W}(\alpha_1,\beta_1) / 3 \geq \frac{1}{6} d(F(g),F(g')) = \frac{1}{6} d_G(g,g')\]
where the first inequality follows from the fact that $D$ is height-preserving. So we are done in this case. 
\end{proof}

It follows that 
\begin{equation} \label{eq.mnlbhyp}
m,n \geq (m+n) / 3 \geq \frac{1}{6} d(F(g),F(g')) = \frac{1}{6} d_G(g,g')
\end{equation}
In particular, since $d_G(g,g') \geq 12$, we know that $m,n \geq 2$. 

\begin{proofclaim3} \label{3claim2}
We are done if the sentences $\alpha_1$, $\beta_1 \in T_W$ satisfy $\leo(\scalf,\jf)$ or $\virgo(\scall,\delta,\jl,N,\epsilon)$.
\end{proofclaim3}

\begin{proof}[Proof of \Cref{3claim2}]
If this occurs then we have
\[d_{T_\Omega}(D\alpha_1,D\beta_1) \geq  d_{T_W}(\alpha_1,\beta_1) / M \geq \frac{1}{2M} d(F(g),F(g')) = \frac{1}{2M} d_G(g,g')\]
and we are done. 
\end{proof}

If $a_{p+1} \neq a_{p+1}'$ or $a_{p+2} \neq a_{p+2}'$ then $\alpha_1$ and $\beta_1$ satisfy $\leo(\scalf,\jf)$ and we are done. So we may assume that $a_{p+1} = a_{p+1}'$ and $a_{p+2} = a_{p+2}'$. 

A crucial observation is that when we reduce (i.e. cancel as much as possible by commuting elements and deleting pairs of the form $a_i^2$ or $b_i^2$)
\begin{equation} \label{eq.reducer}
a_{p+m}^{-1} u_{p+m}^{-1} \dots a_{p+1}^{-1} u_{p+1}^{-1} u_{p+1}' a_{p+1}' \dots u_{p+n}' a_{p+n}'
\end{equation}
we are left with a geodesic from $g$ to $g'$. So any collection of letters in \eqref{eq.reducer} which do not cancel have cardinality bounded above by $d_G(g,g')$. 

\begin{proofclaim3} \label{3claim3}
At most one $\B$-letter in $u_{p+3}a_{p+3}\dots u_{p+m}a_{p+m}$, and at most one $\B$-letter in $u_{p+3}'a_{p+3}'\dots u_{p+n}'a_{p+n}'$, can cancel when we reduce \eqref{eq.reducer}. 
\end{proofclaim3}

\begin{proof}[Proof of \Cref{3claim3}]
This is left to the reader but the idea is that it is difficult for a $\B$-letter in $u_{p+3}a_{p+3}\dots u_{p+m}a_{p+m}$ to commute with both $a_{p+1}$ and $a_{p+2}$ (yet it needs to). 
\end{proof} 

\begin{proofclaim3} \label{3claim4}
When we reduce \eqref{eq.reducer}, there can only be cancellation in at most one of the words $u_{p+2}$ and $u_{p+2}'$. 
\end{proofclaim3}

\begin{proof}[Proof of \Cref{3claim4}]
For there to be cancellation in $u_{p+2}$, we must have that $u_{p+1}$ is a subword of $u_{p+1}'u_{p+2}'u_{p+3}'\dots u_{p+n}'$. If the final letter of $u_{p+1}$ is within $u_{p+2}'u_{p+3}'\dots u_{p+n}'$, then, for there to be any cancellation within $u_{p+2}$, this final letter must commute with $a_{p+1}'$. But then the final letter of $u_{p+1}$ would commute with $a_{p+1} = a_{p+1}'$ which contradicts the $a$-left representation. Thus $u_{p+1}$ is actually a subword of $u_{p+1}'$. In exactly the same way for there to be any cancellation in $u_{p+2}'$, we must have that $u_{p+1}'$ is a subword of $u_{p+1}$. Both these conditions cannot happen, and hence there can only be cancellation in at most one of $u_{p+2}$ and $u_{p+2}'$.
\end{proof}

\begin{proofclaim3} \label{3claim5}
We are done when $u_{p+2} a_{p+2} \neq u_{p+2}' a_{p+2}'$.
\end{proofclaim3}

\begin{proof}[Proof of \Cref{3claim5}]
We claim that $\virgo(\scall,\delta,\jl,N,\epsilon)$ holds with $j = 2$ as the choice of index. Obviously \ref{condition.virgo2} holds. 

Let us prove that \ref{condition.virgo1} holds. Since, by \Cref{3claim3}, at most one $\B$-letter in $u_{p+3}a_{p+3}\dots u_{p+m}a_{p+m}$ can reduce when we cancel \eqref{eq.reducer}, it follows that 
\[\length(u_{p+3}a_{p+3}\dots u_{p+m}a_{p+m}) - 1 \leq d_G(g,g')\]
In particular, $\length(u_{p+3}a_{p+3}\dots u_{p+m}a_{p+m}) \leq 2 d_G(g,g')$. Hence, by \eqref{eq.mnlbhyp}, we have
\[\awl(\ol{u_{p+3} a_{p+3}} \dots \ol{u_{p+m} a_{p+m}}) \leq \frac{2 d_G(g,g')}{d_G(g,g')/6} = 12 \leq N\]
Similarly, we have $\awl(\ol{u_{p+3}' a_{p+3}'} \dots \ol{u_{p+n}' a_{p+n}'}) \leq N$.

Finally, we need to prove that \ref{condition.virgo3} holds. It follows from \Cref{3claim4} and \Cref{lem.aleftrep} that $\length(u_{p+2}a_{p+2}) \leq d_G(g,g')$ or $\length(u_{p+2}'a_{p+2}') \leq d_G(g,g')$. Since $d_G(g,g') \leq 2(m+n)$, it follows that the linear statistic "the final $12c$ letters of $w_i$" with $c = m+n$ can distinguish between the sentences $\ol{u_1 a_1} \dots \ol{u_p a_p} \ \ol{u_{p+1} a_{p+1}} \ \ol{u_{p+2} a_{p+2}}$ and $\ol{u_1 a_1} \dots \ol{u_p a_p} \ \ol{u_{p+1}' a_{p+1}'} \ \ol{u_{p+2}' a_{p+2}'}$. 
\end{proof}

So we may assume that $u_{p+2} a_{p+2} = u_{p+2}' a_{p+2}'$. It follows from \Cref{3claim4} and \Cref{lem.aleftrep} that $\length(u_{p+2} a_{p+2}) \leq d_G(g,g')$ and $\length(u_{p+2}' a_{p+2}') \leq d_G(g,g')$. 

\begin{proofclaim3} \label{3claim6}
We are done when $u_{p+2} a_{p+2} = u_{p+2}' a_{p+2}'$.
\end{proofclaim3}

\begin{proof}[Proof of \Cref{3claim6}]
We claim that $\virgo(\scall,\delta,\jl,N,\epsilon)$ holds with $j = 1$ as the choice of index. Obviously \ref{condition.virgo2} holds. 

Let us prove that \ref{condition.virgo1} holds. As in the proof of \Cref{3claim5}, we have 
\[\length(u_{p+3}a_{p+3}\dots u_{p+m}a_{p+m}) \leq 2 d_G(g,g')\]
Further, we have $\length(u_{p+2} a_{p+2}) \leq d_G(g,g')$ and $\length(u_{p+2}' a_{p+2}') \leq d_G(g,g')$. Hence, by \eqref{eq.mnlbhyp}, we have
\[\awl(\ol{u_{p+2} a_{p+2}} \dots \ol{u_{p+m} a_{p+m}}) \leq \frac{3 d_G(g,g')}{d_G(g,g')/6} = 18 = N\]
Similarly, we have $\awl(\ol{u_{p+2}' a_{p+2}'} \dots \ol{u_{p+n}' a_{p+n}'}) \leq N$.

Finally, we need to prove that \ref{condition.virgo3} holds. Let $u$ be the largest common initial part of $u_{p+1} a_{p+1}$ and $u_{p+1}'a_{p+1}'$ and write $u_{p+1}a_{p+1} = uv$ and $u_{p+1}'a_{p+1}' = uv'$. 

We claim that $\length(v) \leq 3d_G(g,g')$ and $\length(v') \leq 3d_G(g,g')$. We have three cases: $v,v'$ can both be non-empty, $v$ can be non-empty and $v'$ can be empty, or $v'$ can be non-empty and $v$ can be empty. In the first case, it is not hard to see that both $v$ and $v'$ survive in the cancellation of \eqref{eq.reducer}. So $\length(v) \leq d_G(g,g')$ and $\length(v') \leq d_G(g,g')$. In the second case, when we reduce \eqref{eq.reducer}, we know that $v$ can only cancel with letters in $u_{p+2}'a_{p+2}'$ and $u_{p+3}'a_{p+3}'\dots u_{p+n}'a_{p+n}'$. So at most $d_G(g,g') + 1$ letters in $v$ can cancel in \eqref{eq.reducer}. So $\length(v) \leq 2 d_G(g,g') + 1 \leq 3d_G(g,g')$. We are done in the third case for similar reasons. 

Hence $d_{T_A}(u_{p+1} a_{p+1}, u_{p+1} a_{p+1}) \leq 6 d_G(g,g')$. It follows from \Cref{lem.nomt} that either the final $6d_G(g,g')$ letters of $u_{p+1} a_{p+1}$ and $u_{p+1}' a_{p+1}'$ are distinct or the final $6d_G(g,g')$ letters of the base $10$ expansions of $\length(u_{p+1} a_{p+1}) \in \N$ and $\length(u_{p+1}' a_{p+1}') \in \N$ are distinct. So the two linear statistics in $\scall$ with $c = m+n$ successfully distinguish $\ol{u_1 a_1} \dots \ol{u_p a_p} \ \ol{u_{p+1} a_{p+1}}$ from $\ol{u_1 a_1} \dots \ol{u_p a_p} \ \ol{u_{p+1}' a_{p+1}'}$ since $12(m+n) \geq 6 d_G(g,g')$.
\end{proof}
The proof of \Cref{thm.h2example} is complete.
\end{proof}

\bibliographystyle{alpha}
\bibliography{ref.bib}

\end{document}